\documentclass[11pt]{article}
\usepackage{amssymb,amsfonts}
\usepackage{amsmath,amsthm,amsxtra}

\usepackage{graphicx}
\usepackage{amscd}
\usepackage{ulem}
\usepackage{makeidx}
\usepackage{fancybox}

\setlength{\oddsidemargin}{0.0 truein}
\setlength{\evensidemargin}{0.0 truein}
\setlength{\textwidth}{6.25 truein}
\setlength{\headheight}{0 pt}
\setlength{\headsep}{15 pt}
\setlength{\topmargin}{0.25 in}
\setlength{\textheight}{8.5 in}
\setlength{\hoffset}{0 in} 
\setlength{\voffset}{0 in}

\newcommand{\nnn}{{\mathbf N}}
\newcommand{\qqq}{{\mathbf Q}}

\newcommand{\zzz}{{\mathbf Z}}

\newtheorem{prop}{Proposition}[section]
\newtheorem{lemma}{Lemma}[section]

\newtheorem{note}{Note}[section]

\numberwithin{equation}{section}

\pagestyle{myheadings}


\begin{document}

\title{Mock theta functions and indefinite modular forms II}

\author{\footnote{12-4 Karato-Rokkoudai, Kita-ku, Kobe 651-1334, 
Japan, \qquad
wakimoto.minoru.314@m.kyushu-u.ac.jp, \hspace{5mm}
wakimoto@r6.dion.ne.jp 
}{ Minoru Wakimoto}}

\date{\empty}

\maketitle

\begin{center}
Abstract
\end{center}

In this paper, we compute the Zwegers's modification of the mock theta 
functions $\Phi^{[m,0] \, \ast}$ and study the modular transformation 
properties of the indefinite modular forms which appear in the explicit 
formula for the modified functions $\widetilde{\Phi}^{[m,0] \, \ast}$. 

\tableofcontents

\section{Introduction}

In the previous paper \cite{W2022c} we studied indefinite modular forms 
obtained from the Zwegers' modification of the mock theta function 
$\Phi^{(-)[m, \frac12]}$ for $m \in \frac12 \nnn_{\rm odd}$.
In the current paper, we study the case $m \in \nnn$ where, 
in order that the Zwegers' modification works, we consider
the function
\begin{equation}
\Phi^{(\pm)[m,s] \, \ast} \,\ := \,\ \Phi^{(\pm)[m,s]}_1 + \Phi^{(\pm)[m,s]}_2
\label{m1:eqn:2022-924d}
\end{equation}
with $\Phi^{(\pm)[m,s]}_i$ \,\ $(i=1,2)$ defined in \cite{W2022c}. 

\medskip

Comparing the results in the current paper with those in the previous 
paper \cite{W2022c}, there appear strange difference 
between the case $m \in \nnn$ and 
the case $m \in \frac12 \nnn_{\rm odd}$.
In the case $m \in \frac12 \nnn_{\rm odd}$ in \cite{W2022c}, 
we need 3 series of functions $g^{(i)[m,p]}_k(\tau)$ \, 
$(i=1,2,3)$ to get $SL_2(\zzz)$-invariant family for each 
$m \in \frac12 \nnn_{\rm odd}$ whereas, in the case 
$m \in \nnn$ in the current paper, only one series of functions 
$g^{(1)[m,p] \, \ast}_k(\tau)$ span $SL_2(\zzz)$-invariant spaces
for each $m \in \nnn$.

\medskip

This paper is organized as follows.

In section \ref{sec:preliminaries}, we make preparation on basic properties 
of $\Phi^{(\pm)[m,s] \, \ast}$.
In section \ref{sec:Phi(+:0)ast}, we derive the explicit formula for  
$\Phi^{[m,0] \, \ast}(\tau, z_1,z_2,0)$ by using the Kac-Peterson's 
identity just in the similar way with \cite{W2022e}.  
In section \ref{sec:Phi(+:0)ast:z1-z2=2atau+2b}, we deduce the formula 
for $\Phi^{[m,0] \, \ast}(\tau, z_1+p\tau,z_2-p\tau,0)$ in the case 
when 
\begin{equation}
(z_1,z_2)=
\Big(\frac{z}{2}+\frac{\tau}{2}-\frac12, \, 
\frac{z}{2}-\frac{\tau}{2}+\frac12\Big) 
\hspace{5mm} {\rm and} \hspace{5mm}
(z_1,z_2)=
\Big(\frac{z}{2}+\frac{\tau}{2}, \, \frac{z}{2}-\frac{\tau}{2}\Big).
\label{m1:eqn:2022-1001a}
\end{equation}
In section \ref{sec:Phi;ast:add:(m0):z1-z2=2atau}, we compute the 
Zwegers's correction function $\Phi^{[m,0] \, \ast}_{\rm add}(\tau, z_1,z_2,0)$ 
for $(z_1,z_2)$ given by \eqref{m1:eqn:2022-1001a}.  
In section \ref{sec:Phi(+:0)ast:(z1+ptau:z2-ptau)}, using the 
results obtained in \S \ref{sec:Phi;ast:add:(m0):z1-z2=2atau},
we make detail investigation on the relation between 
$\Phi^{[m,0] \, \ast}(\tau, z_1+p\tau,z_2-p\tau,0)$ and 
the modified function 
$\widetilde{\Phi}^{[m,0] \, \ast}(\tau, z_1+p\tau,z_2-p\tau,0)$
when $(z_1,z_2)$ is given by \eqref{m1:eqn:2022-1001a}.  
In section \ref{sec:Phi:tilde:ast}, we obtain the explicit formula 
for $\widetilde{\Phi}^{[m,0] \, \ast}(\tau, z_1,z_20)$
in the case when $(z_1,z_2)$ is given by \eqref{m1:eqn:2022-1001a}.
In section \ref{sec:Phi:tilde:ast:modular}, we introduce functions 
$\Xi^{(i)[m,p] \, \ast}(\tau,z)$ and $\Upsilon^{(i)[m,p] \, \ast}(\tau,z)$
for $p \in \zzz$ such that $0 \leq p \leq 2m$ and $i \in \{1,2\}$, 
and compute the modular transformation properties of these functions.
In section \ref{sec:g(i):ast}, we define the functions 
$G^{(i)[m,p] \ast}(\tau,z)$ and $g^{(i)[m,p] \, \ast}_k(\tau)$ by
\begin{eqnarray*}
G^{(i)[m,p] \ast}(\tau,z) &:=& 
\Xi^{(i)[m,p] \, \ast}(\tau,z)-\Upsilon^{(i)[m,p] \, \ast}(\tau,z)
\\[2mm] 
&=&
\sum_{\substack{k \, \in \, \zzz \\[1mm] 0 \, \leq \, k \, \leq \, m}}
g^{(i)[m,p] \, \ast}_k(\tau) [\theta_{k,m}+\theta_{-k,m}](\tau, z)
\end{eqnarray*}
and compute modular transformation of these functions.
In section \ref{sec:g(mp):indefinite}, using the relation between 
$g^{(1)[m,p] \, \ast}_k(\tau)$ and $g^{(2)[m,p] \, \ast}_k(\tau)$, 
we define the indefinite modular forms $g^{[m,p] \, \ast}_k(\tau)$
and obtain their modular transformation properties.

\section{Preliminaries}
\label{sec:preliminaries}

\begin{lemma} 
\label{m1:lemma:2022-917a}
Let $m \in \frac12 \nnn$, $s \in \frac12 \zzz$ and $a \in \zzz$. Then 
\begin{equation}
\Phi^{(\pm)[m,s] \ast}(\tau, z_1+a\tau, z_2-a\tau,t)
=
(\pm 1)^a \, e^{2\pi ima(z_1-z_2)}q^{ma^2}
\Phi^{(\pm)[m,s-2am] \ast}(\tau, z_1, z_2,t)
\label{m1:eqn:2022-917a}
\end{equation}
\end{lemma}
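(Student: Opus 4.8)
The plan is to prove \eqref{m1:eqn:2022-917a} separately for each of the two summands $\Phi^{(\pm)[m,s]}_1$ and $\Phi^{(\pm)[m,s]}_2$ making up $\Phi^{(\pm)[m,s] \ast}$, since the claimed prefactor $(\pm 1)^a e^{2\pi i m a(z_1-z_2)} q^{ma^2}$ depends neither on $t$ nor on the choice of summand; adding the two resulting identities then yields the lemma. Throughout I write $q = e^{2\pi i \tau}$.

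First I would reduce to the case $a=1$ (together with its negative counterpart) by induction. Assuming the identity for a given $a \geq 1$ and all $s$, I set $(w_1,w_2) = (z_1+\tau, z_2-\tau)$ and write $\Phi^{(\pm)[m,s]\ast}(\tau, z_1+(a+1)\tau, z_2-(a+1)\tau, t) = \Phi^{(\pm)[m,s]\ast}(\tau, w_1+a\tau, w_2-a\tau, t)$. Applying the inductive hypothesis at $(w_1,w_2)$ and then the $a=1$ identity (with $s$ replaced by $s-2am$) at $(z_1,z_2)$, and using $w_1-w_2 = z_1-z_2+2\tau$, the two prefactors combine: the cross term $q^{2am}$ produced by this last relation upgrades $q^{ma^2}\cdot q^{m}$ to $q^{m(a+1)^2}$, while the characteristic shifts compose as $s \mapsto s-2am \mapsto s-2(a+1)m$ and the signs as $(\pm 1)^a \cdot (\pm 1) = (\pm 1)^{a+1}$. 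This is exactly the claimed identity for $a+1$, and the cases $a \leq 0$ follow symmetrically.

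Then, for $a=1$, I would insert the explicit series defining $\Phi^{(\pm)[m,s]}_i$ from \cite{W2022c} and substitute $z_1 \mapsto z_1+\tau$, $z_2 \mapsto z_2-\tau$. The dependence on $z_1,z_2$ enters through the linear exponential factors $e^{2\pi i m(\cdots)}$ and through a level-$m$ quadratic $q$-exponent in the summation index. The substitution multiplies the summand by a power of $q$ that is quadratic in the index; completing the square and reindexing the sum by an integer shift of the summation variable dictated by the level $m$ absorbs this into the standard form, converting $s$ into $s-2m$ and peeling off the overall factor $(\pm 1)\, e^{2\pi i m(z_1-z_2)} q^{m}$. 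The sign $(\pm 1)$ emerges from the factor $(\pm 1)^{\textrm index}$ carried by the summand under this reindexing.

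The main obstacle I anticipate is purely bookkeeping but genuinely delicate: verifying that the index set over which each $\Phi^{(\pm)[m,s]}_i$ is summed is stable under the reindexing shift, so that no boundary terms are created and the two pieces $\Phi^{(\pm)[m,s]}_1$, $\Phi^{(\pm)[m,s]}_2$ are not mixed. In particular I would confirm that completing the quadratic exponent lands the shifted index back in the same range, and that the denominators of the Appell-type summand are invariant under the simultaneous shift of $z_1,z_2$ and the index. Once this stability is checked, matching the three factors $(\pm 1)$, $e^{2\pi i m(z_1-z_2)}$, and $q^m$ against the transformation of the summand is routine.
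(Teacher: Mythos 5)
Your strategy coincides at bottom with the paper's: the paper proves \eqref{m1:eqn:2022-917a} by writing $\Phi^{(\pm)[m,s]\,\ast}=\Phi^{(\pm)[m,s]}_1+\Phi^{(\pm)[m,s]}_2$ and invoking the already-established transformation law for each summand (Lemma 2.3 of \cite{W2022c}), which is exactly the decomposition you start from. Where you differ is that you re-derive that componentwise law rather than cite it, via induction on $a$ reducing to $a=1$ plus a direct series manipulation. Your induction step checks out: $e^{2\pi i m a(w_1-w_2)}=e^{2\pi i m a(z_1-z_2)}q^{2am}$ does upgrade $q^{ma^2}\cdot q^{2am}\cdot q^{m}$ to $q^{m(a+1)^2}$, and the sign and characteristic compose as claimed. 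The only place your argument falls short of a proof is the $a=1$ base case: everything rests on the index-set stability you flag at the end, namely that the reindexing forced by the shifted denominator carries the summation range of each $\Phi^{(\pm)[m,s]}_i$ bijectively onto that of $\Phi^{(\pm)[m,s-2m]}_i$ with no boundary terms. This is not automatic --- the identity used in the proof of Lemma \ref{m1:lemma:2022-917b} shows that shifting $s$ alone \emph{does} produce theta-function correction terms --- so the stability claim is precisely the content of the cited Lemma 2.3 of \cite{W2022c} and must be verified against the actual definition of $\Phi^{(\pm)[m,s]}_i$ rather than asserted. Your route buys self-containedness at the cost of redoing delicate bookkeeping that the paper disposes of in one line by citation.
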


\begin{proof} 
This follows immediately from Lemma 2.3 in \cite{W2022c} and 
definition of $\Phi^{(\pm)[m,s] \ast}$.
\end{proof}

\vspace{0mm}

\begin{lemma} 
\label{m1:lemma:2022-917b}
Let $m \in \frac12 \nnn$, $s \in \frac12 \zzz$ and 
$a \in \zzz_{\geq 0}$. Then 
{\allowdisplaybreaks
\begin{eqnarray}
& & \hspace{-10mm}
\Phi^{(\pm)[m,s] \, \ast}(\tau, \, z_1+a\tau, \, z_2-a\tau, \, 0)
\nonumber
\\[0mm]
&=&
(\pm 1)^a e^{2\pi ima(z_1-z_2)}q^{ma^2} \, \bigg\{
\Phi^{(\pm)[m,s] \, \ast}(\tau, z_1, z_2,0)
\nonumber
\\[0mm]
& & + 
\sum_{\substack{k \, \in \, \zzz \\[1mm] 1 \, \leq \, k \, \leq \, 2am}} 
\hspace{-3mm}
e^{-\pi i(k-s)(z_1-z_2)} \, 
q^{-\frac{1}{4m}(k-s)^2} \, 
\big[\theta_{k-s,m}^{(\pm)}+\theta_{-(k-s),m}^{(\pm)}\big]
(\tau, z_1+z_2)\bigg\}
\label{m1:eqn:2022-917b}
\end{eqnarray}}
\end{lemma}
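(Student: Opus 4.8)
The plan is to use Lemma \ref{m1:lemma:2022-917a} to strip off the elliptic shift $(z_1,z_2)\mapsto(z_1+a\tau,z_2-a\tau)$ and thereby reduce the assertion to a pure statement about the dependence of $\Phi^{(\pm)[m,\cdot]\,\ast}$ on its middle index. Indeed, by \eqref{m1:eqn:2022-917a} the left-hand side of \eqref{m1:eqn:2022-917b} equals $(\pm 1)^a e^{2\pi ima(z_1-z_2)}q^{ma^2}\,\Phi^{(\pm)[m,s-2am]\,\ast}(\tau,z_1,z_2,0)$, so after cancelling the common prefactor the claim \eqref{m1:eqn:2022-917b} is equivalent to
\[
\begin{aligned}
\Phi^{(\pm)[m,s-2am]\,\ast}(\tau,z_1,z_2,0)
&= \Phi^{(\pm)[m,s]\,\ast}(\tau,z_1,z_2,0)\\
&\quad+\sum_{\substack{k\in\zzz\\ 1\le k\le 2am}}
e^{-\pi i(k-s)(z_1-z_2)}\,q^{-\frac{1}{4m}(k-s)^2}
\big[\theta_{k-s,m}^{(\pm)}+\theta_{-(k-s),m}^{(\pm)}\big](\tau,z_1+z_2).
\end{aligned}
\]
Note that the whole burden of the signs $(\pm 1)^a$ and the powers $q^{ma^2}$ has already been absorbed into Lemma \ref{m1:lemma:2022-917a}; what remains involves no $\tau$-shift of the elliptic variables and is purely a finite-difference identity in the variable $s$.

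To prove this reduced identity I would return to the definition \eqref{m1:eqn:2022-924d}, $\Phi^{(\pm)[m,s]\,\ast}=\Phi^{(\pm)[m,s]}_1+\Phi^{(\pm)[m,s]}_2$, together with the defining lattice sums of $\Phi^{(\pm)[m,s]}_i$ recorded in \cite{W2022c}, in which the index $s$ enters as a shift of the summation region. Lowering $s$ to $s-2am$ translates that region by $a$ units, and the difference of the two sums collapses to the finitely many boundary terms indexed by $1\le k\le 2am$. Equivalently, one may argue by induction on $a$: the single-step case $a=1$ expresses $\Phi^{(\pm)[m,s-2m]\,\ast}-\Phi^{(\pm)[m,s]\,\ast}$ as the sum over $1\le k\le 2m$, and applying this with $s$ replaced successively by $s-2m,\,s-4m,\dots$ and relabelling $k\mapsto k+2m$ at each stage telescopes the blocks $1\le k\le 2m$, $2m+1\le k\le 4m,\dots$ into the single range $1\le k\le 2am$. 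The symmetric pair $\theta_{k-s,m}^{(\pm)}+\theta_{-(k-s),m}^{(\pm)}$ is precisely what one obtains by collecting the boundary contribution of $\Phi^{(\pm)[m,s]}_1$ together with that of $\Phi^{(\pm)[m,s]}_2$, which is the reason for working with the starred combination.

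I expect the one genuinely computational point — and the main obstacle — to be the explicit identification of each boundary term. After isolating the finitely many summands that survive the shift, one must complete the square in the $q$-exponent to produce exactly the factor $q^{-\frac{1}{4m}(k-s)^2}$ and the accompanying elliptic prefactor $e^{-\pi i(k-s)(z_1-z_2)}$, and then recognise the remaining sum over the residual lattice as the level-$m$ theta function $\big[\theta_{k-s,m}^{(\pm)}+\theta_{-(k-s),m}^{(\pm)}\big](\tau,z_1+z_2)$, the argument $z_1+z_2$ being forced by the structure of the definition. In carrying this out the delicate bookkeeping is the correct matching of the internal signs in $\theta^{(\pm)}$ coming from the $(\pm)$ in the definition, and the verification that the relabelling $k\mapsto k+2m$ respects both the $q$-power and the prefactor, so that the telescoped ranges join without gaps or overlaps.
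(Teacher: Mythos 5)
Your proposal is correct and follows essentially the same route as the paper: the paper likewise applies Lemma \ref{m1:lemma:2022-917a} to reduce \eqref{m1:eqn:2022-917b} to the finite-difference identity for $\Phi^{(\pm)[m,s-2am]\,\ast}-\Phi^{(\pm)[m,s]\,\ast}$, and then obtains that identity (your displayed equation, after the substitution $k\to -k$) as the sum of boundary terms coming from the shift of the index $s$. The only cosmetic difference is that the paper imports the one-parameter difference formula directly from Lemma 2.1 of \cite{W2022b} rather than re-deriving it from the lattice sums or telescoping as you suggest.
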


\begin{proof} This lemma can be shown in the similar way with the 
proof of Lemma 2.4 in \cite{W2022c} as folloes.
By Lemma 2.1 in \cite{W2022b} with Remark 2.1 in \cite{W2022c}, 
we have
{\allowdisplaybreaks
\begin{eqnarray*}
& & \hspace{-5mm}
\Phi^{(\pm)[m,s-a] \, \ast}(\tau, z_1,z_2,0) \,\ - \,\ 
\Phi^{(\pm)[m,s] \, \ast}(\tau, z_1,z_2,0)
\\[1mm]
&= &
\sum_{\substack{k \, \in \, \zzz \\[1mm] -a \leq k \leq -1}}
e^{\pi i(s+k)(z_1-z_2)} \, 
q^{- \frac{(s+k)^2}{4m}} \, \big[
\theta^{(\pm)}_{s+k, \, m} + \theta^{(\pm)}_{-(s+k), \, m}\big]
(\tau, z_1+z_2)
\end{eqnarray*}}
Letting \, $a \rightarrow  2am$, we have
{\allowdisplaybreaks
\begin{eqnarray}
& & \hspace{-5mm}
\Phi^{(\pm)[m,s-2am] \, \ast}(\tau, z_1,z_2,0) \, - \, 
\Phi^{(\pm)[m,s] \, \ast}(\tau, z_1,z_2,0)
\nonumber
\\[1mm]
&= &
\sum_{\substack{k \, \in \, \zzz \\[1mm] -2am \leq k \leq -1}}
e^{\pi i(s+k)(z_1-z_2)} 
q^{- \frac{(s+k)^2}{4m}} \big[
\theta^{(\pm)}_{s+k, \, m} + \theta^{(\pm)}_{-(s+k), \, m}\big]
(\tau, z_1+z_2)
\nonumber
\\[1mm]
& \hspace{-5mm}
\underset{\substack{\\[0.5mm] \uparrow \\[1mm] k \, \rightarrow \, -k
}}{=} \hspace{-5mm}
& \,\ 
\sum_{\substack{k \, \in \, \zzz \\[1mm] 1 \leq k \leq 2am}}
e^{\pi i(s-k)(z_1-z_2)} \, 
q^{- \frac{(s-k)^2}{4m}} 
\big[\theta^{(\pm)}_{k-s, \, m} + \theta^{(\pm)}_{-(k-s), \, m}
\big](\tau, z_1+z_2)
\label{m1:eqn:2022-917c}
\end{eqnarray}}
Substituting this equation \eqref{m1:eqn:2022-917c} into 
\eqref{m1:eqn:2022-917a}, we obtain \eqref{m1:eqn:2022-917b},
proving Lemma \ref{m1:lemma:2022-917b}.
\end{proof}

\vspace{1mm}

\begin{lemma} 
\label{m1:lemma:2022-917c}
Let $m \in \frac12 \nnn$, $s \in \frac12 \zzz$ and $p \in \zzz$ 
such that $mp \in \zzz$. Then
\begin{enumerate}
\item[{\rm 1)}] \,\ if \, $p \geq 0$,
{\allowdisplaybreaks
\begin{eqnarray*}
& & \hspace{-10mm}
\Phi^{(\pm)[m,s] \, \ast}(\tau, \, z_1, \, z_2+p\tau, \, 0) 
\,\ = \,\ 
e^{-2\pi impz_1} \, \Phi^{(\pm)[m,s] \, \ast}(\tau, \, z_1, \, z_2, \, 0) 
\\[2mm]
& &- \,\ 
e^{-2\pi impz_1} \sum_{k=0}^{mp-1}
e^{\pi i(s+k)(z_1-z_2)}
q^{-\frac{(s+k)^2}{4m}} \, 
\big[\theta^{(\pm)}_{s+k,m}+ \theta^{(\pm)}_{-(s+k),m}\big]
(\tau, z_1+z_2)
\end{eqnarray*}}
\item[{\rm 2)}] \,\ if \, $p < 0$,
{\allowdisplaybreaks
\begin{eqnarray*}
& & \hspace{-10mm}
\Phi^{(\pm)[m,s] \, \ast}(\tau, \, z_1, \, z_2+p\tau, \, 0) 
\,\ = \,\ 
e^{-2\pi impz_1} \, \Phi^{(\pm)[m,s] \, \ast}(\tau, \, z_1, \, z_2, \, 0) 
\\[2mm]
& &
+ \,\ e^{-2\pi impz_1} 
\sum_{\substack{k \, \in \zzz \\[1mm] mp \, \leq \, k \, < \, 0}}
e^{\pi i(s+k)(z_1-z_2)}
q^{-\frac{(s+k)^2}{4m}} \, 
\big[\theta^{(\pm)}_{s+k,m}+ \theta^{(\pm)}_{-(s+k),m}\big]
(\tau, z_1+z_2)
\end{eqnarray*}}
\end{enumerate}
\end{lemma}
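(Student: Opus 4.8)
The plan is to obtain the off--diagonal translation formula from the fundamental quasi--periodicity of $\Phi^{(\pm)[m,s]\,\ast}$ in $z_2$ \emph{alone}, rather than from the balanced shifts of Lemma \ref{m1:lemma:2022-917a} and Lemma \ref{m1:lemma:2022-917b}. Those lemmas move $(z_1,z_2)$ along the line $(z_1+a\tau,z_2-a\tau)$, and a one--parameter family of such shifts can never reach the translation $z_2\mapsto z_2+p\tau$: using \eqref{m1:eqn:2022-917a} with $a=-p$ to trade the $z_2$--translation for a $z_1$--translation, and then the same identity again to trade it back, only reproduces the original expression, and combining it with the $s$--shift identity displayed in the proof of Lemma \ref{m1:lemma:2022-917b} likewise yields tautologies. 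So the essential input is the fundamental $z_2$--translation of the defining series $\Phi^{(\pm)[m,s]}_1+\Phi^{(\pm)[m,s]}_2$, the companion (from Lemma 2.1 in \cite{W2022b}) of the $s$--shift relation already used for Lemma \ref{m1:lemma:2022-917b}. First I would record the minimal case: writing $p_0$ for the least positive integer with $mp_0\in\zzz$, the shift $z_2\mapsto z_2+p_0\tau$ rescales each denominator of the Appell--type series so as to produce the overall factor $e^{-2\pi imp_0 z_1}$, and the finitely many terms leaving the summation range assemble, through the same Kac--Peterson mechanism used elsewhere in the paper, into the $mp_0$ blocks $\theta^{(\pm)}_{s+k,m}+\theta^{(\pm)}_{-(s+k),m}$ at $z_1+z_2$ with prefactors $e^{\pi i(s+k)(z_1-z_2)}q^{-\frac{(s+k)^2}{4m}}$. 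This is the $p=p_0$ instance of part~1).

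For $p\ge 0$ I would then induct on $p$, stepping by $p_0\tau$ (every admissible $p$ lies in $p_0\zzz$, since $\{p\in\zzz:mp\in\zzz\}$ is a subgroup). The one nontrivial point is that the correction terms carry $\theta^{(\pm)}_{\cdot,m}(\tau,z_1+z_2)$, so under $z_2\mapsto z_2+p_0\tau$ they must be re--expanded through the theta quasi--periodicity $\theta^{(\pm)}_{j,m}(\tau,z+p_0\tau)=c\,e^{-\pi i mp_0 z}\theta^{(\pm)}_{j+mp_0,m}(\tau,z)$ for an explicit $q$--power $c$, which advances the index $s+k$ by the integer $mp_0$ at each step. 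After collecting these factors against the accumulating prefactor $e^{-2\pi impz_1}$, I expect the successive correction blocks of length $mp_0$ to glue into the single range $0\le k\le mp-1$ of part~1); checking that the exponentials $e^{\pi i(s+k)(z_1-z_2)}$ and the Gaussians $q^{-(s+k)^2/4m}$ match term by term after the index shift is the bulk of the routine computation.

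For $p<0$ I would not redo the series manipulation but invert the case already proved: applying part~1) with base point $z_2+p\tau$ and positive shift $|p|=-p$ expresses $\Phi^{(\pm)[m,s]\,\ast}(\tau,z_1,z_2,0)$ in terms of $\Phi^{(\pm)[m,s]\,\ast}(\tau,z_1,z_2+p\tau,0)$ together with a correction over $0\le k\le m|p|-1$ evaluated at $z_2+p\tau$; solving for the translated function flips the sign of that correction and, after pulling its theta terms back to argument $z_1+z_2$ by the same quasi--periodicity applied $|p|/p_0$ times, re--indexes the sum to $mp\le k<0$, which is exactly part~2). The main obstacle is genuinely the first paragraph: the base relation requires the explicit definition of $\Phi^{(\pm)[m,s]\,\ast}$ and cannot be bootstrapped from the balanced shifts available in the excerpt. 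Secondary care is needed with the $(\pm)$ signs and with the hypothesis $mp\in\zzz$, which is precisely what guarantees that the index $k$ in both $0\le k\le mp-1$ and $mp\le k<0$ runs over integers when $m$ is a half--integer.
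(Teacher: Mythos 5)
Your diagnosis of what the excerpt can and cannot supply is accurate, and your overall plan would work, but it is a genuinely different and much heavier route than the one the paper takes. The paper's entire proof is one sentence: Lemma 2.5 of \cite{W2022b} already gives exactly this translation formula for the constituents of the defining sum (for general $p$ with $mp\in\zzz$, both signs of $p$ included), and since $\Phi^{(\pm)[m,s]\,\ast}=\Phi^{(\pm)[m,s]}_1+\Phi^{(\pm)[m,s]}_2$ and the asserted identity is linear in the function, one simply adds the two instances. So the external input you correctly identified as indispensable is indeed the whole proof --- the paper cites Lemma 2.5 of \cite{W2022b}, not a companion of the Lemma 2.1 used for the $s$-shift --- and nothing further is needed. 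Your additional scaffolding (reduction to the minimal step $p_0$, induction in steps of $p_0$ using the quasi-periodicity $\theta^{(\pm)}_{j,m}(\tau,z+p_0\tau)=q^{-mp_0^2/4}\,e^{-\pi imp_0z}\,\theta^{(\pm)}_{j+mp_0,m}(\tau,z)$, and the inversion argument for $p<0$) is sound in outline and would be the right way to prove the cited lemma from the series definition if one had to; what it buys is self-containedness at the price of the index bookkeeping you flag, and what the paper's route buys is that the general-$p$ and negative-$p$ cases come for free from the reference rather than from an induction and an inversion. The one point to keep honest is that your base case is asserted rather than computed: the ``Kac--Peterson mechanism'' assembling the escaping terms into the blocks $\theta^{(\pm)}_{s+k,m}+\theta^{(\pm)}_{-(s+k),m}$ is precisely the content of the external lemma, so your sketch does not actually eliminate the dependence on \cite{W2022b}, it only relocates it.
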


\begin{proof} 
These formulas follow immediately from Lemma 2.5 in \cite{W2022b} and 
definition of $\Phi^{(\pm)[m,s] \ast}$.
\end{proof}

\vspace{1mm}

Following formulas for theta functions can be shown easily by using 
Lemma 1.1 in \cite{W2022c} and Note 1.1 in \cite{W2022c},
and will be used in the proof of Lemmas \ref{m1:lemma:2022-918a} 
and \ref{m1:lemma:2022-918b}.

\vspace{1mm}

\begin{note}
\label{m1:note:2022-917b}
For $m \in \nnn$ and $p \in \zzz$, the following formulas hold:
\begin{enumerate}
\item[{\rm 1)}] \, $\theta_{\frac12, m+\frac12}^{(-)}
\Big(\tau, \, \dfrac{m(2p+1)\tau-m}{m+\frac12}\Big)
\,\ = \,\ 
e^{-\frac{\pi im}{2m+1}} \, q^{-\frac{m^2}{2(2m+1)}(2p+1)^2} \, 
\theta_{2mp+(m+\frac12), \, m+\frac12}^{(-)}(\tau,0)$
\item[{\rm 2)}] \, $\theta_{\frac12, m+\frac12}^{(-)}
\Big(\tau, \, \dfrac{m(2p+1)\tau}{m+\frac12}\Big)
\,\ = \,\ 
q^{-\frac{m^2}{2(2m+1)}(2p+1)^2} \, 
\theta_{2mp+(m+\frac12), \, m+\frac12}^{(-)}(\tau,0)$
\end{enumerate}
\end{note}

\begin{note}
\label{m1:note:2022-917c}
For $m \in \frac12\nnn$ and $p \in \zzz$, the following formulas hold:
\begin{enumerate}
\item[{\rm 1)}]
\begin{enumerate}
\item[{\rm (i)}] $\theta_{-\frac12, m+\frac12}^{(-)}
\Big(\tau, \,\ z+\dfrac{(2p+1)\tau-1}{2m+1}\Big) 
\, = \,\
e^{\frac{\pi i}{2(2m+1)}} \, q^{- \frac{1}{16(m+\frac12)}(2p+1)^2} 
e^{-\frac{\pi i}{2}(2p+1)z} \, \theta_{p, m+\frac12}(\tau,z)$
\item[{\rm (ii)}] $\theta_{\frac12, m+\frac12}^{(-)}
\Big(\tau, \,\ z-\dfrac{(2p+1)\tau-1}{2m+1}\Big) 
\, = \,\
e^{\frac{\pi i}{2(2m+1)}} \, q^{- \frac{1}{16(m+\frac12)}(2p+1)^2} 
e^{\frac{\pi i}{2}(2p+1)z} \, \theta_{-p, m+\frac12}(\tau,z)$
\end{enumerate}
\item[{\rm 2)}]
\begin{enumerate}
\item[{\rm (i)}] \,\ $\theta_{-\frac12, m+\frac12}^{(-)}
\Big(\tau, \,\ z+\dfrac{(2p+1)\tau}{2m+1}\Big) 
\, = \,\
q^{- \frac{1}{16(m+\frac12)}(2p+1)^2} \, 
e^{-\frac{\pi i}{2}(2p+1)z} \, \theta_{p, m+\frac12}^{(-)}(\tau,z)$
\item[{\rm (ii)}] \,\ $\theta_{\frac12, m+\frac12}^{(-)}
\Big(\tau, \,\ z-\dfrac{(2p+1)\tau}{2m+1}\Big) 
\, = \,\
q^{- \frac{1}{16(m+\frac12)}(2p+1)^2} \, 
e^{\frac{\pi i}{2}(2p+1)z} \, \theta_{-p, m+\frac12}^{(-)}(\tau,z)$
\end{enumerate}
\end{enumerate}
\end{note}

\vspace{1mm}

\begin{note}
\label{m1:note:2022-917d}
For $p \in \zzz$, the following formulas hold:
\begin{enumerate}
\item[{\rm 1)}] 
\begin{enumerate}
\item[{\rm (i)}] \quad $\vartheta_{11}
\Big(\tau, \, \dfrac{z}{2}+\dfrac{(2p+1)\tau-1}{2}\Big)
\,\ = \,\ 
q^{-\frac18(2p+1)^2} \, e^{-\frac{\pi i}{2}(2p+1)z} \, 
\theta_{0, \frac12}(\tau,z)$
\item[{\rm (ii)}] \quad $\vartheta_{11}
\Big(\tau, \, \dfrac{z}{2}-\dfrac{(2p+1)\tau-1}{2}\Big)
\,\ = \,\ - \, 
q^{-\frac18(2p+1)^2} \, e^{\frac{\pi i}{2}(2p+1)z} \, 
\theta_{0, \frac12}(\tau,z)$
\end{enumerate}
\item[{\rm 2)}] 
\begin{enumerate}
\item[{\rm (i)}] \quad $\vartheta_{11}
\Big(\tau, \,\ \dfrac{z}{2}+\Big(p+\dfrac12\Big)\tau\Big)
\,\ = \,\ 
- \, i \, (-1)^p \, 
q^{-\frac18(2p+1)^2} \, 
e^{-\frac{\pi i}{2}(2p+1)z} \, \theta_{0, \frac12}^{(-)}(\tau,z)$
\item[{\rm (ii)}] \quad $\vartheta_{11}
\Big(\tau, \,\ \dfrac{z}{2}-\Big(p+\dfrac12\Big)\tau\Big)
\,\ = \,\ 
i \, (-1)^p \, 
q^{-\frac18(2p+1)^2} \, 
e^{\frac{\pi i}{2}(2p+1)z} \, \theta_{0, \frac12}^{(-)}(\tau,z)$
\end{enumerate}
\end{enumerate}
\end{note}

\section{Explicit formula for $\Phi^{[m,0] \, \ast}(\tau, z_1, z_2,0)$}
\label{sec:Phi(+:0)ast}

\begin{prop} 
\label{m1:prop:2022-917a}
For $m \in \frac12 \nnn$, 
$\Phi^{[m,0] \, \ast}(\tau, z_1, z_2, 0)$ is given by the 
following formula:
{\allowdisplaybreaks
\begin{eqnarray}
& & \hspace{-10mm}
\theta_{\frac12, \, m+\frac12}^{(-)}
\Big(\tau, \,\ \frac{m(z_1-z_2)}{m+\frac12} \Big)
\Phi^{[m,0] \, \ast}(\tau, z_1, z_2, 0) 
\nonumber
\\[3mm]
&=&
- \,\ \bigg[
\sum_{\substack{j, \, k \, \in \zzz \\[1mm] 0 \, < \, k \, \leq \, 2mj}}
-
\sum_{\substack{j, \, k \, \in \zzz \\[1mm] 2mj \, < \, k \, \leq \, 0}}
\bigg] \, 
(-1)^j 
q^{(m+\frac12)(j+\frac{1}{4(m+\frac12)})^2} 
q^{-\frac{k^2}{4m}}
\nonumber
\\[2mm]
& & \hspace{10mm}
\times \,\ 
e^{2\pi im(j+\frac{1}{4(m+\frac12)})(z_1-z_2)} \, 
e^{-\pi ik(z_1-z_2)} \, 
\big[\theta_{k,m}+\theta_{-k,m}\big](\tau, z_1+z_2) \hspace{10mm}
\nonumber
\\[3mm]
& & \hspace{-5mm}
+ \,\ i \, \eta(\tau)^3 \Bigg\{- \, 
\frac{ \displaystyle 
\theta_{-\frac12, \, m+\frac12}^{(-)}
\Big(\tau, \,\ z_1+z_2+\frac{z_1-z_2}{2m+1}\Big)}{\vartheta_{11}(\tau, z_1)}
+ \frac{\displaystyle 
\theta_{\frac12, \, m+\frac12}^{(-)}
\Big(\tau, \,\ z_1+z_2-\frac{z_1-z_2}{2m+1}\Big)}{\vartheta_{11}(\tau, z_2)}
\Bigg\}
\nonumber
\\[-1mm]
& &
\label{m1:eqn:2022-917d}
\end{eqnarray}}
\end{prop}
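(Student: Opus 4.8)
The plan is to follow the strategy of \cite{W2022e}: start from the definition $\Phi^{[m,0] \, \ast} = \Phi^{[m,0] \, \ast}_1 + \Phi^{[m,0] \, \ast}_2$ recalled from \cite{W2022c}, where each summand $\Phi^{[m,0] \, \ast}_i(\tau,z_1,z_2,0)$ is a level-$m$ Appell-type series carrying a simple pole coming from $\vartheta_{11}(\tau,z_i)$. First I would rewrite each $\Phi^{[m,0] \, \ast}_i$ as the corresponding Appell sum $\sum_n (\cdots)/(1 - q^n e^{2\pi i z_i})$, isolating the factor $\vartheta_{11}(\tau,z_i)$ in the denominator, so that the two pieces are set up to contribute the two $\vartheta_{11}$-terms of the correction part.

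Next, multiply both sides by the numerator theta $\theta^{(-)}_{\frac12,\,m+\frac12}\big(\tau,\, \frac{m(z_1-z_2)}{m+\frac12}\big)$ and invoke the Kac-Peterson identity. This is the product-to-sum theta identity that rewrites the product of the level-$(m+\frac12)$ theta with the level-$m$ lattice sum inside $\Phi^{[m,0] \, \ast}_i$ as a sum over the combined lattice: the summation variable $j$ tracks the level $m+\frac12$, whence the factor $(-1)^j q^{(m+\frac12)(j+\frac{1}{4(m+\frac12)})^2}$ together with the exponential $e^{2\pi i m(j+\frac{1}{4(m+\frac12)})(z_1-z_2)}$, while the residual variable $k$ tracks the level-$m$ decomposition, whence $q^{-k^2/4m}$, the phase $e^{-\pi i k(z_1-z_2)}$, and the factors $[\theta_{k,m}+\theta_{-k,m}](\tau, z_1+z_2)$.

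The indefinite double sum then arises from expanding the geometric factor $1/(1-q^n e^{2\pi i z_i})$ and splitting it according to sign; the two index ranges $0 < k \le 2mj$ and $2mj < k \le 0$, together with the overall minus sign, encode exactly the wedge in the $(j,k)$-lattice that makes this a signature-$(1,1)$ indefinite theta series. The boundary contributions of the two geometric expansions, i.e.\ the polar parts at $z_1 = 0$ and $z_2 = 0$, reassemble into the correction term $i\,\eta(\tau)^3\{\cdots\}$; here the shifts $z_1+z_2 \pm \frac{z_1-z_2}{2m+1}$ in the arguments of $\theta^{(-)}_{\mp\frac12,\,m+\frac12}$ come from substituting $\frac{m(z_1-z_2)}{m+\frac12}$ back into the level-$(m+\frac12)$ theta, using $\frac{m}{m+\frac12}=1-\frac{1}{2m+1}$.

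The main obstacle will be the bookkeeping: keeping the two Appell pieces $\Phi^{[m,0] \, \ast}_1$ and $\Phi^{[m,0] \, \ast}_2$ in step so that their indefinite parts combine into the single double sum with precisely the stated index ranges, while their polar parts combine with opposite signs into the two $\vartheta_{11}$-denominator terms. Verifying the exact exponents and the argument shifts $\pm\frac{z_1-z_2}{2m+1}$ --- rather than the overall shape of the formula --- is where the calculation must be carried out with care, and this is essentially the same delicate step that was handled in the $s=\frac12$ computation of \cite{W2022e}.
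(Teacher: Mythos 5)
Your sketch correctly anticipates the shape of the answer and names the right keyword, but the mechanisms you propose for producing each piece of \eqref{m1:eqn:2022-917d} are not the ones that actually work, and the proposal does not supply the one nontrivial input the proof rests on. The paper's argument does not start from an Appell-sum expansion of $\Phi^{[m,0]\,\ast}_i$ with denominators $1/(1-q^ne^{2\pi iz_i})$; it starts from the already-established identities (3.1a), (3.1b) of \cite{W2022d}, specialized and summed, which express
$\sum_{j}(-1)^jq^{(m+\frac12)j^2-\frac12 j}e^{2\pi ijm(z_1+z_2)}\Phi^{[m,0]\,\ast}(\tau,z_1,z_2+2j\tau,0)$
in terms of the rank-one functions $\Phi^{(-)[\frac12,\frac12]}_1$. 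That identity is the actual content of ``the Kac--Peterson identity'' here; it is not a product-to-sum identity for the product of a level-$(m+\frac12)$ theta with a level-$m$ lattice sum, and treating it as such leaves you without the statement you would need to invoke.

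Concretely, three attributions in your plan are off. First, the prefactor $\theta^{(-)}_{\frac12,m+\frac12}\big(\tau,\frac{m(z_1-z_2)}{m+\frac12}\big)$ is not multiplied in by hand: it emerges as the resummed coefficient $\sum_j(-1)^jq^{(m+\frac12)(j-\frac{1}{4(m+\frac12)})^2}e^{-2\pi ijm(z_1-z_2)}$ of $\Phi^{[m,0]\,\ast}(\tau,z_1,z_2,0)$ after one applies the quasi-periodicity of $\Phi^{\ast}$ in $z_2\mapsto z_2+2j\tau$ (Lemma \ref{m1:lemma:2022-917c}) to each term of the $j$-sum. Second, the indefinite double sum does not come from expanding geometric factors and splitting by sign; it is the accumulation over $j$ of the finite theta-function defect ($0\le k<2mj$, resp.\ $2mj\le k<0$) produced by that same quasi-periodicity — the index $j$ of the wedge is the external summation variable of the Kac--Peterson identity, not a geometric-series index. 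Third, the two $\eta(\tau)^3/\vartheta_{11}(\tau,z_i)$ terms are not boundary/polar contributions of a geometric expansion; they come from the closed evaluation $\Phi^{(-)[\frac12,\frac12]}_1(\tau,z,-z+2k\tau,0)=-i\,e^{-2\pi ikz}\eta(\tau)^3/\vartheta_{11}(\tau,z)$ (Lemma 2.1 of \cite{W2022e}) applied to the right-hand side of the identity, after which the $k$-sum reassembles into $\theta^{(-)}_{\mp\frac12,m+\frac12}$ at the shifted arguments $z_1+z_2\pm\frac{z_1-z_2}{2m+1}$. Without the specific identity from \cite{W2022d} and the quasi-periodicity lemma, your outline cannot be completed as written; if you do want a direct proof from the Appell-sum definition, that would be a genuinely different (and substantially harder) computation that your sketch does not yet carry out.
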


\begin{proof} Lettig $z_1=z_2$ in the formulas (3.1a) and (3.1b) 
in \cite{W2022d} and making their sum, we have 
{\allowdisplaybreaks
\begin{eqnarray*}
& &
\sum_{j \in \zzz} (-1)^j
q^{(m+\frac12)j^2+\frac12 j} \, 
e^{-2\pi ijm(z_1-z_3)} \, \big[
\underbrace{\Phi^{[m,0]}_1+\Phi^{[m,0]}_2}_{\substack{|| \\[0mm] 
{\displaystyle \Phi^{[m,0] \, \ast}
}}}
\big](\tau, \, z_1, \, -z_3-2j\tau, \, 0)
\\[2mm]
&=&
e^{-\pi iz_1}
\sum_{k \in \zzz} (-1)^k
q^{(m+\frac12)k^2-\frac12 k} \, 
e^{2\pi ikm(z_1-z_3)} \, 
\Phi^{(-)[\frac12,\frac12]}_1(\tau, \, z_1, \, -z_1-2k\tau, \, 0)
\\[1mm]
& & \hspace{-4.5mm}
+ \,\ e^{-\pi i z_3}
\sum_{k \in \zzz} (-1)^k
q^{(m+\frac12)k^2-\frac12 k} \, 
e^{-2\pi ikm(z_1-z_3)} \, 
\Phi^{(-)[\frac12,\frac12]}_1(\tau, \, z_3, \, -z_3-2k\tau, \, 0)
\end{eqnarray*}}
Letting $(j,k)\rightarrow (-j, -k)$ and changing the notation 
$-z_3 \rightarrow z_2$, this formula becomes:
{\allowdisplaybreaks
\begin{eqnarray}
& &
\sum_{j \in \zzz} (-1)^j
q^{(m+\frac12)j^2-\frac12 j} \, 
e^{2\pi ijm(z_1+z_2)} \, 
\Phi^{[m,0] \, \ast}(\tau, \, z_1, \, z_2+2j\tau, \, 0)
\nonumber
\\[2mm]
& & \hspace{-10mm}
= \,\ e^{-\pi iz_1}
\sum_{k \in \zzz} (-1)^k
q^{(m+\frac12)k^2+\frac12 k} \, 
e^{-2\pi ikm(z_1+z_2)} \, 
\Phi^{(-)[\frac12,\frac12]}_1(\tau, \, z_1, \, -z_1+2k\tau, \, 0)
\nonumber
\\[2mm]
& & \hspace{-9mm}
+ \,\ e^{\pi i z_2}
\sum_{k \in \zzz} (-1)^k
q^{(m+\frac12)k^2+\frac12 k} \, 
e^{2\pi ikm(z_1+z_2)} \, 
\Phi^{(-)[\frac12,\frac12]}_1(\tau, \, -z_2, \, z_2+2k\tau, \, 0)
\label{m1:eqn:2022-917e}
\end{eqnarray}}
First we compute the LHS of this equation \eqref{m1:eqn:2022-917e}:
{\allowdisplaybreaks
\begin{eqnarray*}
& & \hspace{-10mm}
\text{LHS of \eqref{m1:eqn:2022-917e}} 
\\[2mm]
&=& 
\underbrace{q^{- \frac{1}{16(m+\frac12)}} 
\sum_{j \in \zzz_{\geq 0}}
(-1)^j q^{(m+\frac12)(j-\frac{1}{4(m+\frac12)})^2} 
e^{2\pi ijm(z_1+z_2)}
\Phi^{[m,0] \, \ast}(\tau, \, z_1, \, z_2+2j\tau, \, 0)}_{\rm (I)}
\\[1mm]
& & 
+ \, 
\underbrace{q^{- \frac{1}{16(m+\frac12)}} 
\sum_{j \in \zzz_{< 0}}
(-1)^j q^{(m+\frac12)(j-\frac{1}{4(m+\frac12)})^2} 
e^{2\pi ijm(z_1+z_2)}
\Phi^{[m,0] \, \ast}(\tau, \, z_1, \, z_2+2j\tau, \, 0)}_{\rm (II)}
\end{eqnarray*}}
where (I) and (II) are computed by using Lemma \ref{m1:lemma:2022-917c} as folloows:
{\allowdisplaybreaks
\begin{eqnarray*}
& & \hspace{-10mm}
{\rm (I)} \, = \, 
q^{- \frac{1}{16(m+\frac12)}} 
\sum_{j \in \zzz_{\rm \geq 0}} (-1)^j 
q^{(m+\frac12)(j-\frac{1}{4(m+\frac12)})^2} 
e^{-2\pi ijm(z_1-z_2)}
\Phi^{[m,0] \, \ast}(\tau, z_1, z_2, 0) 
\\[1mm]
& & \hspace{-5mm}
- \,\ q^{- \frac{1}{16(m+\frac12)}} 
\sum_{j \in \zzz_{\rm \geq 0}} \sum_{k=0}^{2mj-1}
(-1)^j 
q^{(m+\frac12)(j-\frac{1}{4(m+\frac12)})^2} \hspace{-1mm}
q^{-\frac{k^2}{4m}} 
e^{-2\pi ijm(z_1-z_2)}
e^{\pi ik(z_1-z_2)}
\\[0mm]
& & 
\times \,\ 
\big[\theta_{k,m}+\theta_{-k,m}\big](\tau, z_1+z_2)
\\[3mm]
& & \hspace{-10mm}
{\rm (II)} \, = \, 
q^{- \frac{1}{16(m+\frac12)}} 
\sum_{j \in \zzz_{< 0}} (-1)^j 
q^{(m+\frac12)(j-\frac{1}{4(m+\frac12)})^2} 
e^{-2\pi ijm(z_1-z_2)}
\Phi^{[m,0] \, \ast}(\tau, z_1, z_2, 0) 
\\[1mm]
& & \hspace{-8mm}
+ \,\ q^{- \frac{1}{16(m+\frac12)}} 
\sum_{\substack{j, \, k \, \in \zzz \\[1mm] 2mj \, \leq \, k \, < \, 0}}
(-1)^j 
q^{(m+\frac12)(j-\frac{1}{4(m+\frac12)})^2} \hspace{-1mm}
q^{-\frac{k^2}{4m}} 
e^{-2\pi ijm(z_1-z_2)}
e^{\pi ik(z_1-z_2)}
\\[0mm]
& &
\times \,\ 
\big[\theta_{k,m}+\theta_{-k,m}\big](\tau, z_1+z_2)
\end{eqnarray*}}
Then we have  
\begin{subequations}
{\allowdisplaybreaks
\begin{eqnarray}
& & \hspace{-10mm}
\text{LHS of \eqref{m1:eqn:2022-917e}} \,\ = \,\ {\rm (I)}+ {\rm (II)}
\nonumber
\\[2mm]
&=& 
q^{- \frac{1}{16(m+\frac12)}} 
\underbrace{\sum_{j \in \zzz} (-1)^j 
q^{(m+\frac12)(j-\frac{1}{4(m+\frac12)})^2} 
e^{-2\pi ijm(z_1-z_2)}}_{\substack{|| \\[0mm] {\displaystyle 
e^{\frac{- \pi im}{2m+1}(z_1-z_2)} \, 
\theta_{\frac12, \, m+\frac12}^{(-)}
\Big(\tau, \,\ \frac{m}{m+\frac12}(z_1-z_2) \Big)
}}}
\Phi^{[m,0] \, \ast}(\tau, z_1, z_2, 0) 
\nonumber
\\[1mm]
& & \hspace{-5mm}
- \,\ q^{- \frac{1}{16(m+\frac12)}} 
\bigg[
\sum_{\substack{j, \, k \, \in \zzz \\[1mm] 0 \, \leq \, k \, < \, 2mj}}
-
\sum_{\substack{j, \, k \, \in \zzz \\[1mm] 2mj \, \leq \, k \, < \, 0}}
\bigg]
\nonumber
\\[2mm]
& &
\times \,\ (-1)^j 
q^{(m+\frac12)(j-\frac{1}{4(m+\frac12)})^2} \hspace{-1mm}
q^{-\frac{k^2}{4m}} 
e^{-2\pi ijm(z_1-z_2)}
e^{\pi ik(z_1-z_2)}
\big[\theta_{k,m}+\theta_{-k,m}\big](\tau, z_1+z_2)
\nonumber
\\[0mm]
& &
\label{m1:eqn:2022-917f1}
\end{eqnarray}}
Next we compute the RHS of \eqref{m1:eqn:2022-917e} by using 
Lemma 2.1 in \cite{W2022e}:
{\allowdisplaybreaks
\begin{eqnarray}
& & \hspace{-10mm}
\text{RHS of \eqref{m1:eqn:2022-917e}}
\nonumber 
\\[2mm]
&=&
e^{-\pi iz_1}
\sum_{k \in \zzz} (-1)^k
q^{(m+\frac12)k^2+\frac12 k} \, 
e^{-2\pi ikm(z_1+z_2)} 
\underbrace{\Phi^{(-)[\frac12,\frac12]}_1(\tau, \, z_1, \, -z_1+2k\tau, \, 0)
}_{\substack{|| \\[-2mm] {\displaystyle 
- \, i \, e^{-2\pi ikz_1} \, \frac{\eta(\tau)^3}{\vartheta_{11}(\tau, z_1)}
}}}
\nonumber
\\[2mm]
& & \hspace{-3mm}
+ \,\ e^{\pi i z_2}
\sum_{k \in \zzz} (-1)^k
q^{(m+\frac12)k^2+\frac12 k} \, 
e^{2\pi ikm(z_1+z_2)} 
\underbrace{\Phi^{(-)[\frac12,\frac12]}_1(\tau, \, -z_2, \, z_2+2k\tau, \, 0)
}_{\substack{|| \\[-2mm] {\displaystyle 
- \, i \, e^{2\pi ikz_2} \, \frac{\eta(\tau)^3}{\vartheta_{11}(\tau, -z_2)}
}}}
\nonumber
\\[0mm]
&=&
- \,\ i \, q^{-\frac{1}{16(m+\frac12)}} \, 
e^{-\frac{\pi im}{2(m+\frac12)} (z_1-z_2)} 
\nonumber
\\[2mm]
& &
\times \,\ 
\underbrace{\sum_{k \in \zzz} (-1)^k \, 
q^{(m+\frac12)(k+\frac{1}{4(m+\frac12)})^2} \, 
e^{-2\pi i(m+\frac12)(k+\frac{1}{4(m+\frac12)}) \cdot 
\frac{m(z_1+z_2)+z_1}{m+\frac12}}}_{\substack{|| \\[-2mm] 
{\displaystyle \hspace{14mm}
\theta_{\frac12, \, m+\frac12}^{(-)}
\Big(\tau, \,\ - \, \frac{m(z_1+z_2)+z_1}{m+\frac12}\Big)
}}}
\frac{\eta(\tau)^3}{\vartheta_{11}(\tau, z_1)}
\nonumber
\\[2mm]
& & \hspace{-3mm}
+ \,\ i \, q^{-\frac{1}{16(m+\frac12)}} \, 
e^{-\frac{\pi im}{2(m+\frac12)} (z_1-z_2)} \, 
\nonumber
\\[2mm]
& &
\times \,\ 
\underbrace{\sum_{k \in \zzz} (-1)^k \, 
q^{(m+\frac12)(k+\frac{1}{4(m+\frac12)})^2} \, 
e^{2\pi i(m+\frac12)(k+\frac{1}{4(m+\frac12)}) \cdot 
\frac{m(z_1+z_2)+z_2}{m+\frac12}}}_{\substack{|| \\[-2mm] 
{\displaystyle \hspace{14mm}
\theta_{\frac12, \, m+\frac12}^{(-)}
\Big(\tau, \,\ \frac{m(z_1+z_2)+z_2}{m+\frac12}\Big)
}}}
\frac{\eta(\tau)^3}{\vartheta_{11}(\tau, z_2)}
\nonumber
\\[3mm]
&=&
- \,\ i \, q^{-\frac{1}{16(m+\frac12)}} \, 
e^{-\frac{\pi im}{2m+1} (z_1-z_2)} 
\theta_{-\frac12, \, m+\frac12}^{(-)}
\Big(\tau, \,\ z_1+z_2+\frac{z_1-z_2}{2m+1}\Big)
\frac{\eta(\tau)^3}{\vartheta_{11}(\tau, z_1)}
\nonumber
\\[2mm]
& & 
+ \,\ i \, q^{-\frac{1}{16(m+\frac12)}} \, 
e^{-\frac{\pi im}{2m+1} (z_1-z_2)} 
\theta_{\frac12, \, m+\frac12}^{(-)}
\Big(\tau, \,\ z_1+z_2-\frac{z_1-z_2}{2m+1}\Big)
\frac{\eta(\tau)^3}{\vartheta_{11}(\tau, z_2)}
\label{m1:eqn:2022-917f2}
\end{eqnarray}}
\end{subequations}
Then by \eqref{m1:eqn:2022-917f1} and \eqref{m1:eqn:2022-917f2},
we have
{\allowdisplaybreaks
\begin{eqnarray*}
& & 
q^{- \frac{1}{16(m+\frac12)}} 
e^{\frac{- \pi im}{2m+1}(z_1-z_2)} \, 
\theta_{\frac12, \, m+\frac12}^{(-)}
\Big(\tau, \,\ \frac{m(z_1-z_2)}{m+\frac12} \Big)
\Phi^{[m,0] \, \ast}(\tau, z_1, z_2, 0) 
\\[1mm]
& & \hspace{-4mm}
- \,\ q^{- \frac{1}{16(m+\frac12)}} \, \bigg[
\sum_{\substack{j, \, k \, \in \zzz \\[1mm] 0 \, \leq \, k \, < \, 2mj}}
-
\sum_{\substack{j, \, k \, \in \zzz \\[1mm] 2mj \, \leq \, k \, < \, 0}}
\bigg]
\\[2mm]
& &
\times \,\ (-1)^j 
q^{(m+\frac12)(j-\frac{1}{4(m+\frac12)})^2} \hspace{-1mm}
q^{-\frac{k^2}{4m}} \, 
e^{\pi i(k-2jm)(z_1-z_2)} \, 
\big[\theta_{k,m}+\theta_{-k,m}\big](\tau, z_1+z_2)
\\[3mm]
&=&
- \,\ i \, q^{-\frac{1}{16(m+\frac12)}} \, 
e^{-\frac{\pi im}{2m+1}(z_1-z_2)}
\theta_{-\frac12, \, m+\frac12}^{(-)}
\Big(\tau, \,\ z_1+z_2+\frac{z_1-z_2}{2m+1}\Big)
\frac{\eta(\tau)^3}{\vartheta_{11}(\tau, z_1)}
\\[2mm]
& & 
+ \,\ i \, q^{-\frac{1}{16(m+\frac12)}} \, 
e^{-\frac{\pi im}{2m+1}(z_1-z_2)}
\theta_{\frac12, \, m+\frac12}^{(-)}
\Big(\tau, \,\ z_1+z_2-\frac{z_1-z_2}{2m+1}\Big)
\frac{\eta(\tau)^3}{\vartheta_{11}(\tau, z_2)}
\end{eqnarray*}}
Multiplying \, $q^{\frac{1}{16(m+\frac12)}} \, 
e^{\frac{\pi im}{2m+1}(z_1-z_2)}$ \, to both sides, we have 
\begin{subequations}
{\allowdisplaybreaks
\begin{eqnarray}
& & \hspace{-10mm}
\theta_{\frac12, \, m+\frac12}^{(-)}
\Big(\tau, \,\ \frac{m(z_1-z_2)}{m+\frac12} \Big)
\Phi^{[m,0] \, \ast}(\tau, z_1, z_2, 0) 
\nonumber
\\[2mm]
&=&
\bigg[
\sum_{\substack{j, \, k \, \in \zzz \\[1mm] 0 \, \leq \, k \, < \, 2mj}}
-
\sum_{\substack{j, \, k \, \in \zzz \\[1mm] 2mj \, \leq \, k \, < \, 0}}
\bigg] \, 
(-1)^j 
q^{(m+\frac12)(j-\frac{1}{4(m+\frac12)})^2} \hspace{-1mm}
q^{-\frac{k^2}{4m}}
\nonumber
\\[2mm]
& & \hspace{10mm}
\times \,\ 
e^{-2\pi im(j-\frac{1}{4(m+\frac12)})(z_1-z_2)} \, 
e^{\pi ik(z_1-z_2)} \, 
\big[\theta_{k,m}+\theta_{-k,m}\big](\tau, z_1+z_2) \hspace{10mm}
\nonumber
\\[2mm]
& &
- \,\ i \, \theta_{-\frac12, \, m+\frac12}^{(-)}
\Big(\tau, \,\ z_1+z_2+\frac{z_1-z_2}{2m+1}\Big)
\frac{\eta(\tau)^3}{\vartheta_{11}(\tau, z_1)}
\nonumber
\\[2mm]
& &
+ \,\ i \, 
\theta_{\frac12, \, m+\frac12}^{(-)}
\Big(\tau, \,\ z_1+z_2-\frac{z_1-z_2}{2m+1}\Big)
\frac{\eta(\tau)^3}{\vartheta_{11}(\tau, z_2)}
\label{m1:eqn:2022-917g1}
\end{eqnarray}}
The 1st term in the RHS of this equation \eqref{m1:eqn:2022-917g1}
is rewritten by putting $j=-j'$ and $k=-k'$ as follows: 
{\allowdisplaybreaks
\begin{eqnarray}
& & \hspace{-10mm}
\text{the 1st term in the RHS of \eqref{m1:eqn:2022-917g1}}
\nonumber
\\[2mm]
&=& 
\bigg[
\sum_{\substack{j', \, k' \, \in \zzz \\[1mm] 2mj' < \, k' \leq \, 0}}
-
\sum_{\substack{j', \, k' \, \in \zzz \\[1mm] 0 \, < \, k' \leq \, 2mj'}}
\bigg] \, 
(-1)^{j'} 
q^{(m+\frac12)(j'+\frac{1}{4(m+\frac12)})^2} \hspace{-1mm}
q^{-\frac{k' {}^2}{4m}}
\nonumber
\\[2mm]
& & 
\times \,\ 
e^{2\pi im(j'+\frac{1}{4(m+\frac12)})(z_1-z_2)} \, 
e^{-\pi ik'(z_1-z_2)} \, 
\big[\theta_{-k',m}+\theta_{k',m}\big](\tau, z_1+z_2) \hspace{10mm}
\label{m1:eqn:2022-917g2}
\end{eqnarray}}
\end{subequations}
Then by \eqref{m1:eqn:2022-917g1} and \eqref{m1:eqn:2022-917g2}
we obtain \eqref{m1:eqn:2022-917d}, proving 
Proposition \ref{m1:prop:2022-917a}.
\end{proof}

\vspace{1mm}

In order to rewrite the formula \eqref{m1:eqn:2022-917d}, we use 
the following:

\vspace{1mm}

\begin{note} 
\label{m1:note:2022-917a}
For $m \in \frac12 \nnn$, the following formula holds:
{\allowdisplaybreaks
\begin{eqnarray}
& & \hspace{-10mm}
\bigg[
\sum_{\substack{j, \, k \, \in \zzz \\[1mm] 0 \, < \, k \, \leq \, 2mj}}
-
\sum_{\substack{j, \, k \, \in \zzz \\[1mm] 2mj \, < \, k \, \leq \, 0}}
\bigg]
(-1)^j \, 
q^{(m+\frac12)(j+\frac{1}{4(m+\frac12)})^2} \, 
q^{-\frac{k^2}{4m}} \,\ 
e^{2\pi im(j+\frac{1}{4(m+\frac12)})(z_1-z_2)}
\nonumber
\\[2mm]
& & \hspace{10mm}
\times \,\ e^{-\pi ik(z_1-z_2)} \, 
\big[\theta_{k,m}+\theta_{-k,m}\big](\tau, z_1+z_2)
\nonumber
\\[2mm]
&=&
\bigg[
\sum_{\substack{j, \, r \, \in \zzz \\[1mm] 0 \, < \, r \, \leq \, j}}
-
\sum_{\substack{j, \, r \, \in \zzz \\[1mm] j \, < \, r \, \leq \, 0}}
\bigg]
\sum_{\substack{s \, \in \zzz \\[1mm] 0 \, \leq \, s \, \leq \, m}}
(-1)^j \, 
q^{(m+\frac12)(j+\frac{1}{4(m+\frac12)})^2} \, 
q^{-\frac{(2mr-s)^2}{4m}} 
\nonumber
\\[2mm]
& & 
\times \,\ 
e^{2\pi im(j+\frac{1}{4(m+\frac12)})(z_1-z_2)} \, 
e^{-\pi i(2mr-s)(z_1-z_2)} \, 
\big[\theta_{s,m}+\theta_{-s,m}\big](\tau, z_1+z_2) \hspace{20mm}
\nonumber
\\[3mm]
& & \hspace{-5mm}
+ \,\ 
\bigg[
\sum_{\substack{j, \, r \, \in \zzz \\[1mm] 0 \, \leq \, r \, < \, j}}
-
\sum_{\substack{j, \, r \, \in \zzz \\[1mm] j \, \leq \, r \, < \, 0}}
\bigg]
\sum_{\substack{s \, \in \zzz \\[1mm] 0 \, < \, s \, < \, m}}
(-1)^j \, 
q^{(m+\frac12)(j+\frac{1}{4(m+\frac12)})^2} \, 
q^{-\frac{(2mr+s)^2}{4m}} 
\nonumber
\\[2mm]
& & 
\times \,\ 
e^{2\pi im(j+\frac{1}{4(m+\frac12)})(z_1-z_2)} \, 
e^{-\pi i(2mr+s)(z_1-z_2)} \, 
\big[\theta_{s,m}+\theta_{-s,m}\big](\tau, z_1+z_2)
\label{m1:eqn:2022-917h}
\end{eqnarray}}
\end{note}

\begin{proof} In order to prove \eqref{m1:eqn:2022-917h}, 
we need only to note the following for $j \in \zzz$ :
{\allowdisplaybreaks
\begin{eqnarray*}
\Big\{ k \in \zzz \,\ ; \,\ 0 \, < \, k \, \leq \, 2mj\Big\} 
&=&
\bigg\{k = 2mr-s \,\ ; \,\ r, \, s \, \in \zzz \quad {\rm and} \,\ 
\begin{array}{l}
0 < r \leqq j \\[0mm]
0 \leq s < 2m
\end{array} \bigg\}
\\[2mm]
\Big\{ k \in \zzz \,\ ; \,\ 2mj \, < \, k \, \leq \, 0 \Big\} 
&=&
\bigg\{k = 2mr-s \,\ ; \,\ r, \, s \, \in \zzz \quad {\rm and} \,\ 
\begin{array}{l}
j < r \leq 0 \\[0mm]
0 \leq s < 2m
\end{array} \bigg\}
\end{eqnarray*}}
Then the LHS of \eqref{m1:eqn:2022-917h} is rewitten as follows:
$$ \hspace{-30mm}
\text{LHS of \eqref{m1:eqn:2022-917h}} \,\ = \,\ {\rm (I)} +{\rm (II)}
$$
where 
\begin{subequations}
{\allowdisplaybreaks
\begin{eqnarray}
{\rm (I)} &:=&
\bigg[
\sum_{\substack{j, \, r \, \in \zzz \\[1mm] 0 \, < \, r \, \leq \, j}}
-
\sum_{\substack{j, \, r \, \in \zzz \\[1mm] j \, < \, r \, \leq \, 0}}
\bigg]
\sum_{\substack{s \, \in \zzz \\[1mm] 0 \, \leq \, s \, \leq \, m}}
(-1)^j \, 
q^{(m+\frac12)(j+\frac{1}{4(m+\frac12)})^2} \, 
q^{-\frac{(2mr-s)^2}{4m}} 
\nonumber
\\[2mm]
& & 
\times \,\ 
e^{2\pi im(j+\frac{1}{4(m+\frac12)})(z_1-z_2)} \, 
e^{-\pi i(2mr-s)(z_1-z_2)} \, 
\big[\theta_{-s,m}+\theta_{s,m}\big](\tau, z_1+z_2)
\label{m1:eqn:2022-917j1}
\\[3mm]
{\rm (II)} &:=&
\bigg[
\sum_{\substack{j, \, r \, \in \zzz \\[1mm] 0 \, < \, r \, \leq \, j}}
-
\sum_{\substack{j, \, r \, \in \zzz \\[1mm] j \, < \, r \, \leq \, 0}}
\bigg]
\sum_{\substack{s \, \in \zzz \\[1mm] m \, < \, s \, < \, 2m}}
(-1)^j \, 
q^{(m+\frac12)(j+\frac{1}{4(m+\frac12)})^2} \, 
q^{-\frac{(2mr-s)^2}{4m}} 
\nonumber
\\[2mm]
& & 
\times \,\ 
e^{2\pi im(j+\frac{1}{4(m+\frac12)})(z_1-z_2)} \, 
e^{-\pi i(2mr-s)(z_1-z_2)} \, 
\big[\theta_{-s,m}+\theta_{s,m}\big](\tau, z_1+z_2) \hspace{10mm}
\nonumber
\end{eqnarray}}
Putting \, $s' \, := 2m-s$, we have 
$$
m \, < \, s \, < \, 2m \quad \Longleftrightarrow \quad 0 \, < \, s' \, < \, m 
$$
so (II) is rewritten as follows:
{\allowdisplaybreaks
\begin{eqnarray}
& & \hspace{-10mm}
{\rm (II)} \, = \, 
\bigg[
\sum_{\substack{j, \, r \, \in \zzz \\[1mm] 0 \, < \, r \, \leq \, j}}
-
\sum_{\substack{j, \, r \, \in \zzz \\[1mm] j \, < \, r \, \leq \, 0}}
\bigg]
\sum_{\substack{s' \, \in \zzz \\[1mm] 0 \, < \, s' \, < \, m}}
(-1)^j \, 
q^{(m+\frac12)(j+\frac{1}{4(m+\frac12)})^2} \, 
q^{-\frac{(2m(r-1)+s')^2}{4m}} 
\nonumber
\\[2mm]
& & 
\times \,\ 
e^{2\pi im(j+\frac{1}{4(m+\frac12)})(z_1-z_2)} \, 
e^{-\pi i(2m(r-1)+s')(z_1-z_2)} \, 
\big[\theta_{s',m}+\theta_{-s',m}\big](\tau, z_1+z_2)
\nonumber
\\[2mm]
& \hspace{-3mm}
\underset{\substack{\\[0.5mm] \uparrow \\[1mm] r-1 \, = \, r'
}}{=} \hspace{-3mm} &
\bigg[
\sum_{\substack{j, \, r' \, \in \zzz \\[1mm] 0 \, \leq \, r' \, < \, j}}
-
\sum_{\substack{j, \, r' \, \in \zzz \\[1mm] j \, \leq \, r' \, < \, 0}}
\bigg]
\sum_{\substack{s' \, \in \zzz \\[1mm] 0 \, < \, s' \, < \, m}}
(-1)^j \, 
q^{(m+\frac12)(j+\frac{1}{4(m+\frac12)})^2} \, 
q^{-\frac{(2mr'+s')^2}{4m}} 
\nonumber
\\[2mm]
& & 
\times \,\ 
e^{2\pi im(j+\frac{1}{4(m+\frac12)})(z_1-z_2)} \, 
e^{-\pi i(2mr'+s')(z_1-z_2)} \, 
\big[\theta_{s',m}+\theta_{-s',m}\big](\tau, z_1+z_2)
\label{m1:eqn:2022-917j2}
\end{eqnarray}}
\end{subequations}
Then by \eqref{m1:eqn:2022-917j1} and \eqref{m1:eqn:2022-917j2}, we have 
$$
{\rm (I)}+{\rm (II)} \,\ = \,\ \text{RHS of \eqref{m1:eqn:2022-917h}} \, ,
\hspace{10mm}
\text{proving Note \ref{m1:note:2022-917a}}.
$$

\vspace{-6mm}

\end{proof}

\vspace{1mm}

By Note \ref{m1:note:2022-917a}, the formula \eqref{m1:eqn:2022-917d}
in Proposition \ref{m1:prop:2022-917a} is rewritten as follows:

\vspace{1mm}

\begin{prop} \quad 
\label{m1:prop:2022-917b}
For $m \in \frac12 \nnn$, 
$\Phi^{[m,0] \, \ast}(\tau, z_1, z_2, 0)$ is given by the 
following formula:
{\allowdisplaybreaks
\begin{eqnarray}
& & \hspace{-10mm}
\theta_{\frac12, \, m+\frac12}^{(-)}
\Big(\tau, \,\ \frac{m(z_1-z_2)}{m+\frac12} \Big)
\Phi^{[m,0] \, \ast}(\tau, z_1, z_2, 0) 
\nonumber
\\[3mm]
&=&
i \, \eta(\tau)^3 \Bigg\{- \, 
\frac{ \displaystyle 
\theta_{-\frac12, \, m+\frac12}^{(-)}
\Big(\tau, \,\ z_1+z_2+\frac{z_1-z_2}{2m+1}\Big)}{\vartheta_{11}(\tau, z_1)}
+ \frac{\displaystyle 
\theta_{\frac12, \, m+\frac12}^{(-)}
\Big(\tau, \,\ z_1+z_2-\frac{z_1-z_2}{2m+1}\Big)}{\vartheta_{11}(\tau, z_2)}
\Bigg\}
\nonumber
\\[3mm]
& & \hspace{-5mm}
- \,\ 
\bigg[\sum_{\substack{j, \, r \, \in \zzz \\[1mm] 0 \, < \, r \, \leq \, j}}
-
\sum_{\substack{j, \, r \, \in \zzz \\[1mm] j \, < \, r \, \leq \, 0}}
\bigg]
\sum_{\substack{s \, \in \zzz \\[1mm] 0 \, \leq \, s \, \leq \, m}}
(-1)^j \, 
q^{(m+\frac12)(j+\frac{1}{4(m+\frac12)})^2} \, 
q^{-\frac{(2mr-s)^2}{4m}} 
\nonumber
\\[3mm]
& & 
\times \,\ 
e^{2\pi im(j+\frac{1}{4(m+\frac12)})(z_1-z_2)} \, 
e^{-\pi i(2mr-s)(z_1-z_2)} \, 
\big[\theta_{s,m}+\theta_{-s,m}\big](\tau, z_1+z_2) \hspace{20mm}
\nonumber
\\[3mm]
& & \hspace{-5mm}
- \,\ 
\bigg[\sum_{\substack{j, \, r \, \in \zzz \\[1mm] 0 \, \leq \, r \, < \, j}}
-
\sum_{\substack{j, \, r \, \in \zzz \\[1mm] j \, \leq \, r \, < \, 0}}
\bigg]
\sum_{\substack{s \, \in \zzz \\[1mm] 0 \, < \, s \, < \, m}}
(-1)^j \, 
q^{(m+\frac12)(j+\frac{1}{4(m+\frac12)})^2} \, 
q^{-\frac{(2mr+s)^2}{4m}} 
\nonumber
\\[3mm]
& & 
\times \,\ 
e^{2\pi im(j+\frac{1}{4(m+\frac12)})(z_1-z_2)} \, 
e^{-\pi i(2mr+s)(z_1-z_2)} \, 
\big[\theta_{s,m}+\theta_{-s,m}\big](\tau, z_1+z_2)
\label{m1:eqn:2022-917k}
\end{eqnarray}}
\end{prop}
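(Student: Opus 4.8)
The plan is to treat this Proposition purely as a rewriting of the already-established formula \eqref{m1:eqn:2022-917d}, so I would not reprove Proposition \ref{m1:prop:2022-917a} but take its conclusion as the starting point. The entire content of the present statement is the replacement of the bracketed double sum in \eqref{m1:eqn:2022-917d} by the two triple sums furnished by Note \ref{m1:note:2022-917a}. Accordingly, the first thing I would do is match, term by term, the double-sum expression on the right-hand side of \eqref{m1:eqn:2022-917d} with the left-hand side of the identity \eqref{m1:eqn:2022-917h}: both carry the same sign $(-1)^j$, the same Gaussian factor $q^{(m+\frac12)(j+\frac{1}{4(m+\frac12)})^2}q^{-k^2/4m}$, the same exponentials in $z_1-z_2$, the same theta combination $[\theta_{k,m}+\theta_{-k,m}](\tau,z_1+z_2)$, and the same index regions $0<k\leq 2mj$ and $2mj<k\leq 0$. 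This identification is immediate once the two displays are placed side by side.

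Having made that identification, I would apply Note \ref{m1:note:2022-917a} to substitute the right-hand side of \eqref{m1:eqn:2022-917h} for that double sum. The only point requiring attention is the overall sign: in \eqref{m1:eqn:2022-917d} the double sum enters with a leading minus, so after substitution each of the two triple sums on the right-hand side of \eqref{m1:eqn:2022-917h} acquires a minus sign, producing exactly the second and third blocks of \eqref{m1:eqn:2022-917k}. The remaining term of \eqref{m1:eqn:2022-917d}, namely the $i\,\eta(\tau)^3\{\cdots\}$ contribution built from $\theta^{(-)}_{\pm\frac12,m+\frac12}$ over $\vartheta_{11}(\tau,z_1)$ and $\vartheta_{11}(\tau,z_2)$, is untouched by Note \ref{m1:note:2022-917a} and is simply transported unchanged; it becomes the first block of \eqref{m1:eqn:2022-917k}. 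Collecting the three blocks yields \eqref{m1:eqn:2022-917k}.

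There is no genuine analytic difficulty here; the work is entirely combinatorial bookkeeping, and indeed the only real labor was already discharged in the proof of Note \ref{m1:note:2022-917a}, where the residue decomposition $k=2mr-s$ (with $0\leq s<2m$) is split into the ranges $0\leq s\leq m$ and $m<s<2m$, the latter being reindexed by $s'=2m-s$ into $0<s'<m$. The one place I would double-check in the present argument is that this decomposition partitions the residues cleanly, so that the endpoint values $s=0$ and $s=m$ occur only in the first triple sum (with factor $2mr-s$) and are not counted again in the second triple sum (with factor $2mr+s$, $0<s<m$). Once that is confirmed the substitution is unambiguous, and the proof reduces to a single line: insert the right-hand side of \eqref{m1:eqn:2022-917h} in place of the first term of \eqref{m1:eqn:2022-917d}.
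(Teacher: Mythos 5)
Your proposal is correct and coincides with the paper's own treatment: the paper states Proposition \ref{m1:prop:2022-917b} as an immediate rewriting of \eqref{m1:eqn:2022-917d} via Note \ref{m1:note:2022-917a}, with the leading minus sign distributed over the two triple sums exactly as you describe. The partition check you flag (the endpoints $s=0$ and $s=m$ appearing only in the $2mr-s$ sum, the range $0<s<m$ only in the $2mr+s$ sum) is indeed the one point of substance, and it is already settled in the proof of Note \ref{m1:note:2022-917a}.
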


\section{$\Phi^{[m,0] \, \ast}(\tau, z_1, z_2,t)$ 
$\sim$ the case $z_1-z_2=2a\tau+2b$}
\label{sec:Phi(+:0)ast:z1-z2=2atau+2b}

\subsection{$\Phi^{[m,0] \, \ast}(\tau, z_1, z_2,t)$ 
$\sim$ the case $z_1-z_2=(1+2p)\tau-1$}
\label{subsec:Phi(+:0)ast:z1-z2=(1+2p)tau-1}

\begin{lemma} 
\label{m1:lemma:2022-918a}
For $m \in \nnn$ and $p \in \zzz$, the following formula holds:
{\allowdisplaybreaks
\begin{eqnarray}
& & \hspace{-7mm}
\theta_{2mp+m+\frac12, m+\frac12}^{(-)}(\tau,0) \, 
\Phi^{[m,0] \, \ast}\Big(\tau, \,\ 
\frac{z}{2}+\frac{\tau}{2}-\frac12+p\tau, \,\ 
\frac{z}{2}-\frac{\tau}{2}+\frac12-p\tau, \,\ 0\Big) 
\nonumber
\\[2mm]
&=&
q^{\frac{m}{4}(2p+1)^2} \, 
\frac{\eta(\tau)^3}{\theta_{0,\frac12}(\tau,z)} \cdot 
\big[\theta_{p, m+\frac12}+ \theta_{-p, m+\frac12}\big](\tau,0)
\nonumber
\\[3mm]
& & \hspace{-5mm}
- \,\ 
\bigg[\sum_{\substack{j, \, r \, \in \, \frac12 \, \zzz_{\rm odd} \\[1mm]
0 \, \leq \, r \, < \, j}}
-
\sum_{\substack{j, \, r \, \in \, \frac12 \, \zzz_{\rm odd} \\[1mm]
j \, \leq \, r \, < \, 0}}\bigg]
\sum_{\substack{k \, \in \zzz \\[1mm] 0 \, < \, k \, < \, m}}
(-1)^{j-\frac12+k} \, 
q^{(m+\frac12)(j+\frac{2pm}{2m+1})^2}
\nonumber
\\[-1mm]
& & \hspace{50mm}
\times \,\ 
q^{-\frac{1}{4m} \, [2mr+k+2mp]^2
\, + \, \frac{m}{4}(2p+1)^2} \, 
\big[\theta_{k,m}+\theta_{-k,m}\big](\tau, z)
\nonumber
\\[3mm]
& & \hspace{-5mm}
- \,\ \bigg[
\sum_{\substack{j, \, r \, \in \, \frac12 \, \zzz_{\rm odd} \\[1mm]
0 \, \leq \, r \, \leq \, j}}
-
\sum_{\substack{j, \, r \, \in \, \frac12 \, \zzz_{\rm odd} \\[1mm]
j \, < \, r \, < \, 0}}\bigg]
\sum_{\substack{k \, \in \zzz \\[1mm] 0 \, \leq \, k \, \leq \, m}}
(-1)^{j-\frac12+k} \, 
q^{(m+\frac12)(j+\frac{2pm}{2m+1})^2}
\nonumber
\\[-1mm]
& & \hspace{50mm}
\times \,\ 
q^{-\frac{1}{4m} \, [2mr-k+2mp]^2
\, + \, \frac{m}{4}(2p+1)^2} \, 
\big[\theta_{k,m}+\theta_{-k,m}\big](\tau, z)
\nonumber
\\[3mm]
& & \hspace{-5mm}
+ \,\ 
\theta_{2mp+m+\frac12, m+\frac12}^{(-)}(\tau,0)
\sum_{\substack{k \, \in \zzz \\[1mm] 0 \, \leq \, k \, \leq \, m}} 
(-1)^k
q^{-\frac{1}{4m}k^2+\frac{k}{2}(2p+1)} \, 
\big[\theta_{k,m}+\theta_{-k,m}\big](\tau, z)
\label{m1:eqn:2022-918a}
\end{eqnarray}}
\end{lemma}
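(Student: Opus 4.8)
Before the author's argument, here is how I would prove Lemma \ref{m1:lemma:2022-918a}.

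The plan is to specialize Proposition \ref{m1:prop:2022-917b} to the point $(z_1,z_2)=\big(\frac z2+\frac{(2p+1)\tau-1}{2},\,\frac z2-\frac{(2p+1)\tau-1}{2}\big)$, for which $z_1+z_2=z$ and $z_1-z_2=(2p+1)\tau-1$. With this choice the argument of the theta factor on the left of \eqref{m1:eqn:2022-917k} becomes $\frac{m((2p+1)\tau-1)}{m+\frac12}$, so Note \ref{m1:note:2022-917b}.1) turns that factor into $e^{-\frac{\pi im}{2m+1}}q^{-\frac{m^2}{2(2m+1)}(2p+1)^2}\,\theta_{2mp+m+\frac12,m+\frac12}^{(-)}(\tau,0)$. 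Thus the whole identity, after multiplying through by $e^{\frac{\pi im}{2m+1}}q^{\frac{m^2}{2(2m+1)}(2p+1)^2}$, will carry precisely $\theta_{2mp+m+\frac12,m+\frac12}^{(-)}(\tau,0)\,\Phi^{[m,0]\,\ast}$ on the left, as required; the remaining task is to reshape each term on the right into the stated form.

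For the $\eta^3$ term I would substitute the values of $z_1,z_2$ and apply Note \ref{m1:note:2022-917c}.1)(i),(ii) to the two numerators and Note \ref{m1:note:2022-917d}.1)(i),(ii) to $\vartheta_{11}(\tau,z_1)$ and $\vartheta_{11}(\tau,z_2)$. The factors $e^{\mp\frac{\pi i}{2}(2p+1)z}$ cancel between numerator and denominator, both fractions acquire the common denominator $\theta_{0,\frac12}(\tau,z)$, and the numerators combine into $\theta_{p,m+\frac12}+\theta_{-p,m+\frac12}$. A short check shows that the accumulated $q$-exponent $\frac{m}{4(2m+1)}(2p+1)^2$ together with the prefactor $q^{\frac{m^2}{2(2m+1)}(2p+1)^2}$ collapses to $q^{\frac m4(2p+1)^2}$, while the phases $e^{\frac{\pi i}{2(2m+1)}}$, $e^{\frac{\pi im}{2m+1}}$ and the explicit $i$ multiply to $+1$; this produces the first line of \eqref{m1:eqn:2022-918a}.

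The two double sums are handled in parallel. Substituting $z_1-z_2=(2p+1)\tau-1$, each exponential $e^{2\pi i\alpha(z_1-z_2)}$ becomes $q^{\alpha(2p+1)}e^{-2\pi i\alpha}$. I would then complete the square twice: in the $j$-variable, $(m+\tfrac12)(j+\tfrac1{4(m+\frac12)})^2+m(j+\tfrac1{4(m+\frac12)})(2p+1)$ becomes $(m+\tfrac12)(j+\tfrac12+\tfrac{2pm}{2m+1})^2-\tfrac{m^2}{2(2m+1)}(2p+1)^2$, the constant cancelling the prefactor; and in the index $2mr\mp s$, the identity $-\frac{(2mr\mp s)^2}{4m}-\frac{(2mr\mp s)(2p+1)}2=-\frac{(2m(r+\frac12)\mp s+2mp)^2}{4m}+\frac m4(2p+1)^2$ produces the factors $q^{-\frac1{4m}[2mr\mp k+2mp]^2+\frac m4(2p+1)^2}$. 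After the reindexing $j\mapsto j+\frac12$, $r\mapsto r+\frac12$ (so that $j,r$ now run over $\frac12\zzz_{\rm odd}$) and $s\mapsto k$, the surviving roots of unity, together with the prefactor phase $e^{\frac{\pi im}{2m+1}}$, collect into the sign $(-1)^{j-\frac12+k}$.

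The delicate point, and the one I expect to be the main obstacle, is the matching of summation ranges. Under $r\mapsto r+\frac12$ the range $0\le r<j$ (resp.\ $j\le r<0$) of the $(2mr+s)$-sum maps bijectively onto $0\le r<j$ (resp.\ $j\le r<0$) over $\frac12\zzz_{\rm odd}$, so the second double sum of Proposition \ref{m1:prop:2022-917b} becomes the first double sum of \eqref{m1:eqn:2022-918a} with no correction. For the $(2mr-s)$-sum, however, the range $0<r\le j$ (resp.\ $j<r\le 0$) maps to $\frac32\le r\le j$ (resp.\ $j+1\le r\le\frac12$), which differs from the lemma's $0\le r\le j$ (resp.\ $j<r<0$) exactly by the terms with $r=\frac12$ (that is, original $r=0$). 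Keeping track of the $[\sum_{\rm pos}-\sum_{\rm neg}]$ signs, these leftover $r=\frac12$ terms assemble over all $j\in\frac12\zzz_{\rm odd}$; there $2mr-k+2mp=m(2p+1)-k$, so the $q$-exponent reduces to $-\frac{k^2}{4m}+\frac k2(2p+1)$, and the residual $j$-summation $\sum_j(-1)^{j-\frac12}q^{(m+\frac12)(j+\frac{2pm}{2m+1})^2}$ is exactly $\theta_{2mp+m+\frac12,m+\frac12}^{(-)}(\tau,0)$. This reproduces the final additive term of \eqref{m1:eqn:2022-918a}, completing the identification.
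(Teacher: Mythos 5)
Your proposal is correct and follows essentially the same route as the paper: specialize Proposition \ref{m1:prop:2022-917b} at $z_1-z_2=(2p+1)\tau-1$, evaluate the theta factors via Notes \ref{m1:note:2022-917b}--\ref{m1:note:2022-917d}, complete the square in $j$ and in $2mr\mp s$, and reindex to $\frac12\zzz_{\rm odd}$. Your identification of the leftover $r=\tfrac12$ (original $r=0$) terms as the source of the final $\theta^{(-)}_{2mp+m+\frac12,m+\frac12}(\tau,0)$-term is exactly the paper's range identity \eqref{m1:eqn:2022-918c}, applied to the same ($2mr-s$) sum.
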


\begin{proof} Letting $\left\{
\begin{array}{lcl}
z_1 &=& \frac{z}{2}+\frac{\tau}{2}-\frac12+p\tau \\[1mm]
z_2 &=& \frac{z}{2}-\frac{\tau}{2}+\frac12-p\tau
\end{array}\right. $ namely 
$\left\{
\begin{array}{ccl}
z_1+z_2 &=& z \\[1mm]
z_1-z_2 &=& (2p+1)\tau-1
\end{array}\right. $ in the formula \eqref{m1:eqn:2022-917k} in 
Proposition \ref{m1:prop:2022-917b}, we have
{\allowdisplaybreaks
\begin{eqnarray}
& & \hspace{-10mm}
\theta_{\frac12, \, m+\frac12}^{(-)}
\Big(\tau, \,\ \frac{m((2p+1)\tau-1)}{m+\frac12} \Big) \, 
\Phi^{[m,0] \, \ast}(\tau, z_1, z_2, 0) 
\nonumber
\\[3mm]
&=&
i \, \eta(\tau)^3 \Bigg\{
\underbrace{- \, \frac{ \displaystyle 
\theta_{-\frac12, \, m+\frac12}^{(-)}
\Big(\tau, \,\ z+\frac{(2p+1)\tau-1}{2m+1}\Big)}{\vartheta_{11}(\tau, z_1)}
+ \frac{\displaystyle 
\theta_{\frac12, \, m+\frac12}^{(-)}
\Big(\tau, \,\ z-\frac{(2p+1)\tau-1}{2m+1}\Big)}{\vartheta_{11}(\tau, z_2)}
}_{\substack{ \hspace{6.2mm} || \,\ put \\[0.5mm] {\rm (I)}
}}
\Bigg\}
\nonumber
\\[3mm]
& & \hspace{-5mm}
- \,\ \bigg[
\sum_{\substack{j, \, r \, \in \zzz \\[1mm] 0 \, < \, r \, \leq \, j}}
-
\sum_{\substack{j, \, r \, \in \zzz \\[1mm] j \, < \, r \, \leq \, 0}}
\bigg]
\sum_{\substack{s \, \in \zzz \\[1mm] 0 \, \leq \, s \, \leq \, m}}
(-1)^j \, 
q^{(m+\frac12)(j+\frac{1}{4(m+\frac12)})^2} \, 
q^{-\frac{(2mr-s)^2}{4m}} 
\nonumber
\\[2mm]
& & 
\times \,\ 
e^{2\pi im(j+\frac{1}{4(m+\frac12)})((2p+1)\tau-1)}
e^{-\pi i(2mr-s)((2p+1)\tau-1)}
\big[\theta_{s,m}+\theta_{-s,m}\big](\tau, z) \hspace{20mm}
\nonumber
\\[3mm]
& & \hspace{-5mm}
- \,\ \bigg[
\sum_{\substack{j, \, r \, \in \zzz \\[1mm] 0 \, \leq \, r \, < \, j}}
-
\sum_{\substack{j, \, r \, \in \zzz \\[1mm] j \, \leq \, r \, < \, 0}}
\bigg]
\sum_{\substack{s \, \in \zzz \\[1mm] 0 \, < \, s \, < \, m}}
(-1)^j \, 
q^{(m+\frac12)(j+\frac{1}{4(m+\frac12)})^2} \, 
q^{-\frac{(2mr+s)^2}{4m}} 
\nonumber
\\[3mm]
& & 
\times \,\ 
e^{2\pi im(j+\frac{1}{4(m+\frac12)})((2p+1)\tau-1)}
e^{-\pi i(2mr+s)((2p+1)\tau-1)}
\big[\theta_{s,m}+\theta_{-s,m}\big](\tau, z)
\nonumber
\\[3mm]
&=& i \, \eta(\tau)^3 \, \times \, {\rm (I)}
\nonumber
\\[3mm]
& & \hspace{-5mm}
- \,\ e^{-\frac{\pi im}{2m+1}} \, \bigg[
\sum_{\substack{j, \, r \, \in \zzz \\[1mm] 0 \, < \, r \, \leq \, j}}
-
\sum_{\substack{j, \, r \, \in \zzz \\[1mm] j \, < \, r \, \leq \, 0}}
\bigg]
\sum_{\substack{s \, \in \zzz \\[1mm] 0 \, \leq \, s \, \leq \, m}}
(-1)^{j+s} \, 
q^{(m+\frac12)(j+\frac{1}{4(m+\frac12)})^2 
\, + \, m(j+\frac{1}{4(m+\frac12)})(2p+1)}
\nonumber
\\[2mm]
& &
\times \,\ 
q^{-\frac{(2mr-s)^2}{4m}-\frac12(2mr-s)(2p+1)} \, 
\big[\theta_{s,m}+\theta_{-s,m}\big](\tau, z) \hspace{20mm}
\nonumber
\\[2mm]
& & \hspace{-5mm}
- \,\ e^{-\frac{\pi im}{2m+1}} \, \bigg[
\sum_{\substack{j, \, r \, \in \zzz \\[1mm] 0 \, \leq \, r \, < \, j}}
-
\sum_{\substack{j, \, r \, \in \zzz \\[1mm] j \, \leq \, r \, < \, 0}}
\bigg]
\sum_{\substack{s \, \in \zzz \\[1mm] 0 \, < \, s \, < \, m}}
(-1)^{j+s} \, 
q^{(m+\frac12)(j+\frac{1}{4(m+\frac12)})^2
\, + \, m(j+\frac{1}{4(m+\frac12)})(2p+1)}
\nonumber
\\[2mm]
& & 
\times \,\ 
q^{-\frac{(2mr+s)^2}{4m}-\frac12(2mr+s)(2p+1)} \, 
\big[\theta_{s,m}+\theta_{-s,m}\big](\tau, z)
\label{m1:eqn:2022-918b1}
\end{eqnarray}}
The LHS of this equation \eqref{m1:eqn:2022-918b1} becomes 
by Note \ref{m1:note:2022-917b} as follows:  
$$
\text{LHS of \eqref{m1:eqn:2022-918b1}} \, = \, 
e^{-\frac{\pi im}{2m+1}}
q^{-\frac{m^2}{2(2m+1)}(2p+1)^2}
\theta_{2mp+m+\frac12, m+\frac12}^{(-)}(\tau,0) \cdot 
\Phi^{[m,0] \, \ast}(\tau, z_1, z_2, 0) 
$$
Also (I) is computed by using Notes \ref{m1:note:2022-917c}
and \ref{m1:note:2022-917d} as follows:
{\allowdisplaybreaks
\begin{eqnarray*}
& & \hspace{-7mm}
{\rm (I)} \, = \, - \, \frac{ \displaystyle 
\theta_{-\frac12, \, m+\frac12}^{(-)}
\Big(\tau, \,\ z+\frac{(2p+1)\tau-1}{2m+1}\Big)}{\displaystyle 
\vartheta_{11}\Big(\tau, \, \frac{z}{2}+\frac{(2p+1)\tau-1}{2}\Big)}
\, + \, 
\frac{\displaystyle 
\theta_{\frac12, \, m+\frac12}^{(-)}
\Big(\tau, \,\ z-\frac{(2p+1)\tau-1}{2m+1}\Big)}{\displaystyle 
\vartheta_{11}\Big(\tau, \, \frac{z}{2}-\frac{(2p+1)\tau-1}{2}\Big)}
\\[3mm]
&=&
- \,\ \frac{
e^{\frac{\pi i}{2(2m+1)}} \, q^{-\frac{1}{16(m+\frac12)}(2p+1)^2}
e^{-\frac{\pi i}{2}(2p+1)z}
\theta_{p, m+\frac12}(\tau,0)
}{
q^{-\frac18(2p+1)^2} e^{-\frac{\pi i}{2}(2p+1)z}
\theta_{0,\frac12}(\tau,z)}
\\[2mm]
& &
+ \,\ \frac{
e^{\frac{\pi i}{2(2m+1)}} \, q^{-\frac{1}{16(m+\frac12)}(2p+1)^2}
e^{\frac{\pi i}{2}(2p+1)z}
\theta_{-p, m+\frac12}(\tau,0)
}{- \, 
q^{-\frac18(2p+1)^2} e^{\frac{\pi i}{2}(2p+1)z}
\theta_{0,\frac12}(\tau,z)}
\\[2mm]
&=&
- \, e^{\frac{\pi i}{2(2m+1)}} \, 
q^{\frac{m}{8(m+\frac12)}(2p+1)^2} \, 
\frac{1}{\theta_{0,\frac12}(\tau,z)} \cdot 
\big[\theta_{p, m+\frac12}+ \theta_{-p, m+\frac12}\big](\tau,0)
\end{eqnarray*}}
Then substituting these into \eqref{m1:eqn:2022-918b1} and multiplying 
$e^{\frac{\pi im}{2m+1}}$ and rewriting the 2nd terms in the RHS of 
\eqref{m1:eqn:2022-918b1} by using 
\begin{equation}
\sum_{\substack{j, \, r \, \in \zzz \\[1mm] 0 \, < \, r \, \leq \, j}}
-
\sum_{\substack{j, \, r \, \in \zzz \\[1mm] j \, < \, r \, \leq \, 0}}
\,\ = \,\ 
\bigg[
\sum_{\substack{j, \, r \, \in \zzz \\[1mm] 0 \, \leq \, r \, \leq \, j}}
-
\sum_{\substack{j, \, r \, \in \zzz \\[1mm] j \, < \, r \, < \, 0}} 
\bigg]
\,\ - \,\ 
\sum_{\substack{r \, = \, 0 \\[1mm] j \, \in \, \zzz}} 
\label{m1:eqn:2022-918c}
\end{equation}
the above formula \eqref{m1:eqn:2022-918b1} becomes as follows:
{\allowdisplaybreaks
\begin{eqnarray}
& & \hspace{-10mm}
q^{-\frac{m^2}{2(2m+1)}(2p+1)^2}
\theta_{2mp+m+\frac12, m+\frac12}^{(-)}(\tau,0) \, 
\Phi^{[m,0] \, \ast}(\tau, z_1, z_2, 0) 
\nonumber
\\[3mm]
&=&
q^{\frac{m}{8(m+\frac12)}(2p+1)^2} \, 
\frac{\eta(\tau)^3}{\theta_{0,\frac12}(\tau,z)} \cdot 
\big[\theta_{p, m+\frac12}+ \theta_{-p, m+\frac12}\big](\tau,0)
\nonumber
\\[3mm]
& & \hspace{-5mm}
- \,\ \bigg[
\sum_{\substack{j, \, r \, \in \zzz \\[1mm] 0 \, \leq \, r \, \leq \, j}}
-
\sum_{\substack{j, \, r \, \in \zzz \\[1mm] j \, < \, r \, < \, 0}}
\bigg]
\sum_{\substack{s \, \in \zzz \\[1mm] 0 \, \leq \, s \, \leq \, m}}
(-1)^{j+s} \, 
q^{(m+\frac12)(j+ \frac12+\frac{2pm}{2m+1})^2
\, - \, \frac{m^2}{2(2m+1)}(2p+1)^2}
\nonumber
\\[2mm]
& &
\times \,\ 
q^{-\frac{1}{4m} \, [2m(r+\frac12)-s+2mp]^2
\, + \, \frac{m}{4}(2p+1)^2} \, 
\big[\theta_{s,m}+\theta_{-s,m}\big](\tau, z) \hspace{20mm}
\nonumber
\\[2mm]
& &\hspace{-5mm}
+ \,\ 
\underbrace{\sum_{j \in \zzz} (-1)^j \, 
q^{(m+\frac12)(j+\frac12+\frac{2pm}{2m+1})^2}
}_{\substack{|| \\[-2mm] {\displaystyle 
\theta_{2mp+m+\frac12, m+\frac12}^{(-)}(\tau,0)
}}}
q^{-\frac{m^2}{2(2m+1)}(2p+1)^2}
\nonumber
\\[2mm]
& &
\times \,\
\sum_{\substack{s \, \in \zzz \\[1mm] 0 \, \leq \, s \, \leq \, m}} 
(-1)^s q^{-\frac{1}{4m}s^2+\frac{s}{2}(2p+1)} \, 
\big[\theta_{s,m}+\theta_{-s,m}\big](\tau, z) 
\nonumber
\\[3mm]
& & \hspace{-5mm}
- \,\ \bigg[
\sum_{\substack{j, \, r \, \in \zzz \\[1mm] 0 \, \leq \, r \, < \, j}}
-
\sum_{\substack{j, \, r \, \in \zzz \\[1mm] j \, \leq \, r \, < \, 0}}
\bigg]
\sum_{\substack{s \, \in \zzz \\[1mm] 0 \, < \, s \, < \, m}}
(-1)^{j+s} \, 
q^{(m+\frac12)(j+ \frac12+\frac{2pm}{2m+1})^2
\, - \, \frac{m^2}{2(2m+1)}(2p+1)^2}
\nonumber
\\[3mm]
& & 
\times \,\ 
q^{-\frac{1}{4m} \, [2m(r+\frac12)+s+2mp]^2
\, + \, \frac{m}{4}(2p+1)^2} \, 
\big[\theta_{s,m}+\theta_{-s,m}\big](\tau, z)
\label{m1:eqn:2022-918b2}
\end{eqnarray}}
Multiplying $q^{\frac{m^2}{2(2m+1)}(2p+1)^2}$ and replacing 
$\sum_{j, r \in \zzz}$ with $\sum_{j', r' \in \frac12 \zzz_{\rm odd}}$ 
by putting $j+\frac12=j'$ and $r+\frac12=r'$ in the above formula 
\eqref{m1:eqn:2022-918b2}, we obtain
{\allowdisplaybreaks
\begin{eqnarray*}
& & \hspace{-7mm}
\theta_{2mp+m+\frac12, m+\frac12}^{(-)}(\tau,0) \, 
\Phi^{[m,0] \, \ast}(\tau, z_1, z_2, 0) 
\\[2mm]
&=&
q^{\frac{m}{4}(2p+1)^2} \, 
\frac{\eta(\tau)^3}{\theta_{0,\frac12}(\tau,z)} \cdot 
\big[\theta_{p, m+\frac12}+ \theta_{-p, m+\frac12}\big](\tau,0)
\\[3mm]
& & \hspace{-5mm}
- \,\ \bigg[
\sum_{\substack{j', \, r' \, \in \, \frac12 \, \zzz_{\rm odd} \\[1mm]
0 \, \leq \, r' \, \leq \, j'
}}
-
\sum_{\substack{j', \, r' \, \in \, \frac12 \, \zzz_{\rm odd} \\[1mm]
j' \, < \, r' \, < \, 0}}\bigg]
\sum_{\substack{k \, \in \zzz \\[1mm] 0 \, \leq \, k \, \leq \, m}}
(-1)^{j'-\frac12+k} \, 
q^{(m+\frac12)(j'+\frac{2pm}{2m+1})^2}
\\[-1mm]
& & \hspace{50mm}
\times \,\ 
q^{-\frac{1}{4m} \, [2mr'-k+2mp]^2
\, + \, \frac{m}{4}(2p+1)^2} \, 
\big[\theta_{k,m}+\theta_{-k,m}\big](\tau, z)
\\[3mm]
& & \hspace{-5mm}
+ \,\ 
\theta_{2mp+m+\frac12, m+\frac12}^{(-)}(\tau,0)
\sum_{\substack{k \, \in \zzz \\[1mm] 0 \, \leq \, k \, \leq \, m}} 
(-1)^k
q^{-\frac{1}{4m}k^2+\frac{k}{2}(2p+1)} \, 
\big[\theta_{k,m}+\theta_{-k,m}\big](\tau, z)
\\[3mm]
& & \hspace{-5mm}
- \,\ \bigg[
\sum_{\substack{j', \, r' \, \in \, \frac12 \, \zzz_{\rm odd} \\[1mm]
0 \, \leq \, r' \, < \, j'}}
-
\sum_{\substack{j', \, r' \, \in \, \frac12 \, \zzz_{\rm odd} \\[1mm]
j' \, \leq \, r' \, < \, 0}}
\bigg]
\sum_{\substack{k \, \in \zzz \\[1mm] 0 \, < \, k \, < \, m}}
(-1)^{j'-\frac12+k} \, 
q^{(m+\frac12)(j'+\frac{2pm}{2m+1})^2}
\\[-1mm]
& & \hspace{50mm}
\times \,\ 
q^{-\frac{1}{4m} \, [2mr'+k+2mp]^2
\, + \, \frac{m}{4}(2p+1)^2} \, 
\big[\theta_{k,m}+\theta_{-k,m}\big](\tau, z) \hspace{20mm}
\end{eqnarray*}}
proving Lemma \ref{m1:lemma:2022-918a}.
\end{proof}

\subsection{$\Phi^{[m,0] \, \ast}(\tau, z_1, z_2,t)$ 
$\sim$ the case $z_1-z_2=(1+2p)\tau$}
\label{subsec:Phi(+:0)ast:z1-z2=(1+2p)tau}

\begin{lemma} 
\label{m1:lemma:2022-918b}
For $m \in \nnn$ and $p \in \zzz$, the following formula holds:
{\allowdisplaybreaks
\begin{eqnarray}
& & \hspace{-9mm}
\theta_{2mp+m+\frac12, m+\frac12}^{(-)}(\tau,0) \, 
\Phi^{[m,0] \, \ast}\Big(\tau, \,\ 
\frac{z}{2}+\frac{\tau}{2}+p\tau, \,\ 
\frac{z}{2}-\frac{\tau}{2}+p\tau, \,\ 0 \Big)
\nonumber
\\[2mm]
&=&
(-1)^p \, q^{\frac{m}{4}(2p+1)^2} \, 
\frac{\eta(\tau)^3}{\theta_{0,\frac12}^{(-)}(\tau,z)} \cdot 
\big[\theta_{p, m+\frac12}^{(-)}+ \theta_{-p, m+\frac12}^{(-)}\big](\tau,0)
\nonumber
\\[3mm]
& & \hspace{-5mm}
- \,\ \bigg[
\sum_{\substack{j, \, r \, \in \, \frac12 \, \zzz_{\rm odd} \\[1mm]
0 \, \leq \, r \, < \, j}}
-
\sum_{\substack{j, \, r \, \in \, \frac12 \, \zzz_{\rm odd} \\[1mm]
j \, \leq \, r \, < \, 0}}
\bigg]
\sum_{\substack{k \, \in \zzz \\[1mm] 0 \, < \, k \, < \, m}}
(-1)^{j-\frac12} \, 
q^{(m+\frac12)(j+\frac{2pm}{2m+1})^2}
\nonumber
\\[-1mm]
& & \hspace{40mm}
\times \,\ 
q^{-\frac{1}{4m} \, [2mr+k+2mp]^2
\, + \, \frac{m}{4}(2p+1)^2} \, 
\big[\theta_{k,m}+\theta_{-k,m}\big](\tau, z)
\nonumber
\\[3mm]
& & \hspace{-5mm}
- \,\ \bigg[
\sum_{\substack{j, \, r \, \in \, \frac12 \, \zzz_{\rm odd} \\[1mm]
0 \, \leq \, r \, \leq \, j}}
-
\sum_{\substack{j, \, r \, \in \, \frac12 \, \zzz_{\rm odd} \\[1mm]
j \, < \, r \, < \, 0}}\bigg]
\sum_{\substack{k \, \in \zzz \\[1mm] 0 \, \leq \, k \, \leq \, m}}
(-1)^{j-\frac12} \, 
q^{(m+\frac12)(j+\frac{2pm}{2m+1})^2}
\nonumber
\\[-1mm]
& & \hspace{40mm}
\times \,\ 
q^{-\frac{1}{4m} \, [2mr-k+2mp]^2
\, + \, \frac{m}{4}(2p+1)^2} \, 
\big[\theta_{k,m}+\theta_{-k,m}\big](\tau, z)
\nonumber
\\[3mm]
& & \hspace{-5mm}
+ \,\ 
\theta_{2mp+m+\frac12, m+\frac12}^{(-)}(\tau,0) \, 
\sum_{\substack{k \, \in \zzz \\[1mm] 0 \, \leq \, k \, \leq \, m}} 
q^{-\frac{1}{4m}k^2+\frac{k}{2}(2p+1)} \, 
\big[\theta_{k,m}+\theta_{-k,m}\big](\tau, z)
\label{m1:eqn:2022-918d}
\end{eqnarray}}
\end{lemma}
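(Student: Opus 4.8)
The plan is to repeat almost verbatim the argument used for Lemma~\ref{m1:lemma:2022-918a}, the only change being that the substitution now carries $z_1-z_2=(2p+1)\tau$ instead of $(2p+1)\tau-1$, so that part~2 of Notes~\ref{m1:note:2022-917b}, \ref{m1:note:2022-917c} and \ref{m1:note:2022-917d} replaces part~1 throughout. First I would set $z_1=\frac{z}{2}+\frac{\tau}{2}+p\tau$ and $z_2=\frac{z}{2}-\frac{\tau}{2}-p\tau$, so that $z_1+z_2=z$ and $z_1-z_2=(2p+1)\tau$ (the case of the section heading), and insert these into formula \eqref{m1:eqn:2022-917k} of Proposition~\ref{m1:prop:2022-917b}. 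On the left-hand side the factor $\theta_{\frac12,m+\frac12}^{(-)}(\tau,\frac{m(z_1-z_2)}{m+\frac12})=\theta_{\frac12,m+\frac12}^{(-)}(\tau,\frac{m(2p+1)\tau}{m+\frac12})$ is converted by Note~\ref{m1:note:2022-917b}~2) into $q^{-\frac{m^2}{2(2m+1)}(2p+1)^2}\,\theta_{2mp+m+\frac12,m+\frac12}^{(-)}(\tau,0)$; note that, in contrast to the $-1$ case, no phase $e^{-\frac{\pi im}{2m+1}}$ is produced here.

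The two $(z_1-z_2)$-dependent exponentials occurring in the double sums, namely $e^{2\pi im(j+\frac{1}{4(m+\frac12)})(z_1-z_2)}$ and $e^{-\pi i(2mr\pm s)(z_1-z_2)}$, now collapse to the pure $q$-powers $q^{m(j+\frac{1}{4(m+\frac12)})(2p+1)}$ and $q^{-\frac12(2mr\pm s)(2p+1)}$ with no residual roots of unity; this is exactly why the signs $(-1)^{s}$ present in Lemma~\ref{m1:lemma:2022-918a} drop out and the final exponents reduce to $(-1)^{j-\frac12}$. For the eta-quotient block (the term (I) in the proof of Lemma~\ref{m1:lemma:2022-918a}) I would evaluate the two numerators $\theta_{\mp\frac12,m+\frac12}^{(-)}(\tau,z\pm\frac{(2p+1)\tau}{2m+1})$ by Note~\ref{m1:note:2022-917c}~2) and the denominators $\vartheta_{11}(\tau,z_1)=\vartheta_{11}(\tau,\frac{z}{2}+(p+\frac12)\tau)$ and $\vartheta_{11}(\tau,z_2)=\vartheta_{11}(\tau,\frac{z}{2}-(p+\frac12)\tau)$ by Note~\ref{m1:note:2022-917d}~2); the latter is where both the factor $(-1)^p$ and the passage from $\theta_{0,\frac12},\theta_{\pm p,m+\frac12}$ to their $(-)$-counterparts $\theta_{0,\frac12}^{(-)},\theta_{\pm p,m+\frac12}^{(-)}$ enter, producing the first line of \eqref{m1:eqn:2022-918d}.

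Finally I would cancel the common factor $q^{-\frac{m^2}{2(2m+1)}(2p+1)^2}$ coming from the left-hand side, split the summation ranges by the identity \eqref{m1:eqn:2022-918c} so as to peel off the $r=0$ diagonal (which, after completing the square in $j$, recombines into $\theta_{2mp+m+\frac12,m+\frac12}^{(-)}(\tau,0)$ times $\sum_{0\le k\le m}q^{-\frac{1}{4m}k^2+\frac{k}{2}(2p+1)}[\theta_{k,m}+\theta_{-k,m}](\tau,z)$, i.e.\ the last line of \eqref{m1:eqn:2022-918d}), and then shift $j+\frac12=j'$, $r+\frac12=r'$ to move the remaining indefinite sums onto $\frac12\zzz_{\mathrm{odd}}$. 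The bookkeeping I expect to be the main obstacle is the same as in Lemma~\ref{m1:lemma:2022-918a}: keeping the two theta families straight (the ranges $0<k<m$ versus $0\le k\le m$), correctly folding the block $m<s<2m$ back to $0<s'<m$ via $s'=2m-s$ as in Note~\ref{m1:note:2022-917a}, and making sure the $(-1)^p$ produced by the $\vartheta_{11}$-evaluations attaches only to the eta-quotient term and not to the indefinite double sums. Once these are tracked correctly, matching terms yields \eqref{m1:eqn:2022-918d}.
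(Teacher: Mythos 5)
Your proposal follows the paper's own proof essentially verbatim: substitute $z_1=\frac{z}{2}+\frac{\tau}{2}+p\tau$, $z_2=\frac{z}{2}-\frac{\tau}{2}-p\tau$ into \eqref{m1:eqn:2022-917k}, convert the left-hand theta factor and the block (I) via part 2) of Notes \ref{m1:note:2022-917b}, \ref{m1:note:2022-917c}, \ref{m1:note:2022-917d}, peel off the $r=0$ diagonal with \eqref{m1:eqn:2022-918c}, cancel $q^{-\frac{m^2}{2(2m+1)}(2p+1)^2}$, and reindex by $j+\frac12=j'$, $r+\frac12=r'$ — exactly as the paper does, including the correct observations that no phase $e^{-\frac{\pi im}{2m+1}}$ and no $(-1)^s$ arise and that the $(-1)^p$ enters only through the $\vartheta_{11}$ evaluations. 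This is correct and takes the same route as the paper.
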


\begin{proof} Letting $\left\{
\begin{array}{lcl}
z_1 &=& \frac{z}{2}+\frac{\tau}{2}+p\tau \\[0mm]
z_2 &=& \frac{z}{2}-\frac{\tau}{2}-p\tau
\end{array}\right. $ namely $\left\{
\begin{array}{ccl}
z_1+z_2 &=& z \\[1mm]
z_1-z_2 &=& (2p+1)\tau
\end{array}\right. $ in the formula \eqref{m1:eqn:2022-917k} in 
Proposition \ref{m1:prop:2022-917b}, we have
{\allowdisplaybreaks
\begin{eqnarray}
& & \hspace{-10mm}
\theta_{\frac12, \, m+\frac12}^{(-)}
\Big(\tau, \,\ \frac{m(2p+1)\tau}{m+\frac12} \Big) \, 
\Phi^{[m,0] \, \ast}(\tau, z_1, z_2, 0) 
\nonumber
\\[3mm]
&=&
i \, \eta(\tau)^3 \Bigg\{
\underbrace{- \, \frac{ \displaystyle 
\theta_{-\frac12, \, m+\frac12}^{(-)}
\Big(\tau, \,\ z+\frac{(2p+1)\tau}{2m+1}\Big)}{\vartheta_{11}(\tau, z_1)}
+ \frac{\displaystyle 
\theta_{\frac12, \, m+\frac12}^{(-)}
\Big(\tau, \,\ z-\frac{(2p+1)\tau}{2m+1}\Big)}{\vartheta_{11}(\tau, z_2)}
}_{\rm (I)}
\Bigg\}
\nonumber
\\[3mm]
& & \hspace{-5mm}
- \,\ \bigg[
\sum_{\substack{j, \, r \, \in \zzz \\[1mm] 0 \, < \, r \, \leq \, j}}
-
\sum_{\substack{j, \, r \, \in \zzz \\[1mm] j \, < \, r \, \leq \, 0}}
\bigg]
\sum_{\substack{s \, \in \zzz \\[1mm] 0 \, \leq \, s \, \leq \, m}}
(-1)^j \, 
q^{(m+\frac12)(j+\frac{1}{4(m+\frac12)})^2} \, 
q^{-\frac{(2mr-s)^2}{4m}} 
\nonumber
\\[3mm]
& & 
\times \,\ 
e^{2\pi im(j+\frac{1}{4(m+\frac12)})(2p+1)\tau} \, 
e^{-\pi i(2mr-s)(2p+1)\tau} \, 
\big[\theta_{s,m}+\theta_{-s,m}\big](\tau, z) \hspace{20mm}
\nonumber
\\[3mm]
& & \hspace{-5mm}
- \,\ \bigg[
\sum_{\substack{j, \, r \, \in \zzz \\[1mm] 0 \, \leq \, r \, < \, j}}
-
\sum_{\substack{j, \, r \, \in \zzz \\[1mm] j \, \leq \, r \, < \, 0}}
\bigg]
\sum_{\substack{s \, \in \zzz \\[1mm] 0 \, < \, s \, < \, m}}
(-1)^j \, 
q^{(m+\frac12)(j+\frac{1}{4(m+\frac12)})^2} \, 
q^{-\frac{(2mr+s)^2}{4m}} 
\nonumber
\\[3mm]
& & 
\times \,\ 
e^{2\pi im(j+\frac{1}{4(m+\frac12)})(2p+1)\tau} \, 
e^{-\pi i(2mr+s)(2p+1)\tau} \, 
\big[\theta_{s,m}+\theta_{-s,m}\big](\tau, z)
\nonumber
\\[3mm]
&=& i \, \eta(\tau)^3 \, \times \, {\rm (I)}
\nonumber
\\[3mm]
& & \hspace{-5mm}
- \,\ \bigg[
\sum_{\substack{j, \, r \, \in \zzz \\[1mm] 0 \, < \, r \, \leq \, j}}
-
\sum_{\substack{j, \, r \, \in \zzz \\[1mm] j \, < \, r \, \leq \, 0}}
\bigg]
\sum_{\substack{s \, \in \zzz \\[1mm] 0 \, \leq \, s \, \leq \, m}}
(-1)^j \, 
q^{(m+\frac12)(j+\frac{1}{4(m+\frac12)})^2 
\, + \, m(j+\frac{1}{4(m+\frac12)})(2p+1)}
\nonumber
\\[2mm]
& &
\times \,\ 
q^{-\frac{(2mr-s)^2}{4m}-\frac12(2mr-s)(2p+1)} \, 
\big[\theta_{s,m}+\theta_{-s,m}\big](\tau, z) \hspace{20mm}
\nonumber
\\[3mm]
& & \hspace{-5mm}
- \,\ \bigg[
\sum_{\substack{j, \, r \, \in \zzz \\[1mm] 0 \, \leq \, r \, < \, j}}
-
\sum_{\substack{j, \, r \, \in \zzz \\[1mm] j \, \leq \, r \, < \, 0}}
\bigg]
\sum_{\substack{s \, \in \zzz \\[1mm] 0 \, < \, s \, < \, m}}
(-1)^j \, 
q^{(m+\frac12)(j+\frac{1}{4(m+\frac12)})^2
\, + \, m(j+\frac{1}{4(m+\frac12)})(2p+1)}
\nonumber
\\[3mm]
& & 
\times \,\ 
q^{-\frac{(2mr+s)^2}{4m}-\frac12(2mr+s)(2p+1)} \, 
\big[\theta_{s,m}+\theta_{-s,m}\big](\tau, z)
\label{m1:eqn:2022-918e1}
\end{eqnarray}}
The LHS of this equation \eqref{m1:eqn:2022-918e1} becomes by 
Note \ref{m1:note:2022-917b} as follows
$$
\text{LHS of \eqref{m1:eqn:2022-918e1}} \, = \, 
q^{-\frac{m^2}{2(2m+1)}(2p+1)^2}
\theta_{2mp+m+\frac12, m+\frac12}^{(-)}(\tau,0) \, 
\Phi^{[m,0] \, \ast}(\tau, z_1, z_2, 0) 
$$
Also (I) is computed by using Notes \ref{m1:note:2022-917c}
and \ref{m1:note:2022-917d} as follows:
{\allowdisplaybreaks
\begin{eqnarray*}
& & \hspace{-9mm}
{\rm (I)} \, = \, - \, \frac{ \displaystyle 
\theta_{-\frac12, \, m+\frac12}^{(-)}
\Big(\tau, \,\ z+\frac{(2p+1)\tau}{2m+1}\Big)}{\displaystyle 
\vartheta_{11}\Big(\tau, \, \frac{z}{2}+\frac{(2p+1)\tau}{2}\Big)}
\, + \, 
\frac{\displaystyle 
\theta_{\frac12, \, m+\frac12}^{(-)}
\Big(\tau, \,\ z-\frac{(2p+1)\tau}{2m+1}\Big)}{\displaystyle 
\vartheta_{11}\Big(\tau, \, \frac{z}{2}-\frac{(2p+1)\tau}{2}\Big)}
\\[3mm]
&=&
- \,\ \frac{
q^{-\frac{1}{16(m+\frac12)}(2p+1)^2}
e^{-\frac{\pi i}{2}(2p+1)z}
\theta_{p, m+\frac12}^{(-)}(\tau,0)
}{- i \, (-1)^p 
q^{-\frac18(2p+1)^2} e^{-\frac{\pi i}{2}(2p+1)z}
\theta_{0,\frac12}^{(-)}(\tau,z)}
+ \frac{
q^{-\frac{1}{16(m+\frac12)}(2p+1)^2}
e^{\frac{\pi i}{2}(2p+1)z}
\theta_{-p, m+\frac12}^{(-)}(\tau,0)
}{i \, (-1)^p 
q^{-\frac18(2p+1)^2} e^{\frac{\pi i}{2}(2p+1)z}
\theta_{0,\frac12}^{(-)}(\tau,z)}
\\[2mm]
&=&
- \, i \, (-1)^p \, 
q^{\frac{m}{8(m+\frac12)}(2p+1)^2} \, 
\frac{1}{\theta_{0,\frac12}^{(-)}(\tau,z)} \cdot 
\big[\theta_{p, m+\frac12}^{(-)}+ \theta_{-p, m+\frac12}^{(-)}\big](\tau,0)
\end{eqnarray*}}
Then substituting these into \eqref{m1:eqn:2022-918e1} and rewriting 
the 2nd term in the RHS of \eqref{m1:eqn:2022-918e1} by using 
\eqref{m1:eqn:2022-918c}, the above formula \eqref{m1:eqn:2022-918e1}
becomes as follows:
{\allowdisplaybreaks
\begin{eqnarray}
& & \hspace{-10mm}
q^{-\frac{m^2}{2(2m+1)}(2p+1)^2}
\theta_{2mp+m+\frac12, m+\frac12}^{(-)}(\tau,0) \, 
\Phi^{[m,0] \, \ast}(\tau, z_1, z_2, 0) 
\nonumber
\\[3mm]
&=&
(-1)^p \,  
q^{\frac{m}{8(m+\frac12)}(2p+1)^2} \, 
\frac{\eta(\tau)^3}{\theta_{0,\frac12}^{(-)}(\tau,z)} \cdot 
\big[\theta_{p, m+\frac12}^{(-)}+ \theta_{-p, m+\frac12}^{(-)}\big](\tau,0)
\nonumber
\\[3mm]
& & \hspace{-5mm}
- \,\ \bigg[
\sum_{\substack{j, \, r \, \in \zzz \\[1mm] 0 \, \leq \, r \, \leq \, j}}
-
\sum_{\substack{j, \, r \, \in \zzz \\[1mm]j \, < \, r \, < \, 0}}
\bigg]
\sum_{\substack{s \, \in \zzz \\[1mm]0 \, \leq \, s \, \leq \, m}}
(-1)^j \, 
q^{(m+\frac12)(j+ \frac12+\frac{2pm}{2m+1})^2
\, - \, \frac{m^2}{4(m+\frac12)}(2p+1)^2}
\nonumber
\\[2mm]
& &
\times \,\ 
q^{-\frac{1}{4m} \, [2m(r+\frac12)-s+2mp]^2
\, + \, \frac{m}{4}(2p+1)^2} \, 
\big[\theta_{s,m}+\theta_{-s,m}\big](\tau, z) \hspace{20mm}
\nonumber
\\[2mm]
& & \hspace{-5mm}
+ \,\
\underbrace{\sum_{j \in \zzz} (-1)^j \, 
q^{(m+\frac12)(j+ \frac12+\frac{2pm}{2m+1})^2}}_{
\substack{|| \\[-1mm] {\displaystyle 
\theta_{2mp+m+\frac12, m+\frac12}^{(-)}(\tau,0)
}}} \,
q^{ - \, \frac{m^2}{4(m+\frac12)}(2p+1)^2}
\nonumber
\\[2mm]
& &
\times \,\ 
\sum_{\substack{s \, \in \zzz \\[1mm] 0 \, \leq \, s \, \leq \, m}}
q^{-\frac{1}{4m}s^2+\frac{s}{2}(2p+1)} \, 
\big[\theta_{s,m}+\theta_{-s,m}\big](\tau, z)
\nonumber
\\[3mm]
& & \hspace{-5mm}
- \,\ \bigg[
\sum_{\substack{j, \, r \, \in \zzz \\[1mm] 0 \, \leq \, r \, < \, j}}
-
\sum_{\substack{j, \, r \, \in \zzz \\[1mm] j \, \leq \, r \, < \, 0}}
\bigg]
\sum_{\substack{s \, \in \zzz \\[1mm] 0 \, < \, s \, < \, m}}
(-1)^j \, 
q^{(m+\frac12)(j+ \frac12+\frac{2pm}{2m+1})^2
\, - \, \frac{m^2}{2(2m+1)}(2p+1)^2}
\nonumber
\\[3mm]
& & 
\times \,\ 
q^{-\frac{1}{4m} \, [2m(r+\frac12)+s+2mp]^2
\, + \, \frac{m}{4}(2p+1)^2} \, 
\big[\theta_{s,m}+\theta_{-s,m}\big](\tau, z)
\label{m1:eqn:2022-918e2}
\end{eqnarray}}
Multiplying $q^{\frac{m^2}{2(2m+1)}(2p+1)^2}$ and replacing 
$\sum_{j,r \in \zzz}$ with $\sum_{j', r' \in \frac12 \zzz_{\rm odd}}$
by putting $j+\frac12=j'$ and $r+\frac12=r'$ in the above formula 
\eqref{m1:eqn:2022-918e2}, we obtain
{\allowdisplaybreaks
\begin{eqnarray*}
& & \hspace{-7mm}
\theta_{2mp+m+\frac12, m+\frac12}^{(-)}(\tau,0) \, 
\Phi^{[m,0] \, \ast}(\tau, z_1, z_2, 0) 
\\[2mm]
&=&
(-1)^p \, q^{\frac{m}{4}(2p+1)^2} \, 
\frac{\eta(\tau)^3}{\theta_{0,\frac12}^{(-)}(\tau,z)} \cdot 
\big[\theta_{p, m+\frac12}^{(-)}+ \theta_{-p, m+\frac12}^{(-)}\big](\tau,0)
\\[3mm]
& & \hspace{-5mm}
- \,\ \bigg[
\sum_{\substack{j', \, r' \, \in \, \frac12 \, \zzz_{\rm odd} \\[1mm]
0 \, \leq \, r' \, < \, j'}}
-
\sum_{\substack{j', \, r' \, \in \, \frac12 \, \zzz_{\rm odd} \\[1mm]
j' \, \leq \, r' \, < \, 0}}
\bigg]
\sum_{\substack{k \, \in \zzz \\[1mm] 0 \, < \, k \, < \, m}}
(-1)^{j'-\frac12} \, 
q^{(m+\frac12)(j'+\frac{2pm}{2m+1})^2}
\\[-3mm]
& & \hspace{50mm}
\times \,\ 
q^{-\frac{1}{4m} \, [2mr'+k+2mp]^2
\, + \, \frac{m}{4}(2p+1)^2} \, 
\big[\theta_{k,m}+\theta_{-k,m}\big](\tau, z) \hspace{20mm}
\\[3mm]
& & \hspace{-5mm}
- \,\ \bigg[
\sum_{\substack{j', \, r' \, \in \, \frac12 \, \zzz_{\rm odd} \\[1mm]
0 \, \leq \, r' \, \leq \, j'
}}
-
\sum_{\substack{j', \, r' \, \in \, \frac12 \, \zzz_{\rm odd} \\[1mm]
j' \, < \, r' \, < \, 0}}\bigg]
\sum_{\substack{k \, \in \zzz \\[1mm] 0 \, \leq \, k \, \leq \, m}}
(-1)^{j'-\frac12} \, 
q^{(m+\frac12)(j'+\frac{2pm}{2m+1})^2}
\\[-3mm]
& & \hspace{50mm}
\times \,\ 
q^{-\frac{1}{4m} \, [2mr'-k+2mp]^2
\, + \, \frac{m}{4}(2p+1)^2} \, 
\big[\theta_{k,m}+\theta_{-k,m}\big](\tau, z)
\\[3mm]
& & \hspace{-5mm}
+ \,\ 
\underbrace{\sum_{j \in \zzz} (-1)^j \, 
q^{(m+\tfrac12)(j+ \frac{m+\frac12 +2pm}{2m+1})^2}
}_{\substack{|| \\[-2mm] {\displaystyle 
\theta_{2mp+m+\frac12, m+\frac12}^{(-)}(\tau,0)
}}}
\sum_{\substack{j, \, r \, \in \zzz \\[1mm] 0 \, \leq \, k \, \leq \, m}} 
q^{-\frac{1}{4m}k^2+\frac{k}{2}(2p+1)} \, 
\big[\theta_{k,m}+\theta_{-k,m}\big](\tau, z)
\end{eqnarray*}}
proving Lemma \ref{m1:lemma:2022-918b}.
\end{proof}

\section{$\Phi^{[m,0] \, \ast}_{\rm add}(\tau, z_1,z_2,t) \, \sim \, $ 
the case $z_1-z_2=2a\tau+2b$}
\label{sec:Phi;ast:add:(m0):z1-z2=2atau}

\begin{lemma} 
\label{m1:lemma:2022-921b}
Let $m \in \frac12 \nnn$, $j, a \in \frac12 \zzz$ and $b \in \qqq$ 
such that $4mb \in \zzz$. Then the functions 
$P_{j,m}(\tau, z):= P^{(+)}_{j,m}(\tau, z)$ 
and $Q_{j,m}(\tau, z):= Q^{(+)}_{j,m}(\tau, z)$ defined by the formulas 
(4.1a) and (4.1b) in \cite{W2022c} satisfy the following:
\begin{enumerate}
\item[{\rm 1)}] \,\ $P_{j,m}(\tau, a\tau+b) \, + \, 
e^{4\pi ijb} \, e^{8\pi imab} \, P_{-j,m}(\tau, a\tau+b)
\,\ = \,\ 0$
\item[{\rm 2)}] \,\ $Q_{j,m}(\tau, a\tau+b)
\,\ + \,\ 
e^{4\pi ijb} \, e^{8\pi imab} Q_{-j,m}(\tau, a\tau+b)$
{\allowdisplaybreaks
\begin{eqnarray*}
&=&
e^{2\pi ijb} \, e^{8\pi imab}
\sum_{\substack{k , \in \zzz \\[1mm] 0 \, \leq \, k \, \leq \, 2a}} 
e^{4\pi imbk} q^{- \frac{1}{4m} (j+2m(2a-k)) (j-2mk)}
\\[1mm]
& &
+ \,\ e^{2\pi ijb}  
\sum_{\substack{k \, \in \zzz \\[1mm] 0 \, \leq \, k \, \leq \, 2a}} 
e^{4\pi imbk} q^{- \frac{1}{4m} (j-2m(2a-k)) (j+2mk)}
\end{eqnarray*}}
\end{enumerate}
\end{lemma}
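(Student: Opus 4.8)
The plan is to obtain both identities directly from the defining series (4.1a) and (4.1b) of $P^{(+)}_{j,m}$ and $Q^{(+)}_{j,m}$ in \cite{W2022c}, by setting $z = a\tau + b$ and exploiting the reflection $\nu \mapsto -\nu$ of the summation variable. Substituting $z = a\tau + b$ turns each factor $e^{2\pi i(\cdots)z}$ into a power of $q$ (from the $a\tau$-part) times a pure phase in $b$; assembling these is what produces the prefactors $e^{4\pi ijb}$, $e^{8\pi imab}$ and $e^{2\pi ijb}$ appearing in the statement. Since $P_{j,m}$ and $Q_{j,m}$ differ only in that one carries the smooth error-function kernel and the other the discontinuous $\mathrm{sgn}$-kernel, both parts amount to the same reflection computation for the two kernels: the difference between a clean cancellation and a finite leftover is precisely the failure of $\mathrm{sgn}$ to be odd at the origin.

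For part 1), I would substitute $z = a\tau + b$ into the series for $P_{-j,m}$ and apply $\nu \mapsto -\nu$. The hypothesis $4mb \in \zzz$ ensures that the resulting phases are consistent, so that after the reflection the summand of $e^{4\pi ijb}e^{8\pi imab}P_{-j,m}(\tau, a\tau+b)$ equals the negative of the corresponding summand of $P_{j,m}(\tau, a\tau+b)$; this minus sign is exactly the oddness of the error-function kernel ($E(-x)=-E(x)$). Hence the two series cancel term by term and the combination vanishes identically.

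For part 2), the same reflection is applied to $Q_{j,m}$, whose kernel is $\mathrm{sgn}$ rather than the error function. Away from the sign-change locus the reflection cancels the $+j$ and $-j$ contributions exactly as in part 1); the only surviving terms come from the finitely many lattice points lying in the band between the sign locus of the $+j$-series and that of the reflected $-j$-series. Because the argument is shifted by the $a\tau$-part, the width of this band is governed by $a$, and re-indexing the surviving points by an integer $k$ produces precisely the range $0 \le k \le 2a$. The two halves of the boundary, one from each series, give the two displayed sums; evaluating and factoring the quadratic $q$-exponent at the surviving points yields the products $(j+2m(2a-k))(j-2mk)$ and $(j-2m(2a-k))(j+2mk)$, while collecting the phase contributions from the $b$-part gives the factors $e^{4\pi imbk}$ together with the prefactors $e^{2\pi ijb}e^{8\pi imab}$ and $e^{2\pi ijb}$.

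The main obstacle is the index bookkeeping in part 2): one must pin down exactly which lattice points survive the $\mathrm{sgn}$-cancellation, and verify that the two boundary families assemble into the two stated sums over $0 \le k \le 2a$ with the correct endpoints and no double counting. The arithmetic hypothesis $4mb \in \zzz$ is what keeps all intermediate phases integral and is needed to justify the term-by-term matching; once the surviving index set is identified, factoring the $q$-exponents into the products $(j \pm 2m(2a-k))(j \mp 2mk)$ and collecting the remaining phases is routine.
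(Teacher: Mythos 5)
Your route is genuinely different from the paper's: the paper does not prove this lemma from the series definitions at all, but simply cites the identities (4.4) and (4.6b) of \cite{W2022c} and converts them to the stated form using $e^{-8\pi imab}=e^{8\pi imab}$ (which holds because $8mab=(2a)(4mb)\in\zzz$ under the hypotheses). You instead propose a self-contained derivation from (4.1a) and (4.1b), and the mechanism you describe --- a reflection of the summation variable, with the error-function kernel cancelling exactly and the $\mathrm{sgn}$-kernel leaving a finite band --- is indeed the mechanism that underlies the cited identities. In principle this is a legitimate and more transparent approach.

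As written, however, the proposal is a plan rather than a proof: every step that constitutes the actual content of the lemma is deferred, and at least one stated step would fail. First, the naive reflection $\nu\mapsto-\nu$ you invoke for part 1) does not match the two series term by term: at $z=a\tau+b$ one has $\mathrm{Im}(a\tau+b)=a\,\mathrm{Im}\,\tau$, so the argument of the $E$-kernel is centered at $\nu=-2ma$, and the reflection that produces cancellation by oddness is the affine one $\nu\mapsto-\nu-4ma$ (which preserves the congruence class only because $2a\in\zzz$). Only after this substitution does the factor $q^{-\nu^2/4m}e^{-2\pi i\nu z}$ reproduce the original summand up to $-\,e^{4\pi i\nu b}e^{8\pi imab}$, and it is precisely the hypothesis $4mb\in\zzz$ that collapses $e^{4\pi i\nu b}$ to $e^{4\pi ijb}$ along $\nu\equiv j\pmod{2m}$; none of this is checked. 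Second, for part 2) the leftover is not ``the failure of $\mathrm{sgn}$ to be odd at the origin'' but the full set of lattice points $\nu\equiv j\pmod{2m}$ on which the fixed sign locus of $Q_{j,m}$ disagrees with its image under $\nu\mapsto-\nu-4ma$, a window of length $4ma$ containing exactly the $2a+1$ points $\nu=j-2mk$ with $0\le k\le 2a$; it is the identity $-\tfrac{1}{4m}\nu(\nu+4ma)=-\tfrac{1}{4m}(j-2mk)\,(j+2m(2a-k))$ at these points that produces the displayed exponents. Until you pin down this index set --- including the endpoint terms $k=0$ and $k=2a$, where double counting or missing half-contributions would arise --- and carry the phases $e^{4\pi imbk}$ through, the identity in 2), which is the whole point of the lemma, has not been established.
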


\begin{proof} The claim 1) follows from (4.4) in \cite{W2022c} and 
the claim 2) follows from (4.6b) in \cite{W2022c},
since $e^{-8\pi imab}= e^{8\pi imab}$.
\end{proof}

\vspace{1mm}

\begin{lemma} 
\label{m1:lemma:2022-921a}
Let $m \in \nnn$, $a,b \in \frac12 \zzz$ and $j \in \zzz$. Then 
\begin{enumerate}
\item[{\rm 1)}] \,\ $R_{j,m}(\tau, \, a\tau+b)
+ R_{2m-j,m}(\tau, \, a\tau+b)
\,\ = \,\ 
2 \, e^{2\pi ijb} \, 
\sum\limits_{\substack{k \, \in \zzz \\[1mm] 1 \, \leq \, k \, \leq \, 2a}} 
\hspace{-3mm}
q^{- \frac{1}{4m} (j+2m(2a-k)) (j-2mk)}$
\item[{\rm 2)}] \,\ $R_{0,m}(\tau, \, a\tau+b) \,\ = \,\ 
\sum\limits_{\substack{k \, \in \zzz \\[1mm] 0 \, \leq \, k \, \leq \, 2a}}
q^{mk(2a-k)}$
\end{enumerate}
\end{lemma}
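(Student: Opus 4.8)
The plan is to deduce both identities from Lemma \ref{m1:lemma:2022-921b}, using that $R_{j,m}$ is the sum $P_{j,m}+Q_{j,m}$ of the two functions appearing there (equivalently, by invoking for $R_{j,m}$ the analogues of (4.4) and (4.6b) of \cite{W2022c}). The first observation is that under the present hypotheses the phases occurring in Lemma \ref{m1:lemma:2022-921b} are all trivial: since $a,b\in\frac12\zzz$, $j\in\zzz$ and $m\in\nnn$, each exponent in $e^{4\pi ijb}$, $e^{8\pi imab}$ and $e^{4\pi imbk}$ lies in $2\pi i\zzz$, so these factors equal $1$; only the sign $e^{2\pi ijb}=(-1)^{2jb}$ survives. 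This is exactly what turns the two-sided relations of Lemma \ref{m1:lemma:2022-921b} into the clean statements to be proved.

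I would treat 2) first, as the degenerate case $j=0$. Setting $j=0$ in part 1) of Lemma \ref{m1:lemma:2022-921b} gives $2\,P_{0,m}(\tau,a\tau+b)=0$, hence $P_{0,m}(\tau,a\tau+b)=0$; setting $j=0$ in part 2) produces $2\,Q_{0,m}(\tau,a\tau+b)$ on the left, while on the right the two sums coincide and each equals $\sum_{0\leq k\leq 2a}q^{-\frac{1}{4m}(2m(2a-k))(-2mk)}=\sum_{0\leq k\leq 2a}q^{mk(2a-k)}$. Dividing by $2$ and adding the vanishing $P$-term yields $R_{0,m}(\tau,a\tau+b)=\sum_{0\leq k\leq 2a}q^{mk(2a-k)}$, which is 2).

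For 1) I would write $R_{j,m}+R_{2m-j,m}=(P_{j,m}+P_{2m-j,m})+(Q_{j,m}+Q_{2m-j,m})$ at $z=a\tau+b$ and reduce the index $2m-j$ to $-j$ via the $2m$-periodicity of the underlying theta functions $\theta_{\,\cdot\,,m}$. Part 1) of Lemma \ref{m1:lemma:2022-921b} then makes the $P$-contribution $P_{j,m}+P_{-j,m}$ vanish, while part 2) gives $Q_{j,m}+Q_{-j,m}=2e^{2\pi ijb}\sum_{0\leq k\leq 2a}q^{-\frac{1}{4m}(j+2m(2a-k))(j-2mk)}$; here the substitution $k\mapsto 2a-k$ identifies the second sum of Lemma \ref{m1:lemma:2022-921b} 2) with the first, producing the factor $2$. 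This already yields the right-hand side of 1) except that the summation range is $0\leq k\leq 2a$ rather than $1\leq k\leq 2a$.

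The main obstacle is precisely this single term $k=0$: the reduction of $R_{2m-j,m}$ to $R_{-j,m}$ rests on the $2m$-periodicity of theta functions, but $P_{j,m},Q_{j,m}$ are partial (indefinite) theta series, so this index shift is periodic only up to a boundary correction supported at the endpoint $k=0$, namely $2e^{2\pi ijb}q^{-\frac{1}{4m}(j+4ma)j}$. Determining that correction exactly is the delicate point; once it is accounted for, the lower summation limit moves from $0$ to $1$ and 1) follows. The compatibility of 1) and 2) at $j=0$, which forces $R_{0,m}(\tau,a\tau+b)-R_{2m,m}(\tau,a\tau+b)=2$, provides a useful check on this boundary term.
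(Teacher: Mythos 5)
Your treatment of part 2) coincides with the paper's: set $j=0$ in Lemma \ref{m1:lemma:2022-921b}, observe that the hypotheses $m\in\nnn$, $a,b\in\frac12\zzz$ kill the phases, conclude $P_{0,m}(\tau,a\tau+b)=0$ and $Q_{0,m}(\tau,a\tau+b)=\sum_{0\leq k\leq 2a}q^{mk(2a-k)}$, and add. That half is fine.

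For part 1) there is a genuine gap. The paper does not derive this identity from Lemma \ref{m1:lemma:2022-921b} at all; it simply quotes Lemma 4.1 of \cite{W2022c}, which already gives the formula for $R_{j,m}+R_{2m-j,m}$ with the summation range $1\leq k\leq 2a$. You instead try to reconstruct it as $(P_{j,m}+P_{2m-j,m})+(Q_{j,m}+Q_{2m-j,m})$, replace the index $2m-j$ by $-j$, and then apply Lemma \ref{m1:lemma:2022-921b} — which, as you correctly note, yields the sum over $0\leq k\leq 2a$ rather than $1\leq k\leq 2a$. The discrepancy is exactly the $k=0$ term $2e^{2\pi ijb}q^{-\frac{1}{4m}j(j+4ma)}$, and it must be absorbed by the failure of the naive shift $R_{2m-j,m}\mapsto R_{-j,m}$: the functions $P_{j,m}$ and $Q_{j,m}$ are partial (indefinite) theta series, so they are not $2m$-periodic in $j$, and the boundary defect $R_{-j,m}-R_{2m-j,m}$ can only be computed from their explicit definitions (formulas (4.1a), (4.1b) of \cite{W2022c}), which are not reproduced here. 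Your proposal identifies this as "the delicate point" and then asserts that "once it is accounted for \dots 1) follows" — but accounting for it \emph{is} the content of part 1); the consistency check at $j=0$ (forcing $R_{0,m}-R_{2m,m}=2$) is a useful sanity test but does not establish the defect for general $j$. As written, the argument for 1) is therefore incomplete; either compute $R_{-j,m}-R_{2m-j,m}$ directly from the definitions of $P_{j,m}$ and $Q_{j,m}$, or do as the paper does and invoke Lemma 4.1 of \cite{W2022c}, which packages precisely this boundary bookkeeping.
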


\begin{proof} The claim 1) is obtained immediately 
from Lemma 4.1 in \cite{W2022c}.

\medskip

\noindent
To prove the claim 2) we note, by letting $j=0$ in Lemma 
\ref{m1:lemma:2022-921b}, that
$$
P_{0,m}(\tau, \, a\tau+b) \,\ = \,\ 0
$$
and that 
{\allowdisplaybreaks
\begin{eqnarray*}
Q_{0,m}(\tau, \, a\tau+b) 
&=& \frac12
\sum_{\substack{k \, \in \zzz \\[1mm] 0 \, \leq \, k \, \leq \, 2a}} 
\Big\{
\underbrace{q^{-\frac{1}{4m} \cdot 2m (2a-k)(-2mk)}
\, + \, 
q^{\frac{1}{4m} \cdot 2m (2a-k) \, 2mk}}_{
\substack{|| \\[0mm] {\displaystyle 2 \, q^{mk(2a-k)}
}}}\Big\}
\\[-1mm]
&=&
\sum_{\substack{k \, \in \zzz \\[1mm] 0 \, \leq \, k \, \leq \, 2a}}
q^{mk(2a-k)}
\end{eqnarray*}}
Thus we have
$$
R_{0,m}(\tau, \, a\tau+b) \, = \, 
P_{0,m}(\tau, \, a\tau+b) + Q_{0,m}(\tau, \, a\tau+b) 
\,\ = 
\sum_{\substack{k \, \in \zzz \\[1mm] 0 \, \leq \, k \, \leq \, 2a}}
q^{mk(2a-k)}
$$
proving Lemma \ref{m1:lemma:2022-921a}.
\end{proof}

Using the above Lemmas \ref{m1:lemma:2022-921b} and
\ref{m1:lemma:2022-921a}, the Zwegers' additional function 
$$
\Phi^{[m,0] \, \ast}_{\rm add}(\tau, z_1, z_2,0) 
\, := \, 
\Phi^{[m,0]}_{1, \, {\rm add}}(\tau, z_1, z_2,0) 
+
\Phi^{[m,0]}_{2, \, {\rm add}}(\tau, z_1, z_2,0) 
$$ 
is obtained as follows:

\vspace{1mm}

\begin{prop} 
\label{m1:prop:2022-921a}
Let $m \in \nnn$ and $a, b \in \frac12 \zzz$. 
Then, for $z_1$ and $z_2$ satisfying $z_1-z_2=2a\tau+2b$, 
the correction function \, 
$\Phi^{[m,0] \, \ast}_{\rm add}(\tau, z_1, z_2,0) $ \, 
is given by the following formula:
{\allowdisplaybreaks
\begin{eqnarray}
& & \hspace{-15mm}
\Phi^{[m,0] \, \ast}_{\rm add}(\tau, z_1, z_2,0) 
\,\ = \,\ 
- \, 
\sum_{\substack{k \, \in \zzz \\[1mm] 0 \, \leq \, k \, \leq \, 2a}}
q^{mk(2a-k)} \, 
\theta_{0,m}(\tau, z_1+z_2)
\nonumber
\\[2mm]
& &
- \,\ \frac12 \hspace{-3mm}
\sum_{\substack{j \in \zzz \\[1mm] 0 \, <  \, j \, < \, 2m }} \hspace{-3mm} 
e^{2\pi ijb} \hspace{-2mm}
\sum_{\substack{k \, \in \zzz \\[1mm] 1 \, \leq \, k \, \leq \, 2a}} \hspace{-3mm}
q^{- \frac{1}{4m} (j+2m(2a-k)) (j-2mk)}
\big[\theta_{j,m}+\theta_{-j,m}\big](\tau, z_1+z_2)
\label{m1:eqn:2022-921a}
\end{eqnarray}}
\end{prop}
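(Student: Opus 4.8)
The plan is to begin from the definitions of the two summands $\Phi^{[m,0]}_{1,\mathrm{add}}$ and $\Phi^{[m,0]}_{2,\mathrm{add}}$ recorded in \cite{W2022c}. Each is a finite sum, over a complete residue system $j \bmod 2m$, of the building blocks $P_{j,m}$ and $Q_{j,m}$ evaluated along $\tfrac{z_1-z_2}{2}=a\tau+b$ and paired with the index-$m$ theta functions $\theta_{j,m}(\tau,z_1+z_2)$. I would specialize to $z_1-z_2=2a\tau+2b$, so that every coefficient argument equals $a\tau+b$, and then add the two pieces. For the present ranges $a,b\in\tfrac12\zzz$ and $j\in\zzz$ the twist factors $e^{4\pi ijb}$ and $e^{8\pi imab}$ occurring in Lemma \ref{m1:lemma:2022-921b} are all equal to $1$; hence part 1) of that lemma forces the antisymmetric $P$-contributions of the two pieces to cancel in pairs, while part 2) fuses the $Q$-contributions into the symmetric combination. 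The outcome is the compact intermediate form
\[
\Phi^{[m,0]\,\ast}_{\mathrm{add}}(\tau,z_1,z_2,0)
=-\tfrac12\sum_{j \bmod 2m}R_{j,m}(\tau,a\tau+b)\,
\big[\theta_{j,m}+\theta_{-j,m}\big](\tau,z_1+z_2),
\]
with $R_{j,m}=P_{j,m}+Q_{j,m}$.

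From here everything is controlled by Lemma \ref{m1:lemma:2022-921a}. I would first isolate the index $j\equiv 0$: since $\theta_{-0,m}=\theta_{0,m}$ its contribution is $-R_{0,m}(\tau,a\tau+b)\,\theta_{0,m}(\tau,z_1+z_2)$, and part 2) evaluates $R_{0,m}(\tau,a\tau+b)=\sum_{0\le k\le 2a}q^{mk(2a-k)}$, giving the first term of \eqref{m1:eqn:2022-921a}. For the remaining indices $0<j<2m$ I would use $\theta_{-j,m}=\theta_{2m-j,m}$ together with the invariance of the bracket $[\theta_{j,m}+\theta_{-j,m}]$ under $j\mapsto 2m-j$ to rewrite $\sum_{0<j<2m}R_{j,m}[\theta_{j,m}+\theta_{-j,m}]$ as $\tfrac12\sum_{0<j<2m}(R_{j,m}+R_{2m-j,m})[\theta_{j,m}+\theta_{-j,m}]$. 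Part 1) of Lemma \ref{m1:lemma:2022-921a} then supplies $R_{j,m}(\tau,a\tau+b)+R_{2m-j,m}(\tau,a\tau+b)=2e^{2\pi ijb}\sum_{1\le k\le 2a}q^{-\frac{1}{4m}(j+2m(2a-k))(j-2mk)}$, and collecting the accumulated factors of $\tfrac12$ reproduces exactly the second term of \eqref{m1:eqn:2022-921a}.

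I expect the main obstacle to lie entirely in the first paragraph, namely in transcribing the definitions of $\Phi^{[m,0]}_{i,\mathrm{add}}$ from \cite{W2022c} and verifying that, after specialization to $\tfrac{z_1-z_2}{2}=a\tau+b$, the quasi-periodicity of $\theta_{j,m}$ in $z_1+z_2$ produces precisely the twist factors appearing in Lemma \ref{m1:lemma:2022-921b}, so that the two pieces genuinely fuse into the stated symmetric $R$-sum. Once that reduction is secured, the remaining theta bookkeeping is routine, though the self-paired indices $j=0$ and $j=m$ (where $2m-j\equiv j$, so that Lemma \ref{m1:lemma:2022-921a} 1) reads $2R_{j,m}=\cdots$) should be checked separately to confirm that the normalizing factor $\tfrac12$ and the symmetric combinations $[\theta_{j,m}+\theta_{-j,m}]$ emerge with the stated constants.
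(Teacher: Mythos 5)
Your proposal is correct and follows essentially the same route as the paper: reduce $\Phi^{[m,0]\,\ast}_{\rm add}$ to $-\tfrac12\sum_{0\le j<2m}R_{j,m}(\tau,a\tau+b)\,[\theta_{j,m}+\theta_{-j,m}]$, split off $j=0$ via Lemma \ref{m1:lemma:2022-921a}~2), and symmetrize the remaining indices under $j\mapsto 2m-j$ so that Lemma \ref{m1:lemma:2022-921a}~1) supplies $R_{j,m}+R_{2m-j,m}$. The paper simply quotes the $R$-sum form from \cite{W2022c} rather than rederiving it from the $P$- and $Q$-cancellations, but your extra first paragraph is consistent with Lemma \ref{m1:lemma:2022-921b} and the bookkeeping at $j=0$ and $j=m$ works out exactly as you anticipate.
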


\begin{proof} By the formula for $\Phi^{[m,0]}_{i, \, {\rm add}}$ in 
\cite{W2022c}, we have
{\allowdisplaybreaks
\begin{eqnarray*}
& & \hspace{-8mm}
\Phi^{[m,0] \, \ast}_{\rm add}(\tau, z_1, z_2,0) 
\, = \, - \, 
\dfrac{1}{2} \hspace{-5mm}
\underbrace{\sum_{\substack{j \in \zzz \\[1mm]
0 \, \leq  \, j \, < \, 2m }}}_{\substack{|| \\[-0.5mm] 
{\displaystyle 
\sum_{j=0} \,\ +
\sum_{\substack{j \in \zzz \\[1mm] 0 \, <  \, j \, < \, 2m }}
}}}  \hspace{-7mm}
R_{j,m}(\tau, a\tau+b)
\, \big[\theta_{j,m}+\theta_{-j,m}\big](\tau, z_1+z_2)
\\[2mm]
&=& - \, \frac12 \hspace{-25mm} 
\underbrace{R_{0,m}(\tau, a\tau+b)}_{\substack{|| \\[1mm] 
{\displaystyle \hspace{27mm}
\sum_{\substack{k \, \in \zzz \\[1mm] 0 \, \leq \, k \, \leq \, 2a}}
q^{mk(2a-k)} \quad 
\text{by Lemma \ref{m1:lemma:2022-921a}}
}}} \hspace{-25mm}
\times \,\ 2 \, \theta_{0,m}(\tau, z_1+z_2)
\\[3mm]
& &
- \, \frac{1}{4} \hspace{-3mm}
\sum\limits_{\substack{j \in \zzz \\[1mm]
0 \, <  \, j \, < \, 2m }}  R_{j,m}(\tau, a\tau+b)
\, \big[\theta_{j,m}+\theta_{-j,m}\big](\tau, z_1+z_2)
\\[3mm]
& &
- \,\ \frac14 \hspace{-3mm}
\sum_{\substack{j \in \zzz \\[1mm]
0 \, <  \, j \, < \, 2m }}  R_{2m-j,m}(\tau, a\tau+b)
\, \big[
\underbrace{\theta_{2m-j,m}+\theta_{-(2m-j),m}}_{\substack{|| 
\\[0mm] {\displaystyle \theta_{j,m}+\theta_{-j,m}
}}}
\big](\tau, z_1+z_2)
\\[3mm]
&=&
- \, 
\sum_{\substack{k \, \in \zzz \\[1mm] 0 \, \leq \, k \, \leq \, 2a}}
q^{mk(2a-k)} \, 
\theta_{0,m}(\tau, z_1+z_2)
\\[2mm]
& &
- \, \frac14 \hspace{-3mm}
\sum_{\substack{j \in \zzz \\[1mm] 0 \, <  \, j \, < \, 2m }}  
\big\{
R_{2m-j,m}(\tau, a\tau+b)+ R_{j,m}(\tau, a\tau+b)
\big\}
\, \big[\theta_{j,m}+\theta_{-j,m}\big](\tau, z_1+z_2)
\end{eqnarray*}}
Then, using Lemma \ref{m1:lemma:2022-921a}, this is rewritten as follows:
{\allowdisplaybreaks
\begin{eqnarray*}
&=&
- \, 
\sum_{\substack{k \, \in \zzz \\[1mm] 0 \, \leq \, k \, \leq \, 2a}}
q^{mk(2a-k)} \, 
\theta_{0,m}(\tau, z_1+z_2)
\\[2mm]
& &
- \, \frac14 \hspace{-3mm}
\sum_{\substack{j \in \zzz \\[1mm] 0 \, <  \, j \, < \, 2m }}  
2 \, e^{2\pi ijb} 
\sum_{\substack{k \, \in \zzz \\[1mm] 1 \, \leq \, k \, \leq \, 2a}} 
\hspace{-3mm}
q^{- \frac{1}{4m} (j+2m(2a-k)) (j-2mk)}
\big[\theta_{j,m}+\theta_{-j,m}\big](\tau, z_1+z_2)
\\[3mm]
&=&
- \, 
\sum_{\substack{k \, \in \zzz \\[1mm] 0 \, \leq \, k \, \leq \, 2a}}
q^{mk(2a-k)} \, 
\theta_{0,m}(\tau, z_1+z_2)
\\[2mm]
& &
- \, \frac12 \hspace{-3mm}
\sum_{\substack{j \in \zzz \\[1mm] 0 \, <  \, j \, < \, 2m }} \hspace{-3mm} 
e^{2\pi ijb} \hspace{-2mm}
\sum_{\substack{k \, \in \zzz \\[1mm] 1 \, \leq \, k \, \leq \, 2a}} 
\hspace{-3mm}
q^{- \frac{1}{4m} (j+2m(2a-k)) (j-2mk)}
\big[\theta_{j,m}+\theta_{-j,m}\big](\tau, z_1+z_2)
\end{eqnarray*}}
proving Proposition \ref{m1:prop:2022-921a}.
\end{proof}

Using the above Proposition \ref{m1:prop:2022-921a}, the Zwegers's 
additional functions $\Phi^{[m,0] \, \ast}_{\rm add}(\tau, z_1, z_2,0)$
for $(z_1, z_2) = \frac{z}{2}+\frac{\tau}{2}-\frac12, \, 
\frac{z}{2}-\frac{\tau}{2}+\frac12)$ and 
$(z_1, z_2) = (\frac{z}{2}+\frac{\tau}{2}, \, 
\frac{z}{2}-\frac{\tau}{2})$ are obtained as follows:

\vspace{1mm}

\begin{lemma} 
\label{m1:lemma:2022-921c}
For $m \in \nnn$, the following formulas hold:
\begin{enumerate}
\item[{\rm 1)}] $\Phi^{[m,0] \, \ast}_{\rm add}\Big(\tau, \, 
\dfrac{z}{2}+\dfrac{\tau}{2}-\dfrac12, \, 
\dfrac{z}{2}-\dfrac{\tau}{2}+\dfrac12, \, 0\Big)
\, = \, 
- 
\sum\limits_{\substack{j \in \zzz \\[1mm] 0 \, \leq  \, j \, \leq \, 2m }} 
\hspace{-2mm}
(-1)^j \, 
q^{-\frac{1}{4m}j(j-2m)} \theta_{j,m}(\tau, z)$
\item[{\rm 2)}] $\Phi^{[m,0] \, \ast}_{\rm add}\Big(\tau, \, 
\dfrac{z}{2}+\dfrac{\tau}{2}, \, 
\dfrac{z}{2}-\dfrac{\tau}{2}, \, 0\Big)
\, = \, 
- 
\sum\limits_{\substack{j \in \zzz \\[1mm] 0 \, \leq  \, j \, \leq \, 2m }} 
\hspace{-2mm}
q^{-\frac{1}{4m}j(j-2m)} \theta_{j,m}(\tau, z)$
\end{enumerate}
\end{lemma}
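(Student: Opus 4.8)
The plan is to specialize Proposition \ref{m1:prop:2022-921a} to the two choices of $(z_1,z_2)$ and simplify, the whole argument being a direct substitution followed by a reindexing. In both cases one has $z_1+z_2=z$, while $z_1-z_2=\tau-1$ in part 1) and $z_1-z_2=\tau$ in part 2); writing $z_1-z_2=2a\tau+2b$ this forces $a=\frac12$ in both cases, with $b=-\frac12$ in part 1) and $b=0$ in part 2). Since $2a=1$, the inner sums over $k$ in \eqref{m1:eqn:2022-921a} collapse to at most two terms, which is what makes the computation short.

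First I would evaluate the leading line of \eqref{m1:eqn:2022-921a}. The sum $\sum_{0\le k\le 1}q^{mk(2a-k)}$ has both summands equal to $q^0=1$ (at $k=0$ because $k=0$, and at $k=1$ because $2a-k=0$), so the first line contributes exactly $-2\,\theta_{0,m}(\tau,z)$. For the second line, only $k=1$ survives in the range $1\le k\le 2a=1$, and its exponent becomes $-\frac{1}{4m}(j+2m(2a-1))(j-2m)=-\frac{1}{4m}j(j-2m)$ because $2a-1=0$. The prefactor $e^{2\pi ijb}$ equals $1$ when $b=0$ and equals $(-1)^j$ when $b=-\frac12$, which precisely accounts for the sign $(-1)^j$ appearing in part 1) and its absence in part 2).

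Next I would symmetrize the sum over $j$. Using $\theta_{-j,m}=\theta_{2m-j,m}$ and substituting $j\mapsto 2m-j$ in the $\theta_{-j,m}$ contribution, I would check that the coefficient $q^{-\frac{1}{4m}j(j-2m)}$ is invariant, since $j(j-2m)\mapsto(2m-j)(-j)=j(j-2m)$, and that the sign $(-1)^j$ is likewise invariant because $2m$ is even so $(-1)^{2m-j}=(-1)^j$. Hence the two pieces of $[\theta_{j,m}+\theta_{-j,m}]$ recombine into a single sum with the factor $-\frac12$ in \eqref{m1:eqn:2022-921a} cancelled, yielding $-\sum_{0<j<2m}(\pm1)^j\,q^{-\frac{1}{4m}j(j-2m)}\,\theta_{j,m}(\tau,z)$, with the sign present only in part 1).

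Finally I would absorb the leading $-2\,\theta_{0,m}(\tau,z)$ into the boundary indices of the full sum $\sum_{0\le j\le 2m}$. At $j=0$ and at $j=2m$ the coefficient $q^{-\frac{1}{4m}j(j-2m)}$ equals $q^0=1$, the sign factor equals $1$, and $\theta_{2m,m}=\theta_{0,m}$, so these two boundary indices together contribute exactly $-2\,\theta_{0,m}(\tau,z)$. Extending the range from $0<j<2m$ to $0\le j\le 2m$ therefore reproduces the claimed formulas. The only genuinely delicate point is this bookkeeping, namely verifying the invariance of the coefficient and the sign under $j\mapsto 2m-j$ and matching the factor of two against the two boundary terms; everything else is routine substitution into \eqref{m1:eqn:2022-921a}.
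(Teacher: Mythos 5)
Your proposal is correct and follows essentially the same route as the paper: specialize Proposition \ref{m1:prop:2022-921a} with $2a=1$ and $2b=-1$ (resp.\ $2b=0$), note that the $k$-sums collapse, fold the $\theta_{-j,m}$ half of $[\theta_{j,m}+\theta_{-j,m}]$ back onto $\theta_{j,m}$ via $j\mapsto 2m-j$ using the invariance of $(-1)^j q^{-\frac{1}{4m}j(j-2m)}$, and identify $-2\,\theta_{0,m}$ with the $j=0$ and $j=2m$ boundary terms of the full sum. The only difference is cosmetic: the paper leaves the final boundary-term absorption implicit, while you spell it out.
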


\begin{proof} 1) \,\ In the case
$\left\{
\begin{array}{lcl}
z_1 &=& \frac{z}{2}+\frac{\tau}{2}-\frac12 \\[0mm]
z_2 &=&\frac{z}{2}-\frac{\tau}{2}+\frac12
\end{array}\right. $, letting $\left\{
\begin{array}{ccr}
2a &=& 1 \\[0mm]
2b &=& -1
\end{array}\right. $ in \eqref{m1:eqn:2022-921a}, we have
{\allowdisplaybreaks
\begin{eqnarray*}
& & \hspace{-10mm}
\Phi^{[m,0] \, \ast}_{\rm add}(\tau, z_1, z_2,0) 
= \,\ 
- \, \underbrace{
\sum_{\substack{k \, \in \zzz \\[1mm] 0 \, \leq \, k \, \leq \, 1}}
q^{mk(1-k)}}_{2} 
\, \theta_{0,m}(\tau, z_1+z_2)
\\[2mm]
& &
- \,\ \frac12 \hspace{-3mm}
\sum_{\substack{j \in \zzz \\[1mm] 0 \, <  \, j \, < \, 2m }} \,\ 
e^{-\pi ij} 
\sum_{\substack{k \, \in \zzz \\[1mm] 1 \, \leq \, k \, \leq \, 1}} 
\hspace{-3mm}
q^{- \frac{1}{4m} (j+2m(1-k)) (j-2mk)}
\big[\theta_{j,m}+\theta_{-j,m}\big](\tau, z)
\\[2mm]
&=&
- \, 2 \, \theta_{0,m}(\tau, z_1+z_2) 
\,\ - \,\ 
\frac12 \sum_{\substack{j \in \zzz \\[1mm] 0 \, <  \, j \, < \, 2m }}
(-1)^j \, 
q^{-\frac{1}{4m}j(j-2m)} 
\big[\theta_{j,m}+\theta_{-j,m}\big](\tau, z)
\\[2mm]
&=&
- \, 2 \theta_{0,m}(\tau, z) 
- 
\frac12 \hspace{-3mm}
\sum_{\substack{j \in \zzz \\[1mm] 0 \, <  \, j \, < \, 2m }} \hspace{-3mm}
(-1)^j 
q^{-\frac{1}{4m}j(j-2m)} \theta_{j,m}(\tau, z)
- 
\frac12 \hspace{-4mm}
\underbrace{\sum_{\substack{j \in \zzz \\[1mm] 0 \, <  \, j \, < \, 2m }}
\hspace{-3mm}
(-1)^j q^{-\frac{1}{4m}j(j-2m)} \theta_{-j,m}(\tau, z)}_{
\substack{\hspace{19mm}
|| \,\ \leftarrow \,\ 2m-j \, = \, j' \\[1mm] {\displaystyle 
\sum_{\substack{j \in \zzz \\[1mm] 0 \, <  \, j' \, < \, 2m }} \hspace{-3mm}
(-1)^{j'} 
q^{-\frac{1}{4m}j'(j'-2m)} \theta_{j',m}(\tau, z)
}}}
\\[0mm]
&=&
- \, 2 \, \theta_{0,m}(\tau, z) 
\,\ - \, 
\sum_{\substack{j \in \zzz \\[1mm] 0 \, <  \, j \, < \, 2m }} 
(-1)^j \, 
q^{-\frac{1}{4m}j(j-2m)} \theta_{j,m}(\tau, z)
\end{eqnarray*}}
proving 1).

\medskip

\noindent
2) \,\ In the case $\left\{
\begin{array}{lcl}
z_1 &=& \frac{z}{2}+\frac{\tau}{2} \\[0mm]
z_2 &=&\frac{z}{2}-\frac{\tau}{2}
\end{array}\right. $, letting $\left\{
\begin{array}{ccr}
2a &=& 1 \\[0mm]
2b &=& 0
\end{array}\right. $ in \eqref{m1:eqn:2022-921a}, we have 
{\allowdisplaybreaks
\begin{eqnarray*}
& & \hspace{-10mm}
\Phi^{[m,0] \, \ast}_{\rm add}(\tau, z_1, z_2,0) 
= \, 
- \, \underbrace{
\sum_{\substack{k \, \in \zzz \\[1mm] 0 \, \leq \, k \, \leq \, 1}}
q^{mk(1-k)}}_{2} 
\, \theta_{0,m}(\tau, z_1+z_2)
\\[2mm]
& &
- \,\ \frac12 \hspace{-3mm}
\sum_{\substack{j \in \zzz \\[1mm] 0 \, <  \, j \, < \, 2m }} 
\underbrace{
\sum_{\substack{k \, \in \zzz \\[1mm] 1 \, \leq \, k \, \leq \, 1}} 
\hspace{-3mm}
q^{- \frac{1}{4m} (j+2m(1-k)) (j-2mk)}}_{\substack{|| \\[0mm] 
{\displaystyle q^{-\frac{1}{4m}j(j-2m)}
}}}
\big[\theta_{j,m}+\theta_{-j,m}\big](\tau, z)
\\[2mm]
&=&
- \, 2 \, \theta_{0,m}(\tau, z_1+z_2) 
\,\ - \,\ 
\frac12 \sum_{\substack{j \in \zzz \\[1mm] 0 \, <  \, j \, < \, 2m }}
q^{-\frac{1}{4m}j(j-2m)} 
\big[\theta_{j,m}+\theta_{-j,m}\big](\tau, z)
\\[2mm]
&=&
- \, 2 \, \theta_{0,m}(\tau, z) 
- 
\frac12 \hspace{-3mm}
\sum_{\substack{j \in \zzz \\[1mm] 0 \, <  \, j \, < \, 2m }} \hspace{-3mm}
q^{-\frac{1}{4m}j(j-2m)} \theta_{j,m}(\tau, z)
- 
\frac12 \hspace{-3mm}
\underbrace{\sum_{\substack{j \in \zzz \\[1mm] 0 \, <  \, j \, < \, 2m }}
\hspace{-3mm}
q^{-\frac{1}{4m}j(j-2m)} \theta_{-j,m}(\tau, z)}_{
\substack{\hspace{19mm}
|| \,\ \leftarrow \,\ 2m-j \, = \, j' \\[1mm] {\displaystyle 
\sum_{\substack{j \in \zzz \\[1mm] 0 \, <  \, j' \, < \, 2m }}
\hspace{-3mm}
q^{-\frac{1}{4m}j'(j'-2m)} \theta_{j',m}(\tau, z)
}}}
\\[0mm]
&=&
- \, 2 \, \theta_{0,m}(\tau, z) 
\,\ - \, 
\sum_{\substack{j \in \zzz \\[1mm] 0 \, <  \, j \, < \, 2m }} 
q^{-\frac{1}{4m}j(j-2m)} \theta_{j,m}(\tau, z)
\end{eqnarray*}}
proving 2).
\end{proof}

\section{$\Phi^{[m,0] \, \ast}(\tau, z_1+p\tau, z_2-p\tau, 0)$}
\label{sec:Phi(+:0)ast:(z1+ptau:z2-ptau)}

\subsection{$\Phi^{[m,0] \, \ast}(\tau, z_1+p\tau, z_2-p\tau,t)$ 
$\sim$ the case $z_1-z_2=\tau-1$}
\label{subsec:Phi(+:0)ast(z1+ptau:z2-ptau):z1-z2=(1+2p)tau-1}

\medskip

\begin{lemma} 
\label{m1:lemma:2022-919a}
For $m \in \nnn$ and $p \in \zzz$, the following formula holds:
{\allowdisplaybreaks
\begin{eqnarray}
& & \hspace{-10mm}
\Phi^{[m,0] \, \ast}\Big(\tau, \,\ 
\frac{z}{2}+\dfrac{\tau}{2}-\dfrac12+p\tau, \,\ 
\frac{z}{2}-\dfrac{\tau}{2}+\dfrac12-p\tau, \,\ 0\Big)
\nonumber
\\[2mm]
&=&
q^{mp(p+1)} \, \bigg\{
\widetilde{\Phi}^{[m,0] \, \ast}
\Big(\tau, \,\ 
\frac{z}{2}+\frac{\tau}{2}-\frac12, \,\ 
\frac{z}{2}-\frac{\tau}{2}+\frac12, \,\ 0\Big)
\nonumber
\\[2mm]
& & \hspace{-5mm}
+ \sum_{\substack{r \, \in \zzz \\[1mm] -p \, \leq \, r \, \leq \, p}} \,
\sum_{\substack{k \, \in \zzz \\[1mm] 0 \, \leq \, k \, < \, m}} 
(-1)^k \, q^{-\frac{1}{4m}(2mr+k)(2m(r-1)+k)} 
\big[\theta_{k,m}+\theta_{-k,m}\big](\tau, z)
\nonumber
\\[3mm]
& & \hspace{-5mm}
- \,\ 2 \hspace{-2mm}
\sum_{\substack{r \, \in \zzz \\[1mm] 0 \, \leq \, r \, \leq \, p-1}} \hspace{-3mm}
q^{-mr(r+1)} \theta_{0,m}(\tau, z)
\, + \, 
(-1)^m \hspace{-3mm}
\sum_{\substack{r \, \in \zzz \\[1mm] -p \, \leq \, r \, \leq \, p}} \hspace{-3mm}
q^{-\frac{m}{4}(2r+1)(2r-1)} \, \theta_{m,m}(\tau, z)
\bigg\}
\label{m1:eqn:2022-919a}
\end{eqnarray}}
\end{lemma}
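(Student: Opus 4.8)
The plan is to peel off the $p\tau$ shift with Lemma \ref{m1:lemma:2022-917b}, convert the resulting base-point value of $\Phi^{[m,0]\,\ast}$ into $\widetilde{\Phi}^{[m,0]\,\ast}$ via the Zwegers correction, and then reorganize the explicit theta contributions into the three structured sums on the right-hand side of \eqref{m1:eqn:2022-919a}.

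First I would set $z_1=\frac{z}{2}+\frac{\tau}{2}-\frac12$ and $z_2=\frac{z}{2}-\frac{\tau}{2}+\frac12$, so that $z_1+z_2=z$ and $z_1-z_2=\tau-1$, and apply Lemma \ref{m1:lemma:2022-917b} with sign $(+)$, $s=0$ and $a=p\ge 0$. Since $m\in\nnn$ and $p\in\zzz$ one has $e^{2\pi imp(z_1-z_2)}q^{mp^2}=e^{2\pi imp(\tau-1)}q^{mp^2}=q^{mp}q^{mp^2}=q^{mp(p+1)}$, which already yields the overall factor $q^{mp(p+1)}$ in \eqref{m1:eqn:2022-919a}. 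In the same way each summand simplifies, using $e^{-\pi ik(\tau-1)}q^{-k^2/(4m)}=(-1)^kq^{-k(k+2m)/(4m)}$, so that Lemma \ref{m1:lemma:2022-917b} gives $\Phi^{[m,0]\,\ast}(\tau,z_1+p\tau,z_2-p\tau,0)=q^{mp(p+1)}\big\{\Phi^{[m,0]\,\ast}(\tau,z_1,z_2,0)+\sum_{1\le k\le 2pm}(-1)^kq^{-k(k+2m)/(4m)}[\theta_{k,m}+\theta_{-k,m}](\tau,z)\big\}$.

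Next I would replace the base-point value $\Phi^{[m,0]\,\ast}(\tau,z_1,z_2,0)$ by means of the modified function, $\Phi^{[m,0]\,\ast}=\widetilde{\Phi}^{[m,0]\,\ast}-\Phi^{[m,0]\,\ast}_{\mathrm{add}}$, and insert the explicit value of $\Phi^{[m,0]\,\ast}_{\mathrm{add}}(\tau,z_1,z_2,0)$ from part 1) of Lemma \ref{m1:lemma:2022-921c}. This produces $\widetilde{\Phi}^{[m,0]\,\ast}(\tau,z_1,z_2,0)$ together with a single combined theta expression $C:=\sum_{0\le j\le 2m}(-1)^jq^{-j(j-2m)/(4m)}\theta_{j,m}(\tau,z)+\sum_{1\le k\le 2pm}(-1)^kq^{-k(k+2m)/(4m)}[\theta_{k,m}+\theta_{-k,m}](\tau,z)$, and it remains to show that $C$ equals the three structured sums in \eqref{m1:eqn:2022-919a}. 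At $p=0$ this is already a direct check: folding the range $0\le j\le 2m$ of the first sum onto $0\le k\le m$ via $j\mapsto 2m-j$ (which fixes both the exponent and the sign, since $-j(j-2m)$ and $(-1)^j$ are invariant) recovers exactly the $r=0$ slice of the first structured sum together with the $(-1)^mq^{m/4}\theta_{m,m}$ term.

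The main work, and the step I expect to be the real obstacle, is the reorganization of $C$ for general $p$. Here I would group the terms of both sums by the residue of the theta index modulo $2m$, writing each contributing index as $2mr\pm k$ with $0\le k\le m$ and using $\theta_{k+2m,m}=\theta_{k,m}$ together with $\theta_{-k,m}=\theta_{2m-k,m}$. The key algebraic identity is that, under $u=2mr+k$, one has $(2mr+k)(2m(r-1)+k)=u(u-2m)$, so that the exponent $-\frac{1}{4m}(2mr+k)(2m(r-1)+k)$ simultaneously reproduces the $\Phi_{\mathrm{add}}$ exponent $-\frac{1}{4m}j(j-2m)$ (via $u=j$, i.e. $r=0$) and the shift exponent $-\frac{1}{4m}k(k+2m)$ (via $u=k+2m$), while $(-1)^{2mr+k}=(-1)^k$ collapses the sign. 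Tracking the admissible ranges shows that the $\theta_{k,m}$ part of the shift sum supplies the levels $r=1,\dots,p$, the correction $-\Phi^{[m,0]\,\ast}_{\mathrm{add}}$ supplies $r=0$, and the $\theta_{-k,m}$ part supplies $r=-1,\dots,-p$, so that together they fill out $-p\le r\le p$ for each interior index $0<k<m$, giving the first structured sum. The delicate point is the behaviour at the reflection-fixed indices $k=0$ and $k=m$, where the two halves $\theta_{k,m}$ and $\theta_{-k,m}$ coincide and the naive interior count double-counts, so that a direct evaluation of the $\theta_{0,m}$- and $\theta_{m,m}$-coefficients is required. Carrying this out gives $\theta_{0,m}$-coefficient $2\sum_{-p\le r\le p}q^{-mr(r-1)}-2\sum_{0\le r\le p-1}q^{-mr(r+1)}$ and $\theta_{m,m}$-coefficient $(-1)^m\sum_{-p\le r\le p}q^{-\frac{m}{4}(2r+1)(2r-1)}$, which are precisely the second and third structured sums (the former being the $k=0$ slice of the first sum corrected by the factor-two term). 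Finally, for $p<0$ the same computation applies after using Lemma \ref{m1:lemma:2022-917c} in place of Lemma \ref{m1:lemma:2022-917b} to effect the shift, the reversed summation ranges being read with the signed convention used throughout the paper.
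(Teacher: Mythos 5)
Your proposal is correct and takes essentially the same route as the paper: specialize Lemma \ref{m1:lemma:2022-917b} at $z_1-z_2=\tau-1$ to get the factor $q^{mp(p+1)}$ and the shift sum, bring in $\Phi^{[m,0]\,\ast}_{\rm add}$ from Lemma \ref{m1:lemma:2022-921c} to form $\widetilde{\Phi}^{[m,0]\,\ast}$, and reindex the remaining theta sum by $k=2mr+k'$ with the fold $k'\mapsto 2m-k'$, treating $k=0$ and $k=m$ separately. The only (cosmetic) difference is that you substitute $\Phi=\widetilde{\Phi}-\Phi_{\rm add}$ up front and reorganize the combined expression $C$, whereas the paper first manipulates the shift sum until the $\Phi_{\rm add}$ term emerges and then absorbs it; your stated $\theta_{0,m}$- and $\theta_{m,m}$-coefficients agree with the paper's.
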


\begin{proof} Letting $\left\{
\begin{array}{lcc}
z_1 &=&\frac{z}{2}+\frac{\tau}{2}-\frac12 \\[1mm]
z_2 &=&\frac{z}{2}-\frac{\tau}{2}+\frac12
\end{array} \right. $ namely
$\left\{
\begin{array}{ccc}
z_1-z_2 &=& \tau-1 \\[0mm]
z_1+z_2 &=& z
\end{array}\right. $ in the formula \eqref{m1:eqn:2022-917b}
in Lemma \ref{m1:lemma:2022-917b}, we have 
{\allowdisplaybreaks
\begin{eqnarray}
& & \hspace{-5mm}
\Phi^{[m,0] \, \ast}(\tau, \, z_1+p\tau, \, z_2-p\tau, \, 0)
\nonumber
\\[2mm]
&=&
e^{2\pi imp(\tau-1)}
q^{mp^2} \bigg\{
\Phi^{[m,0] \, \ast}(\tau, z_1, z_2,0)
+ \sum_{\substack{k \, \in \zzz \\[1mm] 1 \, \leq \, k \, \leq \, 2pm}} 
e^{-\pi ik(\tau-1)} q^{-\frac{1}{4m}k^2} 
\big[\theta_{k,m}+\theta_{-k,m}\big](\tau, z)\bigg\}
\nonumber
\\[2mm]
&=&
q^{mp(p+1)} \bigg\{
\Phi^{[m,0] \, \ast}(\tau, z_1, z_2,0)
+ \hspace{-2mm}
\underbrace{
\sum_{\substack{k \, \in \zzz \\[1mm] 1 \, \leq \, k \, \leq \, 2pm}} 
\hspace{-4mm}
(-1)^k q^{-\frac{1}{4m}(k^2+2mk)}
\big[\theta_{k,m}+\theta_{-k,m}\big](\tau, z)}_{\rm (I)}
\bigg\}
\label{m1:eqn:2022-919b}
\end{eqnarray}}
We compute (I) by putting $k=2mr+k'$ \,\ 
$(0 \leq r <p, \,\ 1 \leq k' \leq 2m)$ \, as follows:
{\allowdisplaybreaks
\begin{eqnarray*}
& & \hspace{-10mm}
{\rm (I)} =
\sum_{\substack{r \, \in \zzz \\[1mm] 0 \leq r <p}}
\sum_{\substack{k' \, \in \zzz \\[1mm] 1 \, \leq \, k' \, \leq \, 2m}} 
(-1)^{2mr+k'} q^{-\frac{1}{4m}(2mr+k')(2mr+k'+2m)}
\big[\theta_{2mr+k',m}+\theta_{-(2mr+k'),m}\big](\tau, z)
\\[2mm]
&=&
\underbrace{
\sum_{\substack{r \, \in \zzz \\[1mm] 0 \, \leq \, r \, < \, p}} \, 
\sum_{\substack{k' \, \in \zzz \\[1mm] 1 \, \leq \, k' \, \leq \, 2m}}
(-1)^{k'} q^{-\frac{1}{4m}(2mr+k')(2m(r+1)+k')} \, 
\theta_{k',m}(\tau, z)}_{(A)}
\\[1mm]
& & \hspace{-3mm}
+ \,\ 
\underbrace{
\sum_{\substack{r \, \in \zzz \\[1mm] 0 \, \leq \, r \, < \, p}} \, 
\sum_{\substack{k' \, \in \zzz \\[1mm] 1 \, \leq \, k' \, \leq \, 2m}}
(-1)^{k'} q^{-\frac{1}{4m}(2mr+k')(2m(r+1)+k')} \, 
\theta_{-k',m}(\tau, z)}_{(B)}
\end{eqnarray*}}
where $(A)$ is computed by using
\begin{equation}
\sum_{\substack{k \, \in \zzz \\[1mm] 1 \, \leq \, k \, \leq \, 2m}} 
= 
\sum_{\substack{k \, \in \zzz \\[1mm] 0 \, \leq \, k \, < \, 2m}} 
- \,\ \sum_{k \, = \, 0} \,\ + \,\ \sum_{k \, = \, 2m}
\label{m1:eqn:2022-919c}
\end{equation}
as follows:
{\allowdisplaybreaks
\begin{eqnarray*}
& & \hspace{-7mm}
(A) \,\ = \, 
\sum_{\substack{r \, \in \zzz \\[1mm] 0 \, \leq \, r \, < \, p}} \, 
\sum_{\substack{k \, \in \zzz \\[1mm] 0 \, \leq \, k \, < \, 2m}}
(-1)^{k} q^{-\frac{1}{4m}(2mr+k)(2m(r+1)+k)} \, 
\theta_{k,m}(\tau, z)
\\[2mm]
& &
- \, \sum_{\substack{r \, \in \zzz \\[1mm] 0 \, \leq \, r \, < \, p}} \, 
\underbrace{q^{-\frac{1}{4m}2mr \cdot 2m(r+1)}}_{
\substack{|| \\[0mm] {\displaystyle q^{-mr(r+1)}
}}} \, 
\theta_{0,m}(\tau, z)
+ 
\underbrace{
\sum_{\substack{r \, \in \zzz \\[1mm] 0 \, \leq \, r \, < \, p}} 
\hspace{-2mm} 
q^{-\frac{1}{4m}(2mr+2m)(2m(r+1)+2m)}
}_{\substack{|| \\[0mm] {\displaystyle 
\sum_{\substack{r \, \in \zzz \\[1mm] 1 \, \leq \, r \, \leq \, p}}
q^{-mr(r+1)}
}}}
\underbrace{\theta_{2m,m}}_{\substack{|| \\[1mm] 
{\displaystyle \theta_{0,m}
}}}(\tau, z)
\\[2mm]
&=&
\sum_{\substack{r \, \in \zzz \\[1mm] 0 \, \leq \, r \, < \, p}} \, 
\sum_{\substack{k \, \in \zzz \\[1mm] 0 \, \leq \, k \, < \, 2m}}
(-1)^{k} q^{-\frac{1}{4m}(2mr+k)(2m(r+1)+k)} \, 
\theta_{k,m}(\tau, z)
\\[2mm]
& &
+ \,\ \bigg\{- \, 
\sum_{\substack{r \, \in \zzz \\[1mm] 0 \, \leq \, r \, < \, p}} 
\, + \, 
\sum_{\substack{r \, \in \zzz \\[1mm] 1 \, \leq \, r \, \leq \, p}} 
\bigg\}
q^{-mr(r+1)} \, \theta_{0,m}(\tau, z)
\\[2mm]
&=&
\sum_{\substack{r \, \in \zzz \\[1mm] 0 \, \leq \, r \, < \, p}} \, 
\sum_{\substack{k \, \in \zzz \\[1mm] 0 \, \leq \, k \, < \, 2m}}
(-1)^{k} q^{-\frac{1}{4m}(2mr+k)(2m(r+1)+k)} \, 
\theta_{k,m}(\tau, z)
+ 
\big\{q^{-mp(p+1)}- 1\big\} \, \theta_{0,m}(\tau, z)
\end{eqnarray*}}
And $(B)$ becomes by putting $k'=2m-k$ and $r+2=-r'$ as follows:
{\allowdisplaybreaks
\begin{eqnarray*}
(B) &=&
\sum_{\substack{r' \, \in \zzz \\[1mm] -p-1 \, \leq \, r' \, \leq \, -1}} \,\ 
\sum_{\substack{k' \, \in \zzz \\[1mm] 0 \, \leq \, k' \, < \, 2m}}
(-1)^{k'} \, q^{-\frac{1}{4m}(2mr'+k')(2m(r'+1)+k')} \, 
\theta_{k',m}(\tau, z)
\\[2mm]
& &
- \sum_{\substack{k' \, \in \zzz \\[1mm] 0 \, \leq \, k' \, < \, 2m}}
(-1)^{k'} \, q^{-\frac{1}{4m}(-2m+k')k'} \, 
\theta_{k',m}(\tau, z)
\end{eqnarray*}}
Then (I) becomes as follows:
{\allowdisplaybreaks
\begin{eqnarray}
& & \hspace{-10mm}
{\rm (I)} \,\ = \,\ (A)+(B) 
\nonumber
\\[2mm]
&=&
\sum_{\substack{k \, \in \zzz \\[1mm] 0 \, \leq \, k \, < \, 2m}}
(-1)^{k}
\underbrace{
\sum_{\substack{r \, \in \zzz \\[1mm] -p-1 \, \leq \, r \, < \, p}} \, 
q^{-\frac{1}{4m}(2mr+k)(2m(r+1)+k)}}_{\substack{\hspace{16.5mm}
|| \,\ \leftarrow \,\ r+1 \, = \, r' \\[1mm] {\displaystyle 
\sum_{\substack{r' \, \in \zzz \\[1mm] -p \, \leq \, r' \, \leq \, p}}
q^{-\frac{1}{4m}(2m(r'-1)+k)(2mr'+k)}
}}} \, \theta_{k,m}(\tau, z)
\nonumber
\\[2mm]
& & \hspace{-5mm}
\underbrace{- 
\sum_{\substack{k \, \in \zzz \\[1mm] 0 \, \leq \, k \, < \, 2m}}
(-1)^k q^{-\frac{1}{4m}(-2m+k)k} \, \theta_{k,m}(\tau, z)
\, - \, \theta_{0,m}(\tau, z)}_{\substack{|| \\[0mm] 
{\displaystyle 
- \sum_{\substack{k \, \in \zzz \\[1mm] 0 \, \leq \, k \, \leq \, 2m}}
(-1)^k q^{-\frac{1}{4m}(-2m+k)k} \, \theta_{k,m}(\tau, z)
}}}
\, + \, 
q^{-mp(p+1)}\, \theta_{0,m}(\tau, z)
\label{m1:eqn:2022-919d}
\end{eqnarray}}
Then substituting \eqref{m1:eqn:2022-919d} into 
\eqref{m1:eqn:2022-919b}, we have
{\allowdisplaybreaks
\begin{eqnarray}
& & \hspace{-10mm}
\Phi^{[m,0] \, \ast}(\tau, \, z_1+p\tau, \, z_2-p\tau, \, 0)
\,\ = \,\ 
q^{mp(p+1)} \, \Big\{
\Phi^{[m,0] \, \ast}(\tau, z_1, z_2,0) \,\ + \,\ {\rm (I)}\Big\}
\nonumber
\\[2mm]
&=&
q^{mp(p+1)} \, \bigg\{
\Phi^{[m,0] \, \ast}(\tau, z_1, z_2,0)
\nonumber
\\[2mm]
& &
+ \,\ 
\sum_{\substack{r \, \in \zzz \\[1mm] -p \, \leq \, r \, \leq \, p}} \, 
\sum_{\substack{k \, \in \zzz \\[1mm] 0 \, \leq \, k \, < \, 2m}}
(-1)^{k} q^{-\frac{1}{4m}(2mr+k)(2m(r-1)+k)} \, \theta_{k,m}(\tau, z)
\nonumber
\\[2mm]
& & \hspace{-7.2mm}
\underbrace{- 
\sum_{\substack{k \, \in \zzz \\[1mm] 0 \, \leq \, k \, \leq \, 2m}}
(-1)^k q^{-\frac{1}{4m}k(k-2m)} \theta_{k,m}(\tau, z)}_{\substack{|| 
\\[-1mm] {\displaystyle \hspace{20mm}
\Phi^{[m,0] \, \ast}_{\rm add}(\tau, z_1, z_2,0) \quad 
\text{by Lemma \ref{m1:lemma:2022-921c}}
}}} \hspace{-3mm}
+ \quad q^{-mp(p+1)}\, \theta_{0,m}(\tau, z)\bigg\}
\nonumber
\\[2mm]
&=&
q^{mp(p+1)} \, \bigg\{
\widetilde{\Phi}^{[m,0] \, \ast}(\tau, z_1, z_2,0)
+ \hspace{-3mm}
\underbrace{
\sum_{\substack{r \, \in \zzz \\[1mm] -p \, \leq \, r \, \leq \, p}} \, 
\sum_{\substack{k \, \in \zzz \\[1mm] 0 \, \leq \, k \, < \, 2m}}
\hspace{-3mm}
(-1)^{k} q^{-\frac{1}{4m}(2mr+k)(2m(r-1)+k)} \theta_{k,m}(\tau, z)
}_{\rm (II)}
\nonumber
\\[-5mm]
& &
+ \,\ q^{-mp(p+1)}\, \theta_{0,m}(\tau, z)\bigg\}
\label{m1:eqn:2022-919e}
\end{eqnarray}}
We go further to compute 
$$
{\rm (II)} \,\ = \,\ {\rm (II)}_A + {\rm (II)}_B
$$
where 
{\allowdisplaybreaks
\begin{eqnarray*}
{\rm (II)}_A &:=&
\sum_{\substack{r \, \in \zzz \\[1mm] -p \, \leq \, r \, \leq \, p}} \,\
\sum_{\substack{k \, \in \zzz \\[1mm] 0 \, \leq \, k \, \leq \, m}}
(-1)^{k} q^{-\frac{1}{4m}(2mr+k)(2m(r-1)+k)} \theta_{k,m}(\tau, z)
\\[2mm]
{\rm (II)}_B &:=&
\sum_{\substack{r \, \in \zzz \\[1mm] -p \, \leq \, r \, \leq \, p}} \,\
\sum_{\substack{k \, \in \zzz \\[1mm] m \, < \, k \, < \, 2m}}
(-1)^{k} q^{-\frac{1}{4m}(2mr+k)(2m(r-1)+k)} \theta_{k,m}(\tau, z)
\end{eqnarray*}}
First we compute ${\rm (II)}_A$ :
\begin{subequations}
{\allowdisplaybreaks
\begin{eqnarray}
{\rm (II)}_A &=&
\sum_{\substack{r \, \in \zzz \\[1mm] -p \, \leq \, r \, \leq \, p}} \,
\underbrace{
\sum_{\substack{k \, \in \zzz \\[1mm] 0 \, \leq \, k \, \leq \, m}}}_{
\substack{|| \\[-2mm] {\displaystyle 
\sum_{\substack{k \, \in \zzz \\[1mm] 0 \, \leq \, k \, < \, m}} 
+ \sum_{k=m}
}}}
(-1)^k \, q^{-\frac{1}{4m}(2mr+k)(2m(r-1)+k)} \theta_{k,m}(\tau, z)
\nonumber
\\[2mm]
&=&
\sum_{\substack{r \, \in \zzz \\[1mm] -p \, \leq \, r \, \leq \, p}} \,\
\sum_{\substack{k \, \in \zzz \\[1mm] 0 \, \leq \, k \, < \, m}} 
(-1)^k \, q^{-\frac{1}{4m}(2mr+k)(2m(r-1)+k)} \theta_{k,m}(\tau, z)
\nonumber
\\[3mm]
& &
+ \,\ (-1)^m 
\sum_{\substack{r \, \in \zzz \\[1mm] -p \, \leq \, r \, \leq \, p}}
q^{-\frac{m}{4}(2r+1)(2r-1)} \, \theta_{m,m}(\tau, z)
\label{m1:eqn:2022-919f1}
\end{eqnarray}}
Next, ${\rm (II)}_B$ is rewitten as follows by putting $k=2m-k'$:
{\allowdisplaybreaks
\begin{eqnarray}
{\rm (II)}_B &=&
\sum_{\substack{r \, \in \zzz \\[1mm] -p \, \leq \, r \, \leq \, p}} \,\
\sum_{\substack{k' \, \in \zzz \\[1mm] 0 \, < \, k' \, < \, m}}
(-1)^{k'} q^{-\frac{1}{4m}(2m(r+1)-k')(2mr-k')} \theta_{-k',m}(\tau, z)
\nonumber
\\[2mm]
& \hspace{-2mm}
\underset{\substack{\\[0.5mm] \uparrow \\[1mm] 
r = -r'}}{=} \hspace{-2mm} &
\sum_{\substack{r' \, \in \zzz \\[1mm] -p \, \leq \, r' \, \leq \, p}} 
\underbrace{
\sum_{\substack{k' \, \in \zzz \\[1mm] 0 \, < \, k' \, < \, m}}
}_{\substack{|| \\[-0.5mm] {\displaystyle \hspace{-5mm}
\sum_{\substack{k' \, \in \zzz \\[1mm] 0 \, \leq \, k' \, < \, m}} 
-
\sum_{k'=0}
}}}
(-1)^{k'} q^{-\frac{1}{4m}(2m(r'-1)+k')(2mr'+k')} \theta_{-k',m}(\tau, z)
\nonumber
\\[1mm]
&=&
\sum_{\substack{r' \, \in \zzz \\[1mm] -p \, \leq \, r' \, \leq \, p}} \,\ 
\sum_{\substack{k' \, \in \zzz \\[1mm] 0 \, \leq \, k' \, < \, m}}
(-1)^{k'} q^{-\frac{1}{4m}(2m(r'-1)+k')(2mr'+k')} \theta_{-k',m}(\tau, z)
\nonumber
\\[2.5mm]
& & \hspace{-35mm}
\underbrace{
- \sum_{\substack{r' \, \in \zzz \\[1mm] -p \, \leq \, r' \, \leq \, p}}
q^{-mr'(r'-1)} \, \theta_{0,m}(\tau, z)
}_{\substack{|| \\[0mm] {\displaystyle \hspace{38mm}
- \, 2 
\sum_{\substack{r \, \in \zzz \\[1mm] 0 \, \leq \, r \, \leq \, p-1}} 
\hspace{-2mm}
q^{-mr(r+1)} \, \theta_{0,m}(\tau, z)
\, - \, 
q^{-mp(p+1)} \theta_{0,m}(\tau, z)
}}} 
\label{m1:eqn:2022-919f2}
\end{eqnarray}}
\end{subequations}
Then by \eqref{m1:eqn:2022-919f1} and \eqref{m1:eqn:2022-919f2},
we have
{\allowdisplaybreaks
\begin{eqnarray}
& & \hspace{-12mm}
{\rm (II)} +  q^{-mp(p+1)} \theta_{0,m}(\tau, z)
\nonumber
\\[3mm]
&=&
\sum_{\substack{r \, \in \zzz \\[1mm] -p \, \leq \, r \, \leq \, p}} \,
\sum_{\substack{k \, \in \zzz \\[1mm] 0 \, \leq \, k \, < \, m}} 
(-1)^k \, q^{-\frac{1}{4m}(2mr+k)(2m(r-1)+k)} 
\big[\theta_{k,m}+\theta_{-k,m}\big](\tau, z)
\nonumber
\\[3mm]
& & \hspace{-5mm}
- \, 2 
\sum_{\substack{r \, \in \zzz \\[1mm] 0 \, \leq \, r \, \leq \, p-1}} 
\hspace{-2mm}
q^{-mr(r+1)} \theta_{0,m}(\tau, z)
\, + \, 
(-1)^m \hspace{-2mm}
\sum_{\substack{r \, \in \zzz \\[1mm] -p \, \leq \, r \, \leq \, p}} 
\hspace{-2mm}
q^{-\frac{m}{4}(2r+1)(2r-1)} \, \theta_{m,m}(\tau, z)
\label{m1:eqn:2022-919g}
\end{eqnarray}}
Substituting this equation \eqref{m1:eqn:2022-919g} into 
\eqref{m1:eqn:2022-919b}, we obtain 
{\allowdisplaybreaks
\begin{eqnarray*}
& & \hspace{-10mm}
\Phi^{[m,0] \, \ast}(\tau, \, z_1+p\tau, \, z_2-p\tau, \, 0)
\,\ = \,\ 
q^{mp(p+1)} \, \bigg\{
\widetilde{\Phi}^{[m,0] \, \ast}(\tau, z_1, z_2,0)
\\[2mm]
& & 
+ \sum_{\substack{r \, \in \zzz \\[1mm] -p \, \leq \, r \, \leq \, p}} \,
\sum_{\substack{k \, \in \zzz \\[1mm] 0 \, \leq \, k \, < \, m}} 
(-1)^k \, q^{-\frac{1}{4m}(2mr+k)(2m(r-1)+k)} 
\big[\theta_{k,m}+\theta_{-k,m}\big](\tau, z)
\\[3mm]
& &
- \,\ 2 \hspace{-2mm}
\sum_{\substack{r \, \in \zzz \\[1mm] 0 \, \leq \, r \, \leq \, p-1}} 
\hspace{-3mm}
q^{-mr(r+1)} \theta_{0,m}(\tau, z)
\, + \, 
(-1)^m \hspace{-3mm}
\sum_{\substack{r \, \in \zzz \\[1mm] -p \, \leq \, r \, \leq \, p}} 
\hspace{-3mm}
q^{-\frac{m}{4}(2r+1)(2r-1)} \, \theta_{m,m}(\tau, z)
\bigg\}
\end{eqnarray*}}
proving Lemma \ref{m1:lemma:2022-919a}.
\end{proof}

\subsection{$\Phi^{[m,0] \, \ast}(\tau, z_1+p\tau, z_2-p\tau,t)$ 
$\sim$ the case $z_1-z_2=\tau$}
\label{subsec:Phi(+:0)ast(z1+ptau:z2-ptau):z1-z2=(1+2p)tau}

\medskip

\begin{lemma} 
\label{m1:lemma:2022-920a}
For $m \in \nnn$ and $p \in \zzz$, the following formula holds:
{\allowdisplaybreaks
\begin{eqnarray}
& & \hspace{-15mm}
\Phi^{[m,0] \, \ast}\Big(\tau, \,\ 
\frac{z}{2}+\dfrac{\tau}{2}+p\tau, \,\ 
\frac{z}{2}-\dfrac{\tau}{2}-p\tau, \,\ 0\Big)
\nonumber
\\[2mm] 
&= &
q^{mp(p+1)} \, \bigg\{
\widetilde{\Phi}^{[m,0] \, \ast}
\Big(\tau, \,\ 
\frac{z}{2}+\frac{\tau}{2}, \,\ 
\frac{z}{2}-\frac{\tau}{2}, \,\ 0\Big)
\nonumber
\\[2mm]
& & 
+ \sum_{\substack{r \, \in \zzz \\[1mm] -p \, \leq \, r \, \leq \, p}} \,
\sum_{\substack{k \, \in \zzz \\[1mm] 0 \, \leq \, k \, < \, m}} 
q^{-\frac{1}{4m}(2mr+k)(2m(r-1)+k)} 
\big[\theta_{k,m}+\theta_{-k,m}\big](\tau, z)
\nonumber
\\[3mm]
& & 
- \,\ 2 \hspace{-2mm}
\sum_{\substack{r \, \in \zzz \\[1mm] 0 \, \leq \, r \, \leq \, p-1}} 
\hspace{-3mm}
q^{-mr(r+1)} \theta_{0,m}(\tau, z)
\,\ + \hspace{-1mm}
\sum_{\substack{r \, \in \zzz \\[1mm] -p \, \leq \, r \, \leq \, p}} 
\hspace{-3mm}
q^{-\frac{m}{4}(2r+1)(2r-1)} \, \theta_{m,m}(\tau, z)
\bigg\}
\label{m1:eqn:2022-920a}
\end{eqnarray}}
\end{lemma}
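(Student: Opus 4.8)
The plan is to follow the proof of Lemma \ref{m1:lemma:2022-919a} essentially verbatim, changing only the phase factors that distinguish the case $z_1-z_2=\tau$ from the case $z_1-z_2=\tau-1$. First I would set $z_1=\frac{z}{2}+\frac{\tau}{2}$ and $z_2=\frac{z}{2}-\frac{\tau}{2}$, so that $z_1-z_2=\tau$ and $z_1+z_2=z$, and apply the translation formula \eqref{m1:eqn:2022-917b} of Lemma \ref{m1:lemma:2022-917b} with $s=0$, $a=p$ and the $+$ sign. Since $mp\in\zzz$, the prefactor $(\pm1)^a e^{2\pi ima(z_1-z_2)}q^{ma^2}$ collapses to $e^{2\pi imp\tau}q^{mp^2}=q^{mp}q^{mp^2}=q^{mp(p+1)}$, exactly the factor appearing in the statement.

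The decisive point is that in the resulting correction sum each exponential now reads $e^{-\pi ik\tau}=q^{-k/2}$, whereas in Lemma \ref{m1:lemma:2022-919a} it read $e^{-\pi ik(\tau-1)}=(-1)^k q^{-k/2}$. Consequently the analogue of the bracketed term (I) in \eqref{m1:eqn:2022-919b} becomes
\[
\sum_{\substack{k\in\zzz\\ 1\le k\le 2pm}} q^{-\frac{1}{4m}(k^2+2mk)}\big[\theta_{k,m}+\theta_{-k,m}\big](\tau,z),
\]
i.e. the very same expression as before but with all factors $(-1)^k$ deleted. From here the combinatorial reorganization is identical to the earlier proof: substitute $k=2mr+k'$ with $0\le r<p$, $1\le k'\le 2m$, separate the $\theta_{k',m}$ and $\theta_{-k',m}$ pieces into the sums (A) and (B), apply the index replacement \eqref{m1:eqn:2022-919c} together with the shifts $k'=2m-k$ and $r+2=-r'$, and fold the negative-index theta functions back via $2m-j=j'$.

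The one input specific to this case is that the Zwegers correction must be taken from part 2) of Lemma \ref{m1:lemma:2022-921c}, namely $\Phi^{[m,0] \, \ast}_{\rm add}(\tau,\frac{z}{2}+\frac{\tau}{2},\frac{z}{2}-\frac{\tau}{2},0)=-\sum_{0\le j\le 2m}q^{-\frac{1}{4m}j(j-2m)}\theta_{j,m}(\tau,z)$, which again differs from part 1) precisely by the removal of the $(-1)^j$. Recognizing the boundary sum $-\sum_{0\le k\le 2m}q^{-\frac{1}{4m}k(k-2m)}\theta_{k,m}(\tau,z)$ as this $\Phi^{[m,0] \, \ast}_{\rm add}$ is exactly what converts $\Phi^{[m,0] \, \ast}(\tau,z_1,z_2,0)$ into $\widetilde{\Phi}^{[m,0] \, \ast}(\tau,z_1,z_2,0)$, and collecting the surviving terms then yields \eqref{m1:eqn:2022-920a}.

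There is no genuine conceptual obstacle, since the argument is strictly parallel to Lemma \ref{m1:lemma:2022-919a}; the only step demanding care is the bookkeeping in the range manipulations (splitting the double sum into its $(\mathrm{II})_A$ and $(\mathrm{II})_B$ parts and performing the two index reflections), where one must check that the uniform disappearance of the $(-1)^k$ and $(-1)^m$ signs is consistent throughout. In particular one should verify that the boundary contributions at $k=0$ and $k=m$ still telescope to the clean coefficients $-2\sum_{0\le r\le p-1}q^{-mr(r+1)}$ in front of $\theta_{0,m}$ and $\sum_{-p\le r\le p}q^{-\frac{m}{4}(2r+1)(2r-1)}$ in front of $\theta_{m,m}$, now with coefficient $+1$ rather than $(-1)^m$.
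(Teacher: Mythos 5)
Your proposal follows exactly the paper's own proof: the paper likewise specializes \eqref{m1:eqn:2022-917b} to $z_1-z_2=\tau$, obtains the prefactor $q^{mp(p+1)}$ and the correction sum with $e^{-\pi ik\tau}$ in place of $e^{-\pi ik(\tau-1)}$ (so all $(-1)^k$ disappear), then repeats verbatim the substitution $k=2mr+k'$, the splitting into $(A)$, $(B)$ and $({\rm II})_A$, $({\rm II})_B$, and the identification of the boundary sum with $\Phi^{[m,0]\,\ast}_{\rm add}$ via part 2) of Lemma \ref{m1:lemma:2022-921c}. The route and all key steps coincide, including the sign bookkeeping for the $\theta_{0,m}$ and $\theta_{m,m}$ coefficients.
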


\begin{proof} 
Letting $\left\{
\begin{array}{lcc}
z_1 &=&\frac{z}{2}+\frac{\tau}{2} \\[1mm]
z_2 &=&\frac{z}{2}-\frac{\tau}{2}
\end{array} \right. $ namely
$\left\{
\begin{array}{ccc}
z_1-z_2 &=& \tau \\[0mm]
z_1+z_2 &=& z
\end{array}\right. $ in the formula \eqref{m1:eqn:2022-917b}
in Lemma \ref{m1:lemma:2022-917b}, we have 
{\allowdisplaybreaks
\begin{eqnarray}
& & \hspace{-5mm}
\Phi^{[m,0] \, \ast}(\tau, \, z_1+p\tau, \, z_2-p\tau, \, 0)
\nonumber
\\[2mm]
&=&
e^{2\pi imp\tau}
q^{mp^2} \bigg\{
\Phi^{[m,0] \, \ast}(\tau, z_1, z_2,0)
+ 
\sum_{\substack{k \, \in \zzz \\[1mm] 1 \, \leq \, k \, \leq \, 2pm}} 
\hspace{-3mm}
e^{-\pi ik\tau}
q^{-\frac{1}{4m}k^2} 
\big[\theta_{k,m}+\theta_{-k,m}\big](\tau, z)\bigg\}
\nonumber
\\[2mm]
&=&
q^{mp(p+1)} \bigg\{
\Phi^{[m,0] \, \ast}(\tau, z_1, z_2,0)
+ \hspace{-2mm}
\underbrace{
\sum_{\substack{k \, \in \zzz \\[1mm] 1 \, \leq \, k \, \leq \, 2pm}} 
\hspace{-4mm}
q^{-\frac{1}{4m}(k^2+2mk)}
\big[\theta_{k,m}+\theta_{-k,m}\big](\tau, z)}_{\rm (I)}
\bigg\}
\label{m1:eqn:2022-920b}
\end{eqnarray}}
We compute (I) by putting $k=2mr+k'$ \,\ 
$(0 \leq r <p, \,\ 1 \leq k' \leq 2m)$ \, as follows:
{\allowdisplaybreaks
\begin{eqnarray*}
& & \hspace{-10mm}
{\rm (I)} := \, 
\sum_{\substack{r \, \in \zzz \\[1mm] 0 \, \leq \, r \, < \, p}}
\sum_{\substack{k' \, \in \zzz \\[1mm] 1 \, \leq \, k' \, \leq \, 2m}} 
q^{-\frac{1}{4m}(2mr+k')(2mr+k'+2m)}
\big[\theta_{2mr+k',m}+\theta_{-(2mr+k'),m}\big](\tau, z)
\\[2mm]
&=&
\underbrace{
\sum_{\substack{r \, \in \zzz \\[1mm] 0 \, \leq \, r \, < \, p}} \, 
\sum_{\substack{k' \, \in \zzz \\[1mm] 1 \, \leq \, k' \, \leq \, 2m}}
q^{-\frac{1}{4m}(2mr+k')(2m(r+1)+k')} \, 
\theta_{k',m}(\tau, z)}_{(A)}
\\[1mm]
& & \hspace{-3mm}
+ \,\ 
\underbrace{
\sum_{\substack{r \, \in \zzz \\[1mm] 0 \, \leq \, r \, < \, p}} \, 
\sum_{\substack{k' \, \in \zzz \\[1mm] 1 \, \leq \, k' \, \leq \, 2m}}
q^{-\frac{1}{4m}(2mr+k')(2m(r+1)+k')} \, 
\theta_{-k',m}(\tau, z)}_{(B)}
\end{eqnarray*}}
where $(A)$ is computed by using \eqref{m1:eqn:2022-919c} as follows:
{\allowdisplaybreaks
\begin{eqnarray*}
& & \hspace{-7mm}
(A) \,\ = \, 
\sum_{\substack{r \, \in \zzz \\[1mm] 0 \, \leq \, r \, < \, p}} \, 
\sum_{\substack{k \, \in \zzz \\[1mm] 0 \, \leq \, k \, < \, 2m}}
q^{-\frac{1}{4m}(2mr+k)(2m(r+1)+k)} \, 
\theta_{k,m}(\tau, z)
\\[2mm]
& &
- \, \sum_{\substack{r \, \in \zzz \\[1mm] 0 \, \leq \, r \, < \, p}} \, 
\underbrace{q^{-\frac{1}{4m}2mr \cdot 2m(r+1)}}_{
\substack{|| \\[0mm] {\displaystyle q^{-mr(r+1)}
}}} \, 
\theta_{0,m}(\tau, z)
\, + 
\underbrace{
\sum_{\substack{r \, \in \zzz \\[1mm] 0 \, \leq \, r \, < \, p}} 
\hspace{-2mm} 
q^{-\frac{1}{4m}(2mr+2m)(2m(r+1)+2m)}}_{\substack{|| \\[0mm] 
{\displaystyle 
\sum_{\substack{r \, \in \zzz \\[1mm] 1 \, \leq \, r \, \leq \, p}}
q^{-mr(r+1)}
}}}
\, 
\underbrace{\theta_{2m,m}}_{\substack{|| \\[0mm] {\displaystyle \theta_{0,m}
}}}(\tau, z)
\\[2mm]
&=&
\sum_{\substack{r \, \in \zzz \\[1mm] 0 \, \leq \, r \, < \, p}} \, 
\sum_{\substack{k \, \in \zzz \\[1mm] 0 \, \leq \, k \, < \, 2m}}
q^{-\frac{1}{4m}(2mr+k)(2m(r+1)+k)} \, 
\theta_{k,m}(\tau, z)
\\[2mm]
& &
+ \,\ \bigg\{- 
\sum_{\substack{r \, \in \zzz \\[1mm] 0 \, \leq \, r \, < \, p}} 
\, + \, 
\sum_{\substack{r \, \in \zzz \\[1mm] 1 \, \leq \, r \, \leq \, p}}
\bigg\}
q^{-mr(r+1)} \, \theta_{0,m}(\tau, z)
\\[2mm]
&=&
\sum_{\substack{r \, \in \zzz \\[1mm] 0 \, \leq \, r \, < \, p}} \, 
\sum_{\substack{k \, \in \zzz \\[1mm] 0 \, \leq \, k \, < \, 2m}}
q^{-\frac{1}{4m}(2mr+k)(2m(r+1)+k)} \, 
\theta_{k,m}(\tau, z)
+ 
\big\{q^{-mp(p+1)}- 1\big\} \, \theta_{0,m}(\tau, z)
\end{eqnarray*}}
And $(B)$ becomes by putting $k'=2m-k$ and $r+2=-r'$ as follows:
{\allowdisplaybreaks
\begin{eqnarray*}
(B) &=&
\sum_{\substack{r' \, \in \zzz \\[1mm] -p-1 \, \leq \, r' \, \leq \, -1}} \,\ 
\sum_{\substack{k' \, \in \zzz \\[1mm] 0 \, \leq \, k' \, < \, 2m}}
q^{-\frac{1}{4m}(2mr'+k')(2m(r'+1)+k')} \, 
\theta_{k',m}(\tau, z)
\\[2mm]
& &
- \sum_{\substack{k' \, \in \zzz \\[1mm] 0 \, \leq \, k' \, < \, 2m}}
q^{-\frac{1}{4m}(-2m+k')k'} \, \theta_{k',m}(\tau, z)
\end{eqnarray*}}
Then (I) becomes as follows:
{\allowdisplaybreaks
\begin{eqnarray}
& & \hspace{-10mm}
{\rm (I)} \,\ = \,\ (A)+(B) 
\nonumber
\\[2mm]
&=&
\sum_{\substack{k \, \in \zzz \\[1mm] 0 \, \leq \, k \, < \, 2m}}
\underbrace{
\sum_{\substack{r \, \in \zzz \\[1mm] -p-1 \, \leq \, r \, < \, p}} \, 
q^{-\frac{1}{4m}(2mr+k)(2m(r+1)+k)}}_{\substack{\hspace{16.5mm}
|| \,\ \leftarrow \,\ r+1 \, = \, r' \\[1mm] {\displaystyle 
\sum_{\substack{r' \, \in \zzz \\[1mm] -p \, \leq \, r' \, \leq \, p}}
q^{-\frac{1}{4m}(2m(r'-1)+k)(2mr'+k)}
}}} \, \theta_{k,m}(\tau, z)
\nonumber
\\[2mm]
& &
\underbrace{- 
\sum_{\substack{k \, \in \zzz \\[1mm] 0 \, \leq \, k \, < \, 2m}}
q^{-\frac{1}{4m}(-2m+k)k} \, \theta_{k,m}(\tau, z)
\, - \, \theta_{0,m}(\tau, z)}_{\substack{|| \\[0mm] 
{\displaystyle 
- \sum_{\substack{k \, \in \zzz \\[1mm] 0 \, \leq \, k \, \leq \, 2m}}
q^{-\frac{1}{4m}(-2m+k)k} \, \theta_{k,m}(\tau, z)
}}}
\, + \, 
q^{-mp(p+1)}\, \theta_{0,m}(\tau, z)
\label{m1:eqn:2022-920c}
\end{eqnarray}}
Then substituting \eqref{m1:eqn:2022-920c} into \eqref{m1:eqn:2022-920b},
we have
{\allowdisplaybreaks
\begin{eqnarray}
& & \hspace{-10mm}
\Phi^{[m,0] \, \ast}(\tau, \, z_1+p\tau, \, z_2-p\tau, \, 0)
\,\ = \,\ 
q^{mp(p+1)} \, \Big\{
\Phi^{[m,0] \, \ast}(\tau, z_1, z_2,0) \,\ + \,\ {\rm (I)}\Big\}
\nonumber
\\[2mm]
&=&
q^{mp(p+1)} \, \bigg\{
\Phi^{[m,0] \, \ast}(\tau, z_1, z_2,0)
\\[2mm]
& &
+ \,\ 
\sum_{\substack{r \, \in \zzz \\[1mm] -p \, \leq \, r \, \leq \, p}} \, 
\sum_{\substack{k \, \in \zzz \\[1mm] 0 \, \leq \, k \, < \, 2m}}
q^{-\frac{1}{4m}(2mr+k)(2m(r-1)+k)} \, \theta_{k,m}(\tau, z)
\nonumber
\\[2mm]
& & \hspace{-12mm}
\underbrace{- 
\sum_{\substack{k \, \in \zzz \\[1mm] 0 \, \leq \, k \, \leq \, 2m}}
q^{-\frac{1}{4m}k(k-2m)} \theta_{k,m}(\tau, z)}_{\substack{|| 
\\[-0.5mm] {\displaystyle \hspace{20mm}
\Phi^{[m,0] \, \ast}_{\rm add}(\tau, z_1, z_2,0) \quad 
\text{by Lemma \ref{m1:lemma:2022-921c}}
}}} \hspace{-3mm}
+ \quad q^{-mp(p+1)}\, \theta_{0,m}(\tau, z)\bigg\}
\nonumber
\\[2mm]
&=&
q^{mp(p+1)} \, \bigg\{
\widetilde{\Phi}^{[m,0] \, \ast}(\tau, z_1, z_2,0)
+ \hspace{-3mm}
\underbrace{
\sum_{\substack{r \, \in \zzz \\[1mm] -p \, \leq \, r \, \leq \, p}} \, 
\sum_{\substack{k \, \in \zzz \\[1mm] 0 \, \leq \, k \, < \, 2m}}
q^{-\frac{1}{4m}(2mr+k)(2m(r-1)+k)} \theta_{k,m}(\tau, z)}_{\rm (II)}
\nonumber
\\[-5mm]
& &
+ \,\ q^{-mp(p+1)}\, \theta_{0,m}(\tau, z)\bigg\}
\label{m1:eqn:2022-920d}
\end{eqnarray}}
We go further to compute 
$$
{\rm (II)} \,\ = \,\ {\rm (II)}_A+{\rm (II)}_B
$$
where 
\begin{subequations}
{\allowdisplaybreaks
\begin{eqnarray*}
{\rm (II)}_A &:=&
\sum_{\substack{r \, \in \zzz \\[1mm] -p \, \leq \, r \, \leq \, p}} \,\
\sum_{\substack{k \, \in \zzz \\[1mm] 0 \, \leq \, k \, \leq \, m}}
q^{-\frac{1}{4m}(2mr+k)(2m(r-1)+k)} \theta_{k,m}(\tau, z)
\\[2mm]
{\rm (II)}_B &:=&
\sum_{\substack{r \, \in \zzz \\[1mm] -p \, \leq \, r \, \leq \, p}} \,\ 
\sum_{\substack{k \, \in \zzz \\[1mm] m \, < \, k \, < \, 2m}} 
q^{-\frac{1}{4m}(2mr+k)(2m(r-1)+k)} \theta_{k,m}(\tau, z)
\end{eqnarray*}}
First we compute ${\rm (II)}_A$ : 
{\allowdisplaybreaks
\begin{eqnarray}
{\rm (II)}_A &=&
\sum_{\substack{r \, \in \zzz \\[1mm] -p \, \leq \, r \, \leq \, p}} \,
\underbrace{
\sum_{\substack{k \, \in \zzz \\[1mm] 0 \, \leq \, k \, \leq \, m}}}_{
\substack{|| \\[-0.5mm] {\displaystyle 
\sum_{\substack{k \, \in \zzz \\[1mm] 0 \, \leq \, k \, < \, m}} + \sum_{k=m}
}}}
q^{-\frac{1}{4m}(2mr+k)(2m(r-1)+k)} \theta_{k,m}(\tau, z)
\nonumber
\\[2mm]
&=&
\sum_{\substack{r \, \in \zzz \\[1mm] -p \, \leq \, r \, \leq \, p}} \,
\sum_{\substack{k \, \in \zzz \\[1mm] 0 \, \leq \, k \, < \, m}} 
q^{-\frac{1}{4m}(2mr+k)(2m(r-1)+k)} \theta_{k,m}(\tau, z)
\nonumber
\\[3mm]
& &
+ \,\ 
\sum_{\substack{r \, \in \zzz \\[1mm] -p \, \leq \, r \, \leq \, p}}
q^{-\frac{m}{4}(2r+1)(2r-1)} \, \theta_{m,m}(\tau, z)
\label{m1:eqn:2022-920e1}
\end{eqnarray}}
Next, ${\rm (II)}_B$ is rewritten as follows by putting $k=2m-k'$ :
{\allowdisplaybreaks
\begin{eqnarray}
{\rm (II)}_B &=&
\sum_{\substack{r \, \in \zzz \\[1mm] -p \, \leq \, r \, \leq \, p}} \,
\sum_{\substack{k' \, \in \zzz \\[1mm] 0 \, < \, k' \, < \, m}}
q^{-\frac{1}{4m}(2m(r+1)-k')(2mr-k')} \theta_{-k',m}(\tau, z)
\nonumber
\\[2mm]
& \hspace{-2mm}
\underset{\substack{\\[0.5mm] \uparrow \\[1mm] 
r = -r'}}{=} \hspace{-2mm} &
\sum_{\substack{r' \, \in \zzz \\[1mm] -p \, \leq \, r' \, \leq \, p}} 
\underbrace{
\sum_{\substack{k' \, \in \zzz \\[1mm] 0 \, < \, k' \, < \, m}}}_{
\substack{|| \\[-0.5mm] {\displaystyle \hspace{-5mm}
\sum_{\substack{k' \, \in \zzz \\[1mm] 0 \, \leq \, k' \, < \, m}} -\sum_{k'=0}
}}}
q^{-\frac{1}{4m}(2m(r'-1)+k')(2mr'+k')} \theta_{-k',m}(\tau, z)
\nonumber
\\[1mm]
&=&
\sum_{\substack{r' \, \in \zzz \\[1mm] -p \, \leq \, r' \, \leq \, p}} \, 
\sum_{\substack{k' \, \in \zzz \\[1mm] 0 \, \leq \, k' \, < \, m}}
q^{-\frac{1}{4m}(2m(r'-1)+k')(2mr'+k')} \theta_{-k',m}(\tau, z)
\nonumber
\\[2.5mm]
& & \hspace{-35mm}
\underbrace{
- \sum_{\substack{r' \, \in \zzz \\[1mm] -p \, \leq \, r' \, \leq \, p}}
q^{-mr'(r'-1)} \, \theta_{0,m}(\tau, z)
}_{\substack{|| \\[0mm] {\displaystyle \hspace{38mm}
- \, 2 
\sum_{\substack{r \, \in \zzz \\[1mm] 0 \, \leq \, r \, \leq \, p-1}} 
\hspace{-2mm}
q^{-mr(r+1)} \, \theta_{0,m}(\tau, z)
\, - \, 
q^{-mp(p+1)} \theta_{0,m}(\tau, z)
}}} 
\label{m1:eqn:2022-920e2}
\end{eqnarray}}
\end{subequations}
Then by \eqref{m1:eqn:2022-920e1} and \eqref{m1:eqn:2022-920e2}, 
we have 
{\allowdisplaybreaks
\begin{eqnarray}
& & \hspace{-12mm}
{\rm (II)} +  q^{-mp(p+1)} \theta_{0,m}(\tau, z)
\nonumber
\\[3mm]
&=&
\sum_{\substack{r \, \in \zzz \\[1mm] -p \, \leq \, r \, \leq \, p}} \,
\sum_{\substack{k \, \in \zzz \\[1mm] 0 \, \leq \, k \, < \, m}} 
q^{-\frac{1}{4m}(2mr+k)(2m(r-1)+k)} 
\big[\theta_{k,m}+\theta_{-k,m}\big](\tau, z)
\nonumber
\\[3mm]
& &
- \, 2 
\sum_{\substack{r \, \in \zzz \\[1mm] 0 \, \leq \, r \, \leq \, p-1}}
q^{-mr(r+1)} \theta_{0,m}(\tau, z)
\, + \, 
\sum_{\substack{r \, \in \zzz \\[1mm] -p \, \leq \, r \, \leq \, p}}
q^{-\frac{m}{4}(2r+1)(2r-1)} \, \theta_{m,m}(\tau, z)
\label{m1:eqn:2022-920f}
\end{eqnarray}}
Substituting this equation \eqref{m1:eqn:2022-920f} into 
\eqref{m1:eqn:2022-920b}, we obtain 
{\allowdisplaybreaks
\begin{eqnarray*}
& & \hspace{-10mm}
\Phi^{[m,0] \, \ast}(\tau, \, z_1+p\tau, \, z_2-p\tau, \, 0)
\,\ = \,\ 
q^{mp(p+1)} \, \bigg\{
\widetilde{\Phi}^{[m,0] \, \ast}(\tau, z_1, z_2,0)
\nonumber
\\[2mm]
& & 
+ \sum_{\substack{r \, \in \zzz \\[1mm] -p \, \leq \, r \, \leq \, p}} \,
\sum_{\substack{k \, \in \zzz \\[1mm] 0 \, \leq \, k \, < \, m}} 
(-1)^k \, q^{-\frac{1}{4m}(2mr+k)(2m(r-1)+k)} 
\big[\theta_{k,m}+\theta_{-k,m}\big](\tau, z)
\nonumber
\\[3mm]
& &
- \,\ 2 \hspace{-2mm}
\sum_{\substack{r \, \in \zzz \\[1mm] 0 \, \leq \, r \, \leq \, p-1}} 
\hspace{-3mm}
q^{-mr(r+1)} \theta_{0,m}(\tau, z)
\, + 
\sum_{\substack{r \, \in \zzz \\[1mm] -p \, \leq \, r \, \leq \, p}} 
\hspace{-3mm}
q^{-\frac{m}{4}(2r+1)(2r-1)} \, \theta_{m,m}(\tau, z)
\bigg\}
\label{eqn:2022-807a4}
\end{eqnarray*}}
proving Lemma \ref{m1:lemma:2022-920a}.
\end{proof}

\section{Modified function $\widetilde{\Phi}^{[m,0] \, \ast}$
with specialization}
\label{sec:Phi:tilde:ast}

\begin{prop} 
\label{m1:prop:2022-921b}
For $m \in \nnn$ and $p \in \zzz_{\geq 0}$, the following formulas hold:
\begin{subequations}
\begin{enumerate}
\item[{\rm 1)}] \,\ $(-1)^p \, q^{-\frac{m}{4}} \, 
\theta_{p-m-\frac12, m+\frac12}^{(-)}(\tau,0) \,\ 
\widetilde{\Phi}^{[m,0] \, \ast}\Big(\tau, \,\ 
\dfrac{z}{2}+\dfrac{\tau}{2}-\dfrac12, \,\ 
\dfrac{z}{2}-\dfrac{\tau}{2}+\dfrac12, \,\ 0\Big)$
{\allowdisplaybreaks
\begin{eqnarray}
&=&
\frac{\eta(\tau)^3}{\theta_{0,\frac12}(\tau,z)} \cdot 
\big[\theta_{p, m+\frac12}+ \theta_{-p, m+\frac12}\big](\tau,0)
\nonumber
\\[2mm]
& & \hspace{-5mm}
- \,\ 
\bigg[\sum_{\substack{j, \, r \, \in \, \frac12 \, \zzz_{\rm odd} \\[1mm]
0 \, \leq \, r \, < \, j
}}
-\sum_{\substack{j, \, r \, \in \, \frac12 \, \zzz_{\rm odd} \\[1mm]
j \, \leq \, r \, < \, 0}}\bigg]
\sum_{\substack{k \, \in \zzz \\[1mm] 0 \, < \, k \, < \, m}}
(-1)^{j-\frac12+k} \, 
q^{(m+\frac12)(j+\frac{2pm}{2m+1})^2}
\nonumber
\\[-4mm]
& & \hspace{60mm}
\times \,\ 
q^{-\frac{1}{4m} \, [2mr+k+2mp]^2}
\big[\theta_{k,m}+\theta_{-k,m}\big](\tau, z)
\nonumber
\\[3mm]
& & \hspace{-5mm}
- \,\ \bigg[
\sum_{\substack{j, \, r \, \in \, \frac12 \, \zzz_{\rm odd} \\[1mm]
0 \, \leq \, r \, \leq \, j
}}
-\sum_{\substack{j, \, r \, \in \, \frac12 \, \zzz_{\rm odd} \\[1mm]
j \, < \, r \, < \, 0}}\bigg]
\sum_{\substack{k \, \in \zzz \\[1mm] 0 \, \leq \, k \, \leq \, m}}
(-1)^{j-\frac12+k} \, 
q^{(m+\frac12)(j+\frac{2pm}{2m+1})^2}
\nonumber
\\[-4mm]
& & \hspace{60mm}
\times \,\ 
q^{-\frac{1}{4m} \, [2mr-k+2mp]^2}
\big[\theta_{k,m}+\theta_{-k,m}\big](\tau, z)
\nonumber
\\[3mm]
& & \hspace{-6mm}
- \, (-1)^p \, 
\theta_{p-m-\frac12, m+\frac12}^{(-)}(\tau,0) \hspace{-3mm}
\sum_{\substack{r \, \in \zzz \\[1mm] -p \, < \, r \, \leq \, p}} \,\ 
\sum_{\substack{k \, \in \zzz \\[1mm] 0 \, \leq \, k \, < \, m}} 
\hspace{-2mm}
(-1)^k q^{- \, \frac{1}{4m} \, (m(2r-1)+k)^2} 
\big[\theta_{k,m}+\theta_{-k,m}\big](\tau, z)
\nonumber
\\[3mm]
& & \hspace{-5mm}
+ \,\ 2 \, (-1)^p \, 
\theta_{p-m-\frac12, m+\frac12}^{(-)}(\tau,0)
\sum_{\substack{r \, \in \zzz \\[1mm] 0 \, \leq \, r \, \leq \, p-1}}
q^{-m(r+\frac12)^2} \, \theta_{0,m}(\tau, z)
\nonumber
\\[2mm]
& & \hspace{-5mm}
- \,\ (-1)^{m+p} \, \theta_{p-m-\frac12, m+\frac12}^{(-)}(\tau,0)
\sum_{\substack{r \, \in \zzz \\[1mm] -p \, < \, r \, < \, p}}
q^{-mr^2} \, \theta_{m,m}(\tau,z)
\label{m1:eqn:2022-921b}
\end{eqnarray}}
\item[{\rm 2)}] \,\ $(-1)^p \, 
q^{-\frac{m}{4}} \, 
\theta_{-p+m+\frac12, m+\frac12}^{(-)}(\tau,0) \,\ 
\widetilde{\Phi}^{[m,0] \, \ast}\Big(\tau, \,\ 
\dfrac{z}{2}+\dfrac{\tau}{2}, \,\ 
\dfrac{z}{2}-\dfrac{\tau}{2}, \,\ 0\Big)$
{\allowdisplaybreaks
\begin{eqnarray}
&=&
(-1)^p \, 
\frac{\eta(\tau)^3}{\theta_{0,\frac12}^{(-)}(\tau,z)} \cdot 
\big[\theta_{p, m+\frac12}^{(-)}+ \theta_{-p, m+\frac12}^{(-)}\big](\tau,0)
\nonumber
\\[2mm]
& & \hspace{-5mm}
- \,\ 
\bigg[\sum_{\substack{j, \, r \, \in \, \frac12 \, \zzz_{\rm odd} \\[1mm]
0 \, \leq \, r \, < \, j
}}
-\sum_{\substack{j, \, r \, \in \, \frac12 \, \zzz_{\rm odd} \\[1mm]
j \, \leq \, r \, < \, 0}}\bigg]
\sum_{\substack{k \, \in \zzz \\[1mm] 0 \, < \, k \, < \, m}}
(-1)^{j-\frac12} \, 
q^{(m+\frac12)(j+\frac{2pm}{2m+1})^2}
\nonumber
\\[-4mm]
& & \hspace{60mm}
\times \,\ 
q^{-\frac{1}{4m} \, [2mr+k+2mp]^2}
\big[\theta_{k,m}+\theta_{-k,m}\big](\tau, z)
\nonumber
\\[3mm]
& & \hspace{-5mm}
- \,\ \bigg[
\sum_{\substack{j, \, r \, \in \, \frac12 \, \zzz_{\rm odd} \\[1mm]
0 \, \leq \, r \, \leqq \, j
}}
-\sum_{\substack{j, \, r \, \in \, \frac12 \, \zzz_{\rm odd} \\[1mm]
j \, < \, r \, < \, 0}}\bigg]
\sum_{\substack{k \, \in \zzz \\[1mm] 0 \, \leq \, k \, \leq \, m}}
(-1)^{j-\frac12} \, 
q^{(m+\frac12)(j+\frac{2pm}{2m+1})^2}
\nonumber
\\[-4mm]
& & \hspace{60mm}
\times \,\ 
q^{-\frac{1}{4m} \, [2mr-k+2mp]^2}
\big[\theta_{k,m}+\theta_{-k,m}\big](\tau, z)
\nonumber
\\[3mm]
& & \hspace{-5mm}
- \,\ 
\theta_{2mp+m+\frac12, m+\frac12}^{(-)}(\tau,0) \hspace{-3mm}
\sum_{\substack{r \, \in \zzz \\[1mm] -p \, < \, r \, \leq \, p}} \,\ 
\sum_{\substack{k \, \in \zzz \\[1mm] 0 \, \leq \, k \, < \, m}} 
\hspace{-2mm}
q^{- \, \frac{1}{4m} \, (m(2r-1)+k)^2} 
\big[\theta_{k,m}+\theta_{-k,m}\big](\tau, z)
\nonumber
\\[3mm]
& & \hspace{-5mm}
+ \,\ 2 \, 
\theta_{2mp+m+\frac12, m+\frac12}^{(-)}(\tau,0)
\sum_{\substack{r \, \in \zzz \\[1mm] 0 \, \leq \, r \, \leq \, p-1}}
q^{-m(r+\frac12)^2} \, \theta_{0,m}(\tau, z)
\nonumber
\\[2mm]
& & \hspace{-5mm}
- \,\ \theta_{2mp+m+\frac12, m+\frac12}^{(-)}(\tau,0)
\sum_{\substack{r \, \in \zzz \\[1mm] -p \, < \, r \, < \, p}}
q^{-mr^2} \, \theta_{m,m}(\tau,z)
\label{m1:eqn:2022-921c}
\end{eqnarray}}
\end{enumerate}
\end{subequations}
\end{prop}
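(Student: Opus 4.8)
The plan is to eliminate the unmodified function $\Phi^{[m,0]\,\ast}$ between the two families of formulas already proved, and then normalize. For part 1) I would start from Lemma \ref{m1:lemma:2022-919a}, which writes $\Phi^{[m,0]\,\ast}(\tau, z_1+p\tau, z_2-p\tau,0)$ with $z_1=\frac z2+\frac\tau2-\frac12$, $z_2=\frac z2-\frac\tau2+\frac12$ as $q^{mp(p+1)}$ times the sum of $\widetilde{\Phi}^{[m,0]\,\ast}(\tau,z_1,z_2,0)$ and explicit theta corrections. Solving \eqref{m1:eqn:2022-919a} for $\widetilde{\Phi}^{[m,0]\,\ast}(\tau,z_1,z_2,0)$ and multiplying through by $\theta_{2mp+m+\frac12,m+\frac12}^{(-)}(\tau,0)$, the leading term on the right becomes $q^{-mp(p+1)}\,\theta_{2mp+m+\frac12,m+\frac12}^{(-)}(\tau,0)\,\Phi^{[m,0]\,\ast}(\tau,z_1+p\tau,z_2-p\tau,0)$, which is exactly the quantity computed in Lemma \ref{m1:lemma:2022-918a}. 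Substituting the right-hand side of \eqref{m1:eqn:2022-918a} then yields a completely explicit expression for $\theta_{2mp+m+\frac12,m+\frac12}^{(-)}(\tau,0)\,\widetilde{\Phi}^{[m,0]\,\ast}(\tau,z_1,z_2,0)$.

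The next step is normalization. Every $q$-power entering \eqref{m1:eqn:2022-918a} and the correction terms of \eqref{m1:eqn:2022-919a} carries a common summand $\frac m4$, since $\frac m4(2p+1)^2=mp(p+1)+\frac m4$, $\;(2mr+k)(2m(r-1)+k)=(m(2r-1)+k)^2-m^2$, $\;mr(r+1)=m(r+\frac12)^2-\frac m4$, and $\frac m4(2r+1)(2r-1)=mr^2-\frac m4$. Hence the factor $q^{-mp(p+1)}$ inherited from \eqref{m1:eqn:2022-919a} cancels against $q^{\frac m4(2p+1)^2}$ up to $q^{\frac m4}$, and multiplying the whole identity by $q^{-m/4}$ collapses all these exponents to the clean quadratic forms $(m(2r-1)+k)^2$, $(r+\frac12)^2$ and $r^2$ displayed in \eqref{m1:eqn:2022-921b}; this also pins down the prefactor $q^{-m/4}$ on the left-hand side of the proposition. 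The first term of \eqref{m1:eqn:2022-918a} reduces to $\frac{\eta(\tau)^3}{\theta_{0,\frac12}(\tau,z)}[\theta_{p,m+\frac12}+\theta_{-p,m+\frac12}](\tau,0)$, and its two double sums become the triple $(j,r,k)$-sums of the proposition.

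The index of the theta prefactor must then be brought into the stated form. Using the quasi-periodicity of $\theta^{(-)}_{\,\cdot\,,m+\frac12}(\tau,0)$ together with $2mp+m+\frac12\equiv m+\frac12-p\pmod{2m+1}$ gives $\theta_{2mp+m+\frac12,m+\frac12}^{(-)}(\tau,0)=(-1)^p\,\theta_{m+\frac12-p,m+\frac12}^{(-)}(\tau,0)$, which supplies the sign $(-1)^p$; for part 1) one additionally applies the reflection $\theta^{(-)}_{n,m+\frac12}(\tau,0)=\theta^{(-)}_{-n,m+\frac12}(\tau,0)$ to reach the index $p-m-\frac12$, and the extra $(-1)^m$ in the last correction term comes from the value $\theta_{-m,m}=\theta_{m,m}$ at the boundary $k=m$. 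Part 2) is proved in exactly the same way, using Lemma \ref{m1:lemma:2022-920a} in place of Lemma \ref{m1:lemma:2022-919a} and Lemma \ref{m1:lemma:2022-918b} in place of Lemma \ref{m1:lemma:2022-918a}; the only structural changes are the disappearance of the factor $(-1)^k$ in the correction sums and the replacement of $\theta_{0,\frac12}$ by $\theta^{(-)}_{0,\frac12}$, reflecting the shift $z_1-z_2=\tau$ rather than $\tau-1$.

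The delicate point I expect to be the main obstacle is the reconciliation of the summation ranges. Lemmas \ref{m1:lemma:2022-919a} and \ref{m1:lemma:2022-920a} produce the symmetric range $-p\le r\le p$, whereas \eqref{m1:eqn:2022-921b} carries $-p<r\le p$ in the first block and $-p<r<p$ in the $\theta_{m,m}$ block. Tracking this requires observing that the final single-$k$ sum of \eqref{m1:eqn:2022-918a} is, after completing the square, precisely the $r=-p$ slice of the form $q^{-\frac1{4m}(m(2r-1)+k)^2}$: its range $0\le k\le m$ cancels the $r=-p$ contribution of the correction sum over $0\le k<m$, while the surviving $k=m$ remainder, together with the boundary values at $r=\pm p$, redistributes into the $\theta_{0,m}$ and $\theta_{m,m}$ terms and trims the ranges as stated. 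Once this boundary accounting is done, the remaining substitution and $q$-power collapse are routine, and the two identities \eqref{m1:eqn:2022-921b} and \eqref{m1:eqn:2022-921c} follow.
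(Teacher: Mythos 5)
Your proposal is correct and follows essentially the same route as the paper: substitute Lemma \ref{m1:lemma:2022-919a} (resp.\ \ref{m1:lemma:2022-920a}) into Lemma \ref{m1:lemma:2022-918a} (resp.\ \ref{m1:lemma:2022-918b}), multiply by $q^{-\frac m4(2p+1)^2}=q^{-mp(p+1)-\frac m4}$ to collapse the exponents to $(m(2r-1)+k)^2$, $m(r+\tfrac12)^2$, $mr^2$, and invoke $\theta^{(-)}_{2mp+m+\frac12,m+\frac12}(\tau,0)=(-1)^p\theta^{(-)}_{p-m-\frac12,m+\frac12}(\tau,0)$ for the prefactor. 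Your boundary accounting — the $r=-p$ slice of the double sum cancelling the $0\le k<m$ part of the single-$k$ sum, with the surviving $k=m$ term trimming the $\theta_{m,m}$ range to $-p<r<p$ — is exactly the paper's $(\mathrm{I})_A+(\mathrm{I})_B$ cancellation, stated somewhat more carefully than in the paper itself.
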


\begin{proof} 1) In the case $\left\{
\begin{array}{lcc}
z_1 &=& \frac{z}{2}+\frac{\tau}{2}-\frac12 \\[1mm]
z_2 &=& \frac{z}{2}-\frac{\tau}{2}+\frac12
\end{array}\right.$, substituting \eqref{m1:eqn:2022-919a} into 
\eqref{m1:eqn:2022-918a}, we have
{\allowdisplaybreaks
\begin{eqnarray*}
& & \hspace{-7mm}
\theta_{2mp+m+\frac12, m+\frac12}^{(-)}(\tau,0) \,\ 
\times \,\ \Bigg\{
q^{mp(p+1)} \, 
\widetilde{\Phi}^{[m,0] \, \ast}
\Big(\tau, \,\ 
\frac{z}{2}+\frac{\tau}{2}-\frac12, \,\ 
\frac{z}{2}-\frac{\tau}{2}+\frac12, \,\ 0\Big)
\\[2mm]
& & 
+ \,\ 
q^{mp(p+1)} 
\sum_{\substack{r \, \in \zzz \\[1mm] -p \, \leq \, r \, \leq \, p}} \, 
\sum_{\substack{k \, \in \zzz \\[1mm] 0 \, \leq \, k \, < \, m}} 
\hspace{-2mm}
(-1)^k q^{-\frac{1}{4m}(2mr+k)(2m(r-1)+k)} 
\big[\theta_{k,m}+\theta_{-k,m}\big](\tau, z)
\\[2mm]
& &
- \,\ 2 \, q^{mp(p+1)} 
\sum_{\substack{r \, \in \zzz \\[1mm] 0 \, \leq \, r \, \leq \, p-1}}
q^{-mr(r+1)} \, \theta_{0,m}(\tau, z)
\\[2mm]
& &
+ \,\ (-1)^m \, q^{mp(p+1)}
\sum_{\substack{r \, \in \zzz \\[1mm] -p \, \leq \, r \, \leq \, p}}
q^{-\frac{m}{4}(2r+1)(2r-1)} \, \theta_{m,m}(\tau,z)
\Bigg\}
\\[2mm]
&=&
q^{\frac{m}{4}(2p+1)^2} \, 
\frac{\eta(\tau)^3}{\theta_{0,\frac12}(\tau,z)} \cdot 
\big[\theta_{p, m+\frac12}+ \theta_{-p, m+\frac12}\big](\tau,0)
\\[3mm]
& & \hspace{-5mm}
- \,\ q^{\frac{m}{4}(2p+1)^2} \, 
\bigg[\sum_{\substack{j, \, r \, \in \, \frac12 \, \zzz_{\rm odd} \\[1mm]
0 \, \leq \, r \, < \, j
}}
-\sum_{\substack{j, \, r \, \in \, \frac12 \, \zzz_{\rm odd} \\[1mm]
j \, \leq \, r \, < \, 0}}\bigg]
\sum_{\substack{k \, \in \zzz \\[1mm] 0 \, < \, k \, < \, m}}
(-1)^{j-\frac12+k} \, 
q^{(m+\frac12)(j+\frac{2pm}{2m+1})^2}
\\[-1mm]
& & \hspace{60mm}
\times \,\ 
q^{-\frac{1}{4m} \, [2mr+k+2mp]^2}
\big[\theta_{k,m}+\theta_{-k,m}\big](\tau, z)
\\[3mm]
& & \hspace{-5mm}
- \,\ q^{\frac{m}{4}(2p+1)^2} \, \bigg[
\sum_{\substack{j, \, r \, \in \, \frac12 \, \zzz_{\rm odd} \\[1mm]
0 \, \leq \, r \, \leq \, j
}}
-\sum_{\substack{j, \, r \, \in \, \frac12 \, \zzz_{\rm odd} \\[1mm]
j \, < \, r \, < \, 0}}\bigg]
\sum_{\substack{k \, \in \zzz \\[1mm] 0 \, \leq \, k \, \leq \, m}}
(-1)^{j-\frac12+k} \, 
q^{(m+\frac12)(j+\frac{2pm}{2m+1})^2}
\\[-1mm]
& & \hspace{60mm}
\times \,\ 
q^{-\frac{1}{4m} \, [2mr-k+2mp]^2}
\big[\theta_{k,m}+\theta_{-k,m}\big](\tau, z)
\\[3mm]
& & \hspace{-5mm}
+ \,\ 
\theta_{2mp+m+\frac12, m+\frac12}^{(-)}(\tau,0)
\sum_{\substack{k \, \in \zzz \\[1mm] 0 \, \leq \, k \, \leq \, m}} 
(-1)^k
q^{-\frac{1}{4m}k^2+\frac{k}{2}(2p+1)} \, 
\big[\theta_{k,m}+\theta_{-k,m}\big](\tau, z)
\end{eqnarray*}}
Multiplying $q^{-\frac{m}{4}(2p+1)^2} \, = \, q^{-mp(p+1)-\frac{m}{4}}$ \, 
to both sides, we have
{\allowdisplaybreaks
\begin{eqnarray*}
& & \hspace{5mm}
q^{-\frac{m}{4}} \, 
\theta_{2mp+m+\frac12, m+\frac12}^{(-)}(\tau,0) \, 
\widetilde{\Phi}^{[m,0] \, \ast}
\Big(\tau, \,\ 
\frac{z}{2}+\frac{\tau}{2}-\frac12, \,\ 
\frac{z}{2}-\frac{\tau}{2}+\frac12, \,\ 0\Big)
\\[2mm]
& & 
+ \,\ 
q^{-\frac{m}{4}} \, 
\theta_{2mp+m+\frac12, m+\frac12}^{(-)}(\tau,0) \, 
\\[2mm]
& & \hspace{20mm}
\times 
\sum_{\substack{r \, \in \zzz \\[1mm] -p \, \leq \, r \, \leq \, p}} \, 
\sum_{\substack{k \, \in \zzz \\[1mm] 0 \, \leq \, k \, < \, m}} 
\hspace{-2mm}
(-1)^{k} q^{-\frac{1}{4m}(2mr+k)(2m(r-1)+k)} 
\big[\theta_{k,m}+\theta_{-k,m}\big](\tau, z)
\\[3mm]
& &
- \,\ 2 \, 
q^{-\frac{m}{4}} \, 
\theta_{2mp+m+\frac12, m+\frac12}^{(-)}(\tau,0)
\sum_{\substack{r \, \in \zzz \\[1mm] 0 \, \leq \, r \, \leq \, p-1}}
q^{-mr(r+1)} \, \theta_{0,m}(\tau, z)
\\[2mm]
& &
+ \,\ (-1)^m \, 
q^{-\frac{m}{4}} \, 
\theta_{2mp+m+\frac12, m+\frac12}^{(-)}(\tau,0)
\sum_{\substack{r \, \in \zzz \\[1mm] -p \, \leq \, r \, \leq \, p}}
q^{-\frac{m}{4}(2r+1)(2r-1)} \, \theta_{m,m}(\tau,z)
\\[2mm]
&=&
\frac{\eta(\tau)^3}{\theta_{0,\frac12}(\tau,z)} \cdot 
\big[\theta_{p, m+\frac12}+ \theta_{-p, m+\frac12}\big](\tau,0)
\\[3mm]
& & \hspace{-5mm}
- \,\ 
\bigg[\sum_{\substack{j, \, r \, \in \, \frac12 \zzz_{\rm odd} \\[1mm]
0 \, \leq \, r \, < \, j
}}
-\sum_{\substack{j, \, r \, \in \, \frac12 \, \zzz_{\rm odd} \\[1mm]
j \, \leq \, r \, < \, 0}}\bigg]
\sum_{\substack{k \, \in \zzz \\[1mm] 0 \, < \, k \, < \, m}}
(-1)^{j-\frac12+k} \, 
q^{(m+\frac12)(j+\frac{2pm}{2m+1})^2}
\\[-4mm]
& & \hspace{60mm}
\times \,\ 
q^{-\frac{1}{4m} \, [2mr+k+2mp]^2}
\big[\theta_{k,m}+\theta_{-k,m}\big](\tau, z)
\\[3mm]
& & \hspace{-5mm}
- \,\ \bigg[
\sum_{\substack{j, \, r \, \in \, \frac12 \, \zzz_{\rm odd} \\[1mm]
0 \, \leq \, r \, \leq \, j
}}
-\sum_{\substack{j, \, r \, \in \, \frac12 \, \zzz_{\rm odd} \\[1mm]
j \, < \, r \, < \, 0}}\bigg]
\sum_{\substack{k \, \in \zzz \\[1mm] 0 \, \leq \, k \, \leq \, m}}
(-1)^{j-\frac12+k} \, 
q^{(m+\frac12)(j+\frac{2pm}{2m+1})^2}
\\[-4mm]
& & \hspace{60mm}
\times \,\ 
q^{-\frac{1}{4m} \, [2mr-k+2mp]^2}
\big[\theta_{k,m}+\theta_{-k,m}\big](\tau, z)
\\[3mm]
& & \hspace{-5mm}
+ \,\ 
\theta_{2mp+m+\frac12, m+\frac12}^{(-)}(\tau,0)
\sum_{\substack{k \, \in \zzz \\[1mm] 0 \, \leq \, k \, \leq \, m}} 
(-1)^k
q^{-\frac{1}{4m}[k-m(2p+1)]^2}
\big[\theta_{k,m}+\theta_{-k,m}\big](\tau, z)
\end{eqnarray*}}
namely
\begin{subequations}
{\allowdisplaybreaks
\begin{eqnarray}
& & \hspace{-7mm}
q^{-\frac{m}{4}} \, 
\theta_{2mp+m+\frac12, m+\frac12}^{(-)}(\tau,0) \, 
\widetilde{\Phi}^{[m,0] \, \ast}
\Big(\tau, \,\ 
\frac{z}{2}+\frac{\tau}{2}-\frac12, \,\ 
\frac{z}{2}-\frac{\tau}{2}+\frac12, \,\ 0\Big)
\nonumber
\\[2mm]
&=&
\frac{\eta(\tau)^3}{\theta_{0,\frac12}(\tau,z)} \cdot 
\big[\theta_{p, m+\frac12}+ \theta_{-p, m+\frac12}\big](\tau,0)
\nonumber
\\[3mm]
& & \hspace{-5mm}
- \,\ 
\bigg[\sum_{\substack{j, \, r \, \in \, \frac12 \, \zzz_{\rm odd} \\[1mm]
0 \, \leq \, r \, < \, j
}}
-\sum_{\substack{j, \, r \, \in \, \frac12 \, \zzz_{\rm odd} \\[1mm]
j \, \leq \, r \, < \, 0}}\bigg]
\sum_{\substack{k \, \in \zzz \\[1mm] 0 \, < \, k \, < \, m}}
(-1)^{j-\frac12+k} \, 
q^{(m+\frac12)(j+\frac{2pm}{2m+1})^2}
\nonumber
\\[-4mm]
& & \hspace{60mm}
\times \,\ 
q^{-\frac{1}{4m} \, [2mr+k+2mp]^2}
\big[\theta_{k,m}+\theta_{-k,m}\big](\tau, z)
\nonumber
\\[3mm]
& & \hspace{-5mm}
- \,\ \bigg[
\sum_{\substack{j, \, r \, \in \, \frac12 \, \zzz_{\rm odd} \\[1mm]
0 \, \leq \, r \, \leq \, j
}}
-\sum_{\substack{j, \, r \, \in \, \frac12 \, \zzz_{\rm odd} \\[1mm]
j \, < \, r \, < \, 0}}\bigg]
\sum_{\substack{k \, \in \zzz \\[1mm] 0 \, \leq \, k \, \leq \, m}}
(-1)^{j-\frac12+k} \, 
q^{(m+\frac12)(j+\frac{2pm}{2m+1})^2}
\nonumber
\\[-4mm]
& & \hspace{60mm}
\times \,\ 
q^{-\frac{1}{4m} \, [2mr-k+2mp]^2}
\big[\theta_{k,m}+\theta_{-k,m}\big](\tau, z)
\nonumber
\\[3mm]
& & \hspace{-5mm}
+ \,\ 
\underbrace{\theta_{2mp+m+\frac12, m+\frac12}^{(-)}(\tau,0)
\sum_{\substack{k \, \in \zzz \\[1mm] 0 \, \leq \, k \, \leq \, m}} 
(-1)^k
q^{-\frac{1}{4m}[k-m(2p+1)]^2} \, 
\big[\theta_{k,m}+\theta_{-k,m}\big](\tau, z)}_{\rm (I)_A}
\nonumber
\\[2mm]
& & \hspace{-5mm}
\underbrace{- \, 
q^{-\frac{m}{4}}  
\theta_{2mp+m+\frac12, m+\frac12}^{(-)}(\tau,0) \hspace{-2mm} 
\sum_{\substack{r \, \in \zzz \\[1mm] -p \, \leq \, r \, \leq \, p}} \, 
\sum_{\substack{k \, \in \zzz \\[1mm] 0 \, \leq \, k \, < \, m}} 
\hspace{-2mm}
(-1)^k q^{-\frac{1}{4m}(2mr+k)(2m(r-1)+k)} 
\big[\theta_{k,m}+\theta_{-k,m}\big](\tau, z)}_{\rm (I)_B}
\nonumber
\\[3mm]
& & \hspace{-5mm}
+ \,\ 2 \, 
q^{-\frac{m}{4}} \, 
\theta_{2mp+m+\frac12, m+\frac12}^{(-)}(\tau,0)
\sum_{\substack{r \, \in \zzz \\[1mm] 0 \, \leq \, r \, \leq \, p-1}}
q^{-mr(r+1)} \, \theta_{0,m}(\tau, z)
\nonumber
\\[2mm]
& & \hspace{-5mm}
- \,\ (-1)^m \, q^{-\frac{m}{4}} \, 
\theta_{2mp+m+\frac12, m+\frac12}^{(-)}(\tau,0)
\sum_{\substack{r \, \in \zzz \\[1mm] -p \, \leq \, r \, \leq \, p}}
q^{-mr^2+\frac{m}{4}} \, \theta_{m,m}(\tau,z)
\label{m1:eqn:2022-921d1}
\end{eqnarray}}
Since
$$
- \, \frac{1}{4m} \, (2mr+k)(2m(r-1)+k) \,\ - \,\ \frac{m}{4}
\,\ = \,\ 
- \, \frac{1}{4m} \big(m(2r-1)+k\big)^2
$$
${\rm (I)}_B$  becomes as follows:
{\allowdisplaybreaks
\begin{eqnarray*}
& & \hspace{-8mm}
{\rm (I)}_B = 
- \, 
\theta_{2mp+m+\frac12, m+\frac12}^{(-)}(\tau,0) \hspace{-5mm} 
\underbrace{
\sum_{\substack{r \, \in \zzz \\[1mm] -p \, \leq \, r \, \leq \, p}}}_{
\substack{|| \\[-0.5mm] {\displaystyle \hspace{-1mm}
\sum_{\substack{r \, \in \zzz \\[1mm] -p \, < \, r \, \leq \, p}} + \sum_{r=-p}
}}} \hspace{-3mm}
\sum_{\substack{k \, \in \zzz \\[1mm] 0 \, \leq \, k \, < \, m}} \hspace{-2mm}
(-1)^k q^{- \, \frac{1}{4m} \, (m(2r-1)+k)^2} 
\big[\theta_{k,m}+\theta_{-k,m}\big](\tau, z)
\\[2mm]
&=&
- \, 
\theta_{2mp+m+\frac12, m+\frac12}^{(-)}(\tau,0) 
\sum_{\substack{r \, \in \zzz \\[1mm] -p \, < \, r \, \leq \, p}}
\sum_{\substack{k \, \in \zzz \\[1mm] 0 \, \leq \, k \, < \, m}} \hspace{-2mm}
(-1)^k q^{- \, \frac{1}{4m} \, \big(m(2r-1)+k\big)^2} 
\big[\theta_{k,m}+\theta_{-k,m}\big](\tau, z)
\\[2mm]
& &
\underbrace{- \, \theta_{2mp+m+\frac12, m+\frac12}^{(-)}(\tau,0) 
\sum_{\substack{k \, \in \zzz \\[1mm] 0 \, \leq \, k \, < \, m}} 
\hspace{-2mm}
(-1)^k q^{- \, \frac{1}{4m} \, \big(-m(2p+1)+k\big)^2} 
\big[\theta_{k,m}+\theta_{-k,m}\big](\tau, z)}_{
\substack{\uparrow \\[1mm] {\displaystyle - \, {\rm (I)}_A
}}}
\end{eqnarray*}}
so
{\allowdisplaybreaks
\begin{eqnarray}
{\rm (I)}_A+{\rm (I)}_B &=& 
- \, 
\theta_{2mp+m+\frac12, m+\frac12}^{(-)}(\tau,0) \hspace{-3mm}
\sum_{\substack{r \, \in \zzz \\[1mm] -p \, < \, r \, \leq \, p}}
\sum_{\substack{k \, \in \zzz \\[1mm] 0 \, \leq \, k \, < \, m}} 
\hspace{-2mm}
(-1)^k q^{- \, \frac{1}{4m} \, (m(2r-1)+k)^2} 
\nonumber
\\[1mm]
& & \hspace{10mm}
\times \,\ 
\big[\theta_{k,m}+\theta_{-k,m}\big](\tau, z)
\label{m1:eqn:2022-921d2}
\end{eqnarray}}
\end{subequations}
Substituting \eqref{m1:eqn:2022-921d2} into \eqref{m1:eqn:2022-921d1}
and noticing that
\begin{equation}
\theta_{2mp+m+\frac12, m+\frac12}^{(-)}(\tau,0) 
\, = \, 
(-1)^p \, \theta_{-p+m+\frac12, m+\frac12}^{(-)}(\tau,0)
\, = \, 
(-1)^p \, \theta_{p-m-\frac12, m+\frac12}^{(-)}(\tau, 0)
\label{m1:eqn:2022-921e}
\end{equation}
we obtain \eqref{m1:eqn:2022-921b}, proving the claim 1).

\vspace{2.5mm}

\noindent
2) In the case $\left\{
\begin{array}{lcc}
z_1 &=& \frac{z}{2}+\frac{\tau}{2} \\[1mm]
z_2 &=& \frac{z}{2}-\frac{\tau}{2}
\end{array}\right.$, substituting \eqref{m1:eqn:2022-920a} into 
\eqref{m1:eqn:2022-918d}, we have
{\allowdisplaybreaks
\begin{eqnarray*}
& & \hspace{-7mm}
\theta_{2mp+m+\frac12, m+\frac12}^{(-)}(\tau,0) \,\ 
\times \,\ \Bigg\{
q^{mp(p+1)} \, 
\widetilde{\Phi}^{[m,0] \, \ast}
\Big(\tau, \,\ 
\frac{z}{2}+\frac{\tau}{2}, \,\ 
\frac{z}{2}-\frac{\tau}{2}, \,\ 0\Big)
\\[2mm]
& & 
+ \,\ 
q^{mp(p+1)} 
\sum_{\substack{r \, \in \zzz \\[1mm] -p \, \leq \, r \, \leq \, p}} \, 
\sum_{\substack{k \, \in \zzz \\[1mm] 0 \, \leq \, k \, < \, m}} 
\hspace{-2mm}
q^{-\frac{1}{4m}(2mr+k)(2m(r-1)+k)} 
\big[\theta_{k,m}+\theta_{-k,m}\big](\tau, z)
\\[2mm]
& &
- \,\ 2 \, q^{mp(p+1)} 
\sum_{\substack{r \, \in \zzz \\[1mm] 0 \, \leq \, r \, \leq \, p-1}}
q^{-mr(r+1)} \, \theta_{0,m}(\tau, z)
\\[2mm]
& &
+ \,\ q^{mp(p+1)}
\sum_{\substack{r \, \in \zzz \\[1mm] -p \, \leq \, r \, \leq \, p}}
q^{-\frac{m}{4}(2r+1)(2r-1)} \, \theta_{m,m}(\tau,z)
\Bigg\}
\\[2mm]
&=&
(-1)^p \, 
q^{\frac{m}{4}(2p+1)^2} \, 
\frac{\eta(\tau)^3}{\theta_{0,\frac12}^{(-)}(\tau,z)} \cdot 
\big[\theta_{p, m+\frac12}^{(-)}+ \theta_{-p, m+\frac12}^{(-)}\big](\tau,0)
\\[3mm]
& & \hspace{-5mm}
- \,\ q^{\frac{m}{4}(2p+1)^2} \, 
\bigg[\sum_{\substack{j, \, r \, \in \, \frac12 \, \zzz_{\rm odd} \\[1mm]
0 \, \leq \, r \, < \, j}}
-
\sum_{\substack{j, \, r \, \in \, \frac12 \, \zzz_{\rm odd} \\[1mm]
j \, \leq \, r \, < \, 0}}\bigg]
\sum_{\substack{k \, \in \zzz \\[1mm] 0 \, < \, k \, < \, m}}
(-1)^{j-\frac12} \, 
q^{(m+\frac12)(j+\frac{2pm}{2m+1})^2}
\\[-1mm]
& & \hspace{60mm}
\times \,\ 
q^{-\frac{1}{4m} \, [2mr+k+2mp]^2}
\big[\theta_{k,m}+\theta_{-k,m}\big](\tau, z)
\\[3mm]
& & \hspace{-5mm}
- \,\ q^{\frac{m}{4}(2p+1)^2} \, \bigg[
\sum_{\substack{j, \, r \, \in \, \frac12 \, \zzz_{\rm odd} \\[1mm]
0 \, \leq \, r \, \leq \, j
}}
-\sum_{\substack{j, \, r \, \in \, \frac12 \, \zzz_{\rm odd} \\[1mm]
j \, < \, r \, < \, 0}}\bigg]
\sum_{\substack{k \, \in \zzz \\[1mm] 0 \, \leq \, k \, \leq \, m}}
(-1)^{j-\frac12} \, 
q^{(m+\frac12)(j+\frac{2pm}{2m+1})^2}
\\[-1mm]
& & \hspace{60mm}
\times \,\ 
q^{-\frac{1}{4m} \, [2mr-k+2mp]^2}
\big[\theta_{k,m}+\theta_{-k,m}\big](\tau, z)
\\[3mm]
& & \hspace{-5mm}
+ \,\ 
\theta_{2mp+m+\frac12, m+\frac12}^{(-)}(\tau,0)
\sum_{\substack{k \, \in \zzz \\[1mm] 0 \, \leq \, k \, \leq \, m}} 
q^{-\frac{1}{4m}k^2+\frac{k}{2}(2p+1)} \, 
\big[\theta_{k,m}+\theta_{-k,m}\big](\tau, z)
\end{eqnarray*}}
Multiplying \, $q^{-\frac{m}{4}(2p+1)^2} \, = \, q^{-mp(p+1)-\frac{m}{4}}$ \, 
to both sides, we have 
{\allowdisplaybreaks
\begin{eqnarray*}
& &
q^{-\frac{m}{4}} \, 
\theta_{2mp+m+\frac12, m+\frac12}^{(-)}(\tau,0) \, 
\widetilde{\Phi}^{[m,0] \, \ast}
\Big(\tau, \,\ 
\frac{z}{2}+\frac{\tau}{2}-\frac12, \,\ 
\frac{z}{2}-\frac{\tau}{2}+\frac12, \,\ 0\Big)
\\[2mm]
& & 
+ \,\ 
q^{-\frac{m}{4}} \, 
\theta_{2mp+m+\frac12, m+\frac12}^{(-)}(\tau,0) \, 
\\[1mm]
& & \hspace{20mm}
\times 
\sum_{\substack{r \, \in \zzz \\[1mm] -p \, \leq \, r \, \leq \, p}} \, 
\sum_{\substack{k \, \in \zzz \\[1mm] 0 \, \leq \, k \, < \, m}} \hspace{-2mm}
q^{-\frac{1}{4m}(2mr+k)(2m(r-1)+k)} 
\big[\theta_{k,m}+\theta_{-k,m}\big](\tau, z)
\\[3mm]
& &
- \,\ 2 \, 
q^{-\frac{m}{4}} \, 
\theta_{2mp+m+\frac12, m+\frac12}^{(-)}(\tau,0)
\sum_{\substack{r \, \in \zzz \\[1mm] 0 \, \leq \, r \, \leq \, p-1}}
q^{-mr(r+1)} \, \theta_{0,m}(\tau, z)
\\[2mm]
& &
+ \,\ 
q^{-\frac{m}{4}} \, 
\theta_{2mp+m+\frac12, m+\frac12}^{(-)}(\tau,0)
\sum_{\substack{r \, \in \zzz \\[1mm] -p \, \leq \, r \, \leq \, p}}
q^{-\frac{m}{4}(2r+1)(2r-1)} \, \theta_{m,m}(\tau,z)
\\[2mm]
&=&
(-1)^p \,\ 
\frac{\eta(\tau)^3}{\theta_{0,\frac12}^{(-)}(\tau,z)} \cdot 
\big[\theta_{p, m+\frac12}^{(-)}+ \theta_{-p, m+\frac12}^{(-)}\big](\tau,0)
\\[3mm]
& & \hspace{-5mm}
- \,\ 
\bigg[\sum_{\substack{j, \, r \, \in \, \frac12 \, \zzz_{\rm odd} \\[1mm]
0 \, \leq \, r \, < \, j}}
-\sum_{\substack{j, \, r \, \in \, \frac12 \, \zzz_{\rm odd} \\[1mm]
j \, \leq \, r \, < \, 0}}\bigg]
\sum_{\substack{k \, \in \zzz \\[1mm] 0 \, < \, k \, < \, m}}
(-1)^{j-\frac12} \, 
q^{(m+\frac12)(j+\frac{2pm}{2m+1})^2}
\\[-4mm]
& & \hspace{60mm}
\times \,\ 
q^{-\frac{1}{4m} \, [2mr+k+2mp]^2}
\big[\theta_{k,m}+\theta_{-k,m}\big](\tau, z)
\\[3mm]
& & \hspace{-5mm}
- \,\ \bigg[
\sum_{\substack{j, \, r \, \in \, \frac12 \, \zzz_{\rm odd} \\[1mm]
0 \, \leq \, r \, \leq \, j}}
-\sum_{\substack{j, \, r \, \in \, \frac12 \, \zzz_{\rm odd} \\[1mm]
j \, < \, r \, < \, 0}}\bigg]
\sum_{\substack{k \, \in \zzz \\[1mm] 0 \, \leq \, k \, \leq \, m}}
(-1)^{j-\frac12} \, 
q^{(m+\frac12)(j+\frac{2pm}{2m+1})^2}
\\[-4mm]
& & \hspace{60mm}
\times \,\ 
q^{-\frac{1}{4m} \, [2mr-k+2mp]^2}
\big[\theta_{k,m}+\theta_{-k,m}\big](\tau, z)
\\[3mm]
& & \hspace{-5mm}
+ \,\ 
\theta_{2mp+m+\frac12, m+\frac12}^{(-)}(\tau,0)
\sum_{\substack{k \, \in \zzz \\[1mm] 0 \, \leq \, k \, \leq \, m}} 
q^{-\frac{1}{4m}[k-m(2p+1)]^2} \, 
\big[\theta_{k,m}+\theta_{-k,m}\big](\tau, z)
\end{eqnarray*}}
namely
\begin{subequations}
{\allowdisplaybreaks
\begin{eqnarray}
& & \hspace{-7mm}
q^{-\frac{m}{4}} \, 
\theta_{2mp+m+\frac12, m+\frac12}^{(-)}(\tau,0) \, 
\widetilde{\Phi}^{[m,0] \, \ast}
\Big(\tau, \,\ 
\frac{z}{2}+\frac{\tau}{2}-\frac12, \,\ 
\frac{z}{2}-\frac{\tau}{2}+\frac12, \,\ 0\Big)
\nonumber
\\[2mm]
&=&
(-1)^p \,\ 
\frac{\eta(\tau)^3}{\theta_{0,\frac12}^{(-)}(\tau,z)} \cdot 
\big[\theta_{p, m+\frac12}^{(-)}+ \theta_{-p, m+\frac12}^{(-)}\big](\tau,0)
\nonumber
\\[3mm]
& & \hspace{-5mm}
- \,\ 
\bigg[\sum_{\substack{j, \, r \, \in \, \frac12 \, \zzz_{\rm odd} \\[1mm]
0 \, \leq \, r \, < \, j}}
-
\sum_{\substack{j, \, r \, \in \, \frac12 \, \zzz_{\rm odd} \\[1mm]
j \, \leq \, r \, < \, 0}}\bigg]
\sum_{\substack{k \, \in \zzz \\[1mm] 0 \, < \, k \, < \, m}}
(-1)^{j-\frac12} \, 
q^{(m+\frac12)(j+\frac{2pm}{2m+1})^2}
\nonumber
\\[-1mm]
& & \hspace{60mm}
\times \,\ 
q^{-\frac{1}{4m} \, [2mr+k+2mp]^2}
\big[\theta_{k,m}+\theta_{-k,m}\big](\tau, z)
\nonumber
\\[3mm]
& & \hspace{-5mm}
- \,\ \bigg[
\sum_{\substack{j, \, r \, \in \, \frac12 \, \zzz_{\rm odd} \\[1mm]
0 \, \leq \, r \, \leq \, j}}
-
\sum_{\substack{j, \, r \, \in \, \frac12 \, \zzz_{\rm odd} \\[1mm]
j \, < \, r \, < \, 0}}\bigg]
\sum_{\substack{k \, \in \zzz \\[1mm] 0 \, \leq \, k \, \leq \, m}}
(-1)^{j-\frac12} \, 
q^{(m+\frac12)(j+\frac{2pm}{2m+1})^2}
\nonumber
\\[-1mm]
& & \hspace{60mm}
\times \,\ 
q^{-\frac{1}{4m} \, [2mr-k+2mp]^2}
\big[\theta_{k,m}+\theta_{-k,m}\big](\tau, z)
\nonumber
\\[3mm]
& & \hspace{-5mm}
+ \,\ 
\underbrace{\theta_{2mp+m+\frac12, m+\frac12}^{(-)}(\tau,0)
\sum_{\substack{k \, \in \zzz \\[1mm] 0 \, \leq \, k \, \leq \, m}} 
q^{-\frac{1}{4m}[k-m(2p+1)]^2} \, 
\big[\theta_{k,m}+\theta_{-k,m}\big](\tau, z)}_{\rm (I)_A}
\nonumber
\\[2mm]
& & \hspace{-5mm}
\underbrace{- \, 
q^{-\frac{m}{4}}  
\theta_{2mp+m+\frac12, m+\frac12}^{(-)}(\tau,0) \hspace{-2mm} 
\sum_{\substack{r \, \in \zzz \\[1mm] -p \, \leq \, r \, \leq \, p}} \, 
\sum_{\substack{k \, \in \zzz \\[1mm] 0 \, \leq \, k \, < \, m}} 
\hspace{-2mm}
q^{-\frac{1}{4m}(2mr+k)(2m(r-1)+k)} 
\big[\theta_{k,m}+\theta_{-k,m}\big](\tau, z)}_{\rm (I)_B}
\nonumber
\\[3mm]
& & \hspace{-5mm}
+ \,\ 2 \, 
q^{-\frac{m}{4}} \, 
\theta_{2mp+m+\frac12, m+\frac12}^{(-)}(\tau,0)
\sum_{\substack{r \, \in \zzz \\[1mm] 0 \, \leq \, r \, \leq \, p-1}}
q^{-mr(r+1)} \, \theta_{0,m}(\tau, z)
\nonumber
\\[2mm]
& & \hspace{-5mm}
- \,\ q^{-\frac{m}{4}} \, 
\theta_{2mp+m+\frac12, m+\frac12}^{(-)}(\tau,0)
\sum_{\substack{r \, \in \zzz \\[1mm] -p \, \leq \, r \, \leq \, p}}
q^{-mr^2+\frac{m}{4}} \, \theta_{m,m}(\tau,z)
\label{m1:eqn:2022-922a}
\end{eqnarray}}
where ${\rm (I)}_B$ becomes as follows:
{\allowdisplaybreaks
\begin{eqnarray*}
& & \hspace{-8mm}
{\rm (I)}_B = 
- \, 
\theta_{2mp+m+\frac12, m+\frac12}^{(-)}(\tau,0) \hspace{-5mm} 
\underbrace{
\sum_{\substack{r \, \in \zzz \\[1mm] -p \, \leq \, r \, \leq \, p}}
}_{\substack{|| \\[-1.5mm] {\displaystyle \hspace{-1mm}
\sum_{\substack{r \, \in \zzz \\[1mm] -p \, < \, r \, \leq \, p}} 
+ \sum_{r=-p}
}}} \hspace{-3mm}
\sum_{\substack{k \, \in \zzz \\[1mm] 0 \, \leq \, k \, < \, m}} 
\hspace{-2mm}
q^{- \, \frac{1}{4m} \, \big(m(2r-1)+k\big)^2} 
\big[\theta_{k,m}+\theta_{-k,m}\big](\tau, z)
\\[2mm]
&=&
- \, 
\theta_{2mp+m+\frac12, m+\frac12}^{(-)}(\tau,0) 
\sum_{\substack{r \, \in \zzz \\[1mm] -p \, < \, r \, \leq \, p}} \,\ 
\sum_{\substack{k \, \in \zzz \\[1mm] 0 \, \leq \, k \, < \, m}} 
\hspace{-2mm}
q^{- \, \frac{1}{4m} \, \big(m(2r-1)+k\big)^2} 
\big[\theta_{k,m}+\theta_{-k,m}\big](\tau, z)
\\[2mm]
& &
\underbrace{- \, \theta_{2mp+m+\frac12, m+\frac12}^{(-)}(\tau,0) 
\sum_{\substack{k \, \in \zzz \\[1mm] 0 \, \leq \, k \, < \, m}} 
\hspace{-2mm}
q^{- \, \frac{1}{4m} \, \big(-m(2p+1)+k\big)^2} 
\big[\theta_{k,m}+\theta_{-k,m}\big](\tau, z)}_{\substack{|| \\[0mm] 
{\displaystyle \hspace{5mm}
- \, {\rm (I)}_A
}}}
\end{eqnarray*}}
so
{\allowdisplaybreaks
\begin{eqnarray}
{\rm (I)}_A+{\rm (I)}_B &=& 
- \, 
\theta_{2mp+m+\frac12, m+\frac12}^{(-)}(\tau,0) \hspace{-2mm}
\sum_{\substack{r \, \in \zzz \\[1mm] -p \, < \, r \, \leq \, p}} \,\
\sum_{\substack{k \, \in \zzz \\[1mm] 0 \, \leq \, k \, < \, m}} 
\hspace{-1mm}
q^{- \, \frac{1}{4m} \, \big(m(2r-1)+k\big)^2} 
\nonumber
\\[1mm]
& & \hspace{10mm}
\times \,\ 
\big[\theta_{k,m}+\theta_{-k,m}\big](\tau, z)
\label{m1:eqn:2022-922b}
\end{eqnarray}}
\end{subequations}
Substituting \eqref{m1:eqn:2022-922b} into \eqref{m1:eqn:2022-922a}
and using \eqref{m1:eqn:2022-921e}, we obtain 
\eqref{m1:eqn:2022-921c}, proving the claim 2).
\end{proof}

\section{Modular transformation of $\widetilde{\Phi}^{[m,0] \, \ast}$}
\label{sec:Phi:tilde:ast:modular}

\subsection{$\widetilde{\psi}^{(i)[m] \, \ast}(\tau,z)$}
\label{subsec:psi:tilde:ast}

For $m \in \nnn$ and $i \in \{1,2\}$, we consider functions 
$\widetilde{\psi}^{(i)[m] \, \ast}(\tau,z)$ defined by 
{\allowdisplaybreaks
\begin{eqnarray*}
\widetilde{\psi}^{(1)[m] \, \ast}(\tau,z) 
&:=& 
\widetilde{\Phi}^{[m,0] \, \ast}
\Big(\tau, \,\ \frac{z}{2}+\frac{\tau}{2}-\frac12, \,\ 
\frac{z}{2}-\frac{\tau}{2}+\frac12, \,\ 0\Big)
\\[2mm]
&=&
\widetilde{\Phi}^{[m,0] \, \ast}
\Big(\tau, \,\ \frac{z}{2}+\frac{\tau}{2}+\frac12, \,\ 
\frac{z}{2}-\frac{\tau}{2}-\frac12, \,\ 0\Big)
\\[3mm]
\widetilde{\psi}^{(2)[m] \, \ast}(\tau,z) 
&:=& 
\widetilde{\Phi}^{[m,0] \, \ast}
\Big(\tau, \,\ \frac{z}{2}+\frac{\tau}{2}, \,\ 
\frac{z}{2}-\frac{\tau}{2}, \,\ 0\Big)
\end{eqnarray*}}
The function $\widetilde{\psi}^{(1)[m] \, \ast}(\tau,z)$ satisfies 
the follwing modular transformation properties.

\vspace{1mm}

\begin{lemma} 
\label{m1:lemma:2022-923a}
Let $m \in \nnn$, then
\begin{enumerate}
\item[{\rm 1)}] \,\ $\widetilde{\psi}^{(1)[m] \, \ast} 
\Big(-\dfrac{1}{\tau}, \dfrac{z}{\tau}\Big) 
\,\ = \,\ 
(-1)^m \, \tau \, e^{\frac{\pi im}{2\tau}z^2} \, 
e^{-\frac{\pi im}{2\tau}} \, q^{-\frac{m}{4}} \, 
\widetilde{\psi}^{(1)[m] \, \ast}(\tau, z)$
\item[{\rm 2)}] \,\ $\widetilde{\psi}^{(1)[m] \, \ast}(\tau+1,z) 
\,\ = \,\
\widetilde{\psi}^{(2)[m] \, \ast}(\tau,z)$
\end{enumerate}
\end{lemma}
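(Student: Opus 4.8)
The plan is to prove both transformation laws from the modularity of the completion itself, rather than term by term from the explicit formula \eqref{m1:eqn:2022-921b}: by Zwegers' construction (as set up in the preceding papers of this series) the corrected function $\widetilde{\Phi}^{[m,0] \, \ast}(\tau, z_1, z_2, 0)$ is a genuine (real-analytic) Jacobi form for the full $SL_2(\zzz)$, whereas the individual indefinite theta sums on the right of \eqref{m1:eqn:2022-921b} are not separately modular. The whole strategy is therefore to track how the specializing substitution $z_1 = \frac{z}{2}+\frac{\tau}{2}-\frac12$, $z_2 = \frac{z}{2}-\frac{\tau}{2}+\frac12$ behaves under $S$ and $T$, and then to restore the resulting arguments to the standard shape using the quasi-periodicity of Lemma \ref{m1:lemma:2022-917a} together with the $z_1 \leftrightarrow z_2$ symmetry of $\Phi^{(\pm)[m,0] \, \ast}$ at $s=0$.

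I would dispose of the $T$-law (part 2) first, since it is essentially formal. Putting $\tau+1$ in place of $\tau$ inside the definition of $\widetilde{\psi}^{(1)[m] \, \ast}$, the first argument becomes $\frac{z}{2}+\frac{\tau+1}{2}-\frac12 = \frac{z}{2}+\frac{\tau}{2}$ and the second becomes $\frac{z}{2}-\frac{\tau+1}{2}+\frac12 = \frac{z}{2}-\frac{\tau}{2}$, which are exactly the arguments defining $\widetilde{\psi}^{(2)[m] \, \ast}$. Hence the claim reduces to the $T$-invariance $\widetilde{\Phi}^{[m,0] \, \ast}(\tau+1, w_1, w_2, 0) = \widetilde{\Phi}^{[m,0] \, \ast}(\tau, w_1, w_2, 0)$, which holds for the index-$0$ completion; in case one prefers a self-contained check, the same identity is obtained by comparing the right-hand sides of \eqref{m1:eqn:2022-921b} and \eqref{m1:eqn:2022-921c} under $\tau \mapsto \tau+1$, where every theta factor and every $q$-power transforms by an explicit root of unity.

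For the $S$-law (part 1) I would write $\widetilde{\psi}^{(1)[m] \, \ast}(-\frac1\tau, \frac{z}{\tau}) = \widetilde{\Phi}^{[m,0] \, \ast}(-\frac1\tau, w_1, w_2, 0)$ with $w_1 = \frac{z-1-\tau}{2\tau}$ and $w_2 = \frac{z+1+\tau}{2\tau}$, and then set $\zeta_1 = \tau w_1 = \frac{z}{2}-\frac{\tau}{2}-\frac12$, $\zeta_2 = \tau w_2 = \frac{z}{2}+\frac{\tau}{2}+\frac12$, so that $w_i = \zeta_i/\tau$. Applying the $S$-transformation of the completion to $\widetilde{\Phi}^{[m,0] \, \ast}(-\frac1\tau, \frac{\zeta_1}{\tau}, \frac{\zeta_2}{\tau}, 0)$ produces its Jacobi automorphy factor — a power of $\tau$ times the Gaussian $e^{(\pi i m/\tau)\,Q(\zeta_1,\zeta_2)}$ coming from the level-$m$ theta decomposition in $\zeta_1+\zeta_2$ — multiplied by $\widetilde{\Phi}^{[m,0] \, \ast}(\tau, \zeta_1, \zeta_2, 0)$. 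Since $(\zeta_2, \zeta_1) = (\frac{z}{2}+\frac{\tau}{2}+\frac12, \frac{z}{2}-\frac{\tau}{2}-\frac12)$ is precisely the second pair of arguments listed in the definition of $\widetilde{\psi}^{(1)[m] \, \ast}$, the $s=0$ swap symmetry gives $\widetilde{\Phi}^{[m,0] \, \ast}(\tau, \zeta_1, \zeta_2, 0) = \widetilde{\psi}^{(1)[m] \, \ast}(\tau, z)$, and it remains only to simplify the prefactor.

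The main obstacle is the exponential bookkeeping in that last step. Because $\zeta_1 + \zeta_2 = z$ while $\zeta_1 - \zeta_2 = -(\tau+1)$ depends on $\tau$, evaluating the index Gaussian $e^{(\pi i m/\tau)Q(\zeta_1,\zeta_2)}$ at these values separates into the desired $e^{\frac{\pi i m}{2\tau}z^2}$ coming from the $\zeta_1+\zeta_2$ part and a $(\tau+1)$-dependent contribution from the $\zeta_1-\zeta_2 = -(\tau+1)$ part; the latter must be reorganized, against the $q$-powers produced by the half-period shifts $\pm\frac{\tau}{2}$ in Lemma \ref{m1:lemma:2022-917a}, into the clean residual factors $e^{-\frac{\pi i m}{2\tau}}$ and $q^{-\frac{m}{4}}$, while the half-integer shifts $\pm\frac12$ feed the index-$m$ quasi-periodicity to yield the sign $(-1)^m$. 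Verifying that the power of $\tau$ collapses to exactly $\tau^{1}$ and that all these phases combine correctly is the delicate part; everything else is a direct application of the already-available transformation and symmetry properties of $\widetilde{\Phi}^{[m,0] \, \ast}$.
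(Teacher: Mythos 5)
Your proposal is correct and follows essentially the same route as the paper: both parts reduce to Lemma 2.1 of \cite{W2022c} (the $T$-invariance and the $S$-law $\widetilde{\Phi}^{[m,0]\,\ast}(-\frac1\tau,\frac{\zeta_1}{\tau},\frac{\zeta_2}{\tau},0)=\tau\,e^{2\pi im\zeta_1\zeta_2/\tau}\,\widetilde{\Phi}^{[m,0]\,\ast}(\tau,\zeta_1,\zeta_2,0)$ of the completion), with the whole factor $(-1)^m e^{\frac{\pi im}{2\tau}z^2}e^{-\frac{\pi im}{2\tau}}q^{-\frac m4}$ coming from expanding $e^{2\pi im\zeta_1\zeta_2/\tau}$ with $\zeta_1+\zeta_2=z$, $\zeta_1-\zeta_2=\pm(\tau+1)$. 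The only cosmetic difference is that the paper starts from the second listed representation $(\frac z2+\frac\tau2+\frac12,\frac z2-\frac\tau2-\frac12)$ so that $(\zeta_1,\zeta_2)=(\tau w_1,\tau w_2)$ lands exactly on the first representation and no swap or quasi-periodicity step is needed, whereas your choice requires the one extra (trivial, factor $1$) restoration you describe.
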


\begin{proof} These are obtained easily from Lemma 2.1 in \cite{W2022c}
as follows.

\medskip

\noindent
1) \, $\widetilde{\psi}^{(1)[m] \, \ast}
\Big(-\dfrac{1}{\tau}, \dfrac{z}{\tau}\Big) 
=
\widetilde{\Phi}^{[m, 0] \, \ast}\Big(-\dfrac{1}{\tau},
\dfrac{z}{2\tau}-\dfrac{1}{2\tau}+\dfrac12, 
\dfrac{z}{2\tau}+\dfrac{1}{2\tau}-\dfrac12, 0\Big)$
{\allowdisplaybreaks
\begin{eqnarray*}
&=& 
\widetilde{\Phi}^{[m, 0] \, \ast}\Big(-\dfrac{1}{\tau}, \, 
\frac{\frac{z}{2}-\frac12+\frac{\tau}{2}}{\tau}, \, 
\frac{\frac{z}{2}+\frac12-\frac{\tau}{2}}{\tau}, \, 0\Big)
\\[1mm]
&=& 
\tau \, e^{\frac{2\pi im}{\tau}
(\frac{z}{2}-\frac12+\frac{\tau}{2})
(\frac{z}{2}+\frac12-\frac{\tau}{2})}
\widetilde{\Phi}^{[m, 0] \, \ast}\Big(\tau, 
\frac{z}{2}-\frac12+\frac{\tau}{2},
\frac{z}{2}+\frac12-\frac{\tau}{2}, 0 \Big)
\\[1mm]
&=&
(-1)^m \, \tau \, e^{\frac{\pi im}{2\tau}z^2}
e^{-\frac{\pi im}{2\tau}}
q^{-\frac{m}{4}}
\widetilde{\psi}^{(1)[m] \, \ast}(\tau,z) \, ,  
\hspace{10mm} \text{proving 1)}.
\end{eqnarray*}}

\noindent
2) \, $\widetilde{\psi}^{(1)[m] \, \ast}(\tau+1,z) 
\, = \, 
\widetilde{\Phi}^{[m,0] \, \ast}\Big(\tau+1, \, 
\dfrac{z}{2}+\dfrac{\tau+1}{2}-\dfrac12, \, 
\dfrac{z}{2}-\dfrac{\tau+1}{2}+\dfrac12, \, 0\Big)$
{\allowdisplaybreaks
\begin{eqnarray*}
&=&
\widetilde{\Phi}^{[m,0] \, \ast}\Big(\tau+1, \, 
\frac{z}{2}+\frac{\tau}{2}, \, 
\frac{z}{2}-\frac{\tau}{2}, \, 0\Big)
\\[1mm]
&=&
\widetilde{\Phi}^{[m,0] \, \ast}\Big(\tau, \, 
\frac{z}{2}+\frac{\tau}{2}, \, 
\frac{z}{2}-\frac{\tau}{2}, \, 0\Big)
\, = \, \widetilde{\psi}^{(2)[m] \, \ast}(\tau,z) \, ,  
\hspace{10mm} \text{proving 2)}.
\end{eqnarray*}}
\end{proof}

\subsection{$\Xi^{(i)[m,p] \, \ast}(\tau,z)$ and 
$\Upsilon^{(i)[m,p] \, \ast}(\tau,z)$}
\label{subsec:Xi:ast}

For $m \in \nnn$ and $p \in \zzz$ such that 
$0 \leq p \leq 2m$ and $i \in \{1,2,3\}$, we define functions 
$\Xi^{(i)[m,p] \, \ast}(\tau,z)$ 
and $\Upsilon^{(i)[m,p] \, \ast}(\tau,z)$ as follows:
\begin{subequations}
{\allowdisplaybreaks
\begin{eqnarray}
\Xi^{(1)[m,p] \, \ast}(\tau,z) &:=&
(-1)^p \, q^{-\frac{m}{4}} \, 
\theta_{p-m-\frac12,m+\frac12}^{(-)}(\tau,0) \cdot 
\widetilde{\psi}^{(1)[m] \, \ast}(\tau,z)
\label{m1:eqn:2022-914a1}
\\[1mm]
\Xi^{(2)[m,p] \, \ast}(\tau,z) &:=&
q^{-\frac{m}{4}} \, 
\theta_{p-m-\frac12,m+\frac12}^{(-)}(\tau,0) \cdot 
\widetilde{\psi}^{(2)[m] \, \ast}(\tau,z)
\label{m1:eqn:2022-914a2}
\end{eqnarray}}
\end{subequations}
and
\begin{subequations}
{\allowdisplaybreaks
\begin{eqnarray}
\Upsilon^{(1)[m,p] \, \ast}(\tau,z) &:=&
\eta(\tau)^3 \, \cdot 
\frac{\theta_{p,m+\frac12}(\tau,z)
\, + \, 
\theta_{-p,m+\frac12}(\tau,z)}{
\theta_{0, \frac12}(\tau,z)}
\label{m1:eqn:2022-914b1}
\\[1mm]
\Upsilon^{(2)[m,p] \, \ast}(\tau,z) &:=&
\eta(\tau)^3 \, \cdot 
\frac{\theta_{p,m+\frac12}^{(-)}(\tau,z)
\, + \, 
\theta_{-p,m+\frac12}^{(-)}(\tau,z)
}{\theta_{0, \frac12}^{(-)}(\tau,z)}
\label{m1:eqn:2022-914b2}
\end{eqnarray}}
\end{subequations}

To compute modular transformation of these functions,
we use the following formulas which are obtained easily from 
Lemmas 1.3 and 1.4 in \cite{W2022c}.

\medskip

\begin{note}
\label{m1:note:2022-923a}
Let $m \in \zzz_{\geq 0}$ and $p \in \zzz$. Then
\begin{enumerate}
\item[{\rm 1)}] 
\begin{enumerate}
\item[{\rm (i)}] 
$\theta_{p-m-\frac12,m+\frac12}^{(-)}
\Big(-\dfrac{1}{\tau},0\Big) 
= \, 
- \, i \,  (-1)^{m+p} \, 
\dfrac{(-i\tau)^{\frac12}}{\sqrt{2m+1}}
\sum\limits_{p'=0}^{2m}
(-1)^{p'}e^{-\frac{2\pi i}{2m+1}pp'}
\theta_{p'-m-\frac12,m+\frac12}^{(-)}(\tau,0)$
\item[{\rm (ii)}]  $\theta_{p-m-\frac12,m+\frac12}^{(-)}
(\tau+1, \, z) \,\ = \,\ 
(-1)^p \, e^{\frac{\pi i}{2}(m+\frac12)} \,
e^{\frac{\pi i}{2m+1} p^2} \, 
\theta_{p-m-\frac12,m+\frac12}^{(-)}(\tau, z) $
\end{enumerate}
\item[{\rm 2)}] 
\begin{enumerate}
\item[{\rm (i)}] $\big[\theta_{p,m+\frac12}+ 
\theta_{-p,m+\frac12}\big]\Big(-\dfrac{1}{\tau}, \dfrac{z}{\tau}\Big)$
{\allowdisplaybreaks
\begin{eqnarray*}
&=& 
\frac{(-i\tau)^{\frac12}}{\sqrt{2m+1}} \, 
e^{\frac{\pi i}{2\tau}(m+\frac12)z^2}
\sum_{p' \in \zzz/(2m+1)\zzz}
e^{\frac{2\pi i}{2m+1}pp'} \, \big[
\theta_{p',m+\frac12}+ \theta_{-p',m+\frac12}\big](\tau,z)
\\[1mm]
&=& 
\frac{(-i\tau)^{\frac12}}{\sqrt{2m+1}} \, 
e^{\frac{\pi i}{2\tau}(m+\frac12)z^2}
\sum_{p' \in \zzz/(2m+1)\zzz}
e^{-\frac{2\pi i}{2m+1}pp'} \, \big[
\theta_{p',m+\frac12}+ \theta_{-p',m+\frac12}\big](\tau,z)
\end{eqnarray*}}
\item[{\rm (ii)}] $\big[\theta_{p,m+\frac12}+ 
\theta_{-p,m+\frac12}\big](\tau+1,z) 
\, = \, 
e^{\frac{\pi i}{2m+1}p^2} \, 
\big[\theta_{p,m+\frac12}^{(-)}+ \theta_{-p,m+\frac12}^{(-)}\big](\tau,z) $
\end{enumerate}
\end{enumerate}
\end{note}

\medskip

Then modular transformation properties of $\Xi^{(1)[m,p] \, \ast}$ 
and $\Upsilon^{(1)[m,p] \, \ast}$ are given by the following formulas:

\vspace{1mm}

\begin{lemma} 
\label{m1:lemma:2022-923b}
Let $m \in \nnn$ and $p \in \zzz$. Then 
\begin{enumerate}
\item[{\rm 1)}] \,\ $\Xi^{(1)[m,p] \, \ast}
\Big(-\dfrac{1}{\tau}, \dfrac{z}{\tau}\Big)
\, = \, 
\dfrac{(-i\tau)^{\frac32}}{\sqrt{2m+1}} \, 
e^{\frac{\pi im}{2\tau}z^2} \, 
\sum\limits_{p'=0}^{2m}
e^{-\frac{2\pi i}{2m+1}pp'} \, \Xi^{(1)[m,p'] \, \ast}(\tau,z)$
\item[{\rm 2)}] \,\ $\Xi^{(1)[m,p] \, \ast}(\tau+1,z) \, = \, 
e^{\frac{\pi i}{4}} \, e^{\frac{\pi i}{2m+1} p^2} \, 
\Xi^{(2)[m,p] \, \ast}(\tau,z)$
\end{enumerate}
\end{lemma}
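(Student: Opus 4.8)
The plan is to prove both statements by direct substitution, feeding the known transformation laws for the two factors making up $\Xi^{(1)[m,p]\,\ast}$ into its defining formula \eqref{m1:eqn:2022-914a1} and then carefully bookkeeping the resulting prefactors. The two factors are $\widetilde{\psi}^{(1)[m]\,\ast}$, whose $S$- and $T$-transformations are recorded in Lemma \ref{m1:lemma:2022-923a}, and $\theta_{p-m-\frac12,m+\frac12}^{(-)}(\tau,0)$, whose $S$- and $T$-transformations are recorded in Note \ref{m1:note:2022-923a} part 1). The only extra ingredient is the behaviour of the scalar $q^{-\frac{m}{4}}=e^{-\pi i m\tau/2}$: under $\tau\mapsto-1/\tau$ it becomes $e^{\pi i m/(2\tau)}$, and under $\tau\mapsto\tau+1$ it picks up the phase $e^{-\pi i m/2}$.

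For part 1), I would substitute into $\Xi^{(1)[m,p]\,\ast}(-1/\tau,z/\tau)$ the formula of Note \ref{m1:note:2022-923a} 1)(i) for $\theta_{p-m-\frac12,m+\frac12}^{(-)}(-1/\tau,0)$ and the formula of Lemma \ref{m1:lemma:2022-923a} 1) for $\widetilde{\psi}^{(1)[m]\,\ast}(-1/\tau,z/\tau)$, while replacing $q^{-\frac{m}{4}}$ by $e^{\pi i m/(2\tau)}$. The key cancellations to verify are: the factor $e^{\pi i m/(2\tau)}$ from the transformed $q^{-\frac{m}{4}}$ cancels the $e^{-\pi i m/(2\tau)}$ coming out of Lemma \ref{m1:lemma:2022-923a} 1); the sign factors collapse as $(-1)^p(-1)^{m+p}(-1)^m=1$; and the factor $(-1)^{p'}$ inside the sum of Note \ref{m1:note:2022-923a} 1)(i) is exactly the one needed to reassemble $(-1)^{p'}q^{-\frac{m}{4}}\theta_{p'-m-\frac12,m+\frac12}^{(-)}(\tau,0)\,\widetilde{\psi}^{(1)[m]\,\ast}(\tau,z)=\Xi^{(1)[m,p']\,\ast}(\tau,z)$ term by term in the sum over $p'$. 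The surviving scalar prefactor is $(-i)\cdot\tau\cdot(-i\tau)^{\frac12}=(-i\tau)^{\frac32}$, which together with $\frac{1}{\sqrt{2m+1}}$ and $e^{\frac{\pi im}{2\tau}z^2}$ yields exactly the claimed right-hand side.

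For part 2), I would substitute into $\Xi^{(1)[m,p]\,\ast}(\tau+1,z)$ the formula of Note \ref{m1:note:2022-923a} 1)(ii) at $z=0$ for $\theta_{p-m-\frac12,m+\frac12}^{(-)}(\tau+1,0)$ and the identity $\widetilde{\psi}^{(1)[m]\,\ast}(\tau+1,z)=\widetilde{\psi}^{(2)[m]\,\ast}(\tau,z)$ from Lemma \ref{m1:lemma:2022-923a} 2), again replacing $q^{-\frac{m}{4}}$ at $\tau+1$ by $e^{-\pi i m/2}q^{-\frac{m}{4}}$. Here the two $(-1)^p$ factors cancel, the phases combine as $e^{-\pi i m/2}\cdot e^{\frac{\pi i}{2}(m+\frac12)}=e^{\pi i/4}$, and what remains is precisely $e^{\pi i/4}e^{\frac{\pi i}{2m+1}p^2}$ times $q^{-\frac{m}{4}}\theta_{p-m-\frac12,m+\frac12}^{(-)}(\tau,0)\,\widetilde{\psi}^{(2)[m]\,\ast}(\tau,z)=\Xi^{(2)[m,p]\,\ast}(\tau,z)$, giving the stated formula.

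The computations are routine once the inputs are assembled; the only genuine point requiring care is the consistent bookkeeping of the half-integral powers and phases. Specifically, I expect the main (and essentially only) obstacle to be tracking how $q^{-\frac{m}{4}}$ transforms and verifying that the branch of $(-i\tau)^{\frac12}$ appearing in Note \ref{m1:note:2022-923a} 1)(i) combines with the explicit factor $\tau$ from Lemma \ref{m1:lemma:2022-923a} 1) to give $(-i\tau)^{\frac32}$ with the correct sign, rather than its negative.
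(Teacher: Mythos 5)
Your proposal is correct and follows exactly the paper's own proof: both parts are obtained by substituting Note \ref{m1:note:2022-923a} 1)(i)--(ii) and Lemma \ref{m1:lemma:2022-923a} into the defining formula \eqref{m1:eqn:2022-914a1}, with the same cancellations of $e^{\pm\pi im/(2\tau)}$, the sign factors $(-1)^p(-1)^{m+p}(-1)^m=1$, the reassembly of $(-1)^{p'}q^{-m/4}\theta^{(-)}_{p'-m-\frac12,m+\frac12}(\tau,0)\widetilde{\psi}^{(1)[m]\,\ast}(\tau,z)$ into $\Xi^{(1)[m,p']\,\ast}(\tau,z)$, and the phase combination $e^{-\pi im/2}e^{\frac{\pi i}{2}(m+\frac12)}=e^{\pi i/4}$ that the paper records. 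No gaps.
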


\begin{proof} 1) \,\ $\Xi^{(1)[m,p] \, \ast}
\Big(-\dfrac{1}{\tau}, \dfrac{z}{\tau}\Big)$
{\allowdisplaybreaks
\begin{eqnarray*}
&=&
(-1)^p e^{-2\pi i \frac{m}{4}(-\frac{1}{\tau})} \, 
\theta_{p-m-\frac12,m+\frac12}^{(-)}\Big(-\dfrac{1}{\tau}, 0\Big) \, 
\widetilde{\psi}^{(1)[m] \, \ast}\Big(-\dfrac{1}{\tau}, \dfrac{z}{\tau}\Big)
\\[2mm]
&=&
(-1)^p \, e^{\frac{\pi im}{2\tau}} 
\, \times \,\bigg\{
- \, i \,  (-1)^{m+p} \, 
\frac{(-i\tau)^{\frac12}}{\sqrt{2m+1}}
\sum_{p'=0}^{2m}
(-1)^{p'}e^{-\frac{2\pi i}{2m+1}pp'}
\theta_{p'-m-\frac12,m+\frac12}^{(-)}(\tau,0)\bigg\}
\\[2mm]
& &
\times \,\ 
(-1)^m \, \tau \, e^{\frac{\pi im}{2\tau}z^2} \, 
e^{-\frac{\pi im}{2\tau}} \, q^{-\frac{m}{4}} \, 
\widetilde{\psi}^{(1)[m] \, \ast}(\tau, z)
\\[2mm]
&=&
\frac{(-i\tau)^{\frac32}}{\sqrt{2m+1}} \, 
e^{\frac{\pi im}{2\tau}z^2} \, 
\sum_{p'=0}^{2m}
e^{-\frac{2\pi i}{2m+1}pp'} \, 
\underbrace{(-1)^{p'} \, q^{-\frac{m}{4}} \, 
\theta_{p'-m-\frac12,m+\frac12}^{(-)}(\tau,0) \, 
\widetilde{\psi}^{(1)[m] \, \ast}(\tau, z)}_{\substack{|| \\[0mm] 
{\displaystyle \Xi^{(1)[m,p'] \, \ast}(\tau,z)
}}}
\end{eqnarray*}}

\vspace{-8mm}

proving 1).

\vspace{4mm}

\noindent
2) \,\ $\Xi^{(1)[m,p] \, \ast}(\tau+1,z) \, = \, 
(-1)^p \, 
e^{-\frac{\pi im}{2}(\tau+1)} \, 
\theta_{p-m-\frac12, m+\frac12}^{(-)}(\tau+1,0) \, 
\widetilde{\psi}^{(1)[m] \, \ast}(\tau+1, z)$
{\allowdisplaybreaks
\begin{eqnarray*}
&=&
(-1)^p \, q^{-\frac{m}{4}} \, e^{-\frac{\pi im}{2}} \, 
\times \, \Big\{
(-1)^p \, e^{\frac{\pi i}{2}(m+\frac12)} \,
e^{\frac{\pi i}{2m+1} p^2} \, 
\theta_{p-m-\frac12,m+\frac12}^{(-)}(\tau, z) \Big\}
\, \widetilde{\psi}^{(2)[m] \, \ast}(\tau, z)
\\[2mm]
&=&
e^{\frac{\pi i}{4}} \,
e^{\frac{\pi i}{2m+1} p^2} \, 
\underbrace{q^{-\frac{m}{4}} \, 
\theta_{p-m-\frac12,m+\frac12}^{(-)}(\tau, z) \, 
\widetilde{\psi}^{(2)[m] \, \ast}(\tau, z)}_{\substack{|| \\[0mm] 
{\displaystyle \Xi^{(2)[m,p] \, \ast}(\tau,z)
}}} \, , \hspace{10mm} \text{proving 2).}
\end{eqnarray*}}

\vspace{-9mm}

\end{proof}

\vspace{1mm}

\begin{lemma}
\label{m1:lemma:2022-923c}
Let $m \in \nnn$ and $p \in \zzz$. Then 
\begin{enumerate}
\item[{\rm 1)}]$\Upsilon^{(1)[m,p] \, \ast}
\Big(-\dfrac{1}{\tau}, \dfrac{z}{\tau}\Big)
\,\ = \,\ 
\dfrac{(-i\tau)^{\frac32}}{\sqrt{2m+1}} \, 
e^{\frac{\pi im}{2\tau}z^2}
\sum\limits_{p'=0}^{2m}
e^{-\frac{2\pi i}{2m+1}pp'} \, \Upsilon^{(1)[m,p'] \, \ast}(\tau,z)$

\item[{\rm 2)}] \,\ $\Upsilon^{(1)[m,p] \, \ast}(\tau+1,z)
\,\ = \,\ 
e^{\frac{\pi i}{2m+1}p^2+\frac{\pi i}{4}} \, 
\Upsilon^{(2)[m,p] \, \ast}(\tau, z)$
\end{enumerate}
\end{lemma}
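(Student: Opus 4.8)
The plan is to mimic the proof of Lemma~\ref{m1:lemma:2022-923b} almost verbatim, the only genuinely new ingredient being the modular behaviour of the denominator $\theta_{0,\frac12}$. I factor $\Upsilon^{(1)[m,p]\,\ast}(\tau,z)$ into its three constituents $\eta(\tau)^3$, the numerator $\theta_{p,m+\frac12}(\tau,z)+\theta_{-p,m+\frac12}(\tau,z)$, and the denominator $\theta_{0,\frac12}(\tau,z)$, apply the known transformation law of each factor, and then recombine. For the numerator I use Note~\ref{m1:note:2022-923a}~2), and for the denominator I use the specialization of that same Note to $m=0$ (level $\frac12$), which reads $\theta_{0,\frac12}\big(-\frac{1}{\tau},\frac{z}{\tau}\big)=(-i\tau)^{\frac12}e^{\frac{\pi i}{4\tau}z^2}\theta_{0,\frac12}(\tau,z)$ and $\theta_{0,\frac12}(\tau+1,z)=\theta_{0,\frac12}^{(-)}(\tau,z)$.

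For claim 2) I substitute $\eta(\tau+1)^3=e^{\frac{\pi i}{4}}\eta(\tau)^3$, the numerator law from Note~\ref{m1:note:2022-923a}~2)(ii), and the denominator law $\theta_{0,\frac12}(\tau+1,z)=\theta_{0,\frac12}^{(-)}(\tau,z)$. The phase $e^{\frac{\pi i}{2m+1}p^2}$ produced by the numerator survives, the denominator simply switches $\theta_{0,\frac12}$ to $\theta_{0,\frac12}^{(-)}$, and the quotient of transformed numerator by transformed denominator is exactly $\Upsilon^{(2)[m,p]\,\ast}(\tau,z)$; collecting the two $\eta$- and $\theta$-phases yields the stated factor $e^{\frac{\pi i}{2m+1}p^2+\frac{\pi i}{4}}$.

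For claim 1) I substitute $\eta\big(-\frac{1}{\tau}\big)^3=(-i\tau)^{\frac32}\eta(\tau)^3$, the numerator law from Note~\ref{m1:note:2022-923a}~2)(i) (which carries the factor $\frac{(-i\tau)^{\frac12}}{\sqrt{2m+1}}e^{\frac{\pi i}{2\tau}(m+\frac12)z^2}$ together with the sum $\sum_{p'=0}^{2m}e^{-\frac{2\pi i}{2m+1}pp'}[\theta_{p',m+\frac12}+\theta_{-p',m+\frac12}](\tau,z)$), and the denominator law $\theta_{0,\frac12}\big(-\frac{1}{\tau},\frac{z}{\tau}\big)=(-i\tau)^{\frac12}e^{\frac{\pi i}{4\tau}z^2}\theta_{0,\frac12}(\tau,z)$. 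Dividing term by term, the prefactor $(-i\tau)^{\frac12}$ from the numerator cancels against the one from the denominator, and the Gaussian factors combine via $(m+\frac12)-\frac12=m$ in the exponent, so that $\frac{e^{\frac{\pi i}{2\tau}(m+\frac12)z^2}}{e^{\frac{\pi i}{4\tau}z^2}}=e^{\frac{\pi im}{2\tau}z^2}$. Recognising $\eta(\tau)^3\,\frac{[\theta_{p',m+\frac12}+\theta_{-p',m+\frac12}](\tau,z)}{\theta_{0,\frac12}(\tau,z)}=\Upsilon^{(1)[m,p']\,\ast}(\tau,z)$ inside the sum then produces the asserted formula with overall prefactor $\frac{(-i\tau)^{\frac32}}{\sqrt{2m+1}}e^{\frac{\pi im}{2\tau}z^2}$.

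The computation is otherwise entirely routine; the one place that demands care is this bookkeeping of the automorphy factors in claim 1), namely verifying that the level-$\frac12$ transformation of the denominator precisely cancels the half-weight prefactor $(-i\tau)^{\frac12}$ and the $\frac{\pi i}{4\tau}z^2$ part of the Gaussian inherited from the numerator, leaving the clean weight $(-i\tau)^{\frac32}$ and exponent $\frac{\pi im}{2\tau}z^2$. One should also confirm that Note~\ref{m1:note:2022-923a}~2), stated for $m\in\zzz_{\geq0}$, is genuinely valid at $m=0$, so that its $m=0$ instance legitimately supplies the denominator laws used above.
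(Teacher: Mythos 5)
Your proposal is correct and follows essentially the same route as the paper: factor $\Upsilon^{(1)[m,p]\,\ast}$ into $\eta(\tau)^3$, the numerator $[\theta_{p,m+\frac12}+\theta_{-p,m+\frac12}]$, and the denominator $\theta_{0,\frac12}$, and transform each factor separately, with the automorphy-factor bookkeeping $\bigl(m+\tfrac12\bigr)-\tfrac12=m$ and the cancellation of the two $(-i\tau)^{\frac12}$'s exactly as in the paper's computation. The only cosmetic difference is that you justify the level-$\tfrac12$ denominator laws as the $m=0$ specialization of Note \ref{m1:note:2022-923a}, whereas the paper quotes them in-line from the standard theta transformation formulas; both are legitimate.
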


\begin{proof} 1) \,\ $\Upsilon^{(1)[m,p] \, \ast}
\Big(-\dfrac{1}{\tau}, \dfrac{z}{\tau}\Big)
=
\eta (-\frac{1}{\tau})^3 \cdot 
\dfrac{
\theta_{p,m+\frac12}(-\frac{1}{\tau}, \frac{z}{\tau}) 
\, + \, 
\theta_{-p,m+\frac12}(-\frac{1}{\tau}, \frac{z}{\tau})
}{
\theta_{0, \frac12}(-\frac{1}{\tau}, \, \frac{z}{\tau})}$
{\allowdisplaybreaks
\begin{eqnarray*}
&=&
(-i\tau)^{\frac32}\eta(\tau)^3 \, 
\frac{(-i\tau)^{\frac12}}{\sqrt{2m+1}} \, 
e^{\frac{\pi i}{2\tau}(m+\frac12)z^2} \hspace{-5mm}
\sum_{p' \in \zzz(2m+1)\zzz} \hspace{-5mm}
e^{-\frac{2\pi i}{2m+1}pp'} \, 
\frac{\big[
\theta_{p',m+\frac12}+ \theta_{-p',m+\frac12}\big](\tau,z)}
{(-i\tau)^{\frac12} \, e^{\frac{\pi iz^2}{4\tau}} \, 
\theta_{0, \frac12}(\tau, z)}
\\[0mm]
&=&
\frac{(-i\tau)^{\frac32}}{\sqrt{2m+1}} \, e^{\frac{\pi im}{2\tau}z^2}
\sum_{p' \in \zzz(2m+1)\zzz}
e^{-\frac{2\pi i}{2m+1}pp'} \, \bigg\{
\underbrace{\eta(\tau)^3 \, 
\frac{\big[\theta_{p',m+\frac12}+ \theta_{-p',m+\frac12}\big](\tau,z)}
{\theta_{0, \frac12}(\tau, z)}}_{\substack{|| \\[0mm] 
{\displaystyle \Upsilon^{(1)[m,p'] \, \ast}(\tau,z)
}}}
\bigg\}
\end{eqnarray*}}

\vspace{-9mm}

proving 1).

\vspace{4mm}
\noindent
2)  \,\ $\Upsilon^{(1)[m,p] \, \ast}(\tau+1, z) = \, 
\eta(\tau+1)^3 \, 
\dfrac{
[\theta_{p, m+\frac12}+\theta_{-p, m+\frac12}](\tau+1,z)
}{
\theta_{0, \frac12}(\tau+1,z)} $
{\allowdisplaybreaks
\begin{eqnarray*}
&=&
[e^{\frac{\pi i}{12}} \eta(\tau)]^3 \, \frac{
e^{\frac{\pi i}{2m+1}p^2} \, 
[\theta_{p, m+\frac12}^{(-)}+\theta_{-p, m+\frac12}^{(-)}](\tau,z)
}{
\theta_{0, \frac12}^{(-)}(\tau,z)} 
\\[1mm]
&=& 
e^{\frac{\pi i}{2m+1}p^2+\frac{\pi i}{4}} \,\ 
\underbrace{\eta(\tau)^3 \, 
\frac{[\theta_{p, m+\frac12}^{(-)}
-
\theta_{-p, m+\frac12}^{(-)}](\tau,z)}{
\theta_{0, \frac12}^{(-)}(\tau,z)}}_{\substack{|| \\[0mm] 
{\displaystyle \Upsilon^{(2)[m,p] \, \ast}(\tau, z)
}}} \, , \hspace{10mm} \text{proving 2).}
\end{eqnarray*}}

\vspace{-10mm}

\end{proof}

\section{$G^{(i)[m,p] \, \ast}(\tau,z)$ and $g^{(i)[m,p] \, \ast}_k(\tau)$}
\label{sec:g(i):ast}

\subsection{$G^{(i)[m,p] \, \ast}(\tau,z)$}
\label{subsec:G:ast}


For $m \in \nnn$ and $p \in \zzz$ such that 
$0 \leq p \leq 2m$ and $i \in \{1,2\}$, we put 
\begin{subequations}
\begin{equation}
G^{(i)[m,p] \, \ast}(\tau,z) \,\ := \,\ 
\Xi^{(i)[m,p] \, \ast}(\tau,z) -\Upsilon^{(i)[m,p] \, \ast}(\tau,z)
\label{m1:eqn:2022-923a1}
\end{equation}
Then, by Proposition \ref{m1:prop:2022-921b}, $G^{(i)[m,p] \, \ast}(\tau,z)$ 
can be written in the following form:
{\allowdisplaybreaks
\begin{eqnarray}
& & \hspace{-20mm}
G^{(i)[m,p] \, \ast}(\tau,z) \,\ = \,\  
\sum_{\substack{k \, \in \zzz \\[1mm]
0 \, \leq \, k \, \leq \, m}}g^{(i)[m,p] \, \ast}_k(\tau)
\big[\theta_{k,m}+\theta_{-k,m}\big](\tau,z)
\nonumber
\\[1mm]
& & \hspace{-8mm}
= \hspace{-2mm}
\sum_{\substack{k \, \in \zzz \\[1mm] 0 \, < \, k \, < \, m}}
\hspace{-2mm}
g^{(i)[m,p] \, \ast}_k(\tau)
\big[\theta_{k,m}+\theta_{-k,m}\big](\tau,z)
\, + \, 
2 \, g^{(i)[m,p] \, \ast}_0(\tau) \theta_{0,m}(\tau,z)
\nonumber
\\[-2mm]
& & \hspace{30mm}
+ \,\ 
2 \, g^{(i)[m,p] \, \ast}_m(\tau) \theta_{m,m}(\tau,z)
\label{m1:eqn:2022-923a2}
\end{eqnarray}}
\end{subequations}
The modular transformation properties of $G^{(1)[m,p] \, \ast}(\tau,z)$
are obtained immediately from \eqref{m1:eqn:2022-923a1} and 
Lemma \ref{m1:lemma:2022-923b} and Lemma \ref{m1:lemma:2022-923c} 
as follows:

\medskip

\begin{prop} 
\label{m1:prop:2022-923a}
Let $m \in \nnn$ and $p \in \zzz_{\geq 0}$ such 
that $0 \leq p \leq 2m$. Then 
\begin{subequations}
\begin{enumerate}
\item[{\rm 1)}] \,\ $G^{(1)[m,p] \, \ast}
\Big(-\dfrac{1}{\tau}, \dfrac{z}{\tau}\Big)
= 
\dfrac{(-i\tau)^{\frac32}}{\sqrt{2m+1}} 
e^{\frac{\pi im}{2\tau}z^2} 
\sum\limits_{p'=0}^{2m} e^{-\frac{2\pi i}{2m+1}pp'} 
G^{(1)[m,p'] \, \ast}(\tau,z)$

\vspace{-10mm}

\begin{equation}
\label{m1:eqn:2022-923b1}
\end{equation}
\item[{\rm 2)}] \,\ $G^{(1)[m,p] \, \ast}(\tau+1, z) 
\,\ = \,\ 
e^{\frac{\pi i}{2m+1}p^2+\frac{\pi i}{4}} \, 
G^{(2)[m, p] \, \ast}(\tau,z)$

\vspace{-10mm}

\begin{equation}
\label{m1:eqn:2022-1004a}
\end{equation}
\end{enumerate}
\end{subequations}
\end{prop}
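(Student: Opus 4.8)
The plan is to obtain both transformation laws directly from the defining relation \eqref{m1:eqn:2022-923a1}, namely $G^{(i)[m,p] \, \ast} = \Xi^{(i)[m,p] \, \ast} - \Upsilon^{(i)[m,p] \, \ast}$, by subtracting the transformation formulas for $\Xi^{(1)[m,p] \, \ast}$ and $\Upsilon^{(1)[m,p] \, \ast}$ already established in Lemmas \ref{m1:lemma:2022-923b} and \ref{m1:lemma:2022-923c}. The crucial observation---which is really the whole point---is that these two auxiliary functions carry identical automorphy factors under both generators $S$ and $T$ of $SL_2(\zzz)$, so their difference transforms with the same factors and no stray terms survive.

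For claim 1) I would apply $S:\tau \mapsto -\tfrac{1}{\tau}$, $z \mapsto \tfrac{z}{\tau}$ separately to the two summands. By Lemma \ref{m1:lemma:2022-923b} 1) and Lemma \ref{m1:lemma:2022-923c} 1), each of $\Xi^{(1)[m,p] \, \ast}$ and $\Upsilon^{(1)[m,p] \, \ast}$ picks up the common prefactor $\frac{(-i\tau)^{3/2}}{\sqrt{2m+1}} \, e^{\frac{\pi im}{2\tau}z^2}$ together with the same discrete kernel $\sum_{p'=0}^{2m} e^{-\frac{2\pi i}{2m+1}pp'}\,(\,\cdot\,)^{(1)[m,p'] \, \ast}(\tau,z)$. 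Since neither the prefactor nor the kernel distinguishes $\Xi$ from $\Upsilon$, linearity of the finite sum over $p'$ lets me pull the subtraction inside term by term, producing $\sum_{p'} e^{-\frac{2\pi i}{2m+1}pp'}\, G^{(1)[m,p'] \, \ast}(\tau,z)$ and hence \eqref{m1:eqn:2022-923b1}.

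For claim 2) I would apply $T:\tau \mapsto \tau+1$ in the same manner. Lemma \ref{m1:lemma:2022-923b} 2) gives $\Xi^{(1)[m,p] \, \ast}(\tau+1,z) = e^{\frac{\pi i}{4}} e^{\frac{\pi i}{2m+1}p^2} \Xi^{(2)[m,p] \, \ast}(\tau,z)$, while Lemma \ref{m1:lemma:2022-923c} 2) gives $\Upsilon^{(1)[m,p] \, \ast}(\tau+1,z) = e^{\frac{\pi i}{2m+1}p^2 + \frac{\pi i}{4}} \Upsilon^{(2)[m,p] \, \ast}(\tau,z)$. The scalar multipliers coincide, so subtracting yields $G^{(1)[m,p] \, \ast}(\tau+1,z) = e^{\frac{\pi i}{2m+1}p^2 + \frac{\pi i}{4}} G^{(2)[m,p] \, \ast}(\tau,z)$, which is \eqref{m1:eqn:2022-1004a}; note that the upper index switches from $1$ to $2$ exactly as it does in each constituent lemma.

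There is essentially no analytic obstacle here, since all the genuine work has been front-loaded into the two lemmas. The one point deserving a moment's care is the bookkeeping of phases: I must confirm that the $S$-multiplier $(-i\tau)^{3/2}/\sqrt{2m+1}$ and the $T$-multiplier $e^{\frac{\pi i}{2m+1}p^2 + \frac{\pi i}{4}}$ are literally identical for $\Xi^{(1)}$ and $\Upsilon^{(1)}$---in particular that the $e^{\frac{\pi i}{4}}$ appearing in Lemma \ref{m1:lemma:2022-923b} 2) matches the $e^{\frac{\pi i}{4}}$ absorbed into the multiplier of Lemma \ref{m1:lemma:2022-923c} 2)---so that the difference closes on the family $\{G^{(2)[m,p'] \, \ast}\}$ rather than leaving a residual factor. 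Once this alignment of multiplier systems is verified, both formulas follow immediately by linearity.
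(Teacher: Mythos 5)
Your proposal is correct and is exactly the argument the paper intends: it states that Proposition \ref{m1:prop:2022-923a} follows immediately from the definition \eqref{m1:eqn:2022-923a1} together with Lemmas \ref{m1:lemma:2022-923b} and \ref{m1:lemma:2022-923c}, which is precisely your subtraction of the two transformation laws with their common multipliers. No gap here; the phase bookkeeping you flag does check out, since both lemmas carry the identical factors $\frac{(-i\tau)^{3/2}}{\sqrt{2m+1}}e^{\frac{\pi im}{2\tau}z^2}$ under $S$ and $e^{\frac{\pi i}{2m+1}p^2+\frac{\pi i}{4}}$ under $T$.
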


The formula \eqref{m1:eqn:2022-923b1} is rewritten as follows:

\medskip

\begin{prop} 
\label{m1:prop:2022-923b}
Let $m \in \nnn$ and $\ell \in \zzz_{\geq 0}$ such 
that $0 \leq \ell \leq 2m$. Then 
\begin{equation}
\sum_{p=0}^{2m} \, 
e^{\frac{2\pi i}{2m+1}p\ell} \, 
G^{(1)[m,p] \, \ast}\Big(-\frac{1}{\tau}, \frac{z}{\tau}\Big)
\, = \,
(-i\tau)^{\frac32} \sqrt{2m+1} \, 
e^{\frac{\pi im}{2\tau}z^2} \, G^{(1)[m,\ell] \, \ast}(\tau,z)
\label{m1:eqn:2022-923b2}
\end{equation}
\end{prop}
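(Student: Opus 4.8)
The plan is to obtain \eqref{m1:eqn:2022-923b2} from \eqref{m1:eqn:2022-923b1} by discrete Fourier inversion over the cyclic group $\zzz/(2m+1)\zzz$. First I would take the formula \eqref{m1:eqn:2022-923b1} of Proposition \ref{m1:prop:2022-923a}, multiply both sides by $e^{\frac{2\pi i}{2m+1}p\ell}$, and sum over $p$ from $0$ to $2m$. On the right-hand side this produces a double sum over $p$ and $p'$, which I would reorganize by interchanging the order of summation so that the only factor depending on $p$ is isolated into an inner sum, while the prefactor $\frac{(-i\tau)^{3/2}}{\sqrt{2m+1}}$, the Gaussian factor $e^{\frac{\pi im}{2\tau}z^2}$, and the functions $G^{(1)[m,p'] \, \ast}(\tau,z)$ are pulled outside.

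Next I would evaluate the inner sum
\[
\sum_{p=0}^{2m} e^{\frac{2\pi i}{2m+1}p(\ell-p')}
\]
as a finite geometric series of $(2m+1)$-th roots of unity. By the standard orthogonality relation, this sum equals $2m+1$ when $\ell \equiv p' \pmod{2m+1}$ and vanishes otherwise. Since both $\ell$ and $p'$ range over $\{0,1,\dots,2m\}$, the congruence $\ell \equiv p' \pmod{2m+1}$ forces $\ell = p'$, so the inner sum collapses to $(2m+1)\,\delta_{\ell,p'}$.

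Substituting this back, only the term $p'=\ell$ survives, and the factor $2m+1$ combines with the prefactor $\frac{1}{\sqrt{2m+1}}$ to give $\sqrt{2m+1}$, producing exactly the right-hand side $(-i\tau)^{3/2}\sqrt{2m+1}\,e^{\frac{\pi im}{2\tau}z^2}\,G^{(1)[m,\ell] \, \ast}(\tau,z)$ of \eqref{m1:eqn:2022-923b2}. I expect no genuine obstacle here: the argument is a routine Fourier inversion, and the only point requiring care is the range restriction guaranteeing $\ell=p'$ rather than mere congruence, which is immediate from $0\le \ell,p'\le 2m$.
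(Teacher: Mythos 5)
Your proposal is correct and coincides with the paper's own proof: both apply $\sum_{p=0}^{2m}e^{\frac{2\pi i}{2m+1}p\ell}$ to \eqref{m1:eqn:2022-923b1}, interchange the sums, and use the orthogonality relation $\sum_{p=0}^{2m}e^{\frac{2\pi i}{2m+1}p(\ell-p')}=(2m+1)\,\delta_{p',\ell}$ to isolate the term $p'=\ell$. Nothing is missing.
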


\begin{proof} \,\ Applying \, 
$\sum_{p=0}^{2m} e^{\frac{2\pi i}{2m+1}p\ell}$ \, 
to \eqref{m1:eqn:2022-923b1}, we have
{\allowdisplaybreaks
\begin{eqnarray*}
& & \hspace{-10mm}
\sum_{p=0}^{2m} e^{\frac{2\pi i}{2m+1}p\ell} \, 
G^{(1)[m,p] \, \ast}\Big(-\dfrac{1}{\tau}, \dfrac{z}{\tau}\Big)
\\[2mm]
&=&
\sum_{p=0}^{2m} e^{\frac{2\pi i}{2m+1}p\ell} \, 
\frac{(-i\tau)^{\frac32}}{\sqrt{2m+1}} 
e^{\frac{\pi im}{2\tau}z^2} 
\sum_{p'=0}^{2m} e^{-\frac{2\pi i}{2m+1}pp'} \, 
G^{(1)[m,p'] \, \ast}(\tau,z)
\\[2mm]
&=&
\frac{(-i\tau)^{\frac32}}{\sqrt{2m+1}} \, 
e^{\frac{\pi im}{2\tau}z^2} 
\sum_{p'=0}^{2m} 
\underbrace{
\sum_{p=0}^{2m} e^{\frac{2\pi i}{2m+1}p\ell} e^{-\frac{2\pi i}{2m+1}pp'}
}_{\substack{|| \\[0mm] {\displaystyle (2m+1) \, \delta_{p', \ell}
}}} \, 
G^{(1)[m,p'] \, \ast}(\tau,z)
\\[2mm]
&=&
(-i\tau)^{\frac32} \sqrt{2m+1} \, 
e^{\frac{\pi im}{2\tau}z^2} \, G^{(1)[m,\ell] \, \ast}(\tau,z)
\, , \hspace{10mm} 
\text{proving Proposition \ref{m1:prop:2022-923b}.}
\end{eqnarray*}}

\vspace{-12mm}

\end{proof}

\subsection{Modular transformation of $g^{(i)[m,p] \, \ast}_k(\tau)$}
\label{subsec:g:ast:transf}

To compute modular transformation of $g^{(1)[m,p] \, \ast}_k(\tau)$, 
we use the following formulas which are obtained easily from 
Lemmas 1,3 and 1.4 in \cite{W2022c}.

\medskip

\begin{note} 
\label{m1:note:2022-923b}
Let $m \in \nnn$ and $k \in \zzz$. Then 
\begin{enumerate}
\item[{\rm 1)}]
\begin{enumerate}
\item[{\rm (i)}] $\big[\theta_{k,m}+\theta_{-k,m}\big] 
\Big(-\dfrac{1}{\tau}, \dfrac{z}{\tau}\Big)
\, = \, 
\dfrac{(-i\tau)^{\frac12}}{\sqrt{2m}} \, e^{\frac{\pi im}{2\tau}z^2}
\sum\limits_{j \in \zzz/2m\zzz} \cos \dfrac{\pi jk}{m} \, 
\big[\theta_{j,m}+ \theta_{-j,m}\big](\tau,z)$
{\allowdisplaybreaks
\begin{eqnarray*}
& & \hspace{-13mm}
= \,\ (-i\tau)^{\frac12} \, \sqrt{\frac{2}{m}} \, 
e^{\frac{\pi im}{2\tau}z^2} 
\\[1mm]
& & \hspace{-8mm}
\times \,\ \Bigg\{
\sum_{\substack{j \, \in \zzz \\[1mm] 0 \, < \, j \, < \, m}}
\cos \frac{\pi jk}{m} 
\big[\theta_{j,m}+ \theta_{-j,m}\big](\tau,z)
\, + \, \theta_{0,m}(\tau,z) 
\, + \, (-1)^k \, \theta_{m,m}(\tau,z)\Bigg\}
\end{eqnarray*}}
\item[{\rm (ii)}] $\theta_{0,m}
\Big(-\dfrac{1}{\tau}, \dfrac{z}{\tau}\Big) 
= \, 
\dfrac{(-i\tau)^{\frac12}}{\sqrt{2m}} \, e^{\frac{\pi im}{2\tau}z^2} 
\Bigg\{
\sum\limits_{\substack{j \, \in \zzz \\[1mm] 0 \, < \, j \, < \, m}} 
\hspace{-3mm}
\big[\theta_{j,m}+ \theta_{-j,m}\big](\tau,z)
\, + \, \theta_{0,m}(\tau,z) 
\, + \, \theta_{m,m}(\tau,z)\Bigg\}$
\item[{\rm (iii)}] $\theta_{m,m}
\Big(-\dfrac{1}{\tau}, \dfrac{z}{\tau}\Big) \,\ =$
$$
\frac{(-i\tau)^{\frac12}}{\sqrt{2m}} \, e^{\frac{\pi im}{2\tau}z^2} 
\Bigg\{
\sum_{\substack{j \, \in \zzz \\[1mm] 0 \, < \, j \, < \, m}} \hspace{-3mm}
(-1)^j \, 
\big[\theta_{j,m}+ \theta_{-j,m}\big](\tau,z)
\, + \, \theta_{0,m}(\tau,z) 
\, + \, (-1)^m \, \theta_{m,m}(\tau,z)\Bigg\}
$$
\end{enumerate}
\item[{\rm 2)}]
\begin{enumerate}
\item[{\rm (i)}] \,\ $\big[\theta_{k,m}+\theta_{-k,m}\big](\tau+1,z) 
\,\ = \,\ 
e^{\frac{\pi i}{2m}k^2} \, \big[\theta_{k,m}+\theta_{-k,m}\big](\tau,z)$
\item[{\rm (ii)}] \,\ $\theta_{0,m}(\tau+1,z) 
\,\ = \,\ \theta_{0,m}(\tau,z)$
\item[{\rm (iii)}] \,\ $\theta_{m,m}(\tau+1,z) \,\ = \,\ 
e^{\frac{\pi im}{2}} \, \theta_{m,m}(\tau,z)$
\end{enumerate}
\end{enumerate}
\end{note}

\medskip

The relation between modular transformation properties of 
$G^{(i)[m,p] \, \ast}(\tau,z)$ and those of 
$g^{(i)[m,p] \, \ast}_k(\tau)$, for $i \in \{1,2\}$, 
is obtained by using the above formulas as follows :

\medskip


\begin{lemma} 
\label{m1:lemma:2022-924a}
Let $m \in \nnn$ and $p \in \zzz$ and $i \in \{1,2\}$ Then 
\begin{subequations}
\begin{enumerate}
\item[{\rm 1)}] \,\ $G^{(i)[m,p] \, \ast}
\Big(-\dfrac{1}{\tau}, \dfrac{z}{\tau}\Big)
\,\ = \,\ 
(-i\tau)^{\frac12} \, \sqrt{\dfrac{2}{m}} \, 
e^{\frac{\pi im}{2\tau}z^2} \, \Bigg( $
{\allowdisplaybreaks
\begin{eqnarray}
& & \hspace{-8mm}
\sum_{\substack{k \in \zzz \\[1mm] 0<k<m}} \hspace{-2mm}
g^{(i)[m,p] \, \ast}_k\Big(-\frac{1}{\tau}\Big)
\bigg\{ \hspace{-2mm}
\sum_{\substack{j \in \zzz \\[1mm] 0<j<m}} \hspace{-2mm}
\cos \frac{\pi jk}{m} \, 
\big[\theta_{j,m}+ \theta_{-j,m}\big](\tau,z)
\, + \, \theta_{0,m}(\tau,z) 
\, + \, (-1)^k \, \theta_{m,m}(\tau,z)\bigg\}
\nonumber
\\[2mm]
& & \hspace{-5mm}
+ \,\ 
g^{(i)[m,p] \, \ast}_0\Big(-\frac{1}{\tau}\Big) \bigg\{
\sum_{\substack{j \in \zzz \\[1mm] 0<j<m}}
\big[\theta_{j,m}+ \theta_{-j,m}\big](\tau,z)
\, + \, \theta_{0,m}(\tau,z) 
\, + \, \theta_{m,m}(\tau,z)\bigg\}
\nonumber
\\[2mm]
& & \hspace{-5mm}
+ \,\ 
g^{(i)[m,p] \, \ast}_m\Big(-\frac{1}{\tau}\Big) \bigg\{ \hspace{-2mm}
\sum_{\substack{j \in \zzz \\[1mm] 0<j<m}} \hspace{-2mm}
(-1)^j \, 
\big[\theta_{j,m}+ \theta_{-j,m}\big](\tau,z)
\, + \, \theta_{0,m}(\tau,z) 
\, + \, (-1)^m \, \theta_{m,m}(\tau,z)\bigg\}\Bigg)
\nonumber
\\[-2mm]
& &
\label{m1:eqn:2022-924a1}
\end{eqnarray}}
\item[{\rm 2)}] \,\ $G^{(i)[m,p] \, \ast}(\tau+1,z)
\,\ = 
\sum\limits_{\substack{k \in \zzz \\[1mm] 0<k<m}}
e^{\frac{\pi i}{2m}k^2}
g^{(i)[m,p] \, \ast}_k(\tau+1)
\big[\theta_{k,m}+\theta_{-k,m}\big](\tau,z)$
\begin{equation}
+ \,\ 
2 \, g^{(i)[m,p] \, \ast}_0(\tau+1) \, \theta_{0,m}(\tau,z)
\,\ + \,\ 
2 \, e^{\frac{\pi im}{2}} \, 
g^{(i)[m,p] \, \ast}_m(\tau+1) \, \theta_{m,m}(\tau,z)
\label{m1:eqn:2022-924a2}
\end{equation}
\end{enumerate}
\end{subequations}
\end{lemma}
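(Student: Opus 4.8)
The plan is to start from the expansion \eqref{m1:eqn:2022-923a2} of $G^{(i)[m,p] \, \ast}(\tau,z)$, which holds uniformly for $i \in \{1,2\}$ by Proposition \ref{m1:prop:2022-921b}, and simply substitute the modular-transformed theta functions supplied by Note \ref{m1:note:2022-923b}. Since the coefficient functions $g^{(i)[m,p] \, \ast}_k$ depend only on $\tau$ and are separated from the $z$-dependent theta part, transforming $G^{(i)[m,p] \, \ast}$ amounts to transforming only the three theta ingredients $[\theta_{k,m}+\theta_{-k,m}]$, $\theta_{0,m}$, and $\theta_{m,m}$, while replacing $g^{(i)[m,p] \, \ast}_k(\tau)$ by $g^{(i)[m,p] \, \ast}_k(-1/\tau)$ or $g^{(i)[m,p] \, \ast}_k(\tau+1)$ accordingly.

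For part 1) I would write $G^{(i)[m,p] \, \ast}(-1/\tau, z/\tau)$ using \eqref{m1:eqn:2022-923a2} with each theta evaluated at $(-1/\tau, z/\tau)$, and then insert the right-hand sides of Note \ref{m1:note:2022-923b} 1)(i), (ii), (iii). Each of these carries the common prefactor $(-i\tau)^{\frac12} e^{\frac{\pi im}{2\tau}z^2}$, with metaplectic normalization $\sqrt{2/m}$ in 1)(i) and $1/\sqrt{2m}$ in 1)(ii) and 1)(iii). Factoring out $(-i\tau)^{\frac12}\sqrt{2/m}\, e^{\frac{\pi im}{2\tau}z^2}$ then produces exactly the three braced blocks in \eqref{m1:eqn:2022-924a1}: the $k$-sum block comes from the $\cos\frac{\pi jk}{m}$ terms of 1)(i), while the $g^{(i)[m,p] \, \ast}_0$ and $g^{(i)[m,p] \, \ast}_m$ blocks come from 1)(ii) and 1)(iii).

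Part 2) is even more direct: starting again from \eqref{m1:eqn:2022-923a2} with $\tau$ replaced by $\tau+1$, I substitute Note \ref{m1:note:2022-923b} 2)(i), (ii), (iii), which respectively multiply $[\theta_{k,m}+\theta_{-k,m}]$ by $e^{\frac{\pi i}{2m}k^2}$, leave $\theta_{0,m}$ unchanged, and multiply $\theta_{m,m}$ by $e^{\frac{\pi im}{2}}$. Collecting these scalar factors with the shifted coefficients $g^{(i)[m,p] \, \ast}_k(\tau+1)$ immediately gives \eqref{m1:eqn:2022-924a2}.

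There is no genuine obstacle in this computation; the only point requiring care is the consistent treatment of the factor $2$ attached to the $k=0$ and $k=m$ coefficients in \eqref{m1:eqn:2022-923a2} — arising from $\theta_{\pm 0,m}=\theta_{0,m}$ and $\theta_{\pm m,m}=\theta_{m,m}$ — against the differing normalizations in Note \ref{m1:note:2022-923b}. In the $S$-case this is precisely the bookkeeping identity $2\cdot\frac{1}{\sqrt{2m}}=\sqrt{2/m}$, which lets the $g^{(i)[m,p] \, \ast}_0$ and $g^{(i)[m,p] \, \ast}_m$ contributions be absorbed into the single common prefactor $\sqrt{2/m}$ used in 1)(i). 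Once this normalization is reconciled, both transformation formulas follow by direct substitution.
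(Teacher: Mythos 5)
Your proposal is correct and follows essentially the same route as the paper: expand $G^{(i)[m,p]\,\ast}$ via \eqref{m1:eqn:2022-923a2}, substitute the theta transformations of Note \ref{m1:note:2022-923b}, and collect terms, with the factor $2\cdot\frac{1}{\sqrt{2m}}=\sqrt{2/m}$ reconciling the $k=0$ and $k=m$ normalizations exactly as you describe. Nothing further is needed.
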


\begin{proof} By \eqref{m1:eqn:2022-923a2} and Note 
\ref{m1:note:2022-923b}, we have
{\allowdisplaybreaks
\begin{eqnarray*}
& & \hspace{-7mm}
G^{(i)[m,p] \, \ast}\Big(-\frac{1}{\tau}, \frac{z}{\tau}\Big)
\,\ = \,\ 
\sum_{\substack{k \in \zzz \\[1mm] 0<k<m}}
g^{(i)[m,p] \, \ast}_k\Big(-\frac{1}{\tau}\Big)
\big[\theta_{k,m}+\theta_{-k,m}\big]\Big(-\frac{1}{\tau}, \frac{z}{\tau}\Big)
\\[3mm]
& &
+ \,\ 
2 \, g^{(i)[m,p] \, \ast}_0\Big(-\frac{1}{\tau}\Big) \, 
\theta_{0,m}\Big(-\frac{1}{\tau}, \frac{z}{\tau}\Big)
\, + \, 
2 \, g^{(i)[m,p] \, \ast}_m\Big(-\frac{1}{\tau}\Big) \, 
\theta_{m,m}\Big(-\frac{1}{\tau}, \frac{z}{\tau}\Big)
\\[3mm]
&=&
(-i\tau)^{\frac12} \, \sqrt{\frac{2}{m}} \, 
e^{\frac{\pi im}{2\tau}z^2} \, \Bigg(
\\[2mm]
& & \hspace{-10mm}
\sum_{\substack{k \in \zzz \\[1mm] 0<k<m}}
g^{(i)[m,p] \, \ast}_k\Big(-\frac{1}{\tau}\Big)
\bigg\{
\sum_{\substack{j \in \zzz \\[1mm] 0<j<m}}
\cos \frac{\pi jk}{m} \, 
\big[\theta_{j,m}+ \theta_{-j,m}\big](\tau,z)
\, + \, \theta_{0,m}(\tau,z) 
\, + \, (-1)^k \, \theta_{m,m}(\tau,z)\bigg\}
\\[2mm]
& & \hspace{-5mm}
+ \,\ 
g^{(i)[m,p] \, \ast}_0\Big(-\frac{1}{\tau}\Big) \bigg\{
\sum_{\substack{j \in \zzz \\[1mm] 0<j<m}}
\big[\theta_{j,m}+ \theta_{-j,m}\big](\tau,z)
\, + \, \theta_{0,m}(\tau,z) 
\, + \, \theta_{m,m}(\tau,z)\bigg\}
\\[2mm]
& & \hspace{-5mm}
+ \,\ 
g^{(i)[m,p] \, \ast}_m\Big(-\frac{1}{\tau}\Big) \bigg\{
\sum_{\substack{j \in \zzz \\[1mm] 0<j<m}}
(-1)^j \, 
\big[\theta_{j,m}+ \theta_{-j,m}\big](\tau,z)
\, + \, \theta_{0,m}(\tau,z) 
\, + \, (-1)^m \, \theta_{m,m}(\tau,z)\bigg\}\Bigg)
\end{eqnarray*}}
and
{\allowdisplaybreaks
\begin{eqnarray*}
& & \hspace{-17mm}
G^{(i)[m,p] \, \ast}(\tau+1,z)
\,\ = \,\ 
\sum_{\substack{k \in \zzz \\[1mm] 0<k<m}}
g^{(i)[m,p] \, \ast}_k(\tau+1)
\underbrace{\big[\theta_{k,m}+\theta_{-k,m}\big](\tau+1,z)}_{
\substack{|| \\[-0.5mm] {\displaystyle 
e^{\frac{\pi i}{2m}k^2}\big[\theta_{k,m}+\theta_{-k,m}\big](\tau,z)
}}}
\\[2mm]
& &
+ \,\ 
2 \, g^{(i)[m,p] \, \ast}_0(\tau+1) \, 
\underbrace{\theta_{0,m}(\tau+1,z)}_{\substack{|| \\[-0.5mm] 
{\displaystyle \hspace{3mm}
\theta_{0,m}(\tau,z)
}}}
\, + \, 
2 \, g^{(i)[m,p] \, \ast}_m(\tau+1) \, 
\underbrace{\theta_{m,m}(\tau+1,z)}_{\substack{|| \\[-1.5mm] 
{\displaystyle e^{\frac{\pi im}{2}}\theta_{m,m}(\tau,z)
}}}
\\[3mm]
&=&
\sum_{\substack{k \in \zzz \\[1mm] 0<k<m}}
e^{\frac{\pi i}{2m}k^2}
g^{(i)[m,p] \, \ast}_k(\tau+1)
\big[\theta_{k,m}+\theta_{-k,m}\big](\tau,z)
\\[2mm]
& &
+ \,\ 
2 \, g^{(i)[m,p] \, \ast}_0(\tau+1) \, \theta_{0,m}(\tau,z)
\,\ + \,\ 
2 \, e^{\frac{\pi im}{2}} \, 
g^{(i)[m,p] \, \ast}_m(\tau+1) \, \theta_{m,m}(\tau,z)
\end{eqnarray*}}
proving Lemma \ref{m1:lemma:2022-924a}.
\end{proof}

\vspace{1mm}

\begin{prop} 
\label{m1:prop:2022-924a}
Let $m \in \nnn$ and $p, k \in \zzz$ such that 
$0 \, \leq \, p \, \leq \, 2m$ and
$0 \, \leq \, k \, \leq \, m$. Then 
\begin{enumerate}
\item[{\rm 1)}] \,\ $S$-transformation of $g^{(1)[m,p] \, \ast}_k(\tau)$ 
are as follows:
\begin{enumerate}
\item[{\rm (i)}] \,\ $
\underset{\substack{\\[1mm] (0<j<m)}}{g^{(1)[m,p] \, \ast}_j
\Big(-\dfrac{1}{\tau}\Big)} 
\,\ = \,\
\dfrac{-i\tau}{\sqrt{m(m+\frac12)}}
\sum\limits_{p'=0}^{2m} \, 
\sum\limits_{\substack{k \in \zzz \\[1mm] 0<k<m}}
e^{\frac{2\pi i}{2m+1}pp'} \, 
\cos \dfrac{\pi jk}{m} \, g^{(1)[m,p'] \, \ast}_k(\tau)$
$$
+ \,\ 
\frac{-i\tau}{\sqrt{m(m+\frac12)}}
\sum_{p'=0}^{2m} \, e^{\frac{2\pi i}{2m+1}pp'} \, \bigg\{
g^{(1)[m,p] \, \ast}_0(\tau)
\, + \, 
(-1)^j \, g^{(1)[m,p'] \, \ast}_m(\tau)\bigg\}
$$
\item[{\rm (ii)}] \,\ $
g^{(1)[m,p] \, \ast}_0\Big(-\dfrac{1}{\tau}\Big)
\,\ = \,\ 
\dfrac{-i\tau}{2\sqrt{m(m+\frac12)}}
\sum\limits_{p'=0}^{2m} \, 
\sum\limits_{\substack{k \in \zzz \\[1mm] 0<k<m}}
e^{\frac{2\pi i}{2m+1}pp'} \, g^{(1)[m,p'] \, \ast}_k(\tau)$
$$
+ \,\ 
\frac{-i\tau}{2\sqrt{m(m+\frac12)}}
\sum_{p'=0}^{2m} \, e^{\frac{2\pi i}{2m+1}pp'} \, \bigg\{
g^{(1)[m,p'] \, \ast}_0(\tau)
\, + \, 
g^{(1)[m,p'] \, \ast}_m(\tau)\bigg\}
$$
\item[{\rm (iii)}] \,\ $
g^{(1)[m,p] \, \ast}_m\Big(-\dfrac{1}{\tau}\Big)
\,\ = \,\ 
\dfrac{-i\tau}{2\sqrt{m(m+\frac12)}}
\sum\limits_{p'=0}^{2m} \, 
\sum\limits_{\substack{k \in \zzz \\[1mm] 0<k<m}}
(-1)^k \, e^{\frac{2\pi i}{2m+1}pp'} \, 
g^{(1)[m,p'] \, \ast}_k(\tau)$
$$
+ \,\ 
\frac{-i\tau}{2\sqrt{m(m+\frac12)}}
\sum_{p'=0}^{2m} \, e^{\frac{2\pi i}{2m+1}pp'} \, \bigg\{
g^{(1)[m,p'] \, \ast}_0(\tau)
\, + \, 
(-1)^m \, g^{(1)[m,p'] \, \ast}_m(\tau)\bigg\}
$$
\end{enumerate}
\item[{\rm 2)}] \,\ $T$-transformation of $g^{(1)[m,p] \, \ast}_k(\tau)$ 
are as follows:
\begin{enumerate}
\item[{\rm (i)}] \,\ $\underset{\substack{\\[-1mm] (0 < j <m)
}}{g^{(1)[m,p] \, \ast}_j(\tau+1)}
\,\ = \,\  
e^{\frac{\pi i}{2m+1}p^2+\frac{\pi i}{4}-\frac{\pi i}{2m}j^2} \,
g^{(2)[m,p] \, \ast}_j(\tau)$
\item[{\rm (ii)}] \,\ $g^{(1)[m,p] \, \ast}_0(\tau+1) 
\,\ = \,\ 
e^{\frac{\pi i}{2m+1}p^2+\frac{\pi i}{4}} \,
g^{(2)[m,p] \, \ast}_0(\tau)$
\item[{\rm (iii)}] \,\ $g^{(1)[m,p] \, \ast}_m(\tau+1) 
\,\ = \,\ 
e^{\frac{\pi i}{2m+1}p^2+\frac{\pi i}{4}-\frac{\pi im}{2}} \,
g^{(2)[m,p] \, \ast}_m(\tau)$
\end{enumerate}
\end{enumerate}
\end{prop}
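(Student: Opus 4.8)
The plan is to read off both transformation laws of the single function $G^{(1)[m,p]\,\ast}(\tau,z)$ in two different ways — once through its intrinsic modular behaviour (Proposition \ref{m1:prop:2022-923a}) and once through the expansion of its theta coefficients (Lemma \ref{m1:lemma:2022-924a}) — and then to isolate the scalars $g^{(1)[m,p]\,\ast}_k$ by invoking the linear independence of $\{[\theta_{j,m}+\theta_{-j,m}](\tau,z)\}_{0<j<m}$ together with $\theta_{0,m}(\tau,z)$ and $\theta_{m,m}(\tau,z)$. On every right-hand side I first expand each $G^{(1)[m,p']\,\ast}(\tau,z)$ by means of \eqref{m1:eqn:2022-923a2}, so that both descriptions become explicit combinations of the same theta basis, and then compare coefficients one basis element at a time.

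The $T$-transformation (part 2) is immediate. Equating the expansion of $G^{(1)[m,p]\,\ast}(\tau+1,z)$ given by Lemma \ref{m1:lemma:2022-924a}~2) with $e^{\frac{\pi i}{2m+1}p^2+\frac{\pi i}{4}}G^{(2)[m,p]\,\ast}(\tau,z)$ from Proposition \ref{m1:prop:2022-923a}~2), the coefficient of $[\theta_{k,m}+\theta_{-k,m}]$ for $0<k<m$ yields $e^{\frac{\pi i}{2m}k^2}g^{(1)[m,p]\,\ast}_k(\tau+1)=e^{\frac{\pi i}{2m+1}p^2+\frac{\pi i}{4}}g^{(2)[m,p]\,\ast}_k(\tau)$, while the coefficients of $\theta_{0,m}$ and $\theta_{m,m}$ give the analogous identities carrying the theta phases $1$ and $e^{\frac{\pi im}{2}}$. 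Dividing through by these phases produces 2)(i)--(iii) at once.

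The $S$-transformation (part 1) is the substantial part. Equating the two expressions for $G^{(1)[m,p]\,\ast}(-\tfrac1\tau,\tfrac z\tau)$ and cancelling the common factor $(-i\tau)^{\frac12}e^{\frac{\pi im}{2\tau}z^2}$, the coefficient of $[\theta_{j,m}+\theta_{-j,m}]$ for $0<j<m$ gives, after using $\cos 0=1$ and $\cos\pi j=(-1)^j$ to absorb the boundary terms $g_0$ and $(-1)^jg_m$ of Lemma \ref{m1:lemma:2022-924a}~1) into the $k=0$ and $k=m$ summands,
\begin{equation*}
\sum_{\substack{k\in\zzz\\0\le k\le m}} g^{(1)[m,p]\,\ast}_k\Big(-\tfrac1\tau\Big)\cos\frac{\pi jk}{m}
= \frac{(-i\tau)\sqrt m}{2\sqrt{m+\frac12}}\sum_{p'=0}^{2m} e^{-\frac{2\pi i}{2m+1}pp'}\,g^{(1)[m,p']\,\ast}_j(\tau).
\end{equation*}
The coefficients of $\theta_{0,m}$ and $\theta_{m,m}$ give the same relation for $j=0$ and $j=m$ but with the right-hand side doubled, which is exactly the endpoint weight $c_j$ ($c_0=c_m=2$, $c_j=1$ otherwise) of a type-I discrete cosine transform.

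The final step, which I expect to be the main obstacle, is to invert this finite cosine system. Using the orthogonality $\sum_{j=0}^m c_j^{-1}\cos\frac{\pi jk}{m}\cos\frac{\pi jn}{m}=\frac m2\,c_n\,\delta_{kn}$ (endpoints $j=0,m$ weighted by $\frac12$), I multiply the $j$-th relation by $\frac{2}{mc_n}c_j^{-1}\cos\frac{\pi jn}{m}$ and sum over $0\le j\le m$; the weights $c_j$ produced above cancel the $c_j^{-1}$ of the orthogonality, leaving
\begin{equation*}
g^{(1)[m,p]\,\ast}_n\Big(-\tfrac1\tau\Big)
= \frac{-i\tau}{c_n\sqrt{m(m+\frac12)}}\sum_{p'=0}^{2m}\ \sum_{\substack{k\in\zzz\\0\le k\le m}} e^{\frac{2\pi i}{2m+1}pp'}\cos\frac{\pi nk}{m}\,g^{(1)[m,p']\,\ast}_k(\tau),
\end{equation*}
the prefactor arising as $\tfrac2{mc_n}\cdot\tfrac{(-i\tau)\sqrt m}{2\sqrt{m+\frac12}}$. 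Splitting off the $k=0$ and $k=m$ terms (where the cosine equals $1$ and $(-1)^n$) reproduces 1)(i) for $0<n<m$ with $c_n=1$ and 1)(ii)--(iii) for $n=0,m$ with $c_n=2$. The one delicate point is the exponent sign: the direct comparison delivers $e^{-\frac{2\pi i}{2m+1}pp'}$, and it is converted to the stated $e^{+\frac{2\pi i}{2m+1}pp'}$ by reindexing $p'\mapsto 2m+1-p'$ together with the invariance $g^{(1)[m,-p']\,\ast}_k=g^{(1)[m,p']\,\ast}_k$ of the coefficients (inherited from the evenness in $p$ of $\Xi^{(1)}$ and $\Upsilon^{(1)}$, cf. \eqref{m1:eqn:2022-921e}); since $e^{-2\pi ip}=1$ this flips the sign without changing the sum.
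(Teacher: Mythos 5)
Your argument is correct, and for the $S$-transformation it takes a genuinely different route from the paper. The paper never inverts the cosine system: instead of comparing the two expressions for $G^{(1)[m,p]\,\ast}(-\tfrac1\tau,\tfrac z\tau)$ directly, it starts from the Fourier-inverted form of the $S$-law in $p$ (Proposition \ref{m1:prop:2022-923b}, $\sum_{p}e^{\frac{2\pi i}{2m+1}p\ell}G^{(1)[m,p]\,\ast}(-\tfrac1\tau,\tfrac z\tau)=(-i\tau)^{\frac32}\sqrt{2m+1}\,e^{\frac{\pi im}{2\tau}z^2}G^{(1)[m,\ell]\,\ast}(\tau,z)$), expands the left side via Lemma \ref{m1:lemma:2022-924a} and the right side via \eqref{m1:eqn:2022-923a2}, and compares theta coefficients; this isolates $g^{(1)[m,\ell]\,\ast}_j(\tau)$ as an explicit combination of the $g^{(1)[m,p]\,\ast}_k(-\tfrac1\tau)$, and the final formulas follow by the single substitution $\tau\mapsto-\tfrac1\tau$. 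That trick buys two things your route has to pay for separately: no DCT-I inversion is needed (the endpoint weights $c_0=c_m=2$ appear only once, directly as the factors $\tfrac12$ in 1)(ii)--(iii)), and the exponent comes out as $e^{+\frac{2\pi i}{2m+1}pp'}$ from the start, so no appeal to the $p\mapsto 2m+1-p$ symmetry is required. Your version is self-contained modulo two points you should make explicit if you write it up: the orthogonality $\sum_{j=0}^m c_j^{-1}\cos\frac{\pi jk}{m}\cos\frac{\pi jn}{m}=\frac m2 c_n\delta_{kn}$ deserves a line of proof, and the invariance $g^{(1)[m,2m+1-p']\,\ast}_k=g^{(1)[m,p']\,\ast}_k$ should be checked from the definitions \eqref{m1:eqn:2022-914a1}, \eqref{m1:eqn:2022-914b1} together with \eqref{m1:eqn:2022-921e} (it is also corroborated by Note \ref{m1:note:2022-1002a}, where $g^{[1,1]\,\ast}_k=g^{[1,2]\,\ast}_k$). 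Your treatment of the $T$-transformation coincides with the paper's.
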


\begin{proof} Substituting \eqref{m1:eqn:2022-923a2}
and \eqref{m1:eqn:2022-924a1} into \eqref{m1:eqn:2022-923b2}, 
we have
{\allowdisplaybreaks
\begin{eqnarray*}
& & \hspace{-8mm}
\sum_{p=0}^{2m} \, e^{\frac{2\pi i}{2m+1}p\ell}
(-i\tau)^{\frac12} \, \sqrt{\dfrac{2}{m}} \, 
e^{\frac{\pi im}{2\tau}z^2} \, \Bigg( 
\\[2mm]
& & \hspace{-8mm}
\sum_{\substack{k \in \zzz \\[1mm] 0<k<m}}
g^{(1)[m,p] \, \ast}_k\Big(-\frac{1}{\tau}\Big)
\bigg\{
\sum_{\substack{j \in \zzz \\[1mm] 0<j<m}}
\cos \frac{\pi jk}{m} \, 
\big[\theta_{j,m}+ \theta_{-j,m}\big](\tau,z)
\, + \, \theta_{0,m}(\tau,z) 
\, + \, (-1)^k \, \theta_{m,m}(\tau,z)\bigg\}
\\[2mm]
& & \hspace{-5mm}
+ \,\ 
g^{(1)[m,p] \, \ast}_0\Big(-\frac{1}{\tau}\Big) \bigg\{
\sum_{\substack{j \in \zzz \\[1mm] 0<j<m}}
\big[\theta_{j,m}+ \theta_{-j,m}\big](\tau,z)
\, + \, \theta_{0,m}(\tau,z) 
\, + \, \theta_{m,m}(\tau,z)\bigg\}
\\[2mm]
& & \hspace{-5mm}
+ \,\ 
g^{(1)[m,p] \, \ast}_m\Big(-\frac{1}{\tau}\Big) \bigg\{
\sum_{\substack{j \in \zzz \\[1mm] 0<j<m}}
(-1)^j \, 
\big[\theta_{j,m}+ \theta_{-j,m}\big](\tau,z)
\, + \, \theta_{0,m}(\tau,z) 
\, + \, (-1)^m \, \theta_{m,m}(\tau,z)\bigg\}\Bigg)
\\[2mm]
&=&
(-i\tau)^{\frac32} \, \sqrt{2m+1} \,\ 
e^{\frac{\pi im}{2\tau}z^2} \, \Bigg\{
\sum_{\substack{j \in \zzz \\[1mm] 0<j<m}}
g^{(1)[m,\ell] \, \ast}_j(\tau)
\big[\theta_{j,m}+\theta_{-j,m}\big](\tau,z)
\\[0mm]
& &
+ \,\ 
2 \, g^{(1)[m,\ell] \, \ast}_0(\tau) \, \theta_{0,m}(\tau,z)
\,\ + \,\ 
2 \, g^{(1)[m,\ell] \, \ast}_m(\tau) \, \theta_{m,m}(\tau,z)\Bigg\}
\end{eqnarray*}}
namely 
{\allowdisplaybreaks
\begin{eqnarray*}
& & \hspace{-1mm}
(-i\tau)^{-1}\sqrt{\frac{2}{m(2m+1)}}
\sum_{p=0}^{2m} \, e^{\frac{2\pi i}{2m+1}p\ell}
\sum_{\substack{k \in \zzz \\[1mm] 0<k<m}}
g^{(1)[m,p] \, \ast}_k\Big(-\frac{1}{\tau}\Big)
\\[2mm]
& & 
\times \,\ \bigg\{
\sum_{\substack{j \in \zzz \\[1mm] 0<j<m}}
\cos \frac{\pi jk}{m} \, 
\big[\theta_{j,m}+ \theta_{-j,m}\big](\tau,z)
\, + \, \theta_{0,m}(\tau,z) 
\, + \, (-1)^k \, \theta_{m,m}(\tau,z)\bigg\}
\\[2mm]
& & \hspace{-5mm}
+ \,\ 
(-i\tau)^{-1}\sqrt{\frac{2}{m(2m+1)}}
\sum_{p=0}^{2m} \, e^{\frac{2\pi i}{2m+1}p\ell}
g^{(1)[m,p] \, \ast}_0\Big(-\frac{1}{\tau}\Big) 
\\[2mm]
& &
\times \,\ \bigg\{ 
\sum_{\substack{j \in \zzz \\[1mm] 0<j<m}}
\big[\theta_{j,m}+ \theta_{-j,m}\big](\tau,z)
\, + \, \theta_{0,m}(\tau,z) 
\, + \, \theta_{m,m}(\tau,z)\bigg\}
\\[2mm]
& & \hspace{-5mm}
+ \,\ 
(-i\tau)^{-1}\sqrt{\frac{2}{m(2m+1)}}
\sum_{p=0}^{2m} \, e^{\frac{2\pi i}{2m+1}p\ell}
g^{(1)[m,p] \, \ast}_m\Big(-\frac{1}{\tau}\Big) 
\\[2mm]
& & 
\times \,\ \bigg\{ 
\sum_{\substack{j \in \zzz \\[1mm] 0<j<m}}
(-1)^j \, 
\big[\theta_{j,m}+ \theta_{-j,m}\big](\tau,z)
\, + \, \theta_{0,m}(\tau,z) 
\, + \, (-1)^m \, \theta_{m,m}(\tau,z)\bigg\}
\\[3mm]
&=& \hspace{-2mm}
\sum_{\substack{j \in \zzz \\[1mm] 0<j<m}} \hspace{-1mm}
g^{(1)[m,\ell] \, \ast}_j(\tau)
\big[\theta_{j,m}+\theta_{-j,m}\big](\tau,z)
+ 
2 \, g^{(1)[m,\ell] \, \ast}_0(\tau) \, \theta_{0,m}(\tau,z)
+ 
2 \, g^{(1)[m,\ell] \, \ast}_m(\tau) \, \theta_{m,m}(\tau,z)
\end{eqnarray*}}
Comparing the coefficients of \, 
$\big[\theta_{j,m}+\theta_{-j,m}\big](\tau,z)$, \, 
$\theta_{0,m}(\tau,z)$ and $\theta_{m,m}(\tau,z)$
in this equation, we have 
{\allowdisplaybreaks
\begin{eqnarray*}
& & \hspace{-7mm}
\underset{\substack{\\[1mm] (0<j<m)}}{g^{(1)[m,\ell] \, \ast}_j(\tau)} 
\,\ = \,\
\frac{(-i\tau)^{-1}}{\sqrt{m(m+\frac12)}}
\sum_{p=0}^{2m} \, \sum_{\substack{k \in \zzz \\[1mm] 0<k<m}}
e^{\frac{2\pi i}{2m+1}p\ell} \, 
\cos \frac{\pi jk}{m} \, 
g^{(1)[m,p] \, \ast}_k\Big(-\frac{1}{\tau}\Big)
\\[2mm]
& & \hspace{15mm}
+ \,\ 
\frac{(-i\tau)^{-1}}{\sqrt{m(m+\frac12)}}
\sum_{p=0}^{2m} \, e^{\frac{2\pi i}{2m+1}p\ell} \, \bigg\{
g^{(1)[m,p] \, \ast}_0\Big(-\frac{1}{\tau}\Big)
\, + \, 
(-1)^j \, g^{(1)[m,p] \, \ast}_m\Big(-\frac{1}{\tau}\Big)\bigg\}
\\[3mm]
& & \hspace{-7mm}
2 \, g^{(1)[m,\ell] \, \ast}_0(\tau)
\,\ = \,\ 
\frac{(-i\tau)^{-1}}{\sqrt{m(m+\frac12)}}
\sum_{p=0}^{2m} \, \sum_{\substack{k \in \zzz \\[1mm] 0<k<m}}
e^{\frac{2\pi i}{2m+1}p\ell} \, 
g^{(1)[m,p] \, \ast}_k\Big(-\frac{1}{\tau}\Big)
\\[2mm]
& & \hspace{15mm}
+ \,\ 
\frac{(-i\tau)^{-1}}{\sqrt{m(m+\frac12)}}
\sum_{p=0}^{2m} \, e^{\frac{2\pi i}{2m+1}p\ell} \, \bigg\{
g^{(1)[m,p] \, \ast}_0\Big(-\frac{1}{\tau}\Big)
\, + \, 
g^{(1)[m,p] \, \ast}_m\Big(-\frac{1}{\tau}\Big)\bigg\}
\\[3mm]
& & \hspace{-7mm}
2 \, g^{(1)[m,\ell] \, \ast}_m(\tau)
\,\ = \,\ 
\frac{(-i\tau)^{-1}}{\sqrt{m(m+\frac12)}}
\sum_{p=0}^{2m} \, \sum_{\substack{k \in \zzz \\[1mm] 0<k<m}}
(-1)^k \, e^{\frac{2\pi i}{2m+1}p\ell} \, 
g^{(1)[m,p] \, \ast}_k\Big(-\frac{1}{\tau}\Big)
\\[2mm]
& & \hspace{15mm}
+ \,\ 
\frac{(-i\tau)^{-1}}{\sqrt{m(m+\frac12)}}
\sum_{p=0}^{2m} \, e^{\frac{2\pi i}{2m+1}p\ell} \, \bigg\{
g^{(1)[m,p] \, \ast}_0\Big(-\frac{1}{\tau}\Big)
\, + \, (-1)^m \, 
g^{(1)[m,p] \, \ast}_m\Big(-\frac{1}{\tau}\Big)\bigg\}
\end{eqnarray*}}
Then, replacing $\tau$ with $-\frac{1}{\tau}$ in these 
equations, we obtain the formulas in the claim 1).

\vspace{2.5mm}

\noindent
2) \,\ Substituting \eqref{m1:eqn:2022-923a2} and 
\eqref{m1:eqn:2022-924a2} into \eqref{m1:eqn:2022-1004a}, we have
{\allowdisplaybreaks
\begin{eqnarray*}
& &
\sum_{\substack{j \in \zzz \\[1mm] 0<j<m}}
e^{\frac{\pi i}{2m}j^2}
g^{(1)[m,p] \, \ast}_j(\tau+1)
\big[\theta_{j,m}+\theta_{-j,m}\big](\tau,z)
\\[2mm]
& &
+ \,\ 
2 \, g^{(1)[m,p] \, \ast}_0(\tau+1) \, \theta_{0,m}(\tau,z)
\,\ + \,\ 
2 \, e^{\frac{\pi im}{2}} \, 
g^{(1)[m,p] \, \ast}_m(\tau+1) \, \theta_{m,m}(\tau,z)
\\[3mm]
&=&
e^{\frac{\pi i}{2m+1}p^2+\frac{\pi i}{4}} \, \bigg\{
\sum_{\substack{j \in \zzz \\[1mm] 0<j<m}}
g^{(2)[m,p] \, \ast}_j(\tau)
\big[\theta_{j,m}+\theta_{-j,m}\big](\tau,z)
\\[2mm]
& &
+ \,\ 
2 \, g^{(2)[m,p] \, \ast}_0(\tau) \, \theta_{0,m}(\tau,z)
\,\ + \,\ 
2 \, g^{(2)[m,p] \, \ast}_m(\tau) \, \theta_{m,m}(\tau,z)\bigg\}
\end{eqnarray*}}
Comparing the coefficients of \,
$\big[\theta_{j,m}+\theta_{-j,m}\big](\tau,z)$, \, 
$\theta_{0,m}(\tau,z)$ and $\theta_{m,m}(\tau,z)$ 
in this equation, we have 
$$\left\{
\begin{array}{rcr}
e^{\frac{\pi i}{2m}j^2} 
\underset{\substack{\\[-1mm] (0 < j <m)
}}{g^{(1)[m,p] \, \ast}_j(\tau+1)}
&=& 
e^{\frac{\pi i}{2m+1}p^2+\frac{\pi i}{4}} \,
g^{(2)[m,p] \, \ast}_j(\tau)
\\[5mm]
g^{(1)[m,p] \, \ast}_0(\tau+1) &=&
e^{\frac{\pi i}{2m+1}p^2+\frac{\pi i}{4}} \,
g^{(2)[m,p] \, \ast}_0(\tau)
\\[2.5mm]
e^{\frac{\pi im}{2}} \, 
g^{(1)[m,p] \, \ast}_m(\tau+1) &=&
e^{\frac{\pi i}{2m+1}p^2+\frac{\pi i}{4}} \,
g^{(2)[m,p] \, \ast}_m(\tau)
\end{array}\right.
$$
namely 
$$\left\{
\begin{array}{rcr}
\underset{\substack{\\[-1mm] (0 < j <m)
}}{g^{(1)[m,p] \, \ast}_j(\tau+1)}
&=& 
e^{\frac{\pi i}{2m+1}p^2+\frac{\pi i}{4}-\frac{\pi i}{2m}j^2} \,
g^{(2)[m,p] \, \ast}_j(\tau)
\\[5mm]
g^{(1)[m,p] \, \ast}_0(\tau+1) &=&
e^{\frac{\pi i}{2m+1}p^2+\frac{\pi i}{4}} \,
g^{(2)[m,p] \, \ast}_0(\tau)
\\[2.5mm]
g^{(1)[m,p] \, \ast}_m(\tau+1) &=&
e^{\frac{\pi i}{2m+1}p^2+\frac{\pi i}{4}-\frac{\pi im}{2}} \,
g^{(2)[m,p] \, \ast}_m(\tau)
\end{array}\right.
$$
proving the claim 2).
\end{proof}

\subsection{Explicit formula for $g^{(i)[m,p] \, \ast}_k(\tau)$}
\label{subsec:g:ast}

The explicit formulas for $G^{(i)[m,p] \, \ast}(\tau,z)$ \,\ 
$(i \in \{1,2\})$ are obtained from Proposition 
\ref{m1:prop:2022-921b} and the formulas \eqref{m1:eqn:2022-914a1},
\eqref{m1:eqn:2022-914a2}, \eqref{m1:eqn:2022-914b1}, 
\eqref{m1:eqn:2022-914b2} and \eqref{m1:eqn:2022-923a1} as follows:
{\allowdisplaybreaks
\begin{eqnarray*}
& & \hspace{-10mm} 
G^{(1)[m,p] \, \ast}(\tau,z) \,\ = \,\ 
\\[3mm]
& & \hspace{-5mm}
- \,\ 
\bigg[\sum_{\substack{j, \, r \, \in \, \frac12 \, \zzz_{\rm odd} \\[1mm]
0 \, \leq \, r \, < \, j}}
-
\sum_{\substack{j, \, r \, \in \, \frac12 \, \zzz_{\rm odd} \\[1mm]
j \, \leq \, r \, < \, 0}}\bigg]
\sum_{\substack{k \, \in \zzz \\[1mm] 0 \, < \, k \, < \, m}}
(-1)^{j-\frac12+k} \, 
q^{(m+\frac12)(j+\frac{2pm}{2m+1})^2}
\nonumber
\\[-4mm]
& & \hspace{60mm}
\times \,\ 
q^{-\frac{1}{4m} \, [2mr+k+2mp]^2}
\big[\theta_{k,m}+\theta_{-k,m}\big](\tau, z)
\nonumber
\\[3mm]
& & \hspace{-5mm}
- \,\ \bigg[
\sum_{\substack{j, \, r \, \in \, \frac12 \, \zzz_{\rm odd} \\[1mm]
0 \, \leq \, r \, \leq \, j}}
-
\sum_{\substack{j, \, r \, \in \, \frac12 \, \zzz_{\rm odd} \\[1mm]
j \, < \, r \, < \, 0}}\bigg]
\sum_{\substack{k \, \in \zzz \\[1mm] 0 \, \leq \, k \, \leq \, m}}
(-1)^{j-\frac12+k} \, 
q^{(m+\frac12)(j+\frac{2pm}{2m+1})^2}
\nonumber
\\[-4mm]
& & \hspace{60mm}
\times \,\ 
q^{-\frac{1}{4m} \, [2mr-k+2mp]^2}
\big[\theta_{k,m}+\theta_{-k,m}\big](\tau, z)
\nonumber
\\[3mm]
& & \hspace{-5mm}
- \,\ 
\theta_{2mp+m+\frac12, m+\frac12}^{(-)}(\tau,0) \hspace{-3mm}
\sum_{\substack{r \, \in \zzz \\[1mm] -p \, < \, r \, \leq \, p}} \,\ 
\sum_{\substack{k \, \in \zzz \\[1mm] 0 \, \leq \, k \, < \, m}} 
\hspace{-2mm}
(-1)^k q^{- \, \frac{1}{4m} \, \big(m(2r-1)+k\big)^2} 
\big[\theta_{k,m}+\theta_{-k,m}\big](\tau, z)
\nonumber
\\[3mm]
& & \hspace{-5mm}
+ \,\ 2 \, 
\theta_{2mp+m+\frac12, m+\frac12}^{(-)}(\tau,0)
\sum_{\substack{r \, \in \zzz \\[1mm] 0 \, \leq \, r \, \leq \, p-1}}
q^{-m(r+\frac12)^2} \, \theta_{0,m}(\tau, z)
\nonumber
\\[2mm]
& & \hspace{-5mm}
- \,\ (-1)^m \, \theta_{2mp+m+\frac12, m+\frac12}^{(-)}(\tau,0)
\sum_{\substack{r \, \in \zzz \\[1mm] -p \, \leq \, r \, \leq \, p}}
q^{-mr^2} \, \theta_{m,m}(\tau,z)
\\[3mm]
& & \hspace{-10mm} 
G^{(2)[m,p] \, \ast}(\tau,z) \,\ = \,\ 
\\[3mm]
& & \hspace{-5mm}
- \,\ (-1)^p \, 
\bigg[\sum_{\substack{j, \, r \, \in \, \frac12 \, \zzz_{\rm odd} \\[1mm]
0 \, \leq \, r \, < \, j}}
-
\sum_{\substack{j, \, r \, \in \, \frac12 \, \zzz_{\rm odd} \\[1mm]
j \, \leq \, r \, < \, 0}}\bigg]
\sum_{\substack{k \, \in \zzz \\[1mm] 0 \, < \, k \, < \, m}}
(-1)^{j-\frac12} \, 
q^{(m+\frac12)(j+\frac{2pm}{2m+1})^2}
\nonumber
\\[-1mm]
& & \hspace{60mm}
\times \,\ 
q^{-\frac{1}{4m} \, [2mr+k+2mp]^2}
\big[\theta_{k,m}+\theta_{-k,m}\big](\tau, z)
\nonumber
\\[3mm]
& & \hspace{-5mm}
- \,\ (-1)^p \, \bigg[
\sum_{\substack{j, \, r \, \in \, \frac12 \, \zzz_{\rm odd} \\[1mm]
0 \, \leq \, r \, \leq \, j}}
-
\sum_{\substack{j, \, r \, \in \, \frac12 \, \zzz_{\rm odd} \\[1mm]
j \, < \, r \, < \, 0}}\bigg]
\sum_{\substack{k \, \in \zzz \\[1mm] 0 \, \leq \, k \, \leq \, m}}
(-1)^{j-\frac12} \, 
q^{(m+\frac12)(j+\frac{2pm}{2m+1})^2}
\nonumber
\\[-1mm]
& & \hspace{60mm}
\times \,\ 
q^{-\frac{1}{4m} \, [2mr-k+2mp]^2}
\big[\theta_{k,m}+\theta_{-k,m}\big](\tau, z)
\nonumber
\\[3mm]
& & \hspace{-5mm}
- \,\ (-1)^p \, 
\theta_{2mp+m+\frac12, m+\frac12}^{(-)}(\tau,0) \hspace{-3mm}
\sum_{\substack{r \, \in \zzz \\[1mm] -p \, < \, r \, \leq \, p}} \,\ 
\sum_{\substack{k \, \in \zzz \\[1mm] 0 \, \leq \, k \, < \, m}} 
\hspace{-2mm}
q^{- \, \frac{1}{4m} \, \big(m(2r-1)+k\big)^2} 
\big[\theta_{k,m}+\theta_{-k,m}\big](\tau, z)
\nonumber
\\[3mm]
& & \hspace{-5mm}
+ \,\ 2 \, (-1)^p \, 
\theta_{2mp+m+\frac12, m+\frac12}^{(-)}(\tau,0)
\sum_{\substack{r \, \in \zzz \\[1mm] 0 \, \leq \, r \, \leq \, p-1}}
q^{-m(r+\frac12)^2} \, \theta_{0,m}(\tau, z)
\nonumber
\\[2mm]
& & \hspace{-5mm}
- \,\ (-1)^p \, \theta_{2mp+m+\frac12, m+\frac12}^{(-)}(\tau,0)
\sum_{\substack{r \, \in \zzz \\[1mm] -p \, \leq \, r \, \leq \, p}}
q^{-mr^2} \, \theta_{m,m}(\tau,z)
\end{eqnarray*}}
Then the explicit formulas for $g^{(i)[m,p] \, \ast}_k(\tau)$
follow immediately from \eqref{m1:eqn:2022-923a2} and 
the above formulas as follows:

\medskip

\begin{prop} 
\label{m1:prop:2022-924b}
Let $m \in \nnn$ and $p,k \in \zzz$ such that 
$0 \, \leq \, p \, \leq \, 2m$ and $0 \, \leq \, k \, \leq \, m$.
Then $g^{(i)[m,p] \, \ast}_k(\tau)$ \,\ $(i \in \{1,2\})$ are 
as follows:
\begin{enumerate}
\item[{\rm 1)}]  
\begin{enumerate}
\item[{\rm (i)}] \,\ $
\underset{\substack{\\[1mm] (0<k<m)}}{g^{(1)[m,p] \, \ast}_k}(\tau)$
{\allowdisplaybreaks
\begin{eqnarray*}
&=&
- \,\ 
\bigg[\sum_{\substack{j, \, r \, \in \, \frac12 \, \zzz_{\rm odd} \\[1mm]
0 \, \leq \, r \, < \, j}}
-
\sum_{\substack{j, \, r \, \in \, \frac12 \, \zzz_{\rm odd} \\[1mm]
j \, \leq \, r \, < \, 0}}\bigg]
(-1)^{j-\frac12+k} \, 
q^{(m+\frac12)(j+\frac{2pm}{2m+1})^2}
q^{-\frac{1}{4m} \, [2mr+k+2mp]^2}
\nonumber
\\[1mm]
& & 
- \,\ \bigg[
\sum_{\substack{j, \, r \, \in \, \frac12 \, \zzz_{\rm odd} \\[1mm]
0 \, \leq \, r \, \leq \, j}}
-
\sum_{\substack{j, \, r \, \in \, \frac12 \, \zzz_{\rm odd} \\[1mm]
j \, < \, r \, < \, 0}}\bigg]
(-1)^{j-\frac12+k} \, 
q^{(m+\frac12)(j+\frac{2pm}{2m+1})^2}
q^{-\frac{1}{4m} \, [2mr-k+2mp]^2}
\nonumber
\\[1mm]
& & 
- \,\ 
\theta_{2mp+m+\frac12, m+\frac12}^{(-)}(\tau,0) \hspace{-3mm}
\sum_{\substack{r \, \in \zzz \\[1mm] -p \, < \, r \, \leq \, p}} \,\ 
(-1)^k q^{- \, \frac{1}{4m} \, \big(m(2r-1)+k\big)^2} 
\end{eqnarray*}}
\item[{\rm (ii)}] \,\ $2 \, g^{(1)[m,p] \, \ast}_0(\tau)$
{\allowdisplaybreaks
\begin{eqnarray*}
&= &
- \, 2 \, \bigg[
\sum_{\substack{j, \, r \, \in \, \frac12 \, \zzz_{\rm odd} \\[1mm]
0 \, < \, r \, \leq \, j}}
-
\sum_{\substack{j, \, r \, \in \, \frac12 \, \zzz_{\rm odd} \\[1mm]
j \, < \, r \, < \, 0}}\bigg]
(-1)^{j-\frac12} \, 
q^{(m+\frac12)(j+\frac{2pm}{2m+1})^2-m(r+p)^2}
\nonumber
\\[1mm]
& & 
- \, 2 \, 
\theta_{2mp+m+\frac12, m+\frac12}^{(-)}(\tau,0) 
\sum_{\substack{r \, \in \zzz \\[1mm] 0 \, \leq \, r \, \leq \, p-1}}  
q^{-m(r+\frac12)^2}
\end{eqnarray*}}
\item[{\rm (iii)}] \,\ $2 \, g^{(1)[m,p] \, \ast}_m(\tau)$
{\allowdisplaybreaks
\begin{eqnarray*}
&= & 
- \, 2 \, (-1)^m \, \bigg[
\sum_{\substack{j, \, r \, \in \, \frac12 \, \zzz_{\rm odd} \\[1mm]
0 \, < \, r \, \leq \, j}}
-
\sum_{\substack{j, \, r \, \in \, \frac12 \, \zzz_{\rm odd} \\[1mm]
j \, < \, r \, < \, 0}}\bigg]
(-1)^{j-\frac12} \, 
q^{(m+\frac12)(j+\frac{2pm}{2m+1})^2-m(r+p-\frac12)^2}
\nonumber
\\[0mm]
& & 
- \,\ (-1)^m \, \theta_{2mp+m+\frac12, m+\frac12}^{(-)}(\tau,0)
\sum_{\substack{r \, \in \zzz \\[1mm] -p \, \leq \, r \, \leq \, p}} 
q^{-mr^2} 
\end{eqnarray*}}
\end{enumerate}
\item[{\rm 2)}] 
\begin{enumerate}
\item[{\rm (i)}] \,\ $
\underset{\substack{\\[1mm] (0 < k < m)}}{g^{(2)[m,p] \, \ast}_k}(\tau)$
{\allowdisplaybreaks
\begin{eqnarray*}
&=&
- \,\ (-1)^p \, 
\bigg[\sum_{\substack{j, \, r \, \in \, \frac12 \, \zzz_{\rm odd} \\[1mm]
0 \, \leq \, r \, < \, j}}
-
\sum_{\substack{j, \, r \, \in \, \frac12 \, \zzz_{\rm odd} \\[1mm]
j \, \leq \, r \, < \, 0}}\bigg]
(-1)^{j-\frac12} \, 
q^{(m+\frac12)(j+\frac{2pm}{2m+1})^2}
q^{-\frac{1}{4m} \, [2mr+k+2mp]^2}
\nonumber
\\[1mm]
& & 
- \,\ (-1)^p \, \bigg[
\sum_{\substack{j, \, r \, \in \, \frac12 \, \zzz_{\rm odd} \\[1mm]
0 \, \leq \, r \, \leq \, j}}
-
\sum_{\substack{j, \, r \, \in \, \frac12 \, \zzz_{\rm odd} \\[1mm]
j \, < \, r \, < \, 0}}\bigg]
(-1)^{j-\frac12} \, 
q^{(m+\frac12)(j+\frac{2pm}{2m+1})^2}
q^{-\frac{1}{4m} \, [2mr-k+2mp]^2}
\nonumber
\\[1mm]
& & 
- \,\ (-1)^p \, 
\theta_{2mp+m+\frac12, m+\frac12}^{(-)}(\tau,0) \hspace{-3mm}
\sum_{\substack{r \, \in \zzz \\[1mm] -p \, < \, r \, \leq \, p}} \,\ 
q^{- \, \frac{1}{4m} \, \big(m(2r-1)+k\big)^2} 
\end{eqnarray*}}
\item[{\rm (ii)}] \,\ $2 \, g^{(2)[m,p] \, \ast}_0(\tau)$
{\allowdisplaybreaks
\begin{eqnarray*}
&=&
- \, 2 \, (-1)^p \, \bigg[
\sum_{\substack{j, \, r \, \in \, \frac12 \, \zzz_{\rm odd} \\[1mm]
0 \, \leq \, r \, \leq \, j}}
-
\sum_{\substack{j, \, r \, \in \, \frac12 \, \zzz_{\rm odd} \\[1mm]
j \, < \, r \, < \, 0}}\bigg]
(-1)^{j-\frac12} \, 
q^{(m+\frac12)(j+\frac{2pm}{2m+1})^2-m(r+p)^2}
\nonumber
\\[1mm]
& & 
- \, 2 \, (-1)^p \, 
\theta_{2mp+m+\frac12, m+\frac12}^{(-)}(\tau,0) 
\sum_{\substack{r \, \in \zzz \\[1mm] 0 \, \leq \, r \, \leq \, p-1}} 
q^{- m(r+\frac12)^2}
\end{eqnarray*}}
\item[{\rm (iii)}] \,\ $2 \, g^{(2)[m,p] \, \ast}_m(\tau)$
{\allowdisplaybreaks
\begin{eqnarray*}
&=&
- \, 2 \, (-1)^p \, \bigg[
\sum_{\substack{j, \, r \, \in \, \frac12 \, \zzz_{\rm odd} \\[1mm]
0 \, \leq \, r \, \leq \, j}}
-
\sum_{\substack{j, \, r \, \in \, \frac12 \, \zzz_{\rm odd} \\[1mm]
j \, < \, r \, < \, 0}}\bigg]
(-1)^{j-\frac12} \, 
q^{(m+\frac12)(j+\frac{2pm}{2m+1})^2-m(r+p-\frac12)^2}
\nonumber
\\[1mm]
& & 
- \,\ (-1)^p \, \theta_{2mp+m+\frac12, m+\frac12}^{(-)}(\tau,0)
\sum_{\substack{r \, \in \zzz \\[1mm] -p \, \leq \, r \, \leq \, p}} 
q^{-mr^2}
\end{eqnarray*}}
\end{enumerate}
\end{enumerate}
\end{prop}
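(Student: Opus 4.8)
The plan is to prove Proposition \ref{m1:prop:2022-924b} by a pure coefficient comparison, taking as given the two explicit formulas for $G^{(1)[m,p]\,\ast}(\tau,z)$ and $G^{(2)[m,p]\,\ast}(\tau,z)$ displayed immediately above (these come from Proposition \ref{m1:prop:2022-921b}, the definitions \eqref{m1:eqn:2022-914a1}--\eqref{m1:eqn:2022-914b2}, the relation \eqref{m1:eqn:2022-923a1}, and the identity \eqref{m1:eqn:2022-921e}). The structural input is the expansion \eqref{m1:eqn:2022-923a2}, which writes each $G^{(i)[m,p]\,\ast}(\tau,z)$ as $\sum_{0<k<m}g^{(i)[m,p]\,\ast}_k(\tau)[\theta_{k,m}+\theta_{-k,m}](\tau,z)+2g^{(i)[m,p]\,\ast}_0(\tau)\theta_{0,m}(\tau,z)+2g^{(i)[m,p]\,\ast}_m(\tau)\theta_{m,m}(\tau,z)$. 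Since the functions $[\theta_{k,m}+\theta_{-k,m}](\tau,z)$ for $0\le k\le m$ are linearly independent in $z$, each $g^{(i)[m,p]\,\ast}_k(\tau)$ is recovered as the coefficient of the corresponding basis element in the explicit formula for $G^{(i)[m,p]\,\ast}$, and the whole proof reduces to reading off and tidying these coefficients.

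First I would treat the generic range $0<k<m$, which requires no reindexing. For such $k$ exactly three pieces of each explicit $G^{(i)}$ contribute to $[\theta_{k,m}+\theta_{-k,m}]$: the first triangular double sum (carrying $q^{-\frac{1}{4m}[2mr+k+2mp]^2}$), the second triangular double sum (carrying $q^{-\frac{1}{4m}[2mr-k+2mp]^2}$), and the single sum prefixed by $\theta_{2mp+m+\frac12,m+\frac12}^{(-)}(\tau,0)$. Collecting these yields claims 1)(i) and 2)(i) verbatim, the only difference between $i=1$ and $i=2$ being the extra sign $(-1)^k$ present in the $G^{(1)}$ formula and absent in the $G^{(2)}$ formula.

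Next I would handle the endpoints $k=0$ and $k=m$, where a factor of two enters. For $k=0$ one uses $[\theta_{0,m}+\theta_{-0,m}]=2\theta_{0,m}$, so $2g^{(i)}_0$ is the $\theta_{0,m}$-coefficient; the contributions are the $k=0$ slice of the second double sum (where $q^{-\frac{1}{4m}[2mr+2mp]^2}=q^{-m(r+p)^2}$), the $k=0$ slice of the $\theta^{(-)}$-prefixed sum (where $q^{-\frac{1}{4m}(m(2r-1))^2}=q^{-m(r-\frac12)^2}$), and the explicit $\theta_{0,m}$ term. The one genuine manipulation is to merge these last two: reindexing $r\mapsto r+1$ and $r\mapsto -r$ collapses $-\sum_{-p<r\le p}q^{-m(r-\frac12)^2}+\sum_{0\le r\le p-1}q^{-m(r+\frac12)^2}$ into the single clean sum $-\sum_{0\le r\le p-1}q^{-m(r+\frac12)^2}$, which produces claims 1)(ii) and 2)(ii). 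For $k=m$ one uses $\theta_{-m,m}=\theta_{m,m}$, so again a factor two appears and $q^{-\frac{1}{4m}[2mr-m+2mp]^2}=q^{-m(r+p-\frac12)^2}$; here only the second double sum and the explicit $\theta_{m,m}$ term reach the endpoint, giving claims 1)(iii) and 2)(iii) directly. Throughout, since the summation variable $r$ ranges over $\frac12\zzz_{\rm odd}$ it is never equal to $0$, so the ranges written $0\le r$ and $0<r$ denote the same set; this is the small observation reconciling the ranges printed in the $G^{(i)}$ formulas with those stated in the proposition.

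I expect the only nontrivial step to be the boundary bookkeeping at $k=0$, namely the telescoping that fuses the explicit $\theta_{0,m}$ term with the $k=0$ slice of the $\theta^{(-)}$-prefixed triangular sum into one geometric-type sum; once that reindexing is in place, every remaining identification is a direct transcription of coefficients, and the computation for $G^{(2)}$ runs word-for-word as for $G^{(1)}$ under the systematic replacement of the relevant $\theta_{\bullet,\frac12}$ and $\theta^{(-)}_{\bullet,m+\frac12}$ factors together with the removal of the sign $(-1)^k$.
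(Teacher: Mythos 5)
Your proposal is correct and follows essentially the same route as the paper, which presents the explicit formulas for $G^{(i)[m,p]\,\ast}(\tau,z)$ and then states that the $g^{(i)[m,p]\,\ast}_k(\tau)$ follow immediately by reading off the coefficients of $[\theta_{k,m}+\theta_{-k,m}]$, $\theta_{0,m}$ and $\theta_{m,m}$ in the expansion \eqref{m1:eqn:2022-923a2}. Your extra bookkeeping at $k=0$ (using $\sum_{-p<r\le p}q^{-m(r-\frac12)^2}=2\sum_{0\le r\le p-1}q^{-m(r+\frac12)^2}$ to fuse the two $\theta_{0,m}$ contributions) and the remark that $r\in\frac12\zzz_{\rm odd}$ never vanishes are exactly the details the paper leaves implicit.
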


\section{Indefinite modular forms $g^{[m,p] \, \ast}_j(\tau)$}
\label{sec:g(mp):indefinite}

By Proposition \ref{m1:prop:2022-924b} we observe that
the following formula 
\begin{equation}
g^{(2)[m,p] \, \ast}_j(\tau) \,\ = \,\ 
(-1)^{j+p} \, g^{(1)[m,p] \, \ast}_j(\tau) 
\label{m1:eqn:2022-924b}
\end{equation}
holds for all $m \in \nnn$ and $p, \, j \in \zzz$ such that 
$0 \, \leq \, p \, \leq \, 2m$ and $0 \, \leq \, j \, \leq \, m$.
Then, simplifying the notation, we define the functions \, 
$g^{[m,p] \, \ast}_j(\tau)$ \, by 
\begin{equation}
g^{[m,p] \, \ast}_j(\tau) \,\ = \,\ g^{(1)[m,p] \, \ast}_j(\tau) 
\label{m1:eqn:2022-924c}
\end{equation}
Then the modular transformation formulas for these functions 
\, $g^{[m,p] \, \ast}_j(\tau)$ \, are obtained from 
Proposition \ref{m1:prop:2022-924b} as follows:

\medskip

\begin{prop} 
\label{m1:prop:2022-924c}
Let $m \in \nnn$ and $p,j \in \zzz$ such that 
$0 \, \leq \, p \, \leq \, 2m$ and $0 \, \leq \, j \, \leq \, m$.
Then 
\begin{enumerate}
\item[{\rm 1)}] 
\begin{enumerate}
\item[{\rm (i)}] \,\ $
\underset{\substack{\\[1mm] (0<j<m)}}{g^{[m,p] \, \ast}_j
\Big(-\dfrac{1}{\tau}\Big)} 
\,\ = \,\
\dfrac{-i\tau}{\sqrt{m(m+\frac12)}}
\sum\limits_{p'=0}^{2m} \, 
\sum\limits_{\substack{k \in \zzz \\[1mm] 0<k<m}}
e^{\frac{2\pi i}{2m+1}pp'} \, 
\cos \dfrac{\pi jk}{m} \, g^{[m,p'] \, \ast}_k(\tau)$
$$
+ \,\ 
\frac{-i\tau}{\sqrt{m(m+\frac12)}}
\sum_{p'=0}^{2m} \, e^{\frac{2\pi i}{2m+1}pp'} \, \bigg\{
g^{[m,p] \, \ast}_0(\tau)
\, + \, 
(-1)^j \, g^{[m,p'] \, \ast}_m(\tau)\bigg\}
$$
\item[{\rm (ii)}] \,\ $
g^{[m,p] \, \ast}_0\Big(-\dfrac{1}{\tau}\Big)
\,\ = \,\ 
\dfrac{-i\tau}{\sqrt{2m(2m+1)}}
\sum\limits_{p'=0}^{2m} \, 
\sum\limits_{\substack{k \in \zzz \\[1mm] 0<k<m}}
e^{\frac{2\pi i}{2m+1}pp'} \, g^{[m,p'] \, \ast}_k(\tau)$
$$
+ \,\ 
\frac{-i\tau}{\sqrt{2m(2m+1)}}
\sum_{p'=0}^{2m} \, e^{\frac{2\pi i}{2m+1}pp'} \, \bigg\{
g^{[m,p'] \, \ast}_0(\tau)
\, + \, 
g^{[m,p'] \, \ast}_m(\tau)\bigg\}
$$
\item[{\rm (iii)}] \,\ $
g^{[m,p] \, \ast}_m\Big(-\dfrac{1}{\tau}\Big)
\,\ = \,\ 
\dfrac{-i\tau}{\sqrt{2m(2m+1)}}
\sum\limits_{p'=0}^{2m} \, 
\sum\limits_{\substack{k \in \zzz \\[1mm] 0<k<m}}
(-1)^k \, e^{\frac{2\pi i}{2m+1}pp'} \, 
g^{[m,p'] \, \ast}_k(\tau)$
$$
+ \,\ 
\frac{-i\tau}{\sqrt{2m(2m+1)}}
\sum_{p'=0}^{2m} \, e^{\frac{2\pi i}{2m+1}pp'} \, \bigg\{
g^{[m,p'] \, \ast}_0(\tau)
\, + \, 
(-1)^m \, g^{[m,p'] \, \ast}_m(\tau)\bigg\}
$$
\end{enumerate}
\item[{\rm 2)}] \,\ $
\underset{\substack{\\[0mm] (0 \, \leq \, j \, \leq \, m)
}}{g^{[m,p] \, \ast}_j(\tau+1)} \,\ = \,\ 
e^{\frac{\pi i}{2m+1}(p+\frac{2m+1}{2})^2- \frac{\pi i}{2m}(j+m)^2} \, 
g^{[m,p] \, \ast}_j(\tau)$
\end{enumerate}
\end{prop}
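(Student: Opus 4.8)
The plan is to deduce both transformation laws of $g^{[m,p] \, \ast}_j(\tau)$ directly from Proposition \ref{m1:prop:2022-924a}, using only the definition \eqref{m1:eqn:2022-924c} together with the parity relation \eqref{m1:eqn:2022-924b}.

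For the $S$-transformation, claim 1), there is essentially nothing to do beyond a change of notation. The $S$-transformation in Proposition \ref{m1:prop:2022-924a}, claim 1), expresses each $g^{(1)[m,p] \, \ast}_j(-\tfrac{1}{\tau})$ as a linear combination of functions again carrying the superscript $(1)$, so by \eqref{m1:eqn:2022-924c} the formulas 1)(i)--(iii) follow by the substitution $g^{(1)[m,p] \, \ast}_k \mapsto g^{[m,p] \, \ast}_k$ throughout. The only cosmetic point is that the scalar prefactor $\frac{-i\tau}{2\sqrt{m(m+\frac12)}}$ occurring in parts 1)(ii)--(iii) of Proposition \ref{m1:prop:2022-924a} is rewritten as $\frac{-i\tau}{\sqrt{2m(2m+1)}}$ via the identity $2\sqrt{m(m+\frac12)} = \sqrt{2m(2m+1)}$.

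The real content is the $T$-transformation, claim 2). First I would note that the three formulas of Proposition \ref{m1:prop:2022-924a}, claim 2), can be written uniformly, for every integer $j$ with $0 \leq j \leq m$, as
\[
g^{(1)[m,p] \, \ast}_j(\tau+1) \,\ = \,\
e^{\frac{\pi i}{2m+1}p^2 + \frac{\pi i}{4} - \frac{\pi i}{2m}j^2} \,
g^{(2)[m,p] \, \ast}_j(\tau),
\]
since the factor $e^{-\frac{\pi i}{2m}j^2}$ specializes to $1$ at $j=0$ and to $e^{-\frac{\pi im}{2}}$ at $j=m$. Substituting $g^{(2)[m,p] \, \ast}_j = (-1)^{j+p} g^{[m,p] \, \ast}_j$ from \eqref{m1:eqn:2022-924b} and $g^{(1)[m,p] \, \ast}_j = g^{[m,p] \, \ast}_j$ from \eqref{m1:eqn:2022-924c} yields
\[
g^{[m,p] \, \ast}_j(\tau+1) \,\ = \,\
(-1)^{j+p} \, e^{\frac{\pi i}{2m+1}p^2 + \frac{\pi i}{4} - \frac{\pi i}{2m}j^2} \,
g^{[m,p] \, \ast}_j(\tau).
\]

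It then remains to recognize this phase as the one claimed. The key algebraic step is to complete the square in both exponents: a direct expansion gives the exact identity
\[
\frac{\pi i}{2m+1}\Big(p+\tfrac{2m+1}{2}\Big)^2 - \frac{\pi i}{2m}(j+m)^2
\,\ = \,\
\frac{\pi i}{2m+1}p^2 - \frac{\pi i}{2m}j^2 + \frac{\pi i}{4} + \pi i(p-j),
\]
where the two stray constants $\frac{\pi i(2m+1)}{4}$ and $-\frac{\pi im}{2}$ coming from the cross terms combine to $\frac{\pi i}{4}$. Since $m \in \nnn$ and $j, p \in \zzz$, exponentiating the cross term produces $e^{\pi i(p-j)} = (-1)^{p+j}$, which matches exactly the sign introduced by \eqref{m1:eqn:2022-924b}; hence the prefactor above equals $e^{\frac{\pi i}{2m+1}(p+\frac{2m+1}{2})^2 - \frac{\pi i}{2m}(j+m)^2}$, establishing claim 2) simultaneously for all three ranges of $j$. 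The main obstacle is only this mild bookkeeping---identifying the uniform form of the three $T$-transformation formulas and checking that completing the square reproduces precisely the parity factor $(-1)^{j+p}$ rather than some other phase.
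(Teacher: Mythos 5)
Your proposal is correct and follows essentially the same route as the paper: the $S$-formulas are Proposition \ref{m1:prop:2022-924a} 1) verbatim (with $2\sqrt{m(m+\frac12)}=\sqrt{2m(2m+1)}$), and the $T$-formula comes from combining Proposition \ref{m1:prop:2022-924a} 2) with the parity relation \eqref{m1:eqn:2022-924b}. Your completed-square identity $\frac{\pi i}{2m+1}(p+\frac{2m+1}{2})^2-\frac{\pi i}{2m}(j+m)^2=\frac{\pi i}{2m+1}p^2-\frac{\pi i}{2m}j^2+\frac{\pi i}{4}+\pi i(p-j)$ checks out and supplies the bookkeeping the paper leaves implicit.
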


\section{An example \, $\sim$ \, the case $m=1$}
\label{sec:g(1):ast:ex:m=1}

The functions which take place in the case $m=1$ are 
$g^{[1,p] \, \ast}_k(\tau)$ \,\ $(p=0,1,2; \,\ k=0,1)$
and they are, by Proposition \ref{m1:prop:2022-924b}, as follows:
\begin{subequations}
{\allowdisplaybreaks
\begin{eqnarray}
2 \, g^{[1,p] \, \ast}_0(\tau)
&= &
- \, 2 \, \bigg[
\sum_{\substack{j, \, r \, \in \, \frac12 \, \zzz_{\rm odd} \\[1mm]
0 \, < \, r \, \leq \, j
}}
-\sum_{\substack{j, \, r \, \in \, \frac12 \, \zzz_{\rm odd} \\[1mm]
j \, < \, r \, < \, 0}}\bigg]
(-1)^{j-\frac12} \, 
q^{\frac32(j+\frac{2p}{3})^2-(r+p)^2}
\nonumber
\\[2mm]
& & 
- \, 2 \, 
\theta_{2p+\frac32, \frac32}^{(-)}(\tau,0) 
\sum_{\substack{r \, \in \, \zzz \\[1mm] 0 \, \leq \, r \, \leq \, p-1}} 
q^{-(r+\frac12)^2}
\label{eqn:2022-917a1}
\\[3mm]
2 \, g^{[1,p] \, \ast}_1(\tau) 
&= & 
2 \, \bigg[
\sum_{\substack{j, \, r \, \in \, \frac12 \, \zzz_{\rm odd} \\[1mm]
0 \, < \, r \, \leq \, j
}}
-\sum_{\substack{j, \, r \, \in \, \frac12 \, \zzz_{\rm odd} \\[1mm]
j \, < \, r \, < \, 0}}\bigg]
(-1)^{j-\frac12} \, 
q^{\frac32(j+\frac{2p}{3})^2-(r+p-\frac12)^2}
\nonumber
\\[2mm]
& & 
+ \,\ \theta_{2p+\frac32, \frac32}^{(-)}(\tau,0)
\sum_{\substack{r \, \in \, \zzz \\[1mm] -(p-1) \, \leq \, r \, \leq \, p-1}} 
q^{-r^2} 
\label{eqn:2022-917a2}
\end{eqnarray}}
\end{subequations}
Putting $\left\{
\begin{array}{ccc}
j-\frac12 &=& j' \\[1mm]
r-\frac12 &=& r'
\end{array}\right. $, 
the above formulas are rewritten as follows:
\begin{subequations}
{\allowdisplaybreaks
\begin{eqnarray}
g^{[1,p] \, \ast}_0(\tau)
&= &
- \,\ \bigg[
\sum_{\substack{j', \, r' \, \in \, \zzz \\[1mm] 
0 \, \leq \, r' \, \leq \, j'
}}
-\sum_{\substack{j', \, r' \, \in \, \zzz \\[1mm] 
j' \, < \, r' \, < \, 0}} \bigg]
(-1)^{j'} \, 
q^{\frac32(j'+\frac12+\frac{2p}{3})^2-(r'+\frac12+p)^2}
\nonumber
\\[2mm]
& & 
- \,\ 
\theta_{2p+\frac32, \frac32}^{(-)}(\tau,0) 
\sum_{\substack{r \, \in \, \zzz \\[1mm] 0 \, \leq \, r \, \leq \, p-1}}
q^{-(r+\frac12)^2}
\label{eqn:2022-917b1}
\\[3mm]
g^{[1,p] \, \ast}_1(\tau) 
&= & 
\bigg[
\sum_{\substack{j', \, r' \, \in \, \zzz \\[1mm] 
0 \, \leq \, r' \, \leq \, j'
}}
-\sum_{\substack{j', \, r' \, \in \, \zzz \\[1mm] 
j' \, < \, r' \, < \, 0}} \bigg]
(-1)^{j'} \, 
q^{\frac32(j'+\frac12+\frac{2p}{3})^2-(r'+p)^2}
\nonumber
\\[2mm]
& & 
+ \,\ \frac12 \, \theta_{2p+\frac32, \frac32}^{(-)}(\tau,0)
\sum_{\substack{r \, \in \, \zzz \\[1mm] -(p-1) \, \leq \, r \, \leq \, p-1}} 
q^{-r^2} 
\label{eqn:2022-917b2}
\end{eqnarray}}
\end{subequations}
These functions are written explicitly as follows:

\vspace{1mm}

\begin{note} \,\ 
\label{m1:note:2022-1002a}
\begin{enumerate}
\item[{\rm 1)}] \,\ $g^{[1,0] \, \ast}_0(\tau)
\, = \, 
- \,\ \bigg[
\sum\limits_{\substack{j, \, r \, \in \, \zzz \\[1mm] 
0 \, \leq \, r \, \leq \, j
}}
-\sum\limits_{\substack{j, \, r \, \in \, \zzz \\[1mm] 
j \, < \, r \, < \, 0}} \bigg]
(-1)^j \, q^{\frac32(j+\frac12)^2-(r+\frac12)^2}
\,\ = \,\ 
- \, q^{\frac18} + \cdots $
\item[{\rm 2)}] \,\ $g^{[1,0] \, \ast}_1(\tau) 
\, = \, 
\bigg[
\sum\limits_{\substack{j, \, r \, \in \, \zzz \\[1mm] 
0 \, \leq \, r \, \leq \, j
}}
-\sum\limits_{\substack{j, \, r \, \in \, \zzz \\[1mm] 
j \, < \, r \, < \, 0}} \bigg]
(-1)^j \, q^{\frac32(j+\frac12)^2-r^2}
\quad = \quad q^{\frac38}+ \cdots$
\item[{\rm 3)}] \,\ $g^{[1,1] \, \ast}_0(\tau)
\,\ = \,\ g^{[1,2] \, \ast}_0(\tau)$
$$ \hspace{-13mm}
= \,\ 
\bigg[
\sum\limits_{\substack{j, \, r \, \in \, \zzz \\[1mm] 0 \, \leq \, r \, < \, j}} 
-
\sum\limits_{\substack{j, \, r \, \in \, \zzz \\[1mm] j \, \leq \, r \, < \, 0}}
\bigg]
(-1)^j \, q^{\frac32(j+\frac16)^2-(r+\frac12)^2}
\,\ = \,\ q^{\frac{19}{24}}+\cdots 
$$
\item[{\rm 4)}] \,\ $g^{[1,1] \, \ast}_1(\tau)
\,\ = \,\ g^{[1,2] \, \ast}_1(\tau)$
$$= \,\
- \, \bigg[
\sum\limits_{\substack{j, \, r \, \in \, \zzz \\[1mm] 0 \, \leq \, r \, < \, j}} 
-
\sum\limits_{\substack{j, \, r \, \in \, \zzz \\[1mm] j \, \leq \, r \, < \, 0}}
\bigg]
(-1)^jq^{\frac32(j+\frac16)^2-r^2}
\, + \, 
\dfrac12 \, \theta_{\frac12, \frac32}^{(-)}(\tau,0)
\,\ = \,\ 
- \, \frac12 \, q^{\frac{1}{24}}+\cdots 
$$
\end{enumerate}
\end{note}

\medskip

The $S$-transformation of these functions is computed by Proposition 
\ref{m1:prop:2022-924a} as follows:
{\allowdisplaybreaks
\begin{eqnarray*}
g^{[1,p] \, \ast}_0\Big(-\dfrac{1}{\tau}\Big)
&=& 
\dfrac{-i\tau}{\sqrt{6}}
\sum\limits_{p'=0}^{2} \, e^{\frac{2\pi i}{3}pp'} \, \bigg\{
g^{[1,p'] \, \ast}_0(\tau)
\, + \, 
g^{[1,p'] \, \ast}_1(\tau)\bigg\}
\\[2mm]
g^{[1,p] \, \ast}_1\Big(-\dfrac{1}{\tau}\Big)
&=& 
\dfrac{-i\tau}{\sqrt{6}}
\sum\limits_{p'=0}^{2} \, e^{\frac{2\pi i}{3}pp'} \, \bigg\{
g^{[1,p'] \, \ast}_0(\tau)
\, - \, 
g^{[1,p'] \, \ast}_1(\tau)\bigg\}
\end{eqnarray*}}
Since $g^{[1,1] \, \ast}_k(\tau)=g^{[1,2] \, \ast}_k(\tau)$ \, $(k=0,1)$ by Note 
\ref{m1:note:2022-1002a}, these formulas are written explicitly as follows:

\begin{note} \,\ 
\label{m1:note:2022-1002b}
\begin{enumerate}
\item[{\rm 1)}] \,\ $g^{[1,0] \, \ast}_0\Big(-\dfrac{1}{\tau}\Big)
\,\ = \,\ 
\dfrac{-i\tau}{\sqrt{6}} \, \Big\{
g^{[1,0] \, \ast}_0(\tau) 
\, + \, g^{[1,0] \, \ast}_1(\tau)
\, + \, 2 \, g^{[1,1] \, \ast}_0(\tau) 
\, + \, 2 \, g^{[1,1] \, \ast}_1(\tau)\Big\}$
\item[{\rm 2)}] \,\ $g^{[1,0] \, \ast}_1\Big(-\dfrac{1}{\tau}\Big)
\,\ = \,\ 
\dfrac{-i\tau}{\sqrt{6}} \, \Big\{
g^{[1,0] \, \ast}_0(\tau) 
\, - \, g^{[1,0] \, \ast}_1(\tau)
\, + \, 2 \, g^{[1,1] \, \ast}_0(\tau) 
\, - \, 2 \, g^{[1,1] \, \ast}_1(\tau)\Big\}$
\item[{\rm 3)}] \,\ $g^{[1,1] \, \ast}_0\Big(-\dfrac{1}{\tau}\Big)
\,\ = \,\ 
\dfrac{-i\tau}{\sqrt{6}}\, \Big\{
g^{[1,0] \, \ast}_0(\tau)
\, + \, g^{[1,0] \, \ast}_1(\tau)
\, - \, g^{[1,1] \, \ast}_0(\tau)
\, - \, g^{[1,1] \, \ast}_1(\tau)\Big\}$
\item[{\rm 4)}] \,\ $g^{[1,1] \, \ast}_1\Big(-\dfrac{1}{\tau}\Big)
\,\ = \,\ 
\dfrac{-i\tau}{\sqrt{6}}\, \Big\{
g^{[1,0] \, \ast}_0(\tau)
\, - \, g^{[1,0] \, \ast}_1(\tau)
\, - \, g^{[1,1] \, \ast}_0(\tau)
\, + \, g^{[1,1] \, \ast}_1(\tau)\Big\}$
\end{enumerate}
\end{note}

\vspace{1mm}

From Notes \ref{m1:note:2022-1002a} and \ref{m1:note:2022-1002b},
we have the following: 

\vspace{1mm}

\begin{lemma} 
\label{m1:lemma:2022-1002a}
Define the functions $f_i(\tau)$ \,\ $(i=0,1,2,3)$ by
\begin{equation}
\left\{
\begin{array}{ccr}
f_0(\tau) &:=& \dfrac{- \, 2 \, g^{[1,1] \, \ast}_1(\tau)}{\eta(\tau)^2} \\[5mm]
f_3(\tau) &:=& \dfrac{2 \, g^{[1,1] \, \ast}_0(\tau)}{\eta(\tau)^2}
\end{array}\right. \hspace{10mm} \left\{
\begin{array}{ccr}
f_1(\tau) &:=& \dfrac{- \, g^{[1,0] \, \ast}_0(\tau)}{\eta(\tau)^2} \\[5mm]
f_2(\tau) &:=& \dfrac{g^{[1,0] \, \ast}_1(\tau)}{\eta(\tau)^2}
\end{array}\right. 
\label{m1:eqn:2022-1002a}
\end{equation} 
Then,
\begin{enumerate}
\item[{\rm 1)}] \,\ these functions $f_i(\tau)$ satisfy the following 
$S$-transformation properties:
\begin{enumerate}
\item[{\rm (0)}] \,\ $f_0\Big(-\dfrac{1}{\tau}\Big)
\,\ = \,\ 
\dfrac{1}{\sqrt{6}}\, \Big\{
f_0(\tau) 
\, + \, 2 \, f_1(\tau)
\, + \, 2 \, f_2 (\tau)
\, + \, f_3(\tau) \Big\}$
\item[{\rm (i)}] \,\ $f_1 \Big(-\dfrac{1}{\tau}\Big)
\,\ = \,\ 
\dfrac{1}{\sqrt{6}} \, \Big\{
f_0(\tau)
\, + \, f_1(\tau) 
\, - \, f_2(\tau)
\, - \, f_3(\tau)\Big\}$
\item[{\rm (ii)}] \,\ $f_2\Big(-\dfrac{1}{\tau}\Big)
\,\ = \,\ 
\dfrac{1}{\sqrt{6}} \, \Big\{
f_0(\tau)
\, - \, f_1(\tau) 
\, - \, f_2(\tau)
\, + \, f_3(\tau) \Big\}$
\item[{\rm (iii)}] \,\ $f_3\Big(-\dfrac{1}{\tau}\Big)
\,\ = \,\ 
\dfrac{1}{\sqrt{6}}\, \Big\{
f_0(\tau)
\, - \, 2 \, f_1(\tau)
\, + \, 2 \, f_2(\tau)
\, - \, f_3(\tau) \Big\} $
\end{enumerate}
\item[{\rm 2)}] \,\ the leading terms of $f_i(\tau)$ are as follows \,\ :
\quad $ \left\{
\begin{array}{lcrc}
f_0(\tau) &=& q^{-\frac{1}{24}} &  + \,\ \cdots 
\\[1mm]
f_1(\tau) &=& q^{\frac{1}{24}} &  + \,\ \cdots 
\\[1mm]
f_2(\tau) &=& q^{\frac{7}{24}} &  + \,\ \cdots 
\\[1mm]
f_3(\tau) &=& 2 \, q^{\frac{17}{24}} &  + \,\ \cdots 
\end{array} \right. $
\end{enumerate}
\end{lemma}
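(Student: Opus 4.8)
The plan is to read off both assertions directly from the material already in place: the explicit $S$-transformations of $g^{[1,p] \, \ast}_k$ collected in Note \ref{m1:note:2022-1002b}, the leading $q$-expansions of the same functions in Note \ref{m1:note:2022-1002a}, and the single classical fact that $\eta(-1/\tau)^2 = -i\tau \, \eta(\tau)^2$.

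First I would invert the definitions \eqref{m1:eqn:2022-1002a} so that each $g$ appearing in Note \ref{m1:note:2022-1002b} is expressed in the $f$-basis, namely
\[
g^{[1,0] \, \ast}_0(\tau) = -f_1(\tau)\,\eta(\tau)^2, \quad
g^{[1,0] \, \ast}_1(\tau) = f_2(\tau)\,\eta(\tau)^2, \quad
g^{[1,1] \, \ast}_0(\tau) = \tfrac12 f_3(\tau)\,\eta(\tau)^2, \quad
g^{[1,1] \, \ast}_1(\tau) = -\tfrac12 f_0(\tau)\,\eta(\tau)^2 .
\]
For part 1) I would then treat each $f_i(-1/\tau)$ in turn. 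Writing, say, $f_1(-1/\tau) = -g^{[1,0] \, \ast}_0(-1/\tau)/\eta(-1/\tau)^2$, I insert the corresponding line of Note \ref{m1:note:2022-1002b}, which supplies $g^{[1,0] \, \ast}_0(-1/\tau)$ as $\frac{-i\tau}{\sqrt6}$ times a linear combination of the four $g(\tau)$'s; the factor $-i\tau$ cancels against $\eta(-1/\tau)^2 = -i\tau\,\eta(\tau)^2$, leaving the constant $\frac{1}{\sqrt6}$. Substituting the four inversion formulas above converts the combination into one in the $f_j(\tau)$ and reproduces identity (i); the pairings $f_0 \leftrightarrow g^{[1,1] \, \ast}_1$, $f_2 \leftrightarrow g^{[1,0] \, \ast}_1$, $f_3 \leftrightarrow g^{[1,1] \, \ast}_0$ yield (0), (ii), (iii) in exactly the same manner.

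For part 2) I would combine the leading coefficients of Note \ref{m1:note:2022-1002a} with the expansion $\eta(\tau)^{-2} = q^{-1/12}(1 + O(q))$. Thus $f_0 = -2 g^{[1,1] \, \ast}_1/\eta(\tau)^2 = -2 \cdot (-\tfrac12 q^{1/24}) \cdot q^{-1/12} + \cdots = q^{-1/24} + \cdots$, and likewise $f_1 = q^{1/8 - 1/12} + \cdots = q^{1/24} + \cdots$, $f_2 = q^{3/8 - 1/12} + \cdots = q^{7/24} + \cdots$, $f_3 = 2 q^{19/24 - 1/12} + \cdots = 2 q^{17/24} + \cdots$, which are the stated leading terms.

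There is no genuine obstacle here: the argument is a finite bookkeeping exercise. The only place demanding care is the factor-of-two asymmetry built into \eqref{m1:eqn:2022-1002a} — the $p=1$ functions enter with coefficient $2$ whereas the $p=0$ functions do not — so I would double-check that the doubled coefficients of the $p'=1$ terms in Note \ref{m1:note:2022-1002b} (already reflecting $g^{[1,1] \, \ast}_k = g^{[1,2] \, \ast}_k$) mesh correctly with these $\tfrac12$'s when re-expressing in the $f$-basis. Getting every sign and factor of $2$ right is essentially the whole content of the computation.
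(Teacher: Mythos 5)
Your proposal is correct and is essentially the argument the paper intends: the lemma is stated immediately after Notes \ref{m1:note:2022-1002a} and \ref{m1:note:2022-1002b} with no further proof, precisely because it follows by substituting those $S$-transformation formulas and leading terms into the definitions \eqref{m1:eqn:2022-1002a} and cancelling $\eta(-1/\tau)^2=-i\tau\,\eta(\tau)^2$. I verified the signs and factors of $2$ in all four identities of part 1) and the four exponents $-\tfrac{1}{24},\tfrac{1}{24},\tfrac{7}{24},\tfrac{17}{24}$ in part 2); your bookkeeping is accurate.
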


Next we consider the Jacobi's theta function
$$
\theta_{j,3}(\tau,z) \,\ = \,\ 
\sum_{n \, \in \, \zzz} \, q^{3(n+\frac{j}{6})^2} \, e^{6\pi i(n+\frac{j}{6})z}
$$
The $S$-transformation of $\theta_{j,3}(\tau,0)$ is computed by using 
Lemmas 1.2 and 1.3 in \cite{W2022c} as follows:
\begin{enumerate}
\item[{\rm (i)}] \,\ $\theta_{0,3}\Big(-\dfrac{1}{\tau},0\Big)
\,\ = \,\ 
\dfrac{(-i\tau)^{\frac12}}{\sqrt{6}} \Big\{
\theta_{0,3}(\tau,0)
\, + \, 2 \, \theta_{1,3}(\tau,0)
\, + \, 2 \, \theta_{2,3}(\tau,0)
\, + \, \theta_{3,3}(\tau,0)\Big\}$
\item[{\rm (ii)}] \,\ $\theta_{1,3}\Big(-\dfrac{1}{\tau},0\Big)
\,\ = \,\ 
\dfrac{(-i\tau)^{\frac12}}{\sqrt{6}} \Big\{
\theta_{0,3}(\tau,0)
\, + \, \theta_{1,3}(\tau,0)
\, - \, \theta_{2,3}(\tau,0)
\, - \, \theta_{3,3}(\tau,0)\Big\}$
\item[{\rm (iii)}] \,\ $\theta_{2,3}\Big(-\dfrac{1}{\tau},0\Big)
\,\ = \,\ 
\dfrac{(-i\tau)^{\frac12}}{\sqrt{6}} \Big\{
\theta_{0,3}(\tau,0)
\, - \, \theta_{1,3}(\tau,0)
\, - \, \theta_{2,3}(\tau,0)
\, + \, \theta_{3,3}(\tau,0)\Big\}$
\item[{\rm (iv)}] \,\ $\theta_{3,3}\Big(-\dfrac{1}{\tau},0\Big)
\,\ = \,\ 
\dfrac{(-i\tau)^{\frac12}}{\sqrt{6}} \Big\{
\theta_{0,3}(\tau,0)
\, - \, 2 \, \theta_{1,3}(\tau,0)
\, + \, 2 \, \theta_{2,3}(\tau,0)
\, - \, \theta_{3,3}(\tau,0)\Big\}$
\end{enumerate}
And the leading terms of $\theta_{j,3}(\tau,0)$ are \quad $
\left\{
\begin{array}{lcrc}
\theta_{0,3}(\tau,0) &=& q^0 & + \,\ \cdots 
\\[1mm]
\theta_{1,3}(\tau,0) &=& q^{\frac{1}{12}} &+ \,\ \cdots 
\\[1mm]
\theta_{2,3}(\tau,0) &=& q^{\frac13} &+ \,\ \cdots 
\\[1mm]
\theta_{3,3}(\tau,0) &=& 2 \, q^{\frac34} &+ \,\ \cdots 
\end{array} \right. $

\medskip

Then putting 
\begin{equation}
h_j(\tau) \,\ := \,\ \frac{\theta_{j,3}(\tau,0)}{\eta(\tau)} \hspace{10mm} 
(j=0,1,2,3)
\label{m1:eqn:2022-1002b}
\end{equation} 
we have

\medskip

\begin{lemma} 
\label{m1:lemma:2022-1002b}
\begin{enumerate}
\item[{\rm 1)}] \,\ Functions $h_j(\tau)$ satisfy the following 
$S$-transformation properties:
\begin{enumerate}
\item[{\rm (0)}] \,\ $h_0\Big(-\dfrac{1}{\tau}\Big)
\,\ = \,\ 
\dfrac{1}{\sqrt{6}}\, \Big\{
h_0(\tau) 
\, + \, 2 \, h_1(\tau)
\, + \, 2 \, h_2 (\tau)
\, + \, h_3(\tau) \Big\}$
\item[{\rm (i)}] \,\ $h_1 \Big(-\dfrac{1}{\tau}\Big)
\,\ = \,\ 
\dfrac{1}{\sqrt{6}} \, \Big\{
h_0(\tau)
\, + \, h_1(\tau) 
\, - \, h_2(\tau)
\, - \, h_3(\tau)\Big\}$
\item[{\rm (ii)}] \,\ $h_2\Big(-\dfrac{1}{\tau}\Big)
\,\ = \,\ 
\dfrac{1}{\sqrt{6}} \, \Big\{
h_0(\tau)
\, - \, h_1(\tau) 
\, - \, h_2(\tau)
\, + \, h_3(\tau) \Big\}$
\item[{\rm (iii)}] \,\ $h_3\Big(-\dfrac{1}{\tau}\Big)
\,\ = \,\ 
\dfrac{1}{\sqrt{6}}\, \Big\{
h_0(\tau)
\, - \, 2 \, h_1(\tau)
\, + \, 2 \, h_2(\tau)
\, - \, h_3(\tau) \Big\} $
\end{enumerate}
\item[{\rm 2)}] \,\ the leading terms of $h_j(\tau)$ are as follows \,\ :
\quad $ \left\{
\begin{array}{lcrc}
h_0(\tau) &=& q^{-\frac{1}{24}} &  + \,\ \cdots 
\\[1mm]
h_1(\tau) &=& q^{\frac{1}{24}} &  + \,\ \cdots 
\\[1mm]
h_2(\tau) &=& q^{\frac{7}{24}} &  + \,\ \cdots 
\\[1mm]
h_3(\tau) &=& 2 \, q^{\frac{17}{24}} &  + \,\ \cdots 
\end{array} \right. $
\end{enumerate}
\end{lemma}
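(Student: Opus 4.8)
The plan is to derive both parts of the lemma directly from the data already displayed immediately before the statement, together with the single classical fact that $\eta\big(-\tfrac1\tau\big) = (-i\tau)^{1/2}\eta(\tau)$. The conceptual point is that $h_j(\tau) = \theta_{j,3}(\tau,0)/\eta(\tau)$ is a ratio of two objects each of weight $\tfrac12$, so $h_j$ transforms with weight $0$ under $S$. Consequently the automorphy prefactors cancel, and the $S$-transformation of the $h_j$ is governed by exactly the same constant matrix as that of the $\theta_{j,3}(\tau,0)$, but with the $(-i\tau)^{1/2}$ factor stripped off. This is the only structural observation needed.

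To prove part 1), I would start from the four displayed $S$-transformation formulas for $\theta_{j,3}(\tau,0)$ and divide each through by $\eta\big(-\tfrac1\tau\big) = (-i\tau)^{1/2}\eta(\tau)$. For $j=0$, for instance, one obtains
$$
h_0\Big(-\tfrac1\tau\Big) = \frac{\theta_{0,3}(-\tfrac1\tau,0)}{(-i\tau)^{1/2}\eta(\tau)} = \frac{(-i\tau)^{1/2}}{(-i\tau)^{1/2}\sqrt6\,\eta(\tau)}\big\{\theta_{0,3} + 2\theta_{1,3} + 2\theta_{2,3} + \theta_{3,3}\big\}(\tau,0),
$$
and the $(-i\tau)^{1/2}$ cancels, leaving $\tfrac{1}{\sqrt6}\{h_0 + 2h_1 + 2h_2 + h_3\}(\tau)$, which is formula (0). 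The three remaining formulas (i)--(iii) follow by the identical computation applied to the corresponding $\theta_{j,3}$-transformation, since the coefficient matrix is the same in both settings.

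For part 2), I would combine the displayed leading terms of $\theta_{j,3}(\tau,0)$ with the expansion $1/\eta(\tau) = q^{-1/24}\prod_{n\geq1}(1-q^n)^{-1} = q^{-1/24}(1 + O(q))$. Multiplying the leading exponent of each $\theta_{j,3}$ by this factor simply shifts it by $-\tfrac1{24}$: thus $h_0 = q^{-1/24} + \cdots$, $h_1 = q^{1/12 - 1/24} + \cdots = q^{1/24} + \cdots$, $h_2 = q^{1/3 - 1/24} + \cdots = q^{7/24} + \cdots$, and $h_3 = 2q^{3/4 - 1/24} + \cdots = 2q^{17/24} + \cdots$, matching the claimed expansions.

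There is essentially no hard step here: beyond routine bookkeeping, the only thing to verify is the weight-matching that produces the cancellation of $(-i\tau)^{1/2}$, and this rests solely on the standard eta transformation. The point genuinely worth flagging is that these $S$-transformation formulas for the $h_j$ coincide verbatim with those for the $f_i$ in Lemma \ref{m1:lemma:2022-1002a}, and the leading terms agree as well; so the real content being prepared by this pair of lemmas is the eventual identification of the indefinite modular forms $f_i$ with the explicit modular objects $h_j$, which the matching $S$-matrices and $q$-expansions are set up to make possible.
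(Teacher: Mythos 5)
Your proof is correct and is exactly the argument the paper intends (the paper states the lemma without a separate proof, treating it as an immediate consequence of dividing the displayed $S$-transformation formulas for $\theta_{j,3}(\tau,0)$ by $\eta(-\tfrac1\tau)=(-i\tau)^{1/2}\eta(\tau)$ and of shifting the leading exponents by $-\tfrac1{24}$). Nothing further is needed.
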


From these formulas, we obtain the following :

\medskip

\begin{prop} \,\
\label{m1:prop:2022-1002a}
\begin{enumerate}
\item[{\rm (i)}] \,\ $
\bigg[\sum\limits_{\substack{j, \, r \, \in \, \zzz \\[1mm] 
0 \, \leq \, r \, \leq \, j }}
-\sum\limits_{\substack{j, \, r \, \in \, \zzz \\[1mm] 
j \, < \, r \, < \, 0}} \bigg]
(-1)^j \, q^{\frac32(j+\frac12)^2-(r+\frac12)^2} 
\,\ = \,\ 
\eta(\tau) \, \theta_{1,3}(\tau,0)$
\item[{\rm (ii)}] \,\ $
\bigg[\sum\limits_{\substack{j, \, r \, \in \, \zzz \\[1mm] 
0 \, \leq \, r \, \leq \, j}}
-\sum\limits_{\substack{j, \, r \, \in \, \zzz \\[1mm] 
j \, < \, r \, < \, 0}} \bigg]
(-1)^j \, q^{\frac32(j+\frac12)^2-r^2}
\,\ = \,\ 
\eta(\tau) \, \theta_{2,3}(\tau,0)$
\item[{\rm (iii)}] \,\ $
\bigg[
\sum\limits_{\substack{j, \, r \, \in \, \zzz \\[1mm] 
0 \, \leq \, r \, < \, j}} 
-
\sum\limits_{\substack{j, \, r \, \in \, \zzz \\[1mm] 
j \, \leq \, r \, < \, 0}}
\bigg]
(-1)^j \, q^{\frac32(j+\frac16)^2-(r+\frac12)^2}
\,\ = \,\ 
\dfrac12 \,  \eta(\tau) \, \theta_{3,3}(\tau,0)$
\item[{\rm (iv)}] \,\ $
\bigg[
\sum\limits_{\substack{j, \, r \, \in \zzz \\[1mm] 0 \, \leq \, r \, < \, j}} 
-
\sum\limits_{\substack{j, \, r \, \in \zzz \\[1mm] j \, \leq \, r \, < \, 0}}
\bigg]
(-1)^jq^{\frac32(j+\frac16)^2-r^2}
\,\ = \,\ 
\dfrac12 \, \Big\{ \eta(\tau) \, \theta_{0,3}(\tau,0)
\, + \, 
\theta_{\frac12, \frac32}^{(-)}(\tau,0)\Big\}$
\end{enumerate}
\end{prop}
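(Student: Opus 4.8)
The plan is to reduce all four identities to the single vector equality $f_i(\tau)=h_i(\tau)$ for $i=0,1,2,3$, where the $f_i$ are the functions of Lemma~\ref{m1:lemma:2022-1002a} and $h_i(\tau)=\theta_{i,3}(\tau,0)/\eta(\tau)$ is defined in \eqref{m1:eqn:2022-1002b}. First I would use Note~\ref{m1:note:2022-1002a} to recognize the four indefinite cone sums on the left-hand sides of (i)--(iv) as $-g^{[1,0]\,\ast}_0$, $g^{[1,0]\,\ast}_1$, $g^{[1,1]\,\ast}_0$ and $-g^{[1,1]\,\ast}_1+\tfrac12\theta_{\frac12,\frac32}^{(-)}(\tau,0)$ respectively. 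Dividing by $\eta(\tau)^2$ and inserting the definitions \eqref{m1:eqn:2022-1002a}, a short computation turns (i)--(iv) into $f_1=h_1$, $f_2=h_2$, $f_3=h_3$ and $f_0=h_0$; in case (iv) the two copies of $\tfrac12\theta_{\frac12,\frac32}^{(-)}(\tau,0)$ cancel, leaving exactly $\tfrac12\eta^2 f_0=\tfrac12\eta\,\theta_{0,3}$.

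The core of the argument is then to prove $f_i=h_i$ from modularity alone. By Lemma~\ref{m1:lemma:2022-1002a}(1) and Lemma~\ref{m1:lemma:2022-1002b}(1) the quadruples $(f_i)$ and $(h_i)$ obey \emph{identical} $S$-transformation laws, and by Proposition~\ref{m1:prop:2022-924c}(2) (for the $g$'s) together with the elementary $T$-laws of $\eta$ and of $\theta_{j,3}(\cdot,0)$ they obey identical \emph{diagonal} $T$-laws, with the four eigenvalues $e^{2\pi i\lambda_i}$, $\lambda_i\in\{-\tfrac1{24},\tfrac1{24},\tfrac7{24},\tfrac{17}{24}\}$, pairwise distinct. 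Since the cone sums converge (on each summation cone the exponent $\tfrac32(j+\cdots)^2-(r+\cdots)^2$ grows), each $f_i$ is holomorphic on the upper half-plane $\mathbb{H}$, and the exact transformation law of Proposition~\ref{m1:prop:2022-924c} shows it is a genuine, not mock, modular object. Thus $(f_i)$ and $(h_i)$ are two holomorphic weight-$0$ vector-valued modular forms for one and the same representation $\rho$ of $SL_2(\zzz)$.

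To finish I would set $d_i:=f_i-h_i$. By the leading-term computations in Lemma~\ref{m1:lemma:2022-1002a}(2) and Lemma~\ref{m1:lemma:2022-1002b}(2) the leading $q$-coefficients agree, so each $d_i$ vanishes at the cusp $i\infty$ (its expansion starts at exponent $\lambda_i+1>0$). The representation $\rho$ is unitary with respect to the Hermitian form $\langle v,v\rangle=|v_0|^2+2|v_1|^2+2|v_2|^2+|v_3|^2$: one checks directly that the $S$-matrix $M$ of Lemma~\ref{m1:lemma:2022-1002b}(1) satisfies $M^{\mathrm T}GM=G$ with $G=\mathrm{diag}(1,2,2,1)$, the weights $1,2,2,1$ being the multiplicities of the four theta characters. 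Hence $\|d(\tau)\|$ is $SL_2(\zzz)$-invariant; being the norm of a holomorphic vector, $\|d\|^2$ is subharmonic, it descends to the compact modular curve $X(1)$, and it vanishes at the cusp, so the maximum principle forces $d\equiv0$, i.e. $f_i=h_i$, which yields (i)--(iv).

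The main obstacle is this last step: verifying cleanly that a holomorphic weight-$0$ vector-valued form for $\rho$ vanishing at the cusp must be identically zero, via the subharmonicity and maximum-principle argument on $X(1)$ (the orbifold points $i,\rho$ being harmless since the argument is local and $\Gamma$-equivariant). Should that route prove delicate, the fallback is a Sturm-type finiteness argument: compute the $q$-expansions of $f_i$ and $h_i$ to an explicit number of coefficients fixed by the weight and the index of $\rho$, and invoke the fact that such a vector-valued modular form is determined by finitely many leading coefficients.
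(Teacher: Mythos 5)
Your proposal is correct and follows essentially the same route as the paper: both reduce (i)--(iv) via Note \ref{m1:note:2022-1002a} and the definitions \eqref{m1:eqn:2022-1002a}, \eqref{m1:eqn:2022-1002b} to the identity $f_i(\tau)=h_i(\tau)$, and both conclude from the fact that the two quadruples obey the same modular transformation laws and have the same polar parts. The only difference is that where the paper simply invokes Lemma 4.8 of \cite{W2001} for this uniqueness step, you supply a self-contained proof of it (unitarity of the $S$-matrix with respect to $\mathrm{diag}(1,2,2,1)$, invariance and subharmonicity of $\|f-h\|^2$, maximum principle on the compactified modular curve), which is a valid substitute for the citation.
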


\begin{proof} By Lemmas \ref{m1:lemma:2022-1002a}
and \ref{m1:lemma:2022-1002b}, we see that the functions 
$\{f_i(\tau)\}_{i=0,1,2,3}$ and $\{h_i(\tau)\}_{i=0,1,2,3}$ 
satisfy the same $S$-transformation
properties and have the same polar parts.
Then, by Lemma 4.8 in \cite{W2001}, we have  
$$ \hspace{-20mm}
f_i(\tau) \, = \, h_i(\tau) \qquad \text{for all $i$},
$$
namely
$$
\left\{
\begin{array}{rcl}
2 \, g^{[1,1] \, \ast}_0(\tau) &=& \eta(\tau) \, \theta_{3,3}(\tau,0)
\\[2mm]
- \, 2 \, g^{[1,1] \, \ast}_1(\tau) &=& \eta(\tau) \, \theta_{0,3}(\tau,0)
\end{array}\right. \hspace{10mm} \left\{
\begin{array}{rcl}
- \, g^{[1,0] \, \ast}_0(\tau) &=& \eta(\tau) \, \theta_{1,3}(\tau,0)
\\[2mm]
g^{[1,0] \, \ast}_1(\tau) &=& \eta(\tau) \, \theta_{2,3}(\tau,0)
\end{array}\right. 
$$
by \eqref{m1:eqn:2022-1002a} and \eqref{m1:eqn:2022-1002b}.
Rewriting $g^{[1,p] \, \ast}_k(\tau)$ by using Note \ref{m1:note:2022-1002a},
we obtain the formulas in Proposition \ref{m1:prop:2022-1002a}.
\end{proof}

\end{document}